 \newtheorem{thm}{Theorem}[section]
 \newtheorem{coro}[thm]{Corollary}
 \newtheorem{lem}[thm]{Lemma}
 \newtheorem{prop}[thm]{Proposition}
 \theoremstyle{definition}
 \newtheorem{rem}[thm]{Remark}
 \numberwithin{equation}{section}
\def\dl{\delta}
\def\tl{\tilde}
\def\Dl{\Delta}
\def\gt{\gtrsim}
\def\sig{\sigma}
\def\N{\mathbb{N}}
\def\nn{\nonumber}
\def\eps{\epsilon}
\def\fr{\frac}
\def\al{\alpha}
\def\la{\langle}
\def\ra{\rangle}
\def\R{\mathbb{R}}
\def\Z{\mathbb{Z}}
\def\pr{\partial}
\def\nb{\nabla}
\def\les{\lesssim}
\def\lm{\lambda}
\def\om{\omega}
\def\l|{\left\|}
\def\r|{\right\|}
\newcommand{\beq}{\begin{eqnarray}}
\newcommand{\eeq}{\end{eqnarray}}
\newcommand{\beqno}{\begin{eqnarray*}}
\newcommand{\eeqno}{\end{eqnarray*}}
\newcommand{\be}{\begin{equation}}
\newcommand{\ee}{\end{equation}}
\newcommand{\beno}{\begin{equation*}}
\newcommand{\eeno}{\end{equation*}}
\newtheorem{theorem}{Theorem}[section]
\newtheorem{Lemma A.1}{Lemma A.1}
\theoremstyle{definition}
\theoremstyle{remark}
\begin{document}
\title[Landau damping for VPFP]{Landau damping, collisionless limit, and stability threshold for the Vlasov-Poisson equation with  nonlinear Fokker-Planck collisions}

\author{Jacob Bedrossian}
\address[J. Bedrossian]{Department of Mathematics, University of California, Los Angeles, CA 90095, USA}
\email{jacob@math.ucla.edu}

\author{ Weiren Zhao}
\address[W. Zhao]{Department of Mathematics, New York University Abu Dhabi, Saadiyat Island, P.O. Box 129188, Abu Dhabi, United Arab Emirates.}
\email{zjzjzwr@126.com, wz19@nyu.edu}

\author{Ruizhao Zi}
\address[R. Zi]{School of Mathematics and Statistics, and Key Laboratory of Nonlinear Analysis \& Applications (Ministry of Education), Central China Normal University, Wuhan,  430079,  P. R. China.}
\email{rzz@ccnu.edu.cn}

\date{}
\maketitle
\begin{abstract}
  In this paper, we study the Vlasov-Poisson-Fokker-Planck (VPFP) equation with a small collision frequency \(0 < \nu \ll 1\), exploring the interplay between the regularity and size of perturbations in the context of the asymptotic stability of the global Maxwellian. Our main result establishes the Landau damping and enhanced dissipation phenomena under the condition that the perturbation of the global Maxwellian falls within the Gevrey-\(\frac{1}{s}\) class and obtain that the stability threshold for the Gevrey-\(\frac{1}{s}\) class with $s>s_{\mathrm{k}}$ can not be larger than $\gamma=\frac{1-3s_{\mathrm{k}}}{3-3s_{\mathrm{k}}}$ for $s_{\mathrm{k}}\in [0,\frac{1}{3}]$. Moreover, we show that for Gevrey-$\frac{1}{s}$ with $s>3$, and for $t\ll \nu^{\frac13}$, the solution to VPFP converges to the solution to Vlasov-Poisson equation without collision. 
\end{abstract}
\setcounter{tocdepth}{1}
{\small\tableofcontents}

\section{Introduction}
In this paper, we consider the following single-species Vlasov-Poisson-(nonlinear) Fokker-Planck (VPFP) equations with a neutralizing background on $\mathbb{T}^n_x\times\mathbb{R}^n_v$ (with $\mathbb{T}_x$ normalized to length $2\pi$), which models the evolution of the distribution function of electrons which are subject to the electrostatic force coming from their Coulomb interaction and to a Brownian force which models their collisions:
\be\label{VFP}
\begin{cases}
\partial_tF+v\cdot\nabla_xF+E(t,x)\cdot\nabla_vF=\nu \varrho\big(\vartheta\Delta_vF+\nabla_v\cdot((v-u)F)\big),\\[1mm]
E(t,x)=-\nabla_x(-\Delta_x)^{-1}(\varrho-1),\\[1mm]
\displaystyle\varrho(t,x)=\int_{\mathbb{R}^n}F(t,x,v)dv,\\[1mm]
\displaystyle(\varrho u)(t,x)=\int_{\mathbb{R}^n}F(t,x,v) vdv,\\[1mm]
\displaystyle\frac{1}{2}(\varrho|u|^2)(t,x)+\frac{1}{2}n(\varrho\vartheta)(t,x)=\frac12\int_{\mathbb{R}^n}F(t,x,v) |v|^2dv,\\[1mm]
F(t=0, x, v)=F_{\mathrm{in}}(x,v).
\end{cases}
\ee
The unknown is the distribution function $F (t, x, v)$, which gives the number density of (say) electrons at location $x$ moving with velocity $v$. The electrostatic force is responsible for the self-consistent force term $E(t,x)\cdot\nb_vF$. The non-local interaction between $E(t,x)$ and the density function $\varrho(t, x)$ of electrons would be through Coulomb electrostatic interactions. The Brownian force is modeled by the Fokker-Planck term $\nu \varrho\big(\vartheta\Dl_vF+\nb_v\cdot((v-u)F)\big)$ with the small coefficient $0\leq \nu\ll 1$ which represents the collision frequency. Here we use the same nonlinear Fokker-Planck collision operator as that studied in \cite{bedrossian2017suppression}, as opposed to the linear one, namely $\varrho=\vartheta=1$, which is also commonly used in studies on weak collisions \cite{johnston1971dominant, lenard1958plasma, short2002damping}. The nonlinear Fokker-Planck collision operator reflects that the probability of collisions is positively correlated with both the density and temperature, which is natural \cite{boyd2003physics, hakim_francisquez_juno_hammett_2020, Ros-Mar-Mac-Will-Judd-Dav-PhysRev}. To understand more precisely how the energy in the electric field is converted into heat and how the plasma returns to global thermodynamic equilibrium also motivates us to study the nonlinear collision operator. Moreover, mathematically, we introduce new ideas to deal with the nonlinear effect of the collision operator in weak collision region, which might be useful in the study of other types of collision operators.  A direct calculation gives the following conservation laws:
\begin{itemize}
    \item Conservation of mass: $\displaystyle\frac{d}{dt}\int_{\mathbb{T}^n}\varrho(t,x)dx=0$.
    \item Conservation of momentum: $\displaystyle\frac{d}{dt}\int_{\mathbb{T}^n}(\varrho u)(t,x)dx=0$.
    \item Conservation of energy: $\displaystyle\frac{d}{dt}\mathcal{E}(t)=0$, where the total energy is defined by
\be
\mathcal{E}(t)=\frac12\int_{\mathbb{T}^n}\int_{\mathbb{R}^n}F(t,x,v) |v|^2dvdx+\frac12\|E(t)\|^2_{L^2}.
\ee
\end{itemize}
Assume that the initial condition $F_{\mathrm{in}}$ satisfies
\be\label{initial}
\displaystyle\int_{\mathbb{T}^n\times\mathbb{R}^n}F_{\mathrm{in}}(x,v)dxdv=(2\pi)^n,\quad
\displaystyle\int_{\mathbb{T}^n\times\mathbb{R}^n}vF_{\mathrm{in}}(x,v)dxdv=0,\quad
\mathcal{E}(0)=\frac{n}{2}(2\pi)^n.
\ee
The VPFP equation \eqref{VFP} admits a steady solution, namely the global Maxwellian, 
\be\label{gM}
\mu(v)=\frac{1}{(2\pi \theta_\infty)^{\frac{n}{2}}}e^{-\frac{|v|^2}{2\theta_\infty}},
\ee
where the final temperature $\theta_\infty$ is determined by the conservation laws via the energy per unit volume:
\be
n\theta_\infty=\frac{2}{(2\pi)^n}\mathcal{E}.
\ee
We assume $\theta_\infty=1$ for the sake of simplicity.

In this paper, we study the asymptotic stability of the global Maxwellian $\mu(v)$. It is natural to introduce the perturbations
\begin{gather}
\label{pertur-1}F(t,x,v)=\mu(v)+g(t,x,v), \quad \varrho(t,x)=1+\rho(t,x),\quad \vartheta(t,x)=1+\theta(t,x),\\
\label{pertur-2}M_1(t,x)=\int_{\mathbb{R}^n}vg(t,x,v)dv,\quad M_2(t,x)=\int_{\mathbb{R}^n}|v|^2g(t,x,v)dv.
\end{gather}
Then by using the conservation laws and the initial assumptions \eqref{initial}, one deduces that
\begin{gather}\label{2thM}
\int_{\mathbb{T}^n}\rho(t,x)dx=0,\quad
\int_{\mathbb{T}^n}M_1(t,x)dx=0,\quad
\int_{\mathbb{T}^n}M_2(t,x)dx=-\|E(t)\|_{L^2}^2.
\end{gather}
In addition,  the following relations can be derived from the fourth and fifth equations of \eqref{VFP}.
\begin{gather}
\label{1thM'}(1+\rho)u=M_1,\\
\label{2thM'}n(1+\rho)\theta=M_2-(1+\rho)|u|^2-n\rho.
\end{gather}
For notational convenience, define
\be
M_\theta=(1+\rho)\theta.
\ee

Now we rewrite the system \eqref{VFP} in terms of  $g, \rho, M_1$ and $M_\theta$:
\begin{subequations}\label{VPFP-per}
\be\label{VPFP-perturbation}
\pr_tg+v\cdot\nb_xg-\nu { L}[g]+E\cdot\nb_v\mu=-E\cdot\nb_vg+\nu\mathcal{C}_\mu+\nu\mathcal{C}[g],
\ee
with initial perturbation $g(t=0, x, v)=g_{\mathrm{in}}(x,v)$, where 
\begin{align}
    &{ L}[g]:=\Dl_v g+\nb_v\cdot(vg),\\
\label{Cg}&\mathcal{C}[g]:=\rho\big(\Dl_vg+\nb_v\cdot(vg)\big)+M_\theta\Dl_vg-M_1\cdot\nb_vg,\\
&\mathcal{C}_\mu:=(M_\theta\Dl_v\mu-M_1\cdot\nb_v\mu).
\end{align}
\end{subequations}

In many plasma physics settings, the collisions are very weak, that is $\nu\ll 1$. There are many situations when physicists neglect them entirely and consider the Vlasov-Poisson equation ($\nu=0$). However, exactly how weak collisions, and collisionless effects interact with nonlinearity, has been the subject of study and debate in the physics community for nearly six decades \cite{malmberg1964collisionless, o1968effect}. 

The most famous collisonless effect is  {\bf Landau damping (L.D.)}, namely, the rapid decay of the electric field despite the lack of dissipative mechanisms. Landau damping was discovered first by Landau in 1946 \cite{landau196561} for the linearized Vlasov-Poisson equations. Landau damping is connected to the mixing in phase-space due to the transport operator, and can be considered a variant of velocity averaging. To understand the nonlinear problem, one must study carefully the important (weakly) nonlinear effect known as the {\bf plasma echo}, observed experimentally in \cite{malmberg1968plasma}. 
These oscillations are caused by nonlinear effects exciting modes which are un-mixing in phase space, causing a transient growth of the electric field, which can then, in turn, excite further oscillations and create a cascade. The cascade leads to  derivative loss. The first mathematical justification of the Landau damping for the nonlinear Vlasov-Poisson equation was given by Mouhot and Villani \cite{MouhotVillani2011}, where nearly analytic regularity for the perturbation is required due to this regularity loss. A  simpler proof was given by Bedrossian, Masmoudi, and Mouhot \cite{BMM2016}, where the required regularity is down to the Gevrey threshold $s>\frac{1}{3}$, see also \cite{grenier2021landau}. If the perturbation is in lower regularity, we refer to \cite{lin2011small}, which shows the existence of small BGK waves, and \cite{bedrossian2020nonlinear} for a mathematical study of plasma echoes. 
We also refer to \cite{bedrossian2022linearized, han2021asymptotic,huang-NHNguyen2022sharp, ionescu2024nonlinear} for the results in the unconfined setting.  As an analogue, the {\bf inviscid damping (I.D.)} of Couette flow for 2D Euler equation was discovered  earlier by Orr \cite{Orr1907} in 1907. The nonlinear inviscid damping for Couette flow was first proved by \cite{BM2015} for the infinite periodic channel setting, see also \cite{IonescuJia2020cmp} for the finite channel case. We also refer to the recent results \cite{IJ2020, MasmoudiZhao2020} for the inviscid damping of  stable monotone shear flows. If the perturbation is less regular, we refer to \cite{castro2023traveling, lin2011inviscid, sinambela2023transition} for asymptotic instability of shear flows. Both Landau damping and inviscid damping are connected to the mixing in phase-space due to the transport operator, which is common in many other fluid models. One can refer to \cite{chen2023nonlinear, Jia2020siam, Jia2020arma,IonescuJia2021,WeiZhangZhao2020, Zillinger2017,RenZhao2017,bedrossian2023nonlinear, liu2022linear, MSZ2020,zhao2023inviscid, zhaozi2023asymptotic} and the references therein.

In the weak collision case, there is a strong interaction between the transport operator and dissipative effect. More precisely, conservative transport transfers information to high frequencies where the dissipation is increasingly dominant, which leads to the {\bf enhanced dissipation (E.D.)} effect, namely the dissipation time scale $\nu^{-\frac{1}{3}}$ is short than the heat dissipation time scale $\nu^{-1}$. The enhanced dissipation together with the small enough size of the perturbation can eliminate the plasma echoes caused by the nonlinear interaction. In \cite{bedrossian2017suppression}, the author studied the interaction between enhanced dissipation and nonlinear effect and proved the asymptotic stability of the global Maxwellian for equation \eqref{VFP}, when the perturbation is in Sobolev spaces with size $\nu^{\frac{1}{3}}$. We also refer to a recent work \cite{chaturvedi2023vlasov} where the asymptotic stability of the global Maxwellian was proved under the same assumption on the initial perturbation for the Vlasov-Poisson equation with weak Landau collisions. 

It is natural to ask  in the weak collision case, what is the size requirement if the perturbation is smoother? This leads to the stability threshold problem, namely, 

{\it 
 Given a norm $\|\cdot\|_X$, find a $\gamma=\gamma(X)$ so that
\begin{align}\label{eq:STP}
\begin{aligned}
  &\|g_{\mathrm{in}}\|_{X}\leq \nu^{\gamma} \Rightarrow \text{stability, L.D., and E. D.},\\
&\|g_{\mathrm{in}}\|_{X}\gg \nu^{\gamma} \Rightarrow \text{instability}.
\end{aligned}
\end{align}
}
In the collisionless case $(\nu=0)$ and in the weak collision case under small perturbation $(\gamma\geq\frac{1}{3})$, the key estimate is to study the interaction between one of the linear decay effects with plasma echoes. In this paper, we consider the weak collision case, with initial perturbation smoother but larger compared with \cite{bedrossian2017suppression, chaturvedi2023vlasov}. We study the plasma echoes under both Landau damping and enhanced dissipation effect and show a new transient growth mechanism that is size-regularity dependent.

\subsection{Main result} In this paper, we study the stability threshold problem with initial data in Gevrey-$1/s$ class for $s>s_{\mathrm{k}}$ with $s_{\mathrm{k}}\in [0,\frac13]$. 
Our first main result states as follows: 
\begin{theorem}\label{Thm:main1}
Let $s_{\mathrm{k}}\in [0,\frac13]$, $\lambda_{\mathrm{in}}>c_0>0$, $\sigma_0 \geq n+20$, and $m\geq n+10$. For any $1\geq s>s_{\mathrm{k}}$, we define the following weighted Gevrey norm
\begin{align}\label{eq: Weight-G-norm}
\left\|g_{\mathrm{in}}\right\|_{\mathcal{G}^{s,\lambda_{\mathrm{in}},\sigma_0}_{m+2,2\nu}}^2=\int_{\mathbb{T}^n\times\mathbb{R}^n} \left|\langle \nabla_{x,v}\rangle^{\sigma_0+1} e^{\lambda_{\mathrm{in}}\langle \nabla_{x,v}\rangle^s}\left(\langle v\rangle^{m+2}e^{2\nu |v|^2}g_{\mathrm{in}}(x, v)\right)\right|^2dxdv.
\end{align}
Then the stability threshold $\gamma(\mathcal{G}^{s,\lambda_{\mathrm{in}},\sigma}_{m+2,2\nu})$ can not be larger than $\frac{1-3s_{\mathrm{k}}}{3-3s_{\mathrm{k}}}$, namely, if
\begin{align}
\gamma=\gamma(\mathcal{G}^{s,\lambda_{\mathrm{in}},\sigma_0}_{m+2,2\nu})\geq \frac{1-3s_{\mathrm{k}}}{3-3s_{\mathrm{k}}},
\end{align}
then there exists $\epsilon_0>0$ such that for any $0<\epsilon<\epsilon_0$, it holds that
\begin{align*}
\|g_{\mathrm{in}}\|_{\mathcal{G}^{s,\lambda_{\mathrm{in}},\sigma_0}_{m+2,2\nu}}\leq \epsilon\nu^{\gamma} \Rightarrow \text{stability, L.D., and E. D.}.
\end{align*}
More precisely, if 
$\|g_{\mathrm{in}}\|_{\mathcal{G}^{s,\lambda_{\mathrm{in}},\sigma_0}_{m+2,2\nu}}\leq \epsilon \nu^{\gamma}$, then there exist $C>1$ and $\delta_1>0$, such that for any $t\in [0,\infty)$, the solution $(g, \rho, M_1, M_{\theta})$ to the system \eqref{VPFP-per} with initial data $g_{\mathrm{in}}(x, v)$ satisfies:\\
(1) Enhanced dissipation for non-zero modes
\begin{align}
    \|{P_{\neq}}g(t)\|_{L^2_{x,v}}\leq C\epsilon \nu^{\gamma} e^{-\delta_1\nu^{1/3} t}. 
\end{align}
(2) Dissipation for the zero mode
\begin{align}
    \|P_0g(t)\|_{L^2_{v}}\leq C\epsilon\nu^{\gamma}e^{-\nu t}.
\end{align}
(3) Landau damping
\begin{align}
\|\rho\|_{L^{\infty}}+\|M_1\|_{L^{\infty}}+\|M_{\theta}\|_{L^{\infty}}\leq \frac{C\epsilon \nu^{\gamma}}{\langle t \rangle^{\sigma_0-10}} e^{-\delta_1\nu^{1/3} t}.
\end{align}
Here $P_0g(t, v)=\frac{1}{(2\pi)^n}\int_{\mathbb{T}^n}g(t, x, v)dx$ and $P_{\neq}g(t, x, v)=g(t, x, v)-P_0g(t, v)$, and the constants $c_0, C, \delta_1, \epsilon$ are independent of $\nu$. 
\end{theorem}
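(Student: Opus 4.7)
The plan is to run a continuity/bootstrap argument for the perturbation $g$ in the profile variable $f(t, x, v) := g(t, x+tv, v)$, which removes the free transport and concentrates the nonlinear difficulty in the plasma-echo cascade and in the nonlinear Fokker-Planck contribution. First I would design a Fourier multiplier $A_k(t, \eta)$ with three factors: a base Gevrey weight $e^{\lambda(t) \langle k, \eta \rangle^s}$ with $\lambda(t)$ slowly decreasing from $\lambda_{\mathrm{in}}$ down to some $\lambda_\infty > 0$ to absorb derivative losses via the Cauchy-Kovalevskaya trick; a BMM-type echo multiplier concentrated on the resonant times $t \approx \eta/k$ that tracks the exact amplification of each echo; and an enhanced-dissipation weight of the form $e^{\delta_1 \nu^{1/3} t}$ acting on $P_{\neq}$ modes. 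On the zero mode, only the standard Gevrey weight together with $e^{\nu t}$ is used, since the linear dissipation $\nu L$ already gives the ordinary $e^{-\nu t}$ decay for $P_0 g$. The extra polynomial and Gaussian weights in $v$ appearing in \eqref{eq: Weight-G-norm} are propagated using that the operators $\Delta_v$ and $\nabla_v\cdot(v\,\cdot)$ respect these weights up to controllable commutators.

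Second, I would derive Landau damping and enhanced dissipation for the macroscopic quantities $\rho$, $M_1$, $M_\theta$ from the Volterra formulation. Taking moments of \eqref{VPFP-perturbation} against $1$, $v$, and $|v|^2$, the density $\hat{\rho}(t,k)$ satisfies a Volterra equation whose leading source is $\widehat{g_{\mathrm{in}}}(k, kt)$ and whose memory kernel is the classical Landau kernel associated with $\mu$, plus nonlinear and $\nu$-collisional corrections. Inverting this operator via a Penrose-type stability estimate (the Maxwellian $\mu$ is known to satisfy it with margin) then yields the polynomial-in-$t$ plus enhanced-dissipation decay claimed in the theorem, and the moments $M_1, M_\theta$ are treated analogously. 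Simultaneously, the profile energy estimate
\[
\frac{d}{dt}\|A f\|_{L^2}^2 + (\text{good dissipation/CK terms}) = \sum_i \mathcal{T}_i
\]
splits the nonlinearity into a transport commutator, the reaction term driven by $E\cdot\nabla_v\mu$ (principal echo generator), a remainder/nonlinear term, and the collisional pieces $\nu\mathcal{C}_\mu$, $\nu\mathcal{C}[g]$. The time-derivatives of $\lambda(t)$ and of the echo multiplier supply CK-type coercive terms that absorb the principal reaction contribution, exactly as in \cite{BMM2016}.

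The main obstacle is controlling the reaction and nonlinear collision contributions when the perturbation size $\epsilon\nu^\gamma$ is much larger than the enhanced-dissipation coefficient $\nu^{1/3}$, i.e. precisely the regime $\gamma < 1/3$ accessed once $s_{\mathrm{k}} > 0$. In that regime, enhanced dissipation is not yet effective on the window $t \in [0, \nu^{-1/3}]$, and the echo amplification over this window must be dominated purely by Gevrey regularity. A careful step-by-step echo-chain analysis shows that the $k$-th echo on this transient window produces a multiplicative loss of order $\exp\!\bigl(C \epsilon \nu^{\gamma} \langle \eta \rangle^{s_{\mathrm{k}}}\bigr)$, so that summability in Gevrey-$1/s$ with $s > s_{\mathrm{k}}$ requires exactly $\gamma \geq \frac{1 - 3s_{\mathrm{k}}}{3 - 3 s_{\mathrm{k}}}$; this is the origin of the threshold in the statement, and verifying it rigorously is the technical heart of the proof. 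The nonlinear collision operator $\mathcal{C}[g]$ is then handled by the Gevrey algebra property together with the $\langle v \rangle^{m+2} e^{2\nu|v|^2}$ weight, which supplies the extra decay required to absorb the $\Delta_v$ and $\nabla_v\cdot(v\,\cdot)$ factors; the prefactor $\nu$ in front of $\mathcal{C}[g]$ and $\mathcal{C}_\mu$ is then easily enough to close the bootstrap against the $\rho,M_1,M_\theta$ decay already obtained. After $t \gtrsim \nu^{-1/3}$, enhanced dissipation dominates and the estimates reduce to those of \cite{bedrossian2017suppression}.
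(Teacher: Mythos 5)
Your high-level architecture (bootstrap on a Gevrey-weighted profile energy, Penrose/Volterra inversion for $\rho$, an echo-chain computation fixing the relation between $s$ and $\gamma$) matches the paper's, and you have correctly located the heart of the matter in the transient window $t\lesssim \nu^{-1/3}$ where the echo amplification $\prod_k \epsilon|\eta|^{1-3\gamma}|k|^{-(3-3\gamma)}\approx e^{c|\eta|^{(1-3\gamma)/(3-3\gamma)}}$ must be paid for by Gevrey regularity. However, there are two concrete gaps. First, the profile $f(t,x,v)=g(t,x+tv,v)$ removes only the free transport; the Fokker--Planck term $\nu\nabla_v\cdot(vg)$ becomes, after Fourier transform, a genuine transport term $\nu\xi\cdot\nabla_\xi\hat g$ in frequency, and its nonlinear analogue $\nu\rho\,\nabla_v\cdot(vg)$ cannot be treated as a ``controllable commutator'' when $\rho$ is only $O(\nu^\gamma)$ with $\gamma<1/3$ and infinitely many derivatives are in play. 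The paper absorbs the linear part exactly into the characteristics $\bar\eta(t;k,\eta)=e^{\nu t}\eta-k(e^{\nu t}-1)/\nu$ (so the dissipation appears as $\nu|\bar\eta|^2$ and all resonant-time bookkeeping is done with $t^{\rm ap}=(1-e^{-\nu t})/\nu$ in place of $t$); without this, your multiplier energy estimate does not close.

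Second, your one-sentence dismissal of $\nu\,\mathcal{C}[g]$ (``Gevrey algebra plus the weight plus the prefactor $\nu$'') misses what the paper identifies as the main new difficulty: the velocity-localization loss. Terms such as $\nu\rho\,\Delta_v g$ and the $|v|^2$-weighted terms produced by conjugating with any Gaussian weight require, when $\rho$ sits at high frequency (where it is only controlled in a norm already losing half a derivative relative to the top norm), both extra velocity moments and extra derivatives on $g$. A fixed weight $e^{2\nu|v|^2}$ produces a damping term of size only $O(\nu^2)|v|^2g^w$, which is too weak; the paper instead uses a \emph{time-dependent} Gaussian weight $\lambda_1(t)=\nu+\nu\langle t\rangle^{-a_0}$ to generate the good terms $-\dot\lambda_1|v|^2g^w$ and $-2\nu\lambda_1(1-2\lambda_1)|v|^2g^w$, and a \emph{three-tier} energy hierarchy ($\mathcal{E}^{\sigma_0+1,2/5}_{m}$, $\mathcal{E}^{\sigma_1,2/5}_{m+2}$, $\mathcal{E}^{\sigma_1,1/3}_{m}$) in which the top-regularity norm only carries the suboptimal rate $e^{\delta_1\nu^{2/5}t}$ — precisely because the derivative loss in $E\cdot\nabla_v g^w$ forces an interpolation that costs $\nu^{-2/5}$ — while the sharp $e^{\delta_1\nu^{1/3}t}$ rate is recovered only at lower regularity. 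Your proposal of a single multiplier carrying $e^{\delta_1\nu^{1/3}t}$ at the highest Gevrey--Sobolev level is therefore not just an omission of detail but something the paper's own accounting shows cannot be closed as stated. (Minor points: the paper does not use a BMM-type resonant echo multiplier — the echoes are handled by Schur tests on the Volterra equation together with the shrinking radius $\lambda(t)$ — and only $\rho$ goes through the Penrose inversion, $M_1,M_2$ being handled by explicit Duhamel formulas; these are legitimate alternative routes rather than errors.)
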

\begin{rem}
    We refer to \eqref{equ:bootstrap} for more precise bounds of each quantity. 
\end{rem}
\begin{rem}
    This kind of `stability threshold' result is entirely analogous to the line of research on the two-dimensional Navier-Stokes equations near shear flows \cite{liMasmoudiZhao2023dynamical, MasmoudiZhao2020cpde}. Here we briefly introduce our result and compared it with the Navier-Stokes case in the following Table \ref{Table 1}. 
\begin{table}[H]
\centering
\caption{Stability threshold problem in the fluid and kinetic equations} 
\label{Table 1}
\medskip
 \begin{tabular}{|c|c|c|}
\hline
 & \makecell[c]{2D NS with small viscosity $\nu$\\
  \hline  Couette flow} & \makecell[c]{VPFP with small collision frequency $\nu$\\
   \hline global Maxwellian} \\
\hline
Sobolev & \makecell[c]{$\|u_{\mathrm{in}}\|_{H^3}\leq \epsilon\nu^{\frac{1}{3}}\ \Rightarrow$ I.D. and E.D.\\
\hline References: \cite{MasmoudiZhao2019,wei2023nonlinear}} & \makecell[c]{$\|\langle v\rangle^m g_{\mathrm{in}}\|_{H^{\sigma}}\leq \epsilon\nu^{\frac{1}{3}}\ \Rightarrow$ L.D. and E.D.\\
\hline References: \cite{bedrossian2017suppression, chaturvedi2023vlasov}}\\
\hline
Gevrey-$\frac{1}{s}$ & \makecell[c]{$\|u_{\mathrm{in}}\|_{\mathcal{G}^{s_{\mathrm{f}}}}\leq \epsilon\nu^{\gamma}\  \Rightarrow$ I.D. and E.D.  \\
\hline References: \cite{liMasmoudiZhao2022asymptotic}}&
\makecell[c]{$\|\langle v\rangle^me^{2\nu|v|^2}g_{\mathrm{in}}\|_{\mathcal{G}^{s}}\leq \epsilon\nu^{\gamma} \ \Rightarrow$ L.D. and E.D.\\
\hline this paper}\\
\hline
 \makecell[c]{Inviscid\\limit}& \makecell[c]{$\|u_{\mathrm{in}}\|_{\mathcal{G}^2}\leq \epsilon\  \Rightarrow$ I.D. and E.D.\\
\hline References: \cite{BMV2016, liMasmoudiZhao2022asymptotic}}&
\makecell[c]{$\|\langle v\rangle^me^{2\nu|v|^2}g_{\mathrm{in}}\|_{\mathcal{G}^{3-}}\leq \epsilon \ \Rightarrow$ L.D. and E.D.\\
\hline this paper}\\
\hline
\end{tabular} 
\medskip
{\scriptsize{\it \begin{itemize}
    \item Here $\gamma(\mathcal{G}^{s_\mathrm{f}})\geq \frac{1-2s_{\mathrm{f}}}{3-3s_{\mathrm{f}}}$ with $s_{\mathrm{f}}\in [0,\frac{1}{2}]$ and $\gamma(\mathcal{G}^{s})\geq \frac{1-3s_{\mathrm{k}}}{3-3s_{\mathrm{k}}}$ for $s>s_{\mathrm{k}}$ with $s_{\mathrm{k}}\in [0, \frac{1}{3}]$. 
    \item The Gevrey-$\frac{1}{s}$ norm is defined by 
    \begin{align*}
        \|f\|_{\mathcal{G}^s(\mathbb{T}^n\times\mathbb{R}^n})=\left(\sum_{k\in \mathbb{Z}^n}\int_{\mathbb{R}^n}e^{2\lambda |k,\xi|^s}|\widehat{f}(k,\xi)|^2d\xi\right)^{\frac{1}{2}},
    \end{align*}
    with some $\lambda>0$. Here we have used the short hand $|k,\xi| = |k| + |\xi|$. 
    \item The constants $ \epsilon>0$ are independent of $\nu$. The constants $\sigma,m$ are sufficiently large. 
\end{itemize} 
}}
\end{table} 
\end{rem}
\begin{rem}
Our estimates also work for the collisionless case $(\nu=0)$ by simply sending $\nu$ to 0, which gives a slightly better estimate than \cite{BMM2016} since the time growth in the top norm of the perturbation of the distribution function in \cite{BMM2016} can be avoided in this paper. This is motivated by \cite{grenier2021landau}, see also \cite{bedrossian2022brief}. 
\end{rem}
\begin{rem}
It is interesting to discuss the optimality of those indexes in Theorem \ref{Thm:main1}. There are few results about the optimality of the thresholds in both fluid and kinetic equation. We refer to \cite{deng2023long, liMasmoudiZhao2023dynamical} for  fluid equations, and refer to \cite{bedrossian2020nonlinear} for  kinetic equations. We also refer to section \ref{sec:Further discussion} for further discussions. 
\end{rem}
\begin{rem}\label{Rmk: new X}
By modifying our Fourier multipliers and some estimates, one may prove that the asymptotic stability holds for  initial data which satisfies 
\begin{align*}
\int_{\mathbb{T}^n\times\mathbb{R}^n} \left|\langle \nabla \rangle^{\sigma_0+1} \exp{\left\{\lambda_{\mathrm{in}}\min\big\{(\nu^{\gamma}|\nabla|)^{\fr{1}{L}},\,  \nu^{\frac{3\gamma-1}{2L}}\big\}\right\}}\left(\langle v\rangle^{m+2}e^{2\nu |v|^2}g_{\mathrm{in}}(x, v)\right)\right|^2dxdv\leq \epsilon^2 \nu^{2\gamma}.
\end{align*}
with $1\leq L<3$. See Section \ref{sec:Further discussion} for more discussions.
\end{rem} 
\begin{rem}
When we finished this paper, we noticed that an independent work was done by Yue Luo \cite{luo2024weak} for the Vlasov-Poisson with linear Fokker-Planck collision operator. After reading our abstract \footnote{The Workshop on the recent progress of kinetic theory and related topics in TSIMF, 15-19th January 2024.}, he improved his result.
\end{rem}

The second main result is the collisionless limit. 
\begin{theorem}\label{Thm:main2}
Let $s>\frac{1}{3}$, and let $\lambda_{\mathrm{in}}, \sigma_0, m, \epsilon_0, \nu_0$ be the same as in Theorem \ref{Thm:main1}. Suppose that the initial data $g_{\mathrm{in}}$ satisfies 
\begin{align}\label{eq: Weight-G-norm}
\left\|g_{\mathrm{in}}\right\|_{\mathcal{G}^{s,\lambda_{\mathrm{in}},\sigma_0}_{m+2,2\nu_0}}\leq \epsilon.
\end{align}
Let $g(t)$ solve \eqref{VPFP-per} for any $0<\nu\leq \nu_0$ and let $g^0(t)$ solve \eqref{VPFP-per} with $\nu=0$. There are $C>1$ and $\delta_0>0$ independent of $\nu, t$ such that for any $t\leq \delta_0\nu^{\frac13}$, 
\begin{align*}
    \|g(t)-g^0(t)\|_{L^{\infty}_{x,v}}\leq C\nu t^3.
\end{align*}
\end{theorem}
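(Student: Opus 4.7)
The plan is to subtract the two evolution equations. Setting $h := g - g^0$ and $E_h := E - E^0$, the system \eqref{VPFP-per} and its $\nu=0$ version give
\begin{align*}
\partial_t h + v \cdot \nabla_x h + E_h \cdot \nabla_v \mu + E \cdot \nabla_v h + E_h \cdot \nabla_v g^0 = \nu \bigl(L[g] + \mathcal{C}_\mu + \mathcal{C}[g]\bigr), \qquad h(0) = 0.
\end{align*}
Theorem \ref{Thm:main1}, applied at collision frequency $\nu$ and separately at $\nu = 0$, yields uniform-in-time bounds of size $O(\epsilon)$ for both $g$ and $g^0$ in the weighted Gevrey norm \eqref{eq: Weight-G-norm}, together with the associated Landau damping decay on the densities, fields, and currents. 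In particular, the right-hand side is uniformly $O(\nu)$ in a sufficiently high-regularity Gevrey norm throughout the window $t \in [0, \delta_0 \nu^{1/3}]$.

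The next step is to pass to the free-transport coordinates $H(t,y,v) := h(t, y+vt, v)$, $G(t,y,v) := g(t, y+vt, v)$. The convective derivative then disappears, while the $v$-gradient at fixed $x$ is conjugated into $\nabla_v - t\nabla_y$, so the collision operator transforms as
\begin{align*}
L[g] \;\longleftrightarrow\; \Delta_v G - 2t\,\nabla_v\!\cdot\!\nabla_y G + t^2 \Delta_y G + n G + v\!\cdot\!\nabla_v G - t\,v\!\cdot\!\nabla_y G,
\end{align*}
producing an explicit factor of $t^2$ on the top-order $y$-derivative. Since $G$ inherits the Gevrey regularity of $g$ and the Gevrey norm absorbs a finite number of $y$-derivatives, this transformed source is bounded by $C\nu(1+t^2)$ in a slightly weakened Gevrey norm $X$, built on the same Fourier multipliers used in Theorem \ref{Thm:main1}. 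Duhamel applied to the transformed equation for $H$ then gives
\begin{align*}
\|H(t)\|_X \;\lesssim\; \int_0^t \nu(1+s^2)\, ds \;+\; \int_0^t \bigl(\|E_h(s)\|+\|H(s)\|\bigr)\bigl(\epsilon + \|H(s)\|\bigr)\, ds,
\end{align*}
and the first integral already supplies the desired $\nu t^3$ scaling.

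The nonlinear coupling is then handled by a standard continuity/bootstrap argument on $\|H(s)\|_X \leq 2C\nu s^3$: the second integral above is $O(\epsilon \nu t^4 + \nu^2 t^7)$ (using $\|E_h\|\lesssim \|H\|$ via the density $\rho_h$), which is strictly smaller than $C\nu t^3$ for $t \leq \delta_0 \nu^{1/3}$ with $\epsilon, \delta_0$ small enough, closing the bootstrap. Since \eqref{eq: Weight-G-norm} controls $\sigma_0 \geq n+20$ derivatives and $m \geq n+10$ velocity moments, Sobolev embedding upgrades the bound on $\|h(t)\|_X$ to $\|h(t)\|_{L^\infty_{x,v}} \lesssim \nu t^3$. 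The main obstacle I anticipate is the sharp bookkeeping of the $t^2$ growth arising from the conjugation of $\Delta_v$ against the two-derivative loss it inflicts on $G$: this loss must be absorbed uniformly by the Gevrey weight and the plasma-echo multipliers of Theorem \ref{Thm:main1}, and the comparison argument is naturally limited to the short window $t \lesssim \nu^{1/3}$, which explains the time restriction in Theorem \ref{Thm:main2}.
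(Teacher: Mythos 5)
Your skeleton coincides with the paper's: form the difference equation for $g_\delta=g-g^0$, pass to the free-transport profile (the paper does this on the Fourier side via $\hat f_\delta(t,k,\eta)=\hat g_\delta(t,k,\eta-kt)$; your physical-space change of variables is the same thing), observe that conjugating the Fokker--Planck operator produces the explicit $t^2$ so that the collisional forcing is $O(\nu\langle t\rangle^2)$ in a Gevrey norm and integrates to the claimed $\nu t^3$, and close with a bootstrap plus Sobolev embedding. Up to that level of detail the two arguments agree.

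The gap is in how you close the bootstrap against the electric-field terms, and it matters because the time window on which the theorem is non-trivial is $t\le\delta_0\nu^{-1/3}$ (this is what the paper's own proof works with, ``$t\le\epsilon_1\nu^{-1/3}$''; the exponent in the statement is evidently a typo, since on $t\le\delta_0\nu^{1/3}\ll1$ everything closes trivially). On that window your estimate $O(\epsilon\nu t^4+\nu^2t^7)$ for the second Duhamel integral does not beat $C\nu t^3$: you would need $\epsilon t\ll1$ and $\nu t^4\ll1$, neither of which holds at $t\sim\nu^{-1/3}$ with $\epsilon$ independent of $\nu$. Moreover, the linear term $E_\delta\cdot\nabla_v\mu$ contributes $\int_0^t\|E_\delta(s)\|\,ds$ with an $O(1)$ (not $O(\epsilon)$) coefficient, and the reaction and transport terms $E_\delta\cdot\nabla_vg^0$ and $E^0\cdot\nabla_vg_\delta$ carry an extra factor $\langle s\rangle$ from $\nabla_v\mapsto\nabla_v-s\nabla_y$ acting on the profiles; none of these can be absorbed by a Gr\"onwall iteration once $t\gg1$. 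The paper's route is to treat $\rho_\delta$ through the same Penrose--Volterra inversion as in Section \ref{sec:Density estimates} (Lemmas \ref{Lem: Penrose stability condition} and \ref{lem-linear-rho}), so that the linear term is inverted rather than iterated, and to control the reaction and transport terms with the plasma-echo and paraproduct machinery of Sections \ref{Sec: Nonlinear collisionless contributions} and \ref{Estimates on the distribution function}; this is precisely what the references to ${\bf N}_{E;\ne}^{\mathrm{HL}}$, ${\bf N}_{E;\ne}^{\mathrm{LH}}$ and $\mathrm{L}$ at the end of the paper's proof sketch mean. Relatedly, your closing sentence attributes the time restriction to the difficulty of absorbing the $t^2$ loss into the Gevrey weight; in fact the restriction is the enhanced-dissipation time $\nu^{-1/3}$, beyond which $g$ and $g^0$ genuinely separate, while the $t^2$ produced by the conjugated Laplacian is harmless for Gevrey data at any finite time.
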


\subsection{Main ideas}
The nonlinear Fokker-Planck collision operator brings mainly two effects: the enhanced dissipation from the linear effect and a velocity localization loss in $v$ from the nonlinear effect. The main ideas in estimating and designing norms are to take advantage of the enhanced dissipation and overcome the velocity localization loss. 

\subsubsection{The time-dependent Gaussian weight}
To deal with the collision operator, the classical way is to introduce a Gaussian weight, which is commonly used to deal with the Landau collision operator and Boltzmann collision operator. The main idea is to use the Gaussian weight to absorb the bad term  and rewrite the collision operator as a non-negative self-adjoint operator. For the nonlinear Fokker-Planck collision operator, this method seems not efficient. One reason is that the kernel of the linearized Fokker-Planck collision operator is not the same as the nonlinear Fokker-Planck collision operator, namely, 
\begin{align*}
    \mathrm{Kerl}\, (L+\mathcal{C})\neq \mathrm{Kerl}\, (L). 
\end{align*}
In \cite{bedrossian2017suppression}, the author introduced a new coordinate system in frequency space by observing that after taking the Fourier transform, the term $\nu\nabla\cdot (vg)$ which creates the kernel of the linearized Fokker-Planck collision operator becomes a transport term $-\nu \xi\cdot \nabla_{\xi} \widehat{g}$ which can be absorbed by the new coordinate. Such an idea is useful to control the nonlinear terms such as $\nu\rho\nabla\cdot (vg)$, when the perturbation has enough $\nu$-dependent smallness. In our case, since the perturbation can be $\nu$-independent, we cannot apply this method directly. 

One of the  new ideas is to introduce a time-dependent Gaussian weight in $v$, namely, we define
\be\label{def-gw}
g=e^{-\lambda_1(t) |v|^2}g^w,
\ee
with
\be\label{def-lm1}
\lambda_1(t)=\nu+\frac{\nu }{\la t\ra^{a_0}},\quad 0<a_0\ll 1.
\ee
 We refer to \cite{ignatova2016almost, wang2021global, zhang2016long} where a similar time-dependent weight is used to study the global existence for the Prandtl system \footnote{W. Z. would like to thank Professor Zhifei Zhang for mentioning those references.}. 

The equation for $g^w$ can be written as (see \eqref{eq-gw})
\begin{align*}
\pr_tg^w&+v\cdot\nb_xg^w-\nu \nabla_v\cdot(vg^{w})\\
&-\nu \Delta_vg^w-\dot{\lambda}_1(t)|v|^2g^w
-2\nu\lambda_1(t)(1-2\lambda_1(t))|v|^2g^w\\
&-4\nu\lambda_1(t) v\cdot\nb_vg^w=\text{other terms}.
\end{align*}
By introducing the time-dependent Gaussian weight, we obtain two new good terms in the equation 
\begin{align}\label{eq: good-termv}
    -\dot{\lambda}_1(t)|v|^2g^w\quad \text{and}\quad -2\nu\lambda_1(t)(1-2\lambda_1(t))|v|^2g^w,
\end{align}
which can be used to control low-high interactions in the nonlinear collision terms such as $\nu(\rho+\lambda_1(\rho+M_{\theta}))\nabla\cdot (vg^w)$ and $\nu(2\lambda_1(1-2\lambda_1)\rho+4\lambda_1^2M_{\theta})|v|^2g^{w}$, where there is  velocity localization loss. We also note that since $e^{-\lambda_1(t) |v|^2}$ is not in the kernel of the linearized collision operator, there are still two bad linear terms
\begin{align*}
    -\nu\nabla\cdot (vg^w)\quad \text{and}\quad -4\nu \lambda_1(t) v\cdot\nabla_vg^w.
\end{align*}
For the first term $-\nu\nabla\cdot (vg^w)$, we treat it as a transport term in frequency space and use the idea in \cite{bedrossian2017suppression}. More precisely, after taking the Fourier transform, the problematic term $-\nu\nabla_v\cdot (vg^w)$ becomes $\nu \xi\cdot \nabla_{\xi}\widehat{g^{w}}$. We introduce the following characteristic curve to absorb the two linear transport terms $-k\nabla_{\xi}\widehat{g^w}+\nu \xi\cdot \nabla_{\xi}\widehat{g^{w}}$:
\begin{align}\label{eq-curve}
\frac{d}{dt}\bar{\eta}(t;k,\eta)=-k+\nu\bar{\eta}(t;k,\eta), \quad \bar{\eta}(0; k, \eta)=\eta,
\end{align}
which gives that 
\be\label{curve}
\bar{\eta}(t;k,\eta)=e^{\nu t}\eta-k\fr{e^{\nu t}-1}{\nu}.
\ee
Throughout the paper, to simplify the presentation, we introduce the notation
\[
t^{\rm ap} =\frac{1-e^{-\nu t}}{\nu}.
\]
Here `{\rm ap}' stands for `approximate'. It is easy to check that as $\nu\to 0_+$, $t^{\rm ap}\to t$. 
We define
\begin{align}\label{def-fw}
(\widehat{f^w})_k(t,\eta)=(\widehat{g^w})_k(t, \bar{\eta}(t; k,\eta)), \quad \mathrm{or} \quad (\widehat{g^w})_k(t, \xi)=(\widehat{f^w})_k\left(t, e^{-\nu t}\xi+kt^{\rm ap} \right).
\end{align}
We then get that $\widehat{f^w}_k(t, \eta)$ satisfies a better equation (see \eqref{eq-fw})
\begin{align*}
&\partial_t(\widehat{f^w})_k(t,\eta)+e^{-2\nu t}\dot{\lambda}_1(t)(\Delta_\eta \widehat{f^w})_k(t,\eta)+\nu|\bar{\eta}|^2(\widehat{f^w})_k(t,\eta)\\
&+\nu \left[-(2\lambda_1-4\lambda_1^2)e^{-2\nu t}(\Delta_\eta \widehat{f^w})_k\left(t, \eta \right)-4\lambda_1 e^{-\nu t}\nabla_\eta\cdot\left(\bar{\eta} \widehat{f^w}\right)_k\left(t, \eta \right)\right]\\
&=\text{other terms}. 
\end{align*}

The second term $-4\nu \lambda_1(t) v\cdot\nabla_vg^w$ can be controlled by the dissipation term and the new good terms, namely 
\begin{align*}
    -2\nu\lambda_1(t)(1-2\lambda_1(t))|v|^2g^w\quad \text{and}\quad -\nu\Delta_vg^w,
\end{align*}
when $\nu$ is small enough. 

\begin{rem}
    Since the time-dependent Gaussian weight is also $\nu$ dependent, we can not use it to get a $\nu$-uniform integrability in $v$. Thus we also use the standard polynormial weight, see more discussions in the section \ref{sec:Proof of the main theorems}. 
\end{rem}

\subsubsection{The time-dependent Fourier multiplier}\label{sec time-dependent Fourier multiplier}
Now let us consider the main nonlinear interaction and explain how the enhanced dissipation affects  the plasma echoes through the nonlinearity. We have the following formal formula for the density
\begin{align*}
\hat{\rho}(t, k)=\sum_{l\in \mathbb{Z}_*^n}\int_0^t\hat{\rho}(\tau, l)\frac{l\cdot k}{|l|^2}(t-\tau)\hat{g}_{k-l}(\tau, kt-l\tau)d\tau+\text{Easy terms}.
\end{align*}
Let us assume that the size of perturbation is $\epsilon\nu^{\gamma}$. We consider the worst-case scenario (taking the 1D example):
\begin{align}\label{eq:grow1}
\begin{aligned}
|\hat{\rho}(t, k)|
&\lesssim \sum_{l=k\pm 1}\int_{kt\approx l\tau}|\hat{\rho}(\tau, l)|\frac{\tau}{|l|}\frac{\epsilon\nu^{\gamma}e^{-\delta_1\nu^{\frac13}\tau}}{\langle kt-l\tau\rangle^{100}}d\tau+\text{Easy terms}\\
&\lesssim \frac{\epsilon t^{1-3\gamma}}{|k|^2}\left|\hat{\rho}\left(\frac{kt}{k+1}, k+1\right)\right|+\text{Easy terms}.
\end{aligned}
\end{align}
The density at time $\tau=\frac{kt}{k+1}$ and frequency $k+1$ is strongly amplifying $\hat{\rho}(t,k)$. This is a plasma echo under the enhanced dissipation effect. This kind of resonance is the synergism of the enhanced dissipation, the transient growth of the plasma echoes, and the nonlinearity. We can rewrite this plasma echo chain in terms of the distribution function $g$:
\begin{align*}
|\hat{g}(t, k, kt)|\lesssim \frac{\epsilon (|k|t)^{1-3\gamma}}{|k|^{3-3\gamma}}\left|\hat{g}\left(\frac{kt}{k+1}, k+1, kt\right)\right|+\text{Easy terms}. 
\end{align*}
By replacing $kt$ by $\eta$, we obtain that the amplification in each step is $\max\left\{\frac{\epsilon |\eta|^{1-3\gamma}}{|k|^{3-3\gamma}},1\right\}$ which together with Stirling's formula $n!\sim \sqrt{2\pi n}(\frac{n}{e})^n$ gives a total growth 
\begin{align*}
    \prod_{\frac{\epsilon |\eta|^{1-3\gamma}}{|k|^{3-3\gamma}}\geq 1}\frac{\epsilon |\eta|^{1-3\gamma}}{|k|^{3-3\gamma}}\approx \prod_{k=1}^{|\eta|^{\frac{1-3\gamma}{3-3\gamma}}} \frac{\epsilon |\eta|^{1-3\gamma}}{|k|^{3-3\gamma}}\approx e^{c|\eta|^{\frac{1-3\gamma}{3-3\gamma}}},
\end{align*}
here equality holds up to a polynomial correction. 
We refer to section \ref{Sec: Nonlinear collisionless contributions} for more precise and mathematically rigorous estimates of these plasma echoes that give the relationship  between the regularity index $s$ and the size index $\gamma$ as in Theorem \ref{Thm:main1}. 

Based on the above estimate, to capture the growth of different frequencies at different times, for $s>s_{\mathrm{k}}=\frac{1-3\gamma}{3-3\gamma}$, we introduce the time-dependent Gevrey-$\frac1s$ type Fourier multiplier 
\begin{align}
    \label{mul-A1}
 {\bf A}_k^{\sigma_0+1,\frac{2}{5}}(t,\eta)&=e^{{\mathbf 1}_{k\neq 0}\delta_1\nu^\frac25t}e^{\lambda(t)\langle k,\eta\rangle^s}\langle k,\eta\rangle^{\sigma_0+1}\mathfrak{m}_k(t, \eta)
\end{align}
with Sobolev correction $\langle k,\eta\rangle^{\sigma+1}=:(|k|^2+|\eta|^2)^{\frac{\sigma+1}{2}}$. 
Here $\lambda(t)$ is the Gevrey radius, which is defined as follows:
\be\label{def-dl}
\lambda(t)=\lambda_\infty+\fr{\tl\dl}{(1+t)^a}
\ee
for positive constants $\tl\dl$ and $a$ chosen sufficiently small, and the multiplier 
\be\label{def-m}
\mathfrak{m}_k(t,\eta)=
\begin{cases}1, \quad\mathrm{if}\quad k=0,\\
\exp\big\{-\int_0^t\fr{\nu^\fr13}{1+\nu^\fr23e^{2\nu s}\left| \eta-ks^{\mathrm{ap}}\right|^2}\fr{e^{2\nu s}|\nu\eta-k|^2}{\la e^{\nu s}(\nu\eta-k)\ra^2}ds\big\}, \quad\mathrm{if}\quad k\ne0, 
\end{cases}
\ee
is the modified ghost-type Fourier multiplier which, together with the collision operator, gives the enhanced dissipation. We refer to Appendix \ref{sec:multiplier m} for more properties of the Fourier multiplier $\mathfrak{m}(t,\nabla)$. The weight $e^{{\mathbf 1}_{k\neq 0}\delta_1\nu^\frac25t}$ gives a suboptimal enhanced dissipation rate. Therefore, 
 we also introduce the time-dependent Gevrey-$\frac1s$ type Fourier multiplier with less Sobolev correction $\langle k,\eta\rangle^{\sig_1}$
 and stronger enhanced dissipation 
\begin{align}\label{mul-A2}
 {\bf A}_k^{ \sig_1 ,\frac{1}{3}}(t,\eta)=e^{{\mathbf 1}_{k\neq 0}\delta_1\nu^\frac13t}e^{\lambda(t)\langle k,\eta\rangle^s}\langle k,\eta\rangle^{ \sig_1 }\mathfrak{m}_k(t, \eta),
\end{align}
with
\[
\sig_1+2+\fr{n+1}{2}\le \sig_0.
\]
For any $\sigma>0$ and $\mathfrak{e}\in \{\frac{1}{3},\frac{2}{5}\}$, we use the notation 
\begin{align*}
    {\bf A}^{\sigma,\mathfrak{e}}f={\bf A}^{\sigma,\mathfrak{e}}(t,\nabla)f=({\bf A}^{\sigma,\mathfrak{e}}_k(t,\xi)\hat{f}_k(t,\xi))^{\vee}
\end{align*}
to simplify the presentation.

Now let us explain briefly why we only obtain a weaker enhanced dissipation rate for the highest norm. We consider the low-high nonlinear interaction $E\cdot \nabla_v g^w$, where the electric field $E$ has rapid decay, but there is a derivative loss. The standard idea is to take advantage of the transport structure and introduce a commutator. The worst-case scenario is $(\nabla_v-t\nabla_x) \approx \nabla_v$. The idea of controlling the derivative loss and paying less smallness (in terms of $\nu$) is to use the dissipation term and the `CK' term, namely, 
\begin{align*}
\nu\|(\nabla_v-t\nabla_x){\bf A}^{\sigma,\mathfrak{e}}(v^{\al}f^w)\|_{L^2}^2\quad  \text{and}\quad \dot{\lambda}(t)\|\langle\nabla\rangle^{\frac{s}{2}}{\bf A}^{\sigma,\mathfrak{e}}(v^{\al}f^w)\|_{L^2}^2.
\end{align*}
More precisely, we have for $\sigma=\sigma_0+1$ and $\mathfrak{e}=\frac25$, it holds that
\begin{align*}
\|{\bf A}^{\sigma,\mathfrak{e}}|\nabla_v|^{1-\frac{s}{2}}(v^{\al}f^w)\|_{L^2}\leq \|{\bf A}^{\sigma,\mathfrak{e}}|\nabla_v-t\nabla_x|(v^{\alpha}f^w)\|_{L^2}^{\frac{2-2s}{2-s}}\|\langle\nabla\rangle^{\frac{s}{2}}{\bf A}^{\sigma,\mathfrak{e}}(v^{\alpha}f^w)\|_{L^2}^{\frac{s}{2-s}}.
\end{align*}
The total loss is hence,
\beno
\nu^{\gamma}\nu^{-\frac{1-s}{2-s}}< \nu^{\gamma+\frac{2}{3\gamma-5}}\leq \nu^{-\frac25},
\eeno
thus we need to gain $\nu^{\frac25}$ from the commutator, which gives the enhanced dissipation rate for the highest norm. Note that the commutator $e^{{\mathbf 1}_{k\neq 0}\delta_1\nu^\frac25t}-1$ does not absorb derivatives, but contributes smallness $\nu^{\frac25}$. For the lower norm ${\bf A}^{\sigma_1,\fr13}$, such commutator estimates are not necessary, and thus we can obtain the sharp enhanced dissipation rate $e^{-\delta_1\nu^{\frac{1}{3}}t}$.

For $\sig\ge0, \dl_1<\dl<\fr{\dl_0}{16}$, where $\dl_0$ is the constant appearing in \eqref{S-prop1},  let us denote
\be 
B_k^{\sigma}(t)=e^{\dl\nu^\fr13t}e^{\lambda(t)\left\la k, kt^{\rm ap}\right\ra^s}\left\la k, kt^{\rm ap}\right\ra^{\sigma}.
\ee
Then for the macroscopic quantities $\rho, M_1, M_{\theta}$,   the following two Fourier multipliers 
\be 
{\bf B}_k(t)=|k|^\fr12B_k^{\sigma_0}(t),\ \  {\rm and}\ \ B^{\sig_1}_k(t)
\ee 
 are involved in the higher and lower norm estimates. 

\subsubsection{Three levels of energy functionals}\label{sec:Three levels energy functionals}
We first recall that there are four main operations: 
\begin{enumerate}
    \item Applying the time-dependent Gaussian-type weight.
    \item Applying the change of coordinate.
    \item Applying the polynomial weight. 
    \item Applying the time-dependent Fourier multipliers.
\end{enumerate}
It is worth pointing out that the order of four operations is chosen to avoid the bad commutator $[{\bf A}^{\sigma}(t,\nabla),e^{\lambda_1|v|^2}]$ and derive a good working system since the inverse Gaussian weight does not commute well with the infinitely many derivatives in $A$. In \cite{chaturvedi2023vlasov}, the authors study the stability of the Vlasov-Poisson-Landau equation in Sobolev spaces. The order of operations in their paper is (1) applying the classical Gaussian weight, (2) applying the finite number of derivatives, (3) applying the polynomial weight and the exponential weight $e^{\frac{q_0|v|^{\varsigma}}{2}}$ introduced in \cite{guo2002landau, guo2012vlasov, strain2008exponential}.

It is natural to introduce the following two energy functionals for the distribution functions: $\mathcal{E}_m^{\sigma_0+1,\frac{2}{5}}$ and $\mathcal{E}^{ \sig_1 ,\frac{1}{3}}_m$, where 
\begin{align}
\|f^w(t)\|_{\mathcal{E}_m^{\sigma_0+1,\frac{2}{5}}}^2=\left(\sum_{\al\in\N^n:|\alpha|\le m} \frac{e^{-2|\alpha|\nu t}}{K_{|\alpha|}}\left\| {\bf A}^{\sigma_0+1,\frac{2}{5}}(v^\alpha f^w(t))\right\|_{L^2}^2\right)^\fr12,
\end{align}
is used to control the highest regularity, and 
\begin{align}
\|f^w(t)\|_{\mathcal{E}^{ \sig_1 ,\frac{1}{3}}_m}^2=\left(\sum_{\al\in\N^n:|\al|\le m} \frac{e^{-2|\alpha|\nu t}}{K_{|\al|}}\left\|{\bf A}^{ \sig_1 ,\frac{1}{3}}(v^\al f^w(t))\right\|_{L^2}^2\right)^\frac12,
\end{align}
is used to obtain the optimal enhanced dissipation rate. Here $\{K_{|\alpha|}\}_{|\alpha|=0}^{m+2}$ is an increasing sequence with $\frac{K_{|\alpha|}}{K_{|\alpha|+1}}\ll 1$ which is determined in the proof. As mentioned in the previous section, to control the macroscopic quantity $\rho$, we use the multiplier ${\bf B}_k$ which is of half derivative less than the highest regularity. This loss comes from the low-high interaction in 
\begin{align*}
\hat{\rho}(t, k)=\sum_{|l|\leq \frac{1}{4}|k|}\int_0^{t/2}\hat{\rho}(\tau, l)\frac{l\cdot k}{|l|^2}(t-\tau)\hat{g}_{k-l}(\tau, kt-l\tau)d\tau+\text{other terms}.
\end{align*}
Due to this derivative loss, we need to carefully treat the high-low interactions in the nonlinear collision operator, for examples
\begin{align*}
    \nu \rho \Delta_v g^{w},\quad \nu\lambda_1(1-\lambda_1) \rho |v|^2g^w.
\end{align*}
For these, we work back on the microscopic quantity and use the identity
\begin{align*}
    \hat{\rho}_k(t)=\hat{g}_k(t,  0)=\hat{f}_k\left(t, kt^{\rm ap}\right).
\end{align*}
We refer to Section \ref{sec-collision-I} for more details. We also face the velocity localization loss problem in the treatment of the high-low interaction in $\nu\lambda_1(1-\lambda_1) \rho |v|^2g^w$. Thus we introduce the third energy functional
\begin{align}
\|f^w(t)\|_{{\mathcal{E}}^{ \sig_1 ,\frac{2}{5}}_{m+2}}^2=\left(\sum_{\al\in\N^n:|\al|\le m+2} \frac{e^{-2|\alpha|\nu t}}{K_{|\al|}}\left\| {\bf A}^{ \sig_1 ,\frac{2}{5}}(v^\alpha f^w(t))\right\|_{L^2}^2\right)^\frac12,
\end{align}
which is used to control the velocity localization loss caused by the nonlinear collision operator. It is worth  pointing out that to close the energy estimate of 
${\mathcal{E}}^{ \sig_1 ,\frac{2}{5}}_{m+2}(t)$, since it is in  lower regularity, there is no derivative loss problem. In the energy estimates of ${\mathcal{E}}^{ \sig_1 ,\frac{2}{5}}_{m+2}(t)$, even the high-low interaction in $\nu\lambda_1(1-\lambda_1) \rho |v|^2g^w$ can be treated as the low-high interactions where we also take advantage of the two good terms \eqref{eq: good-termv}.  

\subsubsection{Further discussion}\label{sec:Further discussion}
There is a dual problem to the stability threshold problem \eqref{eq:STP}:

{\it 
 Given a real number $\gamma\geq 0$, find a largest function space $X_{\gamma, \nu}$ so that
\begin{align}\label{eq:STP2}
\begin{aligned}
  &\|g_{\mathrm{in}}\|_{X_{\gamma, \nu}}\leq \nu^{\gamma} \Rightarrow \text{stability, L.D., and E. D.},\\
&\|g_{\mathrm{in}}\|_{X_{\gamma, \nu}}\gg \nu^{\gamma} \Rightarrow \text{instability}.
\end{aligned}
\end{align}
}
In terms of regularity, if the function space is required to be $\nu$-independent, then the function space \eqref{eq: Weight-G-norm} in Theorem \ref{Thm:main1} seems optimal. 

Let us now discuss the case that the function space is $\nu$-dependent. We revisit \eqref{eq:grow1} and get that
\begin{align}\label{eq:grow2}
\begin{aligned}
|\hat{\rho}(t, k)|
&\lesssim \sum_{l=k\pm 1}\int_{kt\approx l\tau}|\hat{\rho}(\tau, l)|\frac{\tau}{|l|}\frac{\epsilon\nu^{\gamma}e^{-\delta_1\nu^{\frac13}\tau}}{\langle kt-l\tau\rangle^{100}}d\tau+\text{Easy terms}\\
&\lesssim \frac{\epsilon \nu^{\gamma} t e^{-\delta_1\nu^{\frac{1}{3}}t}}{|k|^2}\left|\hat{\rho}\left(\frac{kt}{k+1}, k+1\right)\right|+\text{Easy terms}.
\end{aligned}
\end{align}
By a similar argument, we write this plasma echo chain in terms of distribution function $g$:
\begin{align*}
|\hat{g}(t, k, \eta)|\lesssim  \frac{\epsilon \nu^{\gamma} |\eta| e^{-\delta_1\nu^{\frac{1}{3}}\frac{|\eta|}{|k|}}}{|k|^3}\left|\hat{g}\left(\frac{kt}{k+1}, k+1, \eta \right)\right|+\text{Easy terms}. 
\end{align*}
Thus we obtain that the amplification in each step is $\max\Big\{ \frac{\epsilon \nu^{\gamma} |\eta| e^{-\delta_1\nu^{\frac{1}{3}}\frac{|\eta|}{|k|}}}{|k|^3}, 1\Big\}$, which gives a total growth
\begin{align*}
\prod_{ \epsilon \nu^{\gamma} |\eta| e^{-\delta_1\nu^{\frac{1}{3}}{|\eta|}/{|k|}}\geq {|k|^3}} \frac{\epsilon \nu^{\gamma} |\eta| e^{-\delta_1\nu^{\frac{1}{3}}\frac{|\eta|}{|k|}}}{|k|^3}\lesssim e^{C(\nu^{\gamma}|\eta|)^\fr13}e^{-\delta_1 \nu^{\frac{1}{3}}|\eta|}
\lesssim \exp{\left\{C\min\big\{(\nu^{\gamma}|\eta|)^{\fr13},\,  \nu^{\frac{3\gamma-1}{6}}\big\}\right\}}.
\end{align*}
We also note that 
\beno
\min\Big\{(\nu^{\gamma}\langle k,\eta\rangle)^\fr{1}{L},\,  \nu^{\frac{3\gamma-1}{2L}}\Big
\}\lesssim \langle k,\eta\rangle^{s}
\eeno
with $s=\frac{3}{L}\frac{1-3\gamma}{3-3\gamma}>\frac{1-3\gamma}{3-3\gamma}$. Based on the above estimates, we can get Remark \ref{Rmk: new X} by modifying our Fourier multipliers and some estimates.  

To reduce the technical estimates, the $\nu$-independent function space \eqref{eq: Weight-G-norm} is used which is of the classical Gevrey type.

\section{The working system}
In this section, we derive a good working system as mentioned in section \ref{sec:Three levels energy functionals}. We first apply the time-dependent Gaussian weight and introduce the weighted distribution function $g^w(t, x, v)=g(t, x, v)e^{\lambda_1(t)|v|^2}$. Then we apply the change of coordinate after taking the Fourier transform to absorb the linear transport part. 
\subsection{Reformulation (I):   the distribution function}
Let $g^w$ be defined in \eqref{def-gw}.
Then we infer from \eqref{VPFP-perturbation} that the equation for $g^w$ takes the form of
\begin{align}\label{eq-gw}
\nn&\pr_tg^w-\dot{\lm}_1(t)|v|^2g^w+v\cdot\nb_xg^w-\nu \left(Lg^w
+ \tl{L}_1^{\lm_1} [g^w]\right)+E\cdot\nb_v\mu\, e^{\lm_1(t) |v|^2}\\
\nn=&-E\cdot\left(\nb_vg^w+\tl{L}_2^{\lm_1}[g^w] \right)+\nu(M_\theta\Dl_v\mu-M_1\cdot\nb_v\mu)e^{\lm_1(t) |v|^2}+\nu\rho\left({ L}g^w+\tl{L}_1^{\lm_1}[g^w]\right)\\
&+\nu M_\theta\left(\Dl_vg^w+\tl{L}_3^{\lm_1}[g^w]\right)-\nu M_1\cdot\left(\nb_vg^w+\tl{L}_2^{\lm_1}[g^w] \right),
\end{align}
where
\begin{align*}
\tl{L}_1^{\lm_1}[g^w]=&2n\lm_1(t) g^w-2\lm_1(t)(1-2\lm_1(t))|v|^2g^w-4\lm_1(t) \nb_v\cdot(vg^w),\\
\tl{L}_2^{\lm_1}[g^w]=&-2\lm_1(t)  vg^w,\\
\tl{L}_3^{\lm_1}[g^w]=&2n\lm_1(t) g^w+4(\lm_1(t))^2|v|^2g^w-4\lm_1(t) \nb_v\cdot(vg^w).
\end{align*}
Taking Fourier transform on \eqref{eq-gw} yields
\begin{align}\label{eq-gw-hat}
\nn&\pr_t\widehat{g^w}+\dot{\lm}_1(t)\Dl_\xi \widehat{g^w}-k\cdot\nb_\xi\widehat{g^w}+\nu \xi\cdot\nb_\xi\widehat{g^w}+\nu |\xi|^2\widehat{g^w}\\
\nn&+\nu\left[2n\lm_1 \widehat{g^w}-(2\lm_1-4\lm_1^2)\Dl_{\xi}\widehat{g^w}-4\lm_1 \nb_{\xi}\cdot(\xi \widehat{g^w})\right]+\hat{E}_k(t)\cdot\mathcal{F}\left[ \nb_v\mu\, e^{\lm_1 |v|^2}\right](\xi)\\
\nn=&-\mathcal{F}\left[E\cdot\nb_vg^w\right]_k(t,\xi)+\nu\mathcal{F}\left[(M_\theta\Dl_v\mu-M_1\cdot\nb_v\mu)e^{\lm_1 |v|^2}\right]_k(t,\xi)+\nu\mathcal{F}\big[\mathcal{C}[{g^w}]\big]_k(t,\xi)\\
\nn&\nn-\mathcal{F}\left[E\cdot \tl{L}_2^{\lm_1} [g^w]\right]_k(t,\xi)+\nu \mathcal{F}\left[\rho \tl{L}_2^{\lm_1}[g^w]\right]_k(t,\xi)\\
&+\nu \mathcal{F}\left[M_\theta \tl{L}_3^{\lm_1}[g^w]\right]_k(t,\xi)-\nu \mathcal{F}\left[M_1 \tl{L}_2^{\lm_1}[g^w]\right]_k(t,\xi).
\end{align}
Recalling the definition of $\widehat{f^w}$ in \eqref{def-fw}, and reorganizing the nonlinearities on the right hand side of \eqref{eq-gw-hat} in terms of moments:
\begin{align}
\nu\mathcal{C}[g^w]+\nu \rho \tl{L}_1^{\lm_1}[g^w]+\nu M_\theta \tl{L}_3^{\lm_1}[g^w]-\nu M_1 \tl{L}_2^{\lm_1}[g^w]=\mathcal{M}_0[g^w]+\mathcal{M}_1[g^w]+\mathcal{M}_2[g^w],
\end{align}
where
\begin{align}
\mathcal{M}_0[g^w]\nn=&\nu(\rho+M_\theta)\Dl_vg^w-\nu M_1\cdot\nb_vg^w+2n\nu\lm_1(\rho+M_\theta)g^w,\\
\mathcal{M}_1[g^w]\nn=&\nu\big(\rho-4\lm_1(\rho+M_\theta)\big)\nb_v\cdot(vg^w)+2\nu\lm_1 M_1\cdot vg^w,\\
\mathcal{M}_2[g^w]\nn=&-2\nu\lm_1\big((1-2\lm_1)\rho-2\lm_1M_\theta\big)|v|^2g^w,
\end{align}
one deduces  that $(\widehat{f^w})_k(t,\eta)$ solves
\begin{align}\label{eq-fw}
\nn&\pr_t(\widehat{f^w})_k(t,\eta)+e^{-2\nu t}\dot{\lm}_1(t)(\Dl_\eta \widehat{f^w})_k(t,\eta)+\nu|\bar{\eta}(t;k,\eta)|^2(\widehat{f^w})_k(t,\eta)\\
\nn&+\hat{E}_k(t)\cdot\mathcal{F}\left[\nb_v\mu e^{\lm_1 |v|^2} \right](\bar{\eta}(t; k,\eta))+\nu \left[2n\lm_1 (\widehat{f^w})_k(t,\eta)\right.\\
\nn&\left.-(2\lm_1-4\lm_1^2)e^{-2\nu t}(\Dl_\eta \widehat{f^w})_k\left(t, \eta \right)-4\lm_1 e^{-\nu t}\nb_\eta\cdot\left(\bar{\eta}(t;k,\eta) \widehat{f^w}\right)_k\left(t, \eta \right)\right]\\
\nn=&-\mathcal{F}\left[E\cdot\nb_vg^w\right]_k(t,\bar{\eta}(t;k,\eta))-\mathcal{F}\left[E\cdot \tl{L}_2^{\lm_1} [g^w]\right]_k(t,\bar{\eta}(t;k,\eta))\\
\nn&+\nu\mathcal{F}\left[(M_\theta\Dl_v\mu-M_1\cdot\nb_v\mu)e^{\lm_1 |v|^2}\right]_k(t,\bar{\eta}(t;k,\eta))\\
&+\mathcal{F}\big[\mathcal{M}_0[g^w]\big]_k(t,\bar{\eta}(t;k,\eta))+\mathcal{F}\big[\mathcal{M}_1[g^w]\big]_k(t,\bar{\eta}(t;k,\eta))+\mathcal{F}\big[\mathcal{M}_2[g^w]\big]_k(t,\bar{\eta}(t;k,\eta)),
\end{align}
where
\begin{align}
\mathcal{F}\left[E\cdot\nb_vg^w\right]_k(t,\bar{\eta}(t;k,\eta))
\nn=&\sum_{l\in\mathbb{Z}^n}\hat{E}_l(t)\cdot i\bar{\eta}(t;k,\eta)(\widehat{f^w})_{k-l}\left(t,\eta-lt^{\rm ap}\right),\\
\mathcal{F}\left[E\cdot \tl{L}_2^{\lm_1} [g^w]\right]_k(t,\bar{\eta}(t;k,\eta))
\nn=&-2\lm_1\sum_{l\in\mathbb{Z}^n}\hat{E}_l(t)\cdot i e^{-\nu t} (\nb_\eta\widehat{f^w})_{k-l}\left(t,\eta-lt^{\rm ap}\right),\\
\mathcal{F}\big[\mathcal{M}_0[g^w]\big]_k(t,\bar{\eta}(t;k,\eta))
\nn=&-\nu\sum_{l\in\mathbb{Z}^n}\left(\hat{\rho}_l(t)+(\widehat{M_\theta})_l(t)\right) |\bar{\eta}(t; k,\eta)|^2(\widehat{f^w})_{k-l}\left(t,\eta-lt^{\rm ap}\right)\\
\nn&-\nu\sum_{l\in\mathbb{Z}^n}(\widehat{M_1})_l(t)\cdot i\bar{\eta}(t; k,\eta)(\widehat{f^w})_{k-l}\left(t,\eta-lt^{\rm ap}\right)\\
\label{M0}&+2n\nu\lm_1\sum_{l\in\mathbb{Z}^n}\left(\hat{\rho}_l(t)+(\widehat{M_\theta})_l(t)\right) (\widehat{f^w})_{k-l}\left(t,\eta-lt^{\rm ap}\right),
\end{align}
and
\begin{align} \label{Merr}
\nn&\mathcal{F}\big[\mathcal{M}_1[g^w]\big]_k(t,\bar{\eta}(t;k,\eta))\\\nn=&\nu\sum_{l\in\mathbb{Z}^n}\left(\hat{\rho}_l(t)-4\lm_1(\hat{\rho}_l(t)+(\widehat{M_\theta})_l(t))\right)e^{-\nu t}\bar{\eta}(t;k,\eta)\cdot( \nb_\eta\widehat{f^w})_{k-l}\left(t,\eta-lt^{\rm ap}\right)\\
&+2\nu\lm_1\sum_{l\in\mathbb{Z}^n}(\widehat{M_1})_l(t)\cdot i e^{-\nu t}( \nb_\eta\widehat{f^w})_{k-l}\left(t,\eta-lt^{\rm ap}\right),
 \end{align}
 and
 \begin{align}\label{M2}
\nn&\mathcal{F}\big[\mathcal{M}_2[g^w]\big]_k(t,\bar{\eta}(t;k,\eta))\\
=&2\nu\lm_1\sum_{l\in\mathbb{Z}^n}\left((1-2\lm_1)\hat{\rho}_l(t)-2\lm_1(\widehat{M_\theta})_l(t)\right) e^{-2\nu t} (\Dl_\eta\widehat{f^w})_{k-l}\left(t,\eta-lt^{\rm ap}\right).
\end{align}

\subsection{Reformulation (II):  the density }
Taking Fourier transform in $(x, v)$ of the  equation 	\eqref{VPFP-perturbation},  we arrive at
\be\label{eq-g}
\pr_t\hat{g}_k(t, \xi)-k\cdot\nb_{\xi}\hat{g}_k(t,\xi)+\nu\xi\cdot\nb_\xi\hat{g}_k(t,\xi)+\nu|\xi|^2\hat{g}_k(t, \xi)+\hat{E}_k(t)\cdot i\xi\hat{\mu}(\xi)=\mathcal{N}_k(t,\xi),
\ee
where
\beno
\mathcal{N}_k(t,\xi)=-(\widehat{E\cdot\nb_vg})_k(t,\xi)+\nu(\widehat{\mathcal{C}_\mu})_k(t,\xi)+\nu(\widehat{\mathcal{C}_g})_k(t,\xi).
\eeno
Note that the equations of $\widehat{g^w}$ and $\widehat{g}$ have the same linear transport part. We then use the same change of coordinate and set
\be\label{def-f}
\hat{f}_k(t, \eta)=\hat{g}_k(t,  \bar{\eta}(t;k,\eta)),
\ee
or equivalently 
\be\label{g-f}
\hat{g}_k(t, \xi)=\hat{f}_k\left(t,  e^{-\nu t}\xi+kt^{\rm ap} \right).
\ee
Then from \eqref{eq-g}, we find that the equation of $\hat{f}_k(t,\eta)$ takes the form of
\be\label{eq-f}
\pr_t\hat{f}_k(t,\eta)+\nu|\bar{\eta}(t;k,\eta)|^2\hat{f}_k(t,\eta)+\mathcal{L}_k(t,\bar{\eta}(t;k,\eta))=\mathcal{N}_k(t,\bar{\eta}(t;k,\eta)),
\ee
where
\begin{align}\label{def-L}
\mathcal{L}_k(t,\bar{\eta}(t;k,\eta))=&\hat{E}_k(t)\cdot i\bar{\eta}(t;k,\eta)\hat{\mu}(\bar{\eta}(t;k,\eta)),\\
\label{def-N}
\mathcal{N}_k(t,\bar{\eta}(t;k,\eta))\nn=&-\sum_{l\in\mathbb{Z}^n_*}\hat{E}_l(t)\cdot i\bar{\eta}(t; k,\eta)\hat{f}_{k-l}\left(t, \eta-lt^{\rm ap}\right)\\
&+\nu (\widehat{\mathcal{C}_\mu})_k(t,\bar{\eta}(t;k,\eta))+\nu(\widehat{\mathcal{C}[g]})_k(t,k,\bar{\eta}(t;\eta)),
\end{align}
with
\begin{align}\label{Cmu-hat}
\nn(\widehat{\mathcal{C}_\mu})_k(t,\bar{\eta}(t;k,\eta))=&-(\widehat{M_\theta})_k(t)|\bar{\eta}(t; k,\eta)|^2\hat{\mu}(\bar{\eta}(t; k,\eta))\\
&-(\widehat{M_1})_k(t)\cdot i\bar{\eta}(t; k,\eta)\hat{\mu}(\bar{\eta}(t; k,\eta)),
\end{align}
and
\begin{align}
\nn(\widehat{\mathcal{C}[g]})_k(t,\bar{\eta}(t;k,\eta))=&-\sum_{l\in\mathbb{Z}^n_*}\hat{\rho}_l(t)|\bar{\eta}(t; k,\eta)|^2\hat{f}_{k-l}\left(t, \eta-lt^{\rm ap}\right)\\
\nn&-e^{-\nu t}\sum_{l\in\mathbb{Z}^n_*}\hat{\rho}_l(t)\bar{\eta}(t; k,\eta)\cdot(\nb_\eta\hat{f})_{k-l}\left(t, \eta-lt^{\rm ap}\right)\\
\nn&-\sum_{l\in\mathbb{Z}^n}(\widehat{M_\theta})_l(t)|\bar{\eta}(t; k,\eta)|^2\hat{f}_{k-l}\left(t, \eta-lt^{\rm ap}\right)\\
&-\sum_{l\in\mathbb{Z}^n_*}(\widehat{M_1})_l(t)\cdot i\bar{\eta}(t; k,\eta)\hat{f}_{k-l}\left(t, \eta-lt^{\rm ap}\right).
\end{align}

Now we are in a position to give the macroscopic quantities $\rho, M_1$ and $M_2$ in terms of $f$. In fact,  from \eqref{initial}, \eqref{pertur-1} and \eqref{pertur-2}, we have
\begin{gather}
\label{rhof}\hat{\rho}_k(t)=\hat{g}_k(t,  0)=\hat{f}_k\left(t, kt^{\rm ap}\right),\\
\label{M1f}(\widehat{M_1})_k(t)=(\widehat{vg})_k(t,  0)=ie^{-\nu t}(\nb_\eta\hat{f})_k\left(t, kt^{\rm ap}\right),\\
\label{M2f}(\widehat{M_2})_k(t)=(\widehat{|v|^2g})_k(t,  0)=-e^{-2\nu t}(\Dl_\eta\hat{f})_k\left(t, kt^{\rm ap}\right).
\end{gather}

Let
\be\label{S1}
S_k(t,\tau; \eta)=\exp\left(-\nu\int_\tau^t|\bar{\eta}(s;k,\eta)|^2ds\right)=\exp\left(-\nu\int_\tau^t\left|e^{\nu s}\eta-k\fr{e^{\nu s}-1}{\nu}\right|^2ds\right).
\ee
Take $\eta=kt^{\rm ap}$ in \eqref{S1}, and denote 
\begin{align}\label{S2}
S_k(t,\tau):=&S_k\left(t,\tau;  kt^{\rm ap}\right)
=\exp\left(-\fr{|k|^2}{\nu }\left[t-\tau-2(t-\tau)^{\rm ap}+\fr{1-e^{-2\nu(t-\tau)}}{2\nu}\right]\right).
\end{align}
In particular,
\be\label{S3}
S_k(t):=S_k\left(t,0; kt^{\rm ap}\right)=\exp\left\{-\fr{|k|^2}{\nu }\left(t-2t^{\rm ap}+\fr{1-e^{-2\nu t}}{2\nu}\right)\right\}.
\ee
It is easy to check that
$S_k(t,\tau)=S_k(t-\tau)$. 
By Duhamel's principle, the solution $\hat{f}_k(t,\eta)$ to \eqref{eq-f} can be expressed as
\begin{align}\label{ex-f}
\hat{f}_k(t,\eta)\nn=&S_k(t,0;\eta)(\widehat{f_{\mathrm{in}}})_k(\eta)-\int_0^tS_k(t,\tau; \eta)\mathcal{L}_k(\tau,\bar{\eta}(\tau;k,\eta))d\tau\\
&+\int_0^tS_k(t,\tau;\eta)\mathcal{N}_k(\tau,\bar{\eta}(\tau;k,\eta))d\tau.
\end{align}
Taking $\eta=kt^{\rm ap}$ in \eqref{ex-f}, and noting that 
\be\label{eta-bar}
\bar{\eta}(\tau;k,kt^{\rm ap})=k(t-\tau)^{\mathrm{ap}},
\ee
 we get 
\begin{align}\label{ex-rho}
\hat{\rho}_k(t)\nn=&S_k(t)(\widehat{f_{\mathrm{in}}})_k\left(kt^{\rm ap}\right)-\int_0^tS_k(t-\tau)\hat{E}_k(\tau)\cdot ik(t-\tau)^{\mathrm{ap}}\hat{\mu}\left(k(t-\tau)^{\mathrm{ap}}\right)d\tau\\
&+\int_0^tS_k(t-\tau)\mathcal{N}_k(\tau, k(t-\tau)^{\rm ap})d\tau.
\end{align}
We postpone the equations for $M_1$ and $M_2$ to Section \ref{high-m}.

For convenience, let us denote
$
\pr_v^t=e^{\nu t}\left(\nb_v-t^{\rm ap}\nb_x \right),
$
whose Fourier symbol is $i\bar{\eta}(t, k, \eta)$, namely, 
\begin{align}\label{pr-vt1}
 &\widehat{\pr_v^t}(k,\eta)=i\bar{\eta}(t, k,\eta)=\widehat{\pr_v^t}\left(k-l,\eta-lt^{\rm ap}\right),\\
\label{pr-vt2}
&i k(t-\tau)^{\mathrm{ap}}=ik(t-\tau)^{\rm ap}    =\widehat{\pr_v^\tau}\left(k-l, kt^{\rm ap}-l \tau^{\mathrm{ap}}\right).
\end{align}

\subsection{Relationship between unknowns $f$ and $f^w$}

Recalling that $g=e^{-\lm_1 |v|^2}  g^{w}$, from \eqref{def-f} and \eqref{def-fw}, one deduces the relation between $\hat{f}_k(t, \eta)$ and $(\widehat{f^{w}})_k(t,\eta)$:
\be\label{relation}
\hat{f}_k(t,\eta)=  \int_{\R^n}(\widehat{f^w})_k(t,\xi)\left(\fr{\sqrt{\pi}e^{\nu t}}{\sqrt{\lm_1}}\right)^{n} e^{-\left(\fr{e^{\nu t}}{2\sqrt{\lm_1}}|\eta-\xi|\right)^2}d\xi.
\ee
\begin{lem}[from $f^w$ to $f$]\label{lem-f-f}
Let $0<s<1, \lm>0$, and $\sigma\ge0$. Then for any multi-index $\al\in\N^n$, there holds
\be\label{f-f1}
\|v^\al f\|_{\mathcal{G}^{\lm, \sig;s}}\les  \|v^\al f^w\|_{\mathcal{G}^{\lm, \sig;s}},
\ee
and
\be\label{f-f2}
\left\|\pr_v^t (v^\al f)\right\|_{\mathcal{G}^{\lm,\sig;s}}\les \left\|\pr_v^t (v^\al f^w)\right\|_{\mathcal{G}^{\lm,\sig;s}}+\lm_1e^{-\nu t}\left\| vv^\al f^w\right\|_{\mathcal{G}^{\lm,\sig;s}}.
\ee
\end{lem}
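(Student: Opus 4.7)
The plan is to exploit the Gaussian convolution identity \eqref{relation} connecting $\widehat{f^w}$ and $\hat f$ in the $\eta$-variable, then transfer the Gevrey and polynomial weights across this convolution using the subadditivity of $\la\cdot\ra^s$ for $0<s\le 1$.

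First I would upgrade \eqref{relation} to a weighted convolution identity. Writing $\mathcal{K}_t(w)=\left(\sqrt{\pi}e^{\nu t}/\sqrt{\lm_1}\right)^n\exp\{-e^{2\nu t}|w|^2/(4\lm_1)\}$, the observation $(i\nb_\eta)^\al \mathcal{K}_t(\eta-\xi)=(-1)^{|\al|}(i\nb_\xi)^\al\mathcal{K}_t(\eta-\xi)$ combined with integration by parts in $\xi$ (justified since $\mathcal{K}_t$ is Schwartz) yields
\[
\widehat{v^\al f}_k(t,\eta)=\int_{\R^n}\widehat{v^\al f^w}_k(t,\xi)\,\mathcal{K}_t(\eta-\xi)\,d\xi.
\]
For \eqref{f-f1} I bound the Gevrey weight pointwise inside this integral: the subadditivity $\la k,\eta\ra^s\le\la k,\xi\ra^s+|\eta-\xi|^s$ (valid for $0<s\le 1$) together with the multiplicative estimate $\la k,\eta\ra\le \la k,\xi\ra\la\eta-\xi\ra$ (hence $\la k,\eta\ra^\sig\le\la k,\xi\ra^\sig\la\eta-\xi\ra^\sig$) factor
\[
e^{\lm\la k,\eta\ra^s}\la k,\eta\ra^\sig\le \bigl(e^{\lm\la k,\xi\ra^s}\la k,\xi\ra^\sig\bigr)\cdot\bigl(e^{\lm|\eta-\xi|^s}\la\eta-\xi\ra^\sig\bigr).
\]
Young's convolution inequality (in $\eta$ for each $k$) then reduces \eqref{f-f1} to the uniform bound $\|e^{\lm|\cdot|^s}\la\cdot\ra^\sig\mathcal{K}_t\|_{L^1(\R^n)}\les 1$. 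The change of variables $w\mapsto 2\sqrt{\lm_1}e^{-\nu t}w'$ recasts this integral as $(2\sqrt{\pi})^n\int e^{\lm(2\sqrt{\lm_1}e^{-\nu t}|w'|)^s}\la 2\sqrt{\lm_1}e^{-\nu t}w'\ra^\sig e^{-|w'|^2}dw'$; because $2\sqrt{\lm_1}e^{-\nu t}\le 2\sqrt{2\nu}$ is bounded and $s<1$, the Gaussian dominates uniformly.

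For \eqref{f-f2} I exploit the key algebraic identity
\[
\bar{\eta}(t;k,\eta)-\bar{\eta}(t;k,\xi)=e^{\nu t}(\eta-\xi),
\]
which follows immediately from $\bar\eta(t;k,\cdot)=e^{\nu t}(\cdot-kt^{\rm ap})$. Multiplying the weighted convolution identity by $i\bar\eta(t;k,\eta)$ and splitting $i\bar\eta(t;k,\eta)=i\bar\eta(t;k,\xi)+ie^{\nu t}(\eta-\xi)$, the first summand contributes $\int\widehat{\pr_v^t(v^\al f^w)}_k(\xi)\mathcal{K}_t(\eta-\xi)d\xi$, bounded by $\|\pr_v^t(v^\al f^w)\|_{\mathcal{G}^{\lm,\sig;s}}$ via the argument above. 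For the second summand I use $w\mathcal{K}_t(w)=-2\lm_1 e^{-2\nu t}\nb_w\mathcal{K}_t(w)$ and one integration by parts in $\xi$ to rewrite it as $\pm 2\lm_1 e^{-\nu t}\int\widehat{vv^\al f^w}_k(\xi)\mathcal{K}_t(\eta-\xi)d\xi$, which produces the second term on the right of \eqref{f-f2}.

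The main (and in fact only nonroutine) technical point is the $\nu,t$-uniformity of the weighted $L^1$ bound on $\mathcal{K}_t$, since the implicit constant from Young's inequality equals this norm; the rescaling above dispatches it because the Gaussian length scale $2\sqrt{\lm_1}e^{-\nu t}$ stays bounded. Everything else — the integration by parts against Schwartz kernels, the subadditivity of $\la\cdot\ra^s$, and the elementary identity for $\bar\eta$ — is straightforward.
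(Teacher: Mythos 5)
Your proof is correct and follows essentially the same route as the paper: the Gaussian convolution identity \eqref{relation}, the splitting $\bar\eta(t;k,\eta)=\bar\eta(t;k,\xi)+e^{\nu t}(\eta-\xi)$, and the integration by parts converting $e^{\nu t}(\eta-\xi)\mathcal{K}_t(\eta-\xi)$ into $2\lm_1 e^{-\nu t}\nb_\xi\mathcal{K}_t$ to produce the $\lm_1 e^{-\nu t}\|vv^\al f^w\|$ term. The only (cosmetic) difference is in transferring the weight $e^{\lm\la k,\eta\ra^s}\la k,\eta\ra^\sig$ across the convolution: you use subadditivity of $\la\cdot\ra^s$ plus Peetre's inequality and an explicit weighted $L^1$ bound on the kernel via Young, whereas the paper splits into two frequency regimes with \eqref{app3} and \eqref{app5}; both are valid and yield the same uniform constant.
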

\begin{proof}
From \eqref{relation}, we write 
\[
\pr^\al_\eta \hat{f}_k(t,\eta)=  \int_{\R^n}(\pr^\al_\eta\widehat{f^w})_k(t,\xi)\left(\fr{\sqrt{\pi}e^{\nu t}}{\sqrt{\lm_1}}\right)^{n} e^{-\left(\fr{e^{\nu t}}{2\sqrt{\lm_1}}|\eta-\xi|\right)^2}d\xi.
\]
To bound $e^{\lm\la k,\eta\ra^s}\la k,\eta\ra^s$, we use the following partition of unity
\[
1={\bf 1}_{|\eta-\xi|\le\fr12|k,\xi|}+{\bf 1}_{|k,\xi|\le 2|\eta-\xi|}.
\]
Then in view of \eqref{app3} and \eqref{app5}, we have
\[
e^{\lm\la k,\eta\ra^s}\la k,\eta\ra^\sig\les e^{\lm\la k,\xi\ra^s}\la k,\xi\ra^\sig e^{c\lm\la \eta-\xi\ra^s}+e^{\lm \la \eta-\xi\ra^s}\la \eta-\xi\ra^\sig e^{c\lm\la k,\xi\ra^s}.
\]
Consequently, noting that $\fr{e^{\nu t}}{\sqrt{\lm_1}}\gt \fr{1}{\sqrt{\nu}}$
\begin{align}\label{e-f-f1}
\nn&e^{\lm\la k,\eta\ra^s}\la k,\eta\ra^\sig|\pr^\al_\eta \hat{f}_k(t,\eta)|\\
\nn\les& \int_{\R^n} e^{\lm\la k,\xi\ra^s}\la k,\xi\ra^\sig\left|(\pr^\al_\eta\widehat{f^w})_k(t,\xi)\right| e^{c\lm\la \eta-\xi\ra^s}\left(\fr{\sqrt{\pi}e^{\nu t}}{\sqrt{\lm_1}}\right)^{n} e^{-\left(\fr{e^{\nu t}}{2\sqrt{\lm_1}}|\eta-\xi|\right)^2}d\xi\\
\nn&+\int_{\R^n} e^{c\lm\la k,\xi\ra^s}\left|(\pr^\al_\eta\widehat{f^w})_k(t,\xi)\right| e^{\lm\la \eta-\xi\ra^s}\la \eta-\xi\ra^\sig\left(\fr{\sqrt{\pi}e^{\nu t}}{\sqrt{\lm_1}}\right)^{n} e^{-\left(\fr{e^{\nu t}}{2\sqrt{\lm_1}}|\eta-\xi|\right)^2}d\xi\\
\les&\int_{\R^n} e^{\lm\la k,\xi\ra^s}\la k,\xi\ra^\sig\left|(\pr^\al_\eta\widehat{f^w})_k(t,\xi)\right| \left(\fr{\sqrt{\pi}e^{\nu t}}{\sqrt{\lm_1}}\right)^{n} e^{-\left(\fr{e^{\nu t}}{4\sqrt{\lm_1}}|\eta-\xi|\right)^2}d\xi.
\end{align}
Then \eqref{f-f1} follows immediately. To prove \eqref{f-f2}, we divide $\bar{\eta}(t;k,\eta)\pr^\al_\eta \hat{f}_k(t,\eta)$ into two parts:
\beno
\bar{\eta}(t;k,\eta)\pr^\al_\eta \hat{f}_k(t,\eta)={\rm I}+{\rm II},
\eeno
where
\begin{align}\label{e-f-f2}
{\rm I}=&\int_{\R^n}\bar{\xi}(t;k,\xi) (\pr^\al_\eta\widehat{f^w})_k(t,\xi) \left(\fr{\sqrt{\pi}e^{\nu t}}{\sqrt{\lm_1}}\right)^{n} e^{-\left(\fr{e^{\nu t}}{2\sqrt{\lm_1}}|\eta-\xi|\right)^2}d\xi,\\
\label{e-f-f3}
\nn{\rm II}=&\int_{\R^n} (\pr^\al_\eta\widehat{f^w})_k(t,\xi) e^{\nu t}(\eta-\xi)\left(\fr{\sqrt{\pi}e^{\nu t}}{\sqrt{\lm_1}}\right)^{n} e^{-\left(\fr{e^{\nu t}}{2\sqrt{\lm_1}}|\eta-\xi|\right)^2}d\xi\\
\nn=&2\lm_1e^{-\nu t}\int_{\R^n} (\pr^\al_\eta\widehat{f^w})_k(t,\xi) \left(\fr{\sqrt{\pi}e^{\nu t}}{\sqrt{\lm_1}}\right)^{n} \nb_{\xi} e^{-\left(\fr{e^{\nu t}}{2\sqrt{\lm_1}}|\eta-\xi|\right)^2}d\xi\\
=&-2\lm_1e^{-\nu t}\int_{\R^n} (\nb_\eta\pr^\al_\eta\widehat{f^w})_k(t,\xi) \left(\fr{\sqrt{\pi}e^{\nu t}}{\sqrt{\lm_1}}\right)^{n} e^{-\left(\fr{e^{\nu t}}{2\sqrt{\lm_1}}|\eta-\xi|\right)^2}d\xi
\end{align}
Then we bound \eqref{e-f-f2} and \eqref{e-f-f3} in $\mathcal{G}^{\lm,\sig;s}$ in the same manner as \eqref{e-f-f1}, we get \eqref{f-f2}. This completes the proof of Lemma \ref{lem-f-f}.
\end{proof}

\section{Proof of the main theorems}\label{sec:Proof of the main theorems}
In this section, we prove Theorem \ref{Thm:main1} and Theorem \ref{Thm:main2}, starting the primary steps as propositions which are proved in the subsequent sections.
We recall the multiplier $\mathbf{A}_k^{\sigma,\mathfrak{e}}(t,\eta)$ we used in this paper and the associated three-level energy functionals $\left\| f^w(t)\right\|_{\mathcal{E}_{m}^{\sigma_0+1,\frac{2}{5}}}$, $\left\| f^w(t)\right\|_{\mathcal{E}_{m+2}^{ \sig_1 ,\frac{2}{5}}}$, and $\left\| f^w(t)\right\|_{\mathcal{E}_{m}^{ \sig_1 ,\frac{1}{3}}}$, where for $\sigma=\{\sigma_0+1, \sig_1 \}$ and $\mathfrak{e}=\{\frac{2}{5},\frac{1}{3}\}$,
\begin{align*}
    \mathbf{A}_k^{\sigma,\mathfrak{e}}(t,\eta)=&e^{{\bf 1}_{k\ne0}\dl_1\nu^\mathfrak{e}t}e^{\lm(t)\la k,\eta\ra^s}\la k,\eta\ra^{\sigma}\frak{m}_k(t, \eta),
\end{align*}
and correspondingly
\begin{align}
\left\| f(t)\right\|_{\mathcal{E}_{m}^{\sigma,\mathfrak{e}}}&=\left(\sum_{\al\in\N^n:|\al|\le m} \fr{e^{-2|\al|\nu t}}{K_{|\al|}}\left\| \mathbf{A}^{\sigma,\mathfrak{e}}(v^\al f(t))\right\|_{L^2}^2\right)^\fr12.
\end{align}
By the definition for the Fourier multiplier $\mathbf{A}^{\sigma,\mathfrak{e}}(t,\nabla)$, when taking the time-derivative of the above functionals, we have the corresponding good terms:
\begin{align}
\|f^w(t)\|_{\mathcal{D}_m^{\sigma,\mathfrak{e}}}^2
=&\sum_{\alpha\in \mathbb{N}^n: |\alpha|\leq m}\frac{1}{K_{|\alpha|}}\bigg[-e^{-2|\al|\nu t}\dot{\lm}(t)\left\|\la \nb\ra^{\fr{s}{2}}\mathbf{A}^{\sigma,\mathfrak{e}}(v^\al f^w) \right\|_{L^2}^2\\
\nn&+e^{-2|\al|\nu t}\left\|\sqrt{-\fr{\pr_t\frak{m}}{\frak{m}}} \mathbf{A}^{\sigma,\mathfrak{e}}(v^\al f^w)\right\|_{L^2}^2+\nu e^{-2|\al|\nu t}\left\|\pr_v^t\mathbf{A}^{\sigma,\mathfrak{e}}(v^\al f^w) \right\|_{L^2}^2\\
\nn&+\nu|\al| e^{-2|\al|\nu t}\| \mathbf{A}^{\sigma,\mathfrak{e}}(v^\al f^w)\|_{L^2}^2+2n\nu\lm_1(t)e^{-2|\al|\nu t}\left\| \mathbf{A}^{\sigma,\mathfrak{e}}(v^\al f^w)\right\|_{L^2}^2\\
\nn&+\left(2\nu \lm_1(1-2\lm_1)-\dot{\lm}_1(t)\right)e^{-2(|\al|+1)\nu t}\left\|\mathbf{A}^{\sigma,\mathfrak{e}}(vv^\al f^w)\right\|_{L^2}^2\bigg],
\end{align}
where the first two terms arise from the derivative acting on the time-dependent Fourier multiplier, the remaining terms arise from the good structures of $f^w$ equation, facilitated by the introduction of the time-dependent Gaussian weight. 
Since the Fourier multiplier $\mathfrak{m}(t,\nabla)$ is bounded upper and below, see \eqref{m1}, to simplify the presentation, sometimes we use the following notation
\begin{align*}
\underline{{A}}_k^{\sigma,\mathfrak{e}}(t,\eta)=&e^{{\bf 1}_{k\ne0}\dl_1\nu^\mathfrak{e}t}e^{\lm(t)\la k,\eta\ra^s}\la k,\eta\ra^{\sigma}.
\end{align*}

\begin{rem}\label{rem-transform}
    We remark that the Fourier multipliers commutes smoothly with polynomial weights in $v$. Indeed, we use the following estimates in our proof:

For all multi-index $\al\in\N^n$,
\be
\left| \pr^\al_\eta\underline{A}_k^{\sigma,\mathfrak{e}}(t,\eta)\right|\les \fr{1}{\la k,\eta\ra^{|\al|(1-s)}}\underline{A}_k^{\sigma,\mathfrak{e}}(t,\eta).
\ee
Accordingly, together with Lemma \ref{lem-f-f}, we have
\begin{align}\label{emb}
e^{-m\nu t}\left\| (\underline{A}^{\sigma,\mathfrak{e}}\hat{f})_k(t,\eta)\right\|_{L^2_kH^m_\eta}\nn\les& e^{-m\nu t}\sum_{|\al|\le m}\sum_{\beta\le\al}\begin{pmatrix} \al\\ \beta \end{pmatrix}\left\|\pr_\eta^{\al-\beta} \underline{A}^{\sigma,\mathfrak{e}} \pr_\eta^\beta\hat{f} \right\|_{L^2_kL^2_\eta}\\\les_m&\sum_{|\al|\le m}e^{-|\al|\nu t}\left\|\underline{A}^{\sigma,\mathfrak{e}}\pr_\eta^\al \hat{f} \right\|_{L^2_\eta}\les \left\| f^w(t)\right\|_{\mathcal{E}_m^{\sigma,\mathfrak{e}}}, 
\end{align}
and
\begin{align}\label{emb'}
\nn&\nu\sum_{k\in\mathbb{Z}^n_*}\int_0^{T^*} \fr{1}{\la \tau\ra^{1+}} \left(e^{-m\nu\tau}\left\|\mathcal{F}\left[ \pr_v^\tau \underline{A}^{\sigma,\mathfrak{e}}f_{\ne}\right]_{k}(\tau)\right\|_{H^{\fr{n-1}{2}+}_\eta}\right)^2 d\tau\\
\nn\les&\nu\int_0^{T^*}\fr{1}{\la\tau \ra^{1+}}\left(e^{-m\nu\tau}\left\|\bar{\eta}(\tau; k,\eta)\underline{A}^{\sigma,\mathfrak{e}}\hat{f}_{\ne}\right\|_{L^2_kH^m_\eta}\right)^2d\tau\\
\nn\les&\nu\int_0^{T^*}\fr{1}{\la\tau \ra^{1+}}\left(\left\| f^w(\tau)\right\|_{\mathcal{E}_{m-1}^{\sigma,\mathfrak{e}}}^2+\left\| f^w(\tau)\right\|_{\mathcal{D}_{m}^{\sigma,\mathfrak{e}}}^2\right)d\tau\\
\les&\sup_t\left\| f^w(t)\right\|_{\mathcal{E}_{m-1}^{\sigma,\mathfrak{e}}}^2+\nu\int_0^{T^*}\left\| f^w(t)\right\|_{\mathcal{D}_{m}^{\sigma,\mathfrak{e}}}^2dt.
\end{align}
\end{rem}

\subsection{Bootstrap}
A standard argument gives the local wellposedness for the VPFP system in Gevrey-$\frac{1}{s}$ spaces. In this way, we may safely ignore the time interval $[0,1]$ by further restricting the size of the initial data. 

The goal is next to prove by a continuity argument that the three-level energy functionals
\begin{align}
\left\| f^w(t)\right\|_{\mathcal{E}_{m}^{\sigma_0+1,\frac{2}{5}}}^2,\quad \left\| f^w(t)\right\|_{\mathcal{E}_{m+2}^{ \sig_1 ,\frac{2}{5}}}^2, \quad \left\| f^w(t)\right\|_{\mathcal{E}_{m}^{ \sig_1 ,\frac{1}{3}}}^2
\end{align} 
(together with some related quantities) are uniformly bounded for all time if $\epsilon$ is sufficiently small. We define the following controls referred to in the sequel as the bootstrap hypotheses for $t\geq 1$:
\begin{subequations}\label{equ:bootstrap}
\begin{align}
\label{H-f1}\sup_{t\in [1,T]}\left\| f^w(t)\right\|_{\mathcal{E}_{m}^{\sigma_0+1,\frac{2}{5}}}^2+\int_1^T\left\| f^w(t)\right\|_{\mathcal{D}_{m}^{\sigma_0+1,\frac{2}{5}}}^2 dt \leq (4{\rm C}_f\epsilon\nu^{\gamma})^2,\\
\label{H-f2}\sup_{t\in [1,T]}\left\| f^w(t)\right\|_{\mathcal{E}_{m}^{ \sig_1 ,\frac{1}{3}}}^2+\int_1^T\left\| f^w(t)\right\|_{\mathcal{D}_{m}^{ \sig_1 ,\frac{1}{3}}}^2 dt \leq (4{\rm C}_f\epsilon\nu^{\gamma})^2,\\
\label{H-f3}\sup_{t\in [1,T]}\left\| f^w(t)\right\|_{\mathcal{E}_{m+2}^{ \sig_1 ,\frac{2}{5}}}^2+\int_1^T\left\| f^w(t)\right\|_{\mathcal{D}_{m+2}^{ \sig_1 ,\frac{2}{5}}}^2 dt \leq (4{\rm C}_f\epsilon\nu^{\gamma})^2,\\
\label{H-f0}
\sum_{|\al|\le3}e^{-|\al|\nu t}\big\| e^{\lm(t)\la \eta\ra^s}\la e^{\nu t}\eta\ra^{\sig_0-6}(\pr^\al_\eta\hat{f})_0(t,\eta)\big\|_{L^\infty_\eta}\le 4{\rm C}_fe^{-\nu t}\eps\nu^\gamma.
\end{align}
and
\begin{align}
\label{H-M0-H}\left\|\la t\ra^b  {\bf B}  \rho\right\|_{L^2_{t,x}}^2\leq (4{\rm C}_0\eps\nu^\gamma)^2,\\
\label{H-M1-H}\left\|\la t\ra^b  {\bf B}  M_1\right\|_{L^2_{t,x}}^2\leq (4{\rm C}_1\eps\nu^\gamma)^2,\\
\label{H-M2-H}\left\|\la t\ra^b  {\bf B}  M_2\right\|_{L^2_{t,x}}^2\leq (4{\rm C}_2\eps\nu^\gamma)^2,\\
\label{H-M0-L}\left\|   { B}^{\sig_1}  \rho\right\|_{L^\infty_{t}L^2_x}^2\leq (4{\rm \tl C}_0\eps\nu^\gamma)^2,\\
\label{H-M1-L}\left\|   { B}^{\sig_1}  M_1\right\|_{L^\infty_{t}L^2_x}^2\leq (4{\rm \tl C}_1\eps\nu^\gamma)^2,\\
\label{H-M2-L}\left\|  { B}^{\sig_1}  M_2\right\|_{L^\infty_{t}L^2_x}^2\leq (4{\rm \tl C}_2\eps\nu^\gamma)^2.
\end{align}
Here ${\rm C}_f, {\rm C}_\ell, {\rm \tl C}_\ell, \ell =0, 1, 2 $ are positive constants independent of $t, \nu$ fixed by the proof, satisfying ${\rm C}_\ell<\fr12 {\rm \tl C}_\ell$. 
\end{subequations}
\begin{prop}\label{prop: btsp}
    Let $\sigma_0, m$ be the same as in Theorem \ref{Thm:main1}. Assume that the bootstrap hypotheses \eqref{equ:bootstrap} hold on $[1, T^*]$. Then there exist $\epsilon_0>0$ and  $\nu_0>0$, such that for any $0\leq \nu<\nu_0$ and $0<\epsilon<\epsilon_0$, the same estimates in \eqref{equ:bootstrap} hold with the occurrences of 4 on the right-hand side replaced by 2. 
\end{prop}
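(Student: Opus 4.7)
The plan is to close the bootstrap by deriving energy estimates for each of the three functionals in \eqref{H-f1}--\eqref{H-f3}, a pointwise estimate for the zero mode \eqref{H-f0}, and linear/Volterra type estimates for the macroscopic quantities \eqref{H-M0-H}--\eqref{H-M2-L}. Throughout, we run the computation on the transported variable $f^w$ using equation \eqref{eq-fw}, and we exploit all four good terms coming from the time-derivative of $\mathbf{A}^{\sigma,\mathfrak{e}}$ and from the time-dependent Gaussian weight, i.e.\ the full dissipation functional $\mathcal{D}_m^{\sigma,\mathfrak{e}}$.

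First I would perform the energy estimate at the top level $\mathcal{E}_m^{\sigma_0+1,2/5}$. Applying $v^\alpha$ with $|\alpha|\le m$ to \eqref{eq-fw}, multiplying by $\mathbf{A}^{\sigma_0+1,2/5} \overline{\mathbf{A}^{\sigma_0+1,2/5}(v^\alpha f^w)}$ with the weights $e^{-2|\alpha|\nu t}/K_{|\alpha|}$ and summing in $\alpha$, the left-hand side produces $\tfrac12\tfrac{d}{dt}\|f^w\|_{\mathcal{E}_m^{\sigma_0+1,2/5}}^2$ plus the full $\mathcal{D}_m^{\sigma_0+1,2/5}$ contribution. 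The right-hand side splits into (i) linear forcing $\widehat{E}\cdot\mathcal{F}[\nabla_v\mu\, e^{\lambda_1|v|^2}](\bar\eta)$, (ii) transport-type nonlinearity $E\cdot\nabla_v g^w$ and $E\cdot \tilde L_2^{\lambda_1}[g^w]$, and (iii) collision moments $\mathcal{M}_0,\mathcal{M}_1,\mathcal{M}_2$. The linear and transport-nonlinear terms are handled by a paraproduct/reaction-transport decomposition and lose at most $\nu^{-(1-s)/(2-s)}$ derivatives through the interpolation
\begin{align*}
\|\mathbf{A}^{\sigma,2/5}|\nabla_v|^{1-s/2}(v^\alpha f^w)\|_{L^2}\le \|\mathbf{A}^{\sigma,2/5}\partial_v^t(v^\alpha f^w)\|_{L^2}^{(2-2s)/(2-s)}\|\langle\nabla\rangle^{s/2}\mathbf{A}^{\sigma,2/5}(v^\alpha f^w)\|_{L^2}^{s/(2-s)},
\end{align*}
exactly the loss that the enhanced dissipation factor $e^{\delta_1\nu^{2/5}t}$ is designed to absorb (cf.\ Section \ref{sec time-dependent Fourier multiplier}). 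The low-high collision interactions, in particular $\nu\rho\,\Delta_v g^w$ and $\nu\lambda_1(1-\lambda_1)\rho|v|^2 g^w$, are where velocity localization is lost; for these I would substitute $\rho$ back through the identity \eqref{rhof}, i.e.\ treat $\rho$ as a microscopic quantity and pay the loss against the $|v|^2 g^w$ good terms \eqref{eq: good-termv}, which are precisely the reason the functional $\mathcal{E}_{m+2}^{\sigma_1,2/5}$ was introduced.

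Second, I would repeat the same procedure at level $\mathcal{E}_m^{\sigma_1,1/3}$ and $\mathcal{E}_{m+2}^{\sigma_1,2/5}$. Because these norms have $\sigma_1$ derivatives fewer, no derivative-loss interpolation is required for $\mathcal{E}_m^{\sigma_1,1/3}$, and one recovers the optimal enhanced dissipation rate $e^{-\delta_1\nu^{1/3}t}$; for $\mathcal{E}_{m+2}^{\sigma_1,2/5}$ the extra two powers of $v$ are paid for by the Gaussian good terms $-\dot\lambda_1|v|^2 g^w$ and $-2\nu\lambda_1(1-2\lambda_1)|v|^2 g^w$, which dominate the high-low collision interactions at this regularity. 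The zero-mode bound \eqref{H-f0} follows by differentiating the $k=0$ component of \eqref{eq-fw} (for which the transport term vanishes, only dissipation and nonlinearity remain), treating the nonlinearity via \eqref{emb}--\eqref{emb'} and the other bootstrap hypotheses.

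Finally, for the macroscopic quantities I would use the Volterra representation \eqref{ex-rho} and the analogous ones for $M_1,M_2$ (postponed to Section \ref{high-m}). After applying the multipliers $\mathbf{B}$ and $B^{\sigma_1}$ and integrating against $\langle t\rangle^b$, the linear term gives a self-consistent Volterra equation whose kernel has a plasma-echo contribution; we invert it using the penalized spectral/Fourier multiplier framework as in \cite{BMM2016,bedrossian2017suppression}, while the nonlinear term $\mathcal N_k(\tau,k(t-\tau)^{\rm ap})$ is bounded using the bootstrap $f^w$ estimates combined with the echo analysis of \eqref{eq:grow1}. The main obstacle throughout, and the reason the proof is long, is the plasma-echo cascade for the top norm: one must track the interaction of $\nu^\gamma$ smallness, the $e^{\delta_1\nu^{2/5}t}$ pre-factor, and the Gevrey radius $\lambda(t)$ so that the cumulative amplification $\prod_k \epsilon|\eta|^{1-3\gamma}/|k|^{3-3\gamma}\sim e^{c|\eta|^{(1-3\gamma)/(3-3\gamma)}}$ fits strictly inside the Gevrey-$1/s$ envelope with $s>s_{\mathrm k}$; this is what forces the precise choice of $a,\tilde\delta,\delta_1,\sigma_1$ and ultimately dictates the constants ${\rm C}_f,{\rm C}_\ell,{\rm\tilde C}_\ell$. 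Once these echo estimates are in hand, choosing $\epsilon_0,\nu_0$ small and summing the contributions with the $\mathcal{D}_m^{\sigma,\mathfrak{e}}$ absorption closes all inequalities with $4$ replaced by $2$.
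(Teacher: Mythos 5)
Your outline follows essentially the same route as the paper: three-level Gr\"onwall estimates on $\mathcal{E}_m^{\sigma_0+1,2/5}$, $\mathcal{E}_m^{\sigma_1,1/3}$, $\mathcal{E}_{m+2}^{\sigma_1,2/5}$ driven by the full dissipation functional, the interpolation absorbing the $\nu^{-(1-s)/(2-s)}$ derivative loss into the $e^{\delta_1\nu^{2/5}t}$ prefactor, substitution of $\rho=\hat f_k(t,kt^{\rm ap})$ for the high--low collision terms, Volterra inversion of the density equation under a $\nu$-uniform Penrose condition, explicit Duhamel formulas for $M_1,M_2$ and the zero mode, and the Schur-test echo analysis fixing $s>\frac{1-3\gamma}{3-3\gamma}$. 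The only slight imprecision is that the Gaussian good terms \eqref{eq: good-termv} handle the low--high collision interactions while the high--low velocity-localization loss is paid to the third functional $\mathcal{E}_{m+2}^{\sigma_1,2/5}$ (rather than both being covered by the good terms), but this does not change the architecture.
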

Theorem \ref{Thm:main1} follows directly from Proposition \ref{prop: btsp}. 
The remainder of this paper is devoted to the proof of Proposition \ref{prop: btsp}. Here we highlight the main idea of the proof. For the distribution function, we prove the Gr\"onwall-type inequalities:
\begin{align*}
    \frac{1}{2}\frac{d}{dt}\|f^w(t)\|_{\mathcal{E}^{\sigma_0+1,\frac{2}{5}}_m}^2
    &+\|f^w(t)\|_{\mathcal{D}^{\sigma_0+1,\frac{2}{5}}_m}^2\\
    &\quad \leq \left(c_0+C\|f^w(t)\|_{\mathcal{E}^{ \sig_1 ,\frac{2}{5}}_{m+2}}\right)\|f^w(t)\|_{\mathcal{D}^{\sigma_0+1,\frac{2}{5}}_m}^2+\frac{c_0\epsilon^2\nu^{2\gamma}}{\langle t\rangle^2},\\
    \frac{1}{2}\frac{d}{dt}\|f^w(t)\|_{\mathcal{E}^{ \sig_1 ,\frac{1}{3}}_m}^2
    &+\|f^w(t)\|_{\mathcal{D}^{ \sig_1 ,\frac{1}{3}}_m}^2
    \leq c_0\|f^w(t)\|_{\mathcal{D}^{ \sig_1 ,\frac{1}{3}}_m}^2+\frac{c_0\epsilon^2\nu^{2\gamma}}{\langle t\rangle^2},\\
    \frac{1}{2}\frac{d}{dt}\|f^w(t)\|_{\mathcal{E}^{ \sig_1 ,\frac{2}{5}}_{m+2}}^2
    &+\|f^w(t)\|_{\mathcal{D}^{ \sig_1 ,\frac{2}{5}}_{m+2}}^2
    \leq c_0\|f^w(t)\|_{\mathcal{D}^{ \sig_1 ,\frac{2}{5}}_{m+2}}^2+\frac{c_0\epsilon^2\nu^{2\gamma}}{\langle t\rangle^2},
\end{align*}
where $c_0>0$ is small enough under the bootstrap hypotheses, by choosing $\epsilon_0$, $\nu_0$, $\frac{K_{|\alpha|}}{K_{|\alpha|+1}}$, and $a_0$ small enough. In the energy estimate of $\|f^w(t)\|_{\mathcal{E}^{\sigma_0+1,\frac{2}{5}}_m}^2$, we put $\left(c_0+C\|f^w(t)\|_{\mathcal{E}^{ \sig_1 ,\frac{2}{5}}_{m+2}}\right)$ instead of $c_0$ to emphasize that a higher moment is required, see \eqref{loss2m} for more details. We refer to section \ref{Estimates on the distribution function} for the improved estimates of the distribution function. 

For the zero mode of the distribution function, we apply  Duhamel’s principle by using the explicit expression of the solution operator $S_0(t, 0, \eta)$ corresponding to the Fokker-Planck operator. 

For the density and higher moments, we use the Penrose's identity and take advantage of the time-dependent Fourier multiplier. The improved estimates of density and higher moments is in section \ref{sec:Density estimates}.

\subsection{Proof of Theorem \ref{Thm:main2}}
The proof of Theorem \ref{Thm:main2} follows a similar energy estimate as proving Landau damping for the quasilinear Vlasov-Poisson equation with a controllable force. Let $g_{\delta}=g-g^0$ and $E_{\delta}=E-E^0$, where $(g, E)$ and $(g^0, E^0)$ solve \eqref{VPFP-per} with $\nu>0$ and $\nu=0$ respectively. Then $(g_{\delta}, E_{\delta})$ satisfies
\begin{equation}
    \label{eq:g_delta}
    \left\{
    \begin{aligned}
        &\partial_t g_{\delta}+v\cdot \nabla_xg_{\delta}
        +E_{\delta}\cdot \nabla_v\mu=-E_{\delta}\cdot \nabla_v g^0-E^0\cdot \nabla_vg_{\delta}+\nu (L[g]+\mathcal{C}[g]+\mathcal{C}_{\mu})\\
        &E_{\delta}=-\nabla_x(-\Delta_x)^{-1}\rho_{\delta},\quad \rho_{\delta}=\int_{\mathbb{R}^n}g_{\delta}(t, x, v)dv.
    \end{aligned}
    \right.
\end{equation}
Let us take the Fourier transform in $(x,v)$ and define $\hat{f}_{\delta}(t, k, \eta)=\hat{g}_{\delta}(t, k, \eta-kt)$. Let $\hat{h}(t, k, \eta)=\hat{g}(t, k, \eta-kt)$, then we get that $\widehat{\rho}_{\delta}(t, k)=\hat{f}_{\delta}(t, k, kt)$ and $\hat{f}(t, k, \eta)=\hat{g}(t, k, e^{\nu t}(\eta-kt^{\mathrm{ap}}))=\hat{h}(t, k, e^{\nu t}\eta-k(t^{\mathrm{ap}}-t))$. The following lemma follows directly. 
\begin{lem}
    Let $\widehat{f^0}(t, k, \eta)=\widehat{g^0}(t, k, \eta-kt)$. There is $\nu_0$ such that for any $0<\nu<\nu_0$, and $0\leq t\leq \nu^{\frac{1}{3}}$, it holds for $s>\frac{1}{3}$ that
    \begin{align*}
        \|e^{\frac{\lambda_{\infty}}{2}\langle k, \eta\rangle^{s}}\langle k,\eta\rangle^{\sigma_0+1}(\langle \nabla_{\eta}\rangle^{m}\hat{h}(t, k, \eta))\|_{l^2_kL^2_{\eta}}\leq C\epsilon,\\
        \|e^{\frac{\lambda_{\infty}}{2}\langle k, \eta\rangle^{s}}\langle k,\eta\rangle^{\sigma_0+1}(\langle \nabla_{\eta}\rangle^{m}\widehat{g^0}(t, k, \eta))\|_{l^2_kL^2_{\eta}}\leq C\epsilon,
    \end{align*}
    which gives that 
    \begin{align*}
        &\left\|e^{\frac{\lambda_{\infty}}{2}\langle k, \eta\rangle^{s}}\langle k,\eta\rangle^{\sigma_0+1}\big[\langle \nabla_{\eta}\rangle^{m}\big(\widehat{L[g]}+\widehat{\mathcal{C}[g]}+\widehat{\mathcal{C}_{\mu}}\big)(t, k, \eta-kt)\big]\right\|_{l^2_kL^2_{\eta}}\leq C\epsilon t^2,\\
        &\left\|e^{\frac{\lambda_{\infty}}{2}\langle k, kt \rangle^{s}}\langle k,kt\rangle^{\sigma_0+1}\widehat{\rho^0}(t,k)\right\|_{l^2_k}\leq Ce^{-c t^{s}}. 
    \end{align*}
    
\end{lem}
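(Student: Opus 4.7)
The plan is to reduce everything to Theorem~\ref{Thm:main1} and perform Fourier manipulations. Since $s > 1/3$, one may take $s_{\mathrm{k}} = 1/3$ in Theorem~\ref{Thm:main1}, so the threshold index $\gamma$ can be taken equal to $0$; consequently the smallness hypothesis $\|g_{\mathrm{in}}\|_{\mathcal{G}^{s,\lambda_{\mathrm{in}},\sigma_0}_{m+2,2\nu_0}} \leq \epsilon$ of Theorem~\ref{Thm:main2} already lies in the Landau-damping regime of Theorem~\ref{Thm:main1}, and the analogous statement at $\nu = 0$ (admissible per the remark following Theorem~\ref{Thm:main1}) handles $g^0$. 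For the bound on $\hat{h}$, I would first pull the bootstrap bound on $\|f^w\|_{\mathcal{E}^{\sigma_0+1,2/5}_m}$ back to a Gevrey-Sobolev bound on $\hat{f}$ via Lemma~\ref{lem-f-f}, then use $\hat{g}(t,k,\xi) = \hat{f}(t,k,e^{-\nu t}\xi + kt^{\mathrm{ap}})$, and finally the shift relation $\hat{h}(t,k,\eta) = \hat{g}(t,k,\eta-kt)$, applying a Gevrey-algebra inequality of the form $\langle k,\eta\rangle^s \lesssim \langle k,\eta-kt\rangle^s + \langle k,kt\rangle^s$; the second term is absorbed by dropping the Gevrey radius from $\lambda(t) \geq \lambda_\infty$ down to $\lambda_\infty/2$. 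The same procedure applies to $\widehat{g^0}$.

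For the collision-forcing bound, compute each operator in Fourier at $\xi = \eta - kt$; for example,
\begin{equation*}
(\widehat{L[g]})(t,k,\eta-kt) = -|\eta-kt|^2\,\hat{h}(t,k,\eta) + 2n\,\hat{h}(t,k,\eta) + (\eta-kt)\cdot\nabla_\eta\hat{h}(t,k,\eta),
\end{equation*}
with analogous expressions for $\widehat{\mathcal{C}[g]}$ (a convolution in $k$ of $\hat{\rho}, \widehat{M_\theta}, \widehat{M_1}$ against polynomial-in-$(\eta-kt)$-and-$\nabla_\eta$ expressions of $\hat{h}$) and $\widehat{\mathcal{C}_\mu}$. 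Splitting $|\eta-kt|^2 \leq 2|\eta|^2 + 2|k|^2 t^2$, the $|\eta|^2$ piece is absorbed by the weight $\langle k,\eta\rangle^{\sigma_0+1}$ (using $\sigma_0 \geq n+20$), producing a contribution of order $\epsilon$, while the $|k|^2 t^2$ piece yields the advertised factor $t^2$. Gevrey product estimates together with the bootstrap bounds on $\hat{\rho}, \widehat{M_\theta}, \widehat{M_1}$ handle the quadratic pieces in $\mathcal{C}[g]$, and the $\mathcal{C}_\mu$ term uses only the Schwartz character of $\mu$ together with the macroscopic bounds.

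For $\widehat{\rho^0}$, the Landau-damping part of Theorem~\ref{Thm:main1} at $\nu = 0$ gives $|\widehat{\rho^0}(t,k)| \lesssim \epsilon\, e^{-\lambda_\infty\langle k,kt\rangle^s}\langle k,kt\rangle^{-\sigma_0-2}$. Multiplying by $e^{\lambda_\infty\langle k,kt\rangle^s/2}\langle k,kt\rangle^{\sigma_0+1}$ leaves residual decay $\epsilon\, e^{-\lambda_\infty\langle k,kt\rangle^s/2}/\langle k,kt\rangle$; for $k \neq 0$ one has $\langle k,kt\rangle \geq t$, so the exponential is bounded by $e^{-ct^s}$, and summing in $l^2_k$ closes the bound. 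The most delicate point in the overall argument is matching Gevrey radii across the chain of coordinate changes $f^w \to f \to g \to h$: the time-dependent Gaussian weight and the affine straightening each consume a sliver of Gevrey radius, and the final shift by $kt$ eats up to half. One must check that these losses fit inside the slack between the bootstrap radius $\lambda(t) \geq \lambda_\infty$ and the target radius $\lambda_\infty/2$, and that the polynomial $v$-weights in the $\mathcal{E}^{\sigma_0+1,2/5}_m$ norm translate cleanly into $\eta$-derivatives in the target Gevrey-Sobolev norm on $\hat{h}$.
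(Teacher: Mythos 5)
The paper offers no written proof of this lemma (it is dispatched with ``The following lemma follows directly'' after recording the coordinate identities), so your proposal has to be judged against what the statement actually requires. Your overall strategy --- transfer the bootstrap bounds of Theorem \ref{Thm:main1} through the chain $f^w\to f\to g\to h$ and then read off the collision forcing in Fourier --- is the intended one. But the step you flag as delicate is exactly where your argument breaks. You propose to pass from the bound on $\hat g$ to the bound on $\hat h(t,k,\eta)=\hat g(t,k,\eta-kt)$ via $\langle k,\eta\rangle^s\lesssim\langle k,\eta-kt\rangle^s+\langle k,kt\rangle^s$ and to ``absorb'' the second term by lowering the radius from $\lambda(t)\ge\lambda_\infty$ to $\lambda_\infty/2$. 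This cannot work: for fixed $k\ne0$ the factor $e^{c\langle k,kt\rangle^s}$ grows without bound in $t$, and the available decay $e^{-\lambda\langle k,\eta-kt\rangle^s}$ degenerates to a constant precisely on the resonant set $\eta\approx kt$, so there is nothing to absorb it into. The correct mechanism is a cancellation, not an absorption: the bootstrap controls $\hat f_k(t,\cdot)$, and $\hat h(t,k,\eta)=\hat f_k\bigl(t,e^{-\nu t}(\eta-kt)+kt^{\rm ap}\bigr)=\hat f_k\bigl(t,e^{-\nu t}\eta+O(\nu t^2)k\bigr)$, i.e.\ the free-transport straightening in the definition of $h$ almost exactly undoes the $\nu$-modified straightening in the definition of $f$. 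On the time interval of the lemma one has $|e^{-\nu t}\eta-\eta|+\nu t^2|k|\lesssim\nu^{1/3}\langle k,\eta\rangle$, so the Gevrey weight at the shifted frequency dominates $e^{\frac{\lambda_\infty}{2}\langle k,\eta\rangle^s}$ for $\nu$ small. Note that this is the \emph{only} place the hypothesis $t\lesssim\nu^{\pm1/3}$ is used; your version never invokes it, which is the tell that the bookkeeping is off. (The same remark applies to $\widehat{g^0}$, where the stated inequality is surely meant for the profile $\widehat{f^0}(t,k,\eta)=\widehat{g^0}(t,k,\eta-kt)$ defined in the lemma's first sentence --- the un-straightened $\widehat{g^0}$ does not satisfy a $t$-uniform Gevrey bound at frequency $(k,\eta)$.)

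Two smaller points. First, your Fourier formula for $L[g]$ has an extra $2n\hat h$ and the wrong sign on the transport term; the correct identity is $\widehat{L[g]}(k,\xi)=-|\xi|^2\hat g-\xi\cdot\nabla_\xi\hat g$, which at $\xi=\eta-kt$ is a degree-two polynomial in $\eta-kt$ acting on $\hat h$ and $\nabla_\eta\hat h$ --- structurally what you use, so this is cosmetic. Second, your accounting for the forcing bound yields $C\epsilon(1+t^2)$, not $C\epsilon t^2$: the piece you absorb into $\langle k,\eta\rangle^{\sigma_0+1}$ is order $\epsilon$ at $t=0$ and does not vanish. The lemma's $t^2$ is presumably $\langle t\rangle^2$ (the authors discard $t\le1$ elsewhere), but you should state that you are proving the $\langle t\rangle^2$ version rather than claim the literal bound. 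The $\widehat{\rho^0}$ estimate and the treatment of the quadratic collision terms via the Gevrey algebra and the macroscopic bootstrap bounds are fine as sketched.
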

A direct calculation gives that $\partial_t \hat{f}_{\delta}+\mathcal{L}^{\delta}_k(\eta)
+\mathcal{R}^{\delta}_k(\eta)+\mathcal{T}^{\delta}_k(\eta)+\mathcal{F}_k(\eta)=0$,
where 
\begin{align*}
    &\mathcal{L}^{\delta}_k(t, \eta)=i\widehat{E}_{\delta}(k)(\eta-kt)\hat{\mu}(\eta-kt),\\
    &\mathcal{R}^{\delta}_k(t, \eta)=i \sum_{l} \widehat{E}_{\delta}(l)(\eta-(k-l)t)\widehat{h^0}(t, k-l, \eta),\\
    &\mathcal{T}^{\delta}_k(t, \eta)=i \sum_{l} \widehat{E^0}(k-l)(\eta-lt)\widehat{f}_{\delta}(t, l, \eta),\\
    &\mathcal{F}_k(t, \eta)=\big(\widehat{L[g]}+\widehat{\mathcal{C}[g]}+\widehat{\mathcal{C}_{\mu}}\big)(t, k, \eta-kt).
\end{align*}
We also get that
\begin{align*}
    \widehat{\rho}_{\delta}(t, k)=-\int_0^t\big(\mathcal{L}^{\delta}_k(\tau, kt)
    +\mathcal{R}^{\delta}_k(\tau, kt)+\mathcal{T}^{\delta}_k(\tau, kt)+\mathcal{F}_k(\tau, kt)\big)d\tau. 
\end{align*}
We then define the following energy functionals:
\begin{align*}
    &\|f_{\delta}\|_{E}=\|e^{\lambda_3(t)\langle\nabla_{x,v}\rangle^s}\langle\nabla_{x,v}\rangle^{\sigma_0+1}(\langle v\rangle^mf_{\delta})\|_{L^2},\\
    &\|f_{\delta}\|_{E_{low}}=\|e^{\lambda_3(t)\langle\nabla_{x,v}\rangle^s}\langle\nabla_{x,v}\rangle^{ \sig_1 }(\langle v\rangle^mf_{\delta})\|_{L^2},
\end{align*}
with $\lambda_3(t)={\lambda_{\infty}}/{3}+\frac{1}{100(1+t)^a}\lambda_{\infty}$. 
By following the same argument, we obtain that there is $\epsilon_1>0$ independent of $\nu, t$, such that for any $t\leq \epsilon_1\nu^{-\frac{1}{3}}$, it holds that
\begin{align*}
&\|f_{\delta}\|_{E}+\|f_{\delta}\|_{E_{low}}\leq C\nu t^3,\\
&\langle t\rangle^b\|e^{\lambda_3(t)\langle\nabla_{k,kt}\rangle^s}\langle k, kt\rangle^{\sigma_0}|k|^{\frac{1}{2}}\widehat{\rho_{\delta}}\|_{L_t^2l_k^2}\leq C\nu t^3. 
\end{align*}
We omit the proof and point out that the estimate of $\mathcal{R}^{\delta}_k(t, \eta)$ is similar to the treatments of ${\bf N}_{E;\neq}^{\mathrm{HL}}, {\bf N}_{E;0}^{\mathrm{HL}}$, and $\mathrm{N}^{\mathrm{HL}}_{M}$, the estimate of $\mathcal{T}^{\delta}_k(t, \eta)$ is similar to the treatments of ${\bf N}_{E;\neq}^{\mathrm{LH}}, {\bf N}_{E;0}^{\mathrm{LH}}$, and $\mathrm{N}^{\mathrm{LH}}_{M}$, and the treatment of the linear part $\mathcal{L}^{\delta}_k(t, \eta)$ is the same as $\mathrm{L}$.

\subsection{Some estimates on $M_\theta$}
The following estimates for $M_\theta$ derived from the bootstrap assumptions \eqref{equ:bootstrap} will be frequently used in the remainder of this paper.
\begin{prop} \label{prop-Mtheta}
Let $m>\fr{n}{2}+4$. Under the bootstrap hypotheses, it holds that
\begin{align}\label{AM}
&e^{-m\nu t}\left\|{\bf 1}_{k\ne0}\underline{A}_k^{\sig_0+1,\fr25}\left(t, kt^{\rm ap}\right)(\widehat{M_\theta})_k(t)\right\|_{L^2_k}\les \|f_{\ne}^w(t)\|_{\mathcal{E}_m^{\sig_0+1,\fr25}},\\
\label{BM}
&\|  {\bf B}  M_\theta\|_{L^2_x}\les \left\| {\bf B}  M_2\right\|_{L^2_x}+\left\| {\bf B}  M_1\right\|_{L^2_x}+\left\| {\bf B}  \rho\right\|_{L^2_x},\\
\label{BM'}
&\| B^{ \sig_1 }(M_\theta)_{\ne}\|_{L^2_x}\les \left\|B^{ \sig_1 }(M_2)_{\ne}\right\|_{L^2_x}+\left\|B^{ \sig_1 }M_1\right\|_{L^2_x}+\left\|B^{ \sig_1 }\rho\right\|_{L^2_x},\\
\label{e-Mtheta0}&\left|\left(M_\theta\right)_0(t)\right|\les \eps^2\nu^{2\gamma} e^{-2\dl\nu^\fr13t}\fr{1}{\la t^{\rm ap}\ra^{2\sig_1 }}.
\end{align}
\end{prop}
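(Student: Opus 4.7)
The plan is to reduce all four estimates to the algebraic identity
\begin{equation*}
n M_\theta = M_2 - \frac{|M_1|^2}{1+\rho} - n\rho,
\end{equation*}
which follows from combining \eqref{1thM'} and \eqref{2thM'}, together with Gevrey-algebra (paraproduct) estimates and the bootstrap hypotheses \eqref{equ:bootstrap}.

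For the ``linear-type'' bounds \eqref{BM} and \eqref{BM'}, I would split $M_\theta$ into a linear piece $\tfrac{1}{n}(M_2-n\rho)$ and a quadratic piece $-\tfrac{1}{n}|M_1|^2/(1+\rho)$. Expanding $(1+\rho)^{-1}=\sum_{j\ge0}(-\rho)^j$, which converges since the bootstrap hypotheses together with the Landau-damping output of Theorem \ref{Thm:main1} yield $\|\rho\|_{L^\infty}\ll 1$, I would invoke the fact that the norms $\|{\bf B}\cdot\|_{L^2_x}$ and $\|B^{\sig_1}\cdot\|_{L^2_x}$ are algebras, since the Gevrey weight $e^{\lambda(t)\la k,kt^{\rm ap}\ra^s}$ is subadditive via \eqref{app3}--\eqref{app5} and the Sobolev corrections $\sig_0$ resp.\ $\sig_1$ exceed $n/2$. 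The quadratic piece then contributes a term of the form $O(\epsilon\nu^\gamma)\cdot\|{\bf B} M_1\|_{L^2_x}$ (resp.\ $\|B^{\sig_1}M_1\|_{L^2_x}$), which is a small multiple of the right-hand side and can be absorbed.

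For \eqref{AM}, I would use formulas \eqref{rhof}--\eqref{M2f} to express $\rho$, $M_1$, $M_2$ as the restrictions of $\hat{f}$ and its $\eta$-derivatives at the moving frequency $\eta=kt^{\rm ap}$. Combined with Lemma \ref{lem-f-f}, which converts bounds on $\hat f$ to bounds on $\hat{f^w}$ (the extra term $\lambda_1 e^{-\nu t}\|vv^\alpha f^w\|$ is harmless since it is absorbed by a higher-order polynomial weight covered by $m\ge n+10$), and the Sobolev embedding in $\eta$ captured by \eqref{emb} of Remark \ref{rem-transform}, the linear pieces $\rho$ and $M_2$ are controlled directly by $\|f_{\ne}^w(t)\|_{\mathcal{E}_m^{\sig_0+1,2/5}}$. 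The nonlinear piece $|M_1|^2/(1+\rho)$ is handled by the same Gevrey algebra as before, with one factor of $M_1$ absorbed by the bootstrap smallness.

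Finally, for the zero-mode bound \eqref{e-Mtheta0}, the averages over $\mathbb{T}^n$ in \eqref{2thM} give $\int\rho\,dx=\int M_1\,dx=0$ and $\int M_2\,dx=-\|E\|_{L^2}^2$, so
\begin{equation*}
n(M_\theta)_0(t) = -\frac{1}{(2\pi)^n}\|E(t)\|_{L^2}^2 - \frac{1}{(2\pi)^n}\int_{\mathbb{T}^n}\frac{|M_1|^2}{1+\rho}\,dx.
\end{equation*}
Both pieces are quadratic in the non-zero modes of $\rho$ and $M_1$, which by \eqref{H-M0-L}--\eqref{H-M1-L} (together with $\|E\|_{L^2}\les\|\rho\|_{L^2}$ and the definition $B^{\sig_1}_k(t)=e^{\dl\nu^{1/3}t}\la k,kt^{\rm ap}\ra^{\sig_1}e^{\lambda(t)\la k,kt^{\rm ap}\ra^s}$) satisfy
\begin{equation*}
\|\rho_{\ne}\|_{L^2}+\|(M_1)_{\ne}\|_{L^2}\les \epsilon\nu^\gamma e^{-\dl\nu^{1/3}t}\la t^{\rm ap}\ra^{-\sig_1}.
\end{equation*}
Squaring and using $\|1/(1+\rho)\|_{L^\infty}\les 1$ delivers the claimed bound $\eps^2\nu^{2\gamma}e^{-2\dl\nu^{1/3}t}\la t^{\rm ap}\ra^{-2\sig_1}$.

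The main obstacle is in \eqref{AM}: the top-regularity Fourier multiplier $\underline{A}^{\sig_0+1,2/5}$ forces us to handle $M_1^2/(1+\rho)$ at the highest level. Because this term is quadratic, the algebra estimate combined with the bootstrap smallness $\epsilon\nu^\gamma$ keeps the estimate closed, but one must verify that the frozen evaluation $\eta=kt^{\rm ap}$ is compatible with the Gevrey subadditivity (which it is, since $\la k,kt^{\rm ap}\ra\les \la k-l,(k-l)t^{\rm ap}\ra+\la l,lt^{\rm ap}\ra$) and that the polynomial moments $v^\alpha$, $|\alpha|\le 2$, required by $M_2$ are available in the norm (which they are via $m\ge n+10$).
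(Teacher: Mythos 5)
Your proposal follows essentially the same route as the paper: the identity $M_\theta=\tfrac1n M_2-\tfrac1n\sum_{j\ge0}(-1)^j\rho^j|M_1|^2-\rho$, the Gevrey algebra estimates of Lemma \ref{lem-alg} to sum the geometric series, the representations \eqref{rhof}--\eqref{M2f} together with Lemma \ref{lem-f-f} and \eqref{emb} for \eqref{AM}, and the zero-mode cancellations \eqref{2thM}--\eqref{2thM'} for \eqref{e-Mtheta0}. One small correction: summing $\sum_j(-\rho)^j$ inside the weighted norms requires smallness of the Gevrey-weighted quantity $\|\underline{B}^{\sig_0+1}_k(t)\hat\rho_k(t)\|_{L^2_k}$ (which the bootstrap hypotheses supply directly, e.g.\ via $\hat\rho_k(t)=\hat f_k\left(t,kt^{\rm ap}\right)$ and \eqref{H-f1}), not merely $\|\rho\|_{L^\infty}\ll1$, and you should not invoke the conclusion of Theorem \ref{Thm:main1} for this smallness since the proposition is used inside the bootstrap that proves that theorem.
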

\begin{proof}
From \eqref{2thM'}, we get
\begin{align}\label{exp-Mtheta}
M_\theta=\fr{1}{n}M_2-\fr{1}{n}\sum_{j=0}^\infty(-1)^j\rho^j|M_1|^2-\rho.
\end{align}
Recalling \eqref{rhof}--\eqref{M2f}, one deduces that
\begin{align}
\nn(\widehat{M_\theta})_k(t)=&-\fr{1}{n}e^{-2\nu t}\Dl_\eta\hat{f}_k\left(t, kt^{\rm ap}\right)-\hat{f}_k\left(t, kt^{\rm ap}\right)-\fr{1}{n}\sum_{j=0}^\infty(-1)^j \mathcal{F}\left[\rho^j|M_1|^2\right]_k(t).
\end{align}
Then
\begin{align}\label{AMtheta}
\underline{A}_k^{\sig_0+1,\fr25}\left(t,kt^{\rm ap}\right)(\widehat{M_\theta})_k(t)\nn=&-\fr{1}{n}e^{-2\nu t} (\underline{A}^{\sig_0+1,\fr25}\Dl_\eta\hat{f})_k\left(t, kt^{\rm ap}\right)-(\underline{A}^{\sig_0+1,\fr25}\hat{f})_k\left(t, kt^{\rm ap}\right)\\
&-\fr{1}{n}\sum_{j=0}^\infty(-1)^j \underline{A}_k^{\sig_0+1,\fr25}\left(t,kt^{\rm ap}\right)\mathcal{F}\left[\rho^j|M_1|^2\right]_k(t).
\end{align}
Note that 
\[
\underline{A}_k^{\sig_0+1,\fr25}\left(t, kt^{\rm ap}\right)=e^{\dl_1\nu^\fr25t}\underline{B}_k^{\sig_0+1}(t), \quad \mathrm{for} \quad k\ne0,
\]
where $\underline{B}_k^{\sig_0+1}(t)$ is introduced in Lemma \ref{lem-alg} in Section \ref{sec-app-ineq}.
Then in view of \eqref{alg1}, and using \eqref{rhof},  \eqref{M1f} and \eqref{emb}, we find that
\begin{align}\label{Aproduct}
\nn&\sum_{j=0}^\infty\left\| \underline{A}_k^{\sig_0+1,\fr25}\left(t, kt^{\rm ap}\right)\mathcal{F}\left[\rho^j|M_1|^2\right]_k(t)\right\|_{L^2_k}\\
\nn\les&e^{\dl_1\nu^\fr25t}\left\|\underline{B}_k^{\sig_0+1}(t)\mathcal{F}\left[M_1\right]_k(t)\right\|_{L^2_k}^2\left(1+\sum_{j=1}^\infty\left\|\underline{B}_k^{\sig_0+1}(t)\mathcal{F}\left[\rho^j\right]_k(t)\right\|_{L^2_k}\right)\\
\nn\les&e^{-\dl_1\nu^{\fr25} t}\left\|\underline{A}_k^{\sig_0+1,\fr25}\left(t,kt^{\rm ap}\right)\mathcal{F}\left[M_1\right]_k(t)\right\|_{L^2_k}^2\left(1+\sum_{j=1}^\infty C_0^{j-1}\left\|\underline{B}_k^{\sig_0+1}(t)\hat{\rho}_k(t)\right\|_{L^2_k}^j\right)\\
\nn\les&e^{-\dl_1\nu^{\fr25} t}e^{-2\nu t}\left\|{\bf 1}_{k\ne0}(\underline{A}^{\sig_0+1,\fr25}\nb_\eta \hat{f})_k\left(t,kt^{\rm ap}\right)\right\|_{L^2_k}^2\\
\nn&\times\left(1+\sum_{j=1}^\infty \left(C_0e^{-\dl_1\nu^\fr25t}\left\|{\bf 1}_{k\ne0}(\underline{A}_k^{\sig_0+1,\fr25}\hat{f})_k\left(t, kt^{\rm ap}\right)\right\|_{L^2_k}\right)^j\right)\\
\nn\les&e^{-\dl_1\nu^{\fr25} t}e^{-2\nu t}\left\|{\bf 1}_{k\ne0}(\underline{A}^{\sig_0+1,\fr25}\nb_\eta \hat{f})_k(t,\eta)\right\|_{L^2_kL^\infty_\eta}^2\\
\nn&\times\left(1+\sum_{j=1}^\infty \left(C_0e^{-\dl_1\nu^\fr25t}\left\|{\bf 1}_{k\ne0}(\underline{A}_k^{\sig_0+1,\fr25}\hat{f})_k\left(t, \eta\right)\right\|_{L^2_kL^\infty_\eta}\right)^j\right)\\
\les&\|f_{\ne}^w(t)\|_{\mathcal{E}_m^{\sig_0+1,\fr25}}^2\left(1+\sum_{j=1}^\infty(C_0\|f_{\ne}^w(t)\|_{\mathcal{E}_m^{\sig_0+1,\fr25}})^j\right)\les \|f_{\ne}^w(t)\|_{\mathcal{E}_m^{\sig_0+1,\fr25}}^2,
\end{align}
where $C_0$ is the constant appearing \eqref{alg1}.
It follows from \eqref{AMtheta} and \eqref{Aproduct}  that
\begin{align}
\nn&e^{-m\nu t}\left\|{\bf 1}_{k\ne0}\underline{A}_k^{\sig_0+1,\fr25}\left(t, kt^{\rm ap}\right)(\widehat{M_\theta})_k(t)\right\|_{L^2_k}\\
\nn\les& e^{-(m+2)\nu t}\left\|{\bf 1}_{k\ne0}(\underline{A}^{\sig_0+1,\fr25}\Dl_\eta\hat{f})_k(t,\eta)\right\|_{L^2_kH^{\fr{n}{2}+}_\eta}\\
\nn&+e^{-m\nu t}\left\|{\bf 1}_{k\ne0}(\underline{A}^{\sig_0+1,\fr25}\hat{f})_k(t,\eta)\right\|_{L^2_kH^{\fr{n}{2}+}_\eta}+\|f_{\ne}^w(t)\|_{\mathcal{E}_m^{\sig_0+1,\fr25}}^2.
\end{align}
Arguing as \eqref{emb} again, we obtain \eqref{AM} immediately. 
 
Next, we turn to prove \eqref{BM}. One can see from \eqref{exp-Mtheta} that
\be\label{BMtheta}
\left\| {\bf B}  M_\theta\right\|_{L^2_x}\les \left\| {\bf B}  M_2\right\|_{L^2_x}+\left\| {\bf B}  \rho\right\|_{L^2_x}+\sum_{j=0}^\infty\left\| {\bf B}  _k(t)\mathcal{F}\left[\rho^j|M_1|^2\right]_k(t)\right\|_{L^2_k}.
\ee
It suffices to bound the last term on the right-hand side of the above inequality. Clearly, 
\[
 {\bf B}_k  (t)\le e^{\dl\nu^\fr13t}\la k\ra^\fr12\underline{B}^{\sig_0}_k(t)\quad \mathrm{and}\quad \la k\ra^\fr12\underline{B}^{\sig_0}_k(t)\le\underline{B}^{\sig_0+1}_k(t).
\]
Then using \eqref{alg2}, similar to \eqref{Aproduct}, and recalling that $ (M_1)_0=0$, we have
\begin{align}\label{Bproduct}
\nn&\sum_{j=0}^\infty\left\| {\bf B}_k  (t)\mathcal{F}\left[\rho^j|M_1|^2\right]_k(t)\right\|_{L^2_k}\\
\nn\les&e^{\dl\nu^\fr13 t}\left\|\la k\ra^\fr12\underline{B}^{\sig_0}_k(t)(\widehat{M_1})_k(t) \right\|_{L^2_k}^2\left(1+\sum_{j=1}^\infty \tl C_0^{j-1}\left\|\la k\ra^\fr12 \underline{B}_k^{\sig_0}(t)\hat{\rho}_k(t)\right\|_{L^2_k}^j \right)\\
\les& \left\| {\bf B}  M_1 \right\|_{L^2_x}\left(e^{-\dl_1\nu^\fr25t}e^{-\nu t}\left\|{\bf 1}_{k\ne0}(\underline{A}^{\sig_0+1,\fr25}\nb_\eta\hat{f})_k\left(t, \eta\right)\right\|_{L^2_kL^\infty_\eta}\right)\\
\nn&\times\left(1+\sum_{j=1}^\infty \left(\tl C_0e^{-\dl_1\nu^\fr25t}\left\|{\bf 1}_{k\ne0}(\underline{A}^{\sig_0+1,\fr25}\hat{f})_k\left(t, \eta\right)\right\|_{L^2_kL^\infty_\eta}\right)^j\right)
\les\left\| {\bf B}  M_1 \right\|_{L^2_x}.
\end{align}
Substituting \eqref{Bproduct} into \eqref{BMtheta} yields \eqref{BM}. Similarly, the inequality \eqref{BM'} can be obtained  by using \eqref{alg1} instead of \eqref{alg2}.  We omit the details for brevity.  

To prove \eqref{e-Mtheta0},  from \eqref{2thM}--\eqref{2thM'}, we find that
\begin{align}
\left|\left(M_\theta\right)_0(t)\right|\nn=&\left| \left(M_2\right)_0(t)-\fr{1}{n}\left((1+\rho)|u|^2\right)_0(t)\right|\\
\nn=&\left|\fr{\|E(t)\|_{L^2}^2}{(2\pi)^n}+\fr{1}{(2\pi)^n}\int_{\mathbb{T}^n}|M_1|^2- |M_1|^2\fr{\rho}{1+\rho} dx\right|\\
\nn\les&\left\|\fr{\hat{\rho}_k(t)}{|k|}\right\|_{L^2_k}^2+\|M_1(t)\|_{L^2_x}^2\left(1+\left\|\fr{\rho}{1+\rho}\right\|^2_{L^\infty_x}\right)\\
\les&\|\rho(t)\|_{L^2}^2+\|M_1(t)\|_{L^2}^2.
\end{align}
Then \eqref{e-Mtheta0} follows immediately.
The proof of Proposition \ref{prop-Mtheta} is completed.
\end{proof}

\section{Density estimates}\label{sec:Density estimates}
In this section, we improve \eqref{H-M0-H} and \eqref{H-M0-L}. Due to the appearance of $f$ in the equation of $\rho$, see \eqref{ex-rho}, we will use Remark \ref{rem-transform} frequently to recover the estimates of $f$ from $f^w$.

Before proceeding any further, let us consider the  following linear Volterra equation:
\begin{align}\label{Volterra}
    \hat{\rho}(t, k)=\mathcal{Q}(t, k)
  +\int_0^t\mathcal{K}^{\nu}(t-\tau,k)\hat{\rho}(\tau, k)d\tau,
\end{align}
with
$
    \mathcal{K}^{\nu}(t, k)=-S_k(t)t^{\mathrm{ap}}\hat{\mu}(k t^{\mathrm{ap}}), t\geq 0.
$
Note that when $\nu t\le1$, $\hat{\mu}(k t^{\mathrm{ap}})$ decays faster than $S_k(t)$ and when $\nu t>1$, $S_k(t)$ decays faster than $\hat{\mu}(k t^{\mathrm{ap}})$. Then by \eqref{S3} and \eqref{S-prop2},
it is not difficult to verify that there exists a sufficiently small $\dl_2>0$ independent of $\nu$, such that
\begin{align}\label{eq: K^nu}
&|\mathcal{K}^{\nu}(t, k)|\lesssim \fr{1}{|k|}e^{-\dl_2|k|t},\\
\label{eq: d-est-K^nu}&\left|\pr_t\mathcal{K}^{\nu}(t, k)\right|\lesssim e^{-\delta_2|k|t},\quad
\left|\pr_{tt}\mathcal{K}^{\nu}(t, k)\right|\lesssim |k|e^{-\delta_2|k|t}.
\end{align}
Let 
$\widetilde{\mathcal{K}^{\nu}}(z,k)$ be the Fourier-Laplace transform of $\mathcal{K}^{\nu}(t,k)$ with respect  to the $t$ variable:
\begin{align}\label{eq: Four-Lapl}
\widetilde{\mathcal K^{\nu}}(z,k)=\fr{1}{2\pi}\int_0^\infty e^{-zt}\mathcal{K}^{\nu}(t,k)dt.
\end{align}

We have the following estimate of the Fourier-Laplace transform of $\mathcal{K}^{\nu}(t, k)$ in $t$, which guarantees the Penrose stability condition. 
\begin{lem}\label{Lem: Penrose stability condition}
For all $\nu>0$ sufficiently small, there holds that
\begin{align*}
    &\inf_{z\in \mathbb{C}: \frak{Re}\, z>-c\delta_2|k|}\left|1-\widetilde{\mathcal{K}^{\nu}}(z, k)\right|\geq \kappa>0.
\end{align*}
where $\kappa>0, c\in(0,1)$ can be taken independent of $\nu>0$. 
\end{lem}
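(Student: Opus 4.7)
The plan is to split the frequencies $k \in \mathbb{Z}^n_*$ into two regimes and to perturb from the collisionless kernel $\mathcal{K}^0(t,k) := -t\,\hat\mu(kt)$, which is the formal $\nu \to 0$ limit of $\mathcal{K}^\nu(t,k)$. For large $|k|$ the bound comes directly from the pointwise decay \eqref{eq: K^nu}, while for bounded $|k|$ it comes from the classical Penrose stability of the Maxwellian combined with a continuity argument in $\nu$.

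For the first regime, fix $c \in (0,1)$ (say $c = 1/4$). Using \eqref{eq: K^nu} and the definition \eqref{eq: Four-Lapl}, for any $z$ with $\mathfrak{Re}\,z \geq -c\delta_2|k|$,
\begin{equation*}
|\widetilde{\mathcal{K}^\nu}(z,k)| \leq \frac{1}{2\pi|k|}\int_0^\infty e^{-(1-c)\delta_2|k| t}\,dt = \frac{1}{2\pi(1-c)\delta_2|k|^2}.
\end{equation*}
Choose $K_* \geq 1$ independent of $\nu$ so that the right side is $\leq 1/2$ for all $|k| \geq K_*$; then $|1-\widetilde{\mathcal{K}^\nu}(z,k)| \geq 1/2$ there. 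Similar arguments using \eqref{eq: d-est-K^nu} (after integration by parts in $t$) give decay of $\widetilde{\mathcal{K}^\nu}(z,k)$ as $|\mathfrak{Im}\,z| \to \infty$, uniformly on the half-plane, so the relevant infimum is attained on a compact subset of the $z$-plane.

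For the second regime $1 \leq |k| \leq K_*$, I would invoke the classical Penrose stability of the Maxwellian on $\mathbb{T}^n$ (see the discussion in \cite{MouhotVillani2011, BMM2016}): there exist $c_0, \kappa_0 > 0$, depending only on $K_*$ and $\hat\mu$, such that
\begin{equation*}
\inf_{1 \leq |k| \leq K_*,\ \mathfrak{Re}\,z \geq -c_0}|1-\widetilde{\mathcal{K}^0}(z,k)| \geq 2\kappa_0.
\end{equation*}
Then pick $c$ so small that $c\delta_2|k| \leq c_0$ for $|k| \leq K_*$, and show that
\begin{equation*}
\sup_{1 \leq |k| \leq K_*,\ \mathfrak{Re}\,z \geq -c\delta_2|k|}|\widetilde{\mathcal{K}^\nu}(z,k) - \widetilde{\mathcal{K}^0}(z,k)| \To 0 \quad \text{as } \nu \to 0.
\end{equation*}
Combining the two estimates by the triangle inequality yields the desired lower bound with $\kappa = \min\{\kappa_0, 1/2\}$, provided $\nu$ is small enough.

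To prove the uniform convergence I would split the $t$-integral at some large $T$. The tail $\int_T^\infty$ is controlled by the large-frequency argument above, giving a bound $\lesssim |k|^{-2} e^{-(1-c)\delta_2|k|T}$ uniformly in $\nu$, $z$, and $k$, hence small for $T$ large. For the bounded piece $\int_0^T$, I would use that $t^{\mathrm{ap}} \to t$ and $S_k(t) \to 1$ uniformly on $[0,T]$ as $\nu \to 0$ (directly from \eqref{S3}), together with continuity of $\hat\mu$ and the uniform bound $e^{-\mathfrak{Re}(z) t} \leq e^{c\delta_2 K_* T}$ on the bounded parameter range. The main technical obstacle is the first of these two ingredients: verifying the classical Penrose condition for Maxwellian in a strip $\mathfrak{Re}\,z \geq -c_0$ (not just on the imaginary axis). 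For the explicit Gaussian $\hat\mu$ this follows from an explicit computation of $\widetilde{\mathcal{K}^0}$ in terms of the plasma dispersion function together with a direct inspection of its zero set, a calculation standard in the Landau damping literature.
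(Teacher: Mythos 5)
Your proposal is correct and follows essentially the same route as the paper: the large-$|k|$ regime is handled by the pointwise decay \eqref{eq: K^nu} giving $|\widetilde{\mathcal{K}^{\nu}}(z,k)|\lesssim |k|^{-2}$ as in \eqref{lower-K1}, and the finitely many remaining modes are handled by the classical Penrose stability of the collisionless kernel $\mathcal{K}^0(t,k)=-t\hat\mu(kt)$ together with convergence of $\widetilde{\mathcal{K}^{\nu}}$ to $\widetilde{\mathcal{K}^{0}}$ as $\nu\to 0^+$ (which the paper gets from dominated convergence). Your write-up merely fills in more details of the uniformity in $z$ and the splitting of the $t$-integral, which is consistent with the paper's sketch.
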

The proof of Lemma \ref{Lem: Penrose stability condition} is standard. We highlight the main idea here. On one hand, by the point-wise decay estimate \eqref{eq: K^nu}, we have
\be\label{lower-K1} 
|\widetilde{\mathcal{K}^{\nu}}(z, k)|\lesssim |k|^{-2}, \quad {\rm for \ all}\quad 
\frak{Re}\, z>-c\delta_2|k|.
\ee
Hence, we only need to focus on the small wave number case, which is finite. On the other hand, by taking the limit $\nu\to 0+$, we have $\lim_{\nu\to 0+}\mathcal{K}^{\nu}(t, k)=-t\hat{\mu}(kt)=:\mathcal{K}^0(t,k)$, whose Fourier-Laplace transform satisfies
\begin{align*}
    &\inf_{z\in \mathbb{C}: \frak{Re}\, z>-c\delta_2|k|}\left|1-\widetilde{\mathcal{K}^{0}}(z, k)\right|\geq 2\kappa>0.
\end{align*}
The lemma then follows immediately from the fact that $\widetilde{\mathcal{K}^{\nu}}(z, k)$ is a continuous function in $\nu$, which is a direct conclusion of the Lebesgue-dominated convergence theorem. 
We now have the estimate for the linearized Volterra equation in Gevrey norms:
\begin{lem}\label{lem-linear-rho}
    For $0<\delta\ll 1$ and all $\nu$ sufficiently small, we have
    \begin{align*}
        \|\langle t\rangle^b {\bf B}\rho\|_{L^2_{t, x}}\lesssim \|\langle t\rangle^b {\bf B}\mathcal{Q}\|_{L^2_{t, x}}.
    \end{align*}
\end{lem}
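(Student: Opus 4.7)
The plan is to prove the lemma in two stages: first produce an explicit resolvent representation for the linear Volterra equation \eqref{Volterra}, and then transfer the unweighted bound to the weighted Gevrey norm by controlling a multiplier ratio.

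\emph{Stage 1 (unweighted resolvent).} Combining the pointwise bounds \eqref{eq: K^nu}--\eqref{eq: d-est-K^nu} with Lemma \ref{Lem: Penrose stability condition}, the function $z \mapsto \widetilde{\mathcal{K}^\nu}(z,k)/(1-\widetilde{\mathcal{K}^\nu}(z,k))$ is holomorphic on $\{\mathfrak{Re}\, z > -c\delta_2|k|\}$ and decays at infinity in $z$. A standard Paley-Wiener argument (shifting the Bromwich contour and exploiting $\kappa^{-1}$ from Lemma \ref{Lem: Penrose stability condition}) then produces a resolvent kernel $\mathcal{R}^\nu(\theta,k)$ satisfying $|\mathcal{R}^\nu(\theta,k)| \lesssim e^{-c\delta_2|k|\theta/2}$ uniformly in $|k|\geq 1$, so that
\begin{align*}
\hat\rho(t,k) = \mathcal{Q}(t,k) + \int_0^t \mathcal{R}^\nu(t-\tau,k)\,\mathcal{Q}(\tau,k)\, d\tau.
\end{align*}

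\emph{Stage 2 (weighted estimate via a kernel ratio).} Setting $r_k(t) = \langle t\rangle^b \mathbf{B}_k(t)\hat\rho(t,k)$ and $q_k(t) = \langle t\rangle^b \mathbf{B}_k(t)\mathcal{Q}(t,k)$, multiplying the resolvent identity by the weight yields
\begin{align*}
r_k(t) = q_k(t) + \int_0^t \mathcal{R}^\nu(t-\tau,k)\,\mathcal{W}_k(t,\tau)\,q_k(\tau)\, d\tau,
\qquad
\mathcal{W}_k(t,\tau) := \frac{\langle t\rangle^b \mathbf{B}_k(t)}{\langle \tau\rangle^b \mathbf{B}_k(\tau)}.
\end{align*}
The central task is to dominate $\mathcal{W}_k(t,\tau)$ by a function of $t-\tau$ that is absorbable by $\mathcal{R}^\nu$. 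Using that $|k|^{1/2}$ cancels, that $\lambda$ is decreasing in $t$, that $\langle k,kt^{\mathrm{ap}}\rangle = |k|\langle t^{\mathrm{ap}}\rangle$, that $t^{\mathrm{ap}}-\tau^{\mathrm{ap}}\leq t-\tau$, and the elementary inequalities $\langle t^{\mathrm{ap}}\rangle^s - \langle \tau^{\mathrm{ap}}\rangle^s \leq (t-\tau)^s$ (for $s\in(0,1]$) and $\langle t\rangle^b/\langle \tau\rangle^b \lesssim \langle t-\tau\rangle^b$, one deduces
\begin{align*}
\mathcal{W}_k(t,\tau) \lesssim \langle t-\tau\rangle^{b+\sigma_0}\exp\bigl(C\lambda_\infty (|k|(t-\tau))^s + \delta\nu^{1/3}(t-\tau)\bigr).
\end{align*}
Choosing $\lambda_\infty$, $\delta$, and $\nu_0$ small and using $s<1$, the product $|\mathcal{R}^\nu(t-\tau,k)|\,\mathcal{W}_k(t,\tau)$ is bounded pointwise by $e^{-c'\delta_2|k|(t-\tau)/4}$ for some $c'\in(0,1)$, whose $L^1$ norm in each time variable is $O(|k|^{-1})$ uniformly for $|k|\geq 1$.

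\emph{Conclusion and main obstacle.} The Schur test in $t$ then gives $\|r_k\|_{L^2_t} \lesssim \|q_k\|_{L^2_t}$ with constant independent of $k$, and $\ell^2_k$ summation yields the lemma. I expect the main obstacle to be the ratio estimate: $\mathbf{B}_k(t)$ bundles Gevrey, Sobolev, enhanced-dissipation, and polynomial time-weight factors whose joint behavior must be majorized by a function of $t-\tau$ alone. The Gevrey piece is the delicate one, since the frequency argument $kt^{\mathrm{ap}}$ grows in $t$ while $\lambda(t)$ decreases; the interplay of these two monotonicities with the subadditivity of $x^s$ confines the Gevrey loss to $(|k|(t-\tau))^s$, which $s<1$ ensures is absorbable by $e^{-c\delta_2|k|(t-\tau)}$ once $\lambda_\infty$ is chosen small.
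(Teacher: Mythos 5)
Your proposal is correct and follows essentially the same route as the paper: a resolvent representation $\hat\rho = \mathcal{Q} + \mathcal{R}*\mathcal{Q}$ obtained from the Penrose condition and the Fourier--Laplace bound on $\widetilde{\mathcal{K}^\nu}$, followed by Young's inequality in $t$ after absorbing the weight ratio into the exponential decay of $\mathcal{R}$. Your Stage 2 merely spells out the kernel-ratio estimate that the paper compresses into one sentence, and your bookkeeping there (cancellation of $|k|^{1/2}$, subadditivity of $x^s$ with $s<1$, smallness of $\delta\nu^{1/3}$ relative to $\delta_2|k|$) is sound.
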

\begin{proof}
We use the idea in \cite{bedrossian2022brief,grenier2021landau} to treat the linearized Voltterra equation. First, by applying integration by part twice in \eqref{eq: Four-Lapl}, and applying \eqref{eq: d-est-K^nu}, together with \eqref{lower-K1}, we get that
 \begin{align}\label{eq:tildeK^nu}
 |\widetilde{\mathcal{K}^{\nu}}(z,k)|\lesssim \frac{1}{|k|^2+|\mathfrak{Im}\, z|^2}, \quad {\rm for \ all}\quad 
\frak{Re}\, z>-c\delta_2|k|.
 \end{align}
Thanks to Lemma D in \cite{bedrossian2022brief}, we get from Lemma \ref{Lem: Penrose stability condition} and \eqref{eq:tildeK^nu} that
  \begin{align}
     \widetilde{\hat \rho}(\om,k)=\widetilde{\mathcal{Q}}(\om,k)+{\mathcal{H}}(\om, k)\widetilde{\mathcal{Q}}(\om,k). 
 \end{align}
where ${\mathcal{H}}(\om, k)=\frac{\widetilde{\mathcal{K}^{\nu}}(\om, k)}{1-\widetilde{\mathcal{K}^{\nu}}(\om, k)}$. Let $\mathcal{R}(t, k)$ be the inverse Fourier-Laplace transform of ${\mathcal{H}}(w, k)$. By using \eqref{eq:tildeK^nu}, we get that $|\mathcal{R}(t, k)|\lesssim \frac{1}{|k|}e^{-\delta_3 |k|t}$ for some $\dl_3>0$ small enough, and then
\begin{align}\label{eq: rho-R}
    \hat{\rho}(t, k)=\mathcal{Q}(t, k)+\int_0^t\mathcal{R}(t-\tau, k)\mathcal{Q}(\tau, k)d\tau.
\end{align}
Applying the operator $\langle t\rangle^b{\bf B}$ to \eqref{eq: rho-R}, and taking $\nu$ sufficiently small, the exponential decay of the kernel $\mathcal{R}(t, k)$ enables us to use Young's inequality to complete the proof of this   lemma  immediately. 
\end{proof}

\subsection{$L^2_{t,x}$ estimate on $\rho$ }\label{sec-L2rho}
Taking 
$\mathcal{Q}(t,k)=S_k(t)(\widehat{f_{\mathrm{in}}})_k\left(kt^{\rm ap}\right)+\int_0^tS_k(t-\tau)\mathcal{N}_k(\tau, k(t-\tau)^{\rm ap}    )d\tau,$
in \eqref{Volterra}, by Lemma \ref{lem-linear-rho}, we have
\begin{align}
\|\la t\ra^b  {\bf B}  \rho\|_{L^2_{t,x}}^2\les {\bf I}+{\bf N}_{E}+{\bf N}_{\mu}+{\bf N}_{g},
\end{align}
where
\begin{align}
\nn{\bf I}=&\int_0^{T^*}\sum_{k\in\mathbb{Z}^n_*}\left|\langle t\rangle^{b} {\bf B}_k  (t)S_k(t)\widehat{g_{\rm in}}(k,kt^{\rm ap})\right|^2dt,\\
\nn{\bf N}_{E}=&\int_0^{T^*}\sum_{k\in\mathbb{Z}^n_*}\Bigg[\langle t\rangle^{b} {\bf B}_k  (t)\int_0^tS_k(t-\tau)\sum_{l\in\mathbb{Z}^n_*}\hat{\rho}_l(\tau)\fr{l}{|l|^2}\cdot k(t-\tau)^{\mathrm{ap}}\hat{f}_{k-l}\left(\tau, kt^{\rm ap}-l \tau^{\mathrm{ap}}\right)d\tau\Bigg]^2dt,\\
{\bf N}_{\mu}\nn=&\nu^2\int_0^{T^*}\sum_{k\in\mathbb{Z}^n_*}\left[\langle t\rangle^{b} {\bf B}_k  (t)\int_0^tS_k(t-\tau)(\widehat{\mathcal{C}_{\mu}})_k(\tau, k(t-\tau)^{\mathrm{ap}})d\tau\right]^2dt,\\
{\bf N}_{g}\nn=&\nu^2\int_0^{T^*}\sum_{k\in\mathbb{Z}^n_*}\left[\langle t\rangle^{b} {\bf B}_k  (t)\int_0^tS_k(t-\tau)(\widehat{\mathcal{C}[g]})_k(\tau, k(t-\tau)^{\mathrm{ap}})d\tau\right]^2dt.
\end{align}
Recalling \eqref{Cg}, $\widehat{\mathcal{C}[g]}$ consists of the following four parts
$
\widehat{\mathcal{C}[g]}:=\widehat{\mathcal{C}^1_g}+\widehat{\mathcal{C}^2_g}+\widehat{\mathcal{C}^3_g}+\widehat{\mathcal{C}^4_g},
$
with
\begin{subequations}\label{Cg1234}
\begin{align}
(\widehat{\mathcal{C}^1_g})_k\left(\tau,k(t-\tau)^{\mathrm{ap}}\right)=&-\sum_{l\in\mathbb{Z}^n_*}\hat\rho_l(\tau)|k(t-\tau)^{\mathrm{ap}}|^2\hat{f}_{k-l}\left(\tau,  kt^{\rm ap}-l \tau^{\mathrm{ap}}\right),\\
(\widehat{\mathcal{C}^2_g})_k\left(\tau, k(t-\tau)^{\mathrm{ap}}\right)=&-\sum_{l\in\mathbb{Z}^n_*}\hat\rho_l(\tau)k(t-\tau)^{\mathrm{ap}}\cdot e^{-\nu \tau}(\nb_\eta\hat{f})_{k-l}\left(\tau,  kt^{\rm ap}-l \tau^{\mathrm{ap}}\right),\\
(\widehat{\mathcal{C}^3_g})_k\left(\tau, k(t-\tau)^{\mathrm{ap}}\right)=&-\sum_{l\in\mathbb{Z}^n}(\widehat{M_\theta})_l(\tau)|k(t-\tau)^{\mathrm{ap}}|^2\hat{f}_{k-l}\left(\tau, kt^{\rm ap}-l \tau^{\mathrm{ap}}\right),\\
(\widehat{\mathcal{C}^4_g})_k\left(\tau, k(t-\tau)^{\mathrm{ap}}\right)=&-i\sum_{l\in\mathbb{Z}^n_*}(\widehat{M_1})_l(\tau)k(t-\tau)^{\mathrm{ap}}\hat{f}_{k-l}\left(\tau,  kt^{\rm ap}-l \tau^{\mathrm{ap}}\right).
\end{align}
\end{subequations}

\subsubsection{$\mathbf{I}$ initial contribution} Let us first treat $\mathbf{I}$, the contribution from the initial data. 
Note that
\be\label{tau-up}
t=\fr{\nu t}{1-e^{-\nu t}}t^{\rm ap}\les\la\nu t\ra t^{\rm ap}.
\ee
Combining this with \eqref{S-prop2}, taking the change of variable
 \be\label{change2}
 t'=|k|t^{\rm ap},
 \ee
 and then  using the Sobolev trace theorem (see Lemma 3.4 in \cite{BMM2016}), thanks to \eqref{emb}, we find that
\begin{align}
{\bf I}\nn\les&\sum_{k\in\mathbb{Z}^n_*}|k|\int_0^{T^*}\langle \nu t\rangle^{2b}e^{2\dl\nu^\fr13t}S_k^2(t) e^{2\lm(0)\la k,kt^{\rm ap} \ra^s}\left\la k, kt^{\rm ap} \right\ra^{2(\sig_0+b)}\left|\widehat{g_{\rm in}}(k,kt^{\rm ap})\right|^2dt\\
\nn\les&\sum_{k\in\N^n_*}\sup_{|\om|=1}\int_{-\infty}^{\infty}\left|\mathcal{F}\left[e^{\lm(0)\la \nb\ra^s}\la \nb\ra^{\sig_0+b}g_{\rm in}(k,\om t')\right]\right|^2dt'\\
\nn\les&  \left\| \mathcal{F}\left[e^{\lm(0)\la \nb\ra^s}\la \nb\ra^{\sig_0+b}g_{\rm in}\right](k,\eta)\right\|_{L^2_\eta H^{\fr{n-1}{2}+}_\eta}^2
\les \left\| e^{\lm(0)\la \nb\ra^s}\la \nb\ra^{\sig_0+b}\left(\la v\ra^mg_{\rm in}\right)\right\|_{L^2}^2\les \epsilon^2\nu^{2\gamma}.
\end{align}

\subsubsection{${\bf N}_{E}$ collisionless contributions}\label{Sec: Nonlinear collisionless contributions} 
In this section, we treat ${\bf N}_{E}$. This is the term that is discussed in section \ref{sec time-dependent Fourier multiplier} and determines the relationship between $s$ and $\gamma$.
According to whether $f$ is at zero frequency or not, we  divide ${\bf N}_{E}$ into two parts:
\begin{align}\label{e-NCL}
\nn{\bf N}_{E}={\bf N}_{E;\ne}+{\bf N}_{E;0}.
\end{align}

\noindent{\bf \em Treatment of ${\bf N}_{E;\ne}$.} We split ${\bf N}_{E;\ne}$ into high-low and low-high interactions and write
\[
{\bf N}_{E;\ne}={\bf N}_{E;\ne}^{\mathrm{HL}}+{\bf N}_{E;\ne}^{\mathrm{LH}},
\]
where the above decomposition is given according to the following partition of unity:
\be\label{pou-rho}
1={\bf 1}_{\left|k-l,kt^{\rm ap}-l \tau^{\mathrm{ap}}\right|\le2\left|l,l \tau^{\mathrm{ap}}\right|}+{\bf 1}_{ \left|l,l \tau^{\mathrm{ap}}\right|<\fr12\left|k-l,kt^{\rm ap}-l \tau^{\mathrm{ap}}\right|}.
\ee
To bound ${\bf N}_{E;\ne}^{\mathrm{HL}}$, on the one hand,
in view of \eqref{app5} and \eqref{S-prop2},  we find that for some $0<c<1$,
\begin{align}\label{BS-up1}
\nn &\la t\ra^b {\bf B}_k  (t)S_k(t-\tau){\bf 1}_{\left|k-l,kt^{\rm ap}-l \tau^{\mathrm{ap}}\right|\le2\left|l,l \tau^{\mathrm{ap}}\right|}\\
\les& S^{\fr34}_k(t-\tau)  {\bf B}_l    (\tau)\la t\ra^b e^{\left(\lm(t)-\lm(\tau)\right)\left\la k,kt^{\rm ap}\right\ra^s}  e^{c\lm(\tau)\left\la k-l,kt^{\rm ap}-l \tau^{\mathrm{ap}}\right\ra^s}\la k-l\ra^\fr12.
\end{align}
We also have for $k\ne l$ that
\begin{align}\label{regu-overflow}
|k|(t-\tau)^{\mathrm{ap}}
\les e^{\nu\tau}\left\la \tau^{\mathrm{ap}}\right\ra\left| k-l, kt^{\rm ap}-l \tau^{\mathrm{ap}} \right|.
\end{align}
Therefore,
\begin{align}\label{NHL1}
{\bf N}_{E;\ne}^{\mathrm{HL}}\nn\les&\int_0^{T^*}\sum_{k\in\mathbb{Z}^n_*}\Bigg[\sum_{l\in\mathbb{Z}^n_*,l\ne k}\int_0^tS_k^\fr34(t-\tau)| {\bf B}  \hat{\rho}_l(\tau)|\la t\ra^b\\\nn&\times\fr{e^{\left(\lm(t)-\lm(\tau)\right)\left\la k,kt^{\rm ap}\right\ra^s}}{|l|}e^{c\lm(\tau)\left\la k-l,kt^{\rm ap}-l \tau^{\mathrm{ap}}\right\ra^s}\\
&\times e^{\nu\tau}\la \tau\ra \left|\mathcal{F}\left[\la \nb\ra^\fr32 f\right]_{k-l}\left(\tau, kt^{\rm ap}-l \tau^{\mathrm{ap}}\right)\right|d\tau\Bigg]^2dt.
\end{align}
By using the Sobolev embedding \eqref{emb}, we get that
\begin{align}\label{embd}
\nn&e^{\nu\tau}\la \tau\ra \left|\mathcal{F}\left[\la \nb\ra^\fr32 f\right]_{k-l}\left(\tau, kt^{\rm ap}-l \tau^{\mathrm{ap}}\right)\right|{\bf 1}_{l\ne k}\\
\nn=&\la\tau\ra^{1-3\gamma}\left[e^{\nu\tau}\left(\nu^\fr13\la\tau\ra\right)^{3\gamma}e^{m\nu\tau}e^{-\dl_1\nu^\fr13\tau}\right]e^{-\lm(\tau)\la k-l,kt^{\rm ap}-l \tau^{\mathrm{ap}}\ra^s}\\
\nn&\times \nu^{-\gamma} e^{-m\nu\tau}e^{\dl_1\nu^\fr13\tau}\left|\mathcal{F}\left[e^{\lm(\tau)\la\nb \ra^s}\la \nb\ra^\fr32 f\right]_{k-l}\left(\tau, kt^{\rm ap}-l \tau^{\mathrm{ap}}\right)\right|{\bf 1}_{l\ne k}\\
\nn\les&e^{-\fr34\dl_1\nu^\fr13\tau}\la\tau\ra^{1-3\gamma}e^{-\lm(\tau)\la k-l,kt^{\rm ap}-l \tau^{\mathrm{ap}}\ra^s}\\
\nn&\times\nu^{-\gamma}\sup_t\left(e^{-m\nu t}\left\| \underline{A}^{ \sig_1 ,\fr13}f_{\ne}\left(t, \eta\right)\right\|_{L^2_kH^{\fr{n}{2}+}_\eta}\right)\\
\nn\les&e^{-\fr34\dl_1\nu^\fr13\tau}\la\tau\ra^{1-3\gamma}e^{-\lm(\tau)\la k-l,kt^{\rm ap}-l \tau^{\mathrm{ap}}\ra^s}\left(\nu^{-\gamma}\sup_t\left\| f^w_{\ne}\right\|_{\mathcal{E}^{ \sig_1 ,\fr13}_m}\right)\\
\les&\eps e^{-\fr34\dl_1\nu^\fr13\tau}\la\tau\ra^{1-3\gamma}e^{-\lm(\tau)\la k-l,kt^{\rm ap}-l \tau^{\mathrm{ap}}\ra^s}.
\end{align}
Substituting this into \eqref{NHL1}, we find that
\begin{align}\label{NHL2}
{\bf N}_{E;\ne}^{\mathrm{HL}}\nn\les&\eps^2\int_0^{T^*}\sum_{k\in\mathbb{Z}^n_*}\Bigg[\sum_{l\in\mathbb{Z}^n_*}\int_0^tS^\fr34_k(t-\tau)| {\bf B}  \hat{\rho}_l(\tau)|\la t\ra^b\fr{e^{\left(\lm(t)-\lm(\tau)\right)\left\la k,kt^{\rm ap}\right\ra^s}}{|l|}\\
&\times e^{-\fr34\dl_1\nu^\fr13\tau}\la\tau\ra^{1-3\gamma}e^{-\underline{c}\la k-l,kt^{\rm ap}-l \tau^{\mathrm{ap}}\ra^s}d\tau\Bigg]^2dt,
\end{align}
where $\underline{c}=(1-c)\lm_\infty$. To treat the right-hand side of \eqref{NHL2}, we consider two cases. 

{\bf Case 1: $\tau\le\fr{t}{2}$.} A direct calculation gives that
\be\label{gap1}
\lm(\tau)-\lm(t)=\fr{\tl\dl}{(1+\tau)^a}-\fr{\tl \dl}{(1+t)^a}\gt\fr{t-\tau}{(1+ \tau)^a(1+t)}, \quad \mathrm{for \ \ all}\quad \tau<t.
\ee
Combining this  with \eqref{tau-up}, \eqref{S-prop2} and the fact $t\le 2(t-\tau)$ yields, for $\Gamma\ge\fr{b+1}{s-a}$, 
\begin{align}\label{small-tau1}
\nn &S^\fr14_k(t-\tau)\la t\ra^b\la\tau\ra^{1-3\gamma}\fr{e^{\left(\lm(t)-\lm(\tau)\right)\left\la k,kt^{\rm ap}\right\ra^s}}{|l|}{\bf 1}_{k\ne0}
\les S^\fr14_k(t-\tau)\fr{\la t\ra^{b+1-3\gamma+a\Gamma}}{\la k, kt^{\rm ap}\ra^{s\Gamma}}{\bf 1}_{k\ne0}\\
\les&\la \nu (t-\tau)\ra^{b+1+a\Gamma}S^\fr14_k(t-\tau)\fr{\la t^{\rm ap}\ra^{b+1-3\gamma+a\Gamma}}{\la k, kt^{\rm ap}\ra^{s\Gamma}}{\bf 1}_{k\ne0}\les1.
\end{align}
Substituting this into \eqref{NHL2}, and using Schur's test, we are led to
\begin{align}\label{schur1}
{\bf N}_{E;\ne}^{\mathrm{HL}}\nn\les&\eps^2\int_0^{T^*}\sum_{k\in\mathbb{Z}^n_*}\Bigg[\sum_{l\in\mathbb{Z}^n_*}\int_0^tS^\fr12_k(t-\tau) e^{-\fr12\dl_1\nu^\fr13\tau}e^{-\underline{c}\la k-l,kt^{\rm ap}-l \tau^{\mathrm{ap}}\ra^s}| {\bf B}  \hat{\rho}_l(\tau)|d\tau\Bigg]^2dt\\
\les&\eps^2\| {\bf B}  \rho\|_{L^2_{t,x}}^2\left[\sup_{t}\sup_{k\ne 0}\sum_{l\in\mathbb{Z}^n_*}\int_0^t\mathcal{K}_1(t, \tau, k, l)d\tau\right] 
\left[\sup_{\tau}\sup_{l\ne0}\sum_{k\in\mathbb{Z}^n_*}\int_\tau^{T*}\mathcal{K}_1(t, \tau, k, l)dt\right],
\end{align}
where 
$
\mathcal{K}_1(t, \tau, k, l)=S^\fr12_k(t-\tau) e^{-\fr12\dl_1\nu^\fr13\tau}e^{-\underline{c}\la k-l,kt^{\rm ap}-l \tau^{\mathrm{ap}}\ra^s}.
$
Taking the change of variable 
\be\label{change1}
\tau'=|l|\tau^{\mathrm{ap}},
\ee
and using \eqref{app1}, we obtain
\begin{align}
\nn\sup_{k\ne 0}\sum_{l\in\mathbb{Z}^n_*}\int_0^t\mathcal{K}_1(t, \tau, k, l)d\tau\les&\sup_k\sum_{l\in\Z^n_*}e^{-c_s\underline{c}\la k-l\ra^s}\int_{\R}  e^{-c_s\underline{c}\la kt^{\rm ap}-\fr{l}{|l|}\tau'\ra^s}d\tau'\les1.
\end{align}
For the other kernel estimate, the only difference is that we need to use \eqref{S-prop2} to absorb the Jacobian $e^{\nu t}$ arising from the change of variable \eqref{change2}.
Thus, for $\tau\le\fr{t}{2}$, we arrive at
 \be
{\bf N}_{E;\ne}^{\mathrm{HL}}\les \eps^2\| {\bf B}  \rho\|_{L^2_{t,x}}^2,
 \ee
which is consistent with Proposition \ref{prop: btsp} provided we set $\eps$ sufficiently small.

{\bf Case 2: $\fr{t}{2}\le\tau\le t$.} The dimensional reducing process in \cite{BMM2016, MouhotVillani2011} enables us to  consider only the case $n=1$. Furthermore, by symmetry, it suffices to investigate the case $k\ge1$. If $l<0$ or $0\le l<k$, we have
\beno
|kt^{\rm ap}-l \tau^{\rm ap}|\ge \tau^{\rm ap}.
\eeno
This, together with \eqref{tau-up}, implies that
\begin{align}\label{absorb-tau}
\nn &e^{-\fr34\dl_1\nu^\fr13\tau}\la\tau\ra^{1-3\gamma}e^{-\underline{c}\la k-l,kt^{\rm ap}-l \tau^{\mathrm{ap}}\ra^s}\\
\nn\les& e^{-\fr34\dl_1\nu^\fr13\tau}\la\nu\tau\ra^{1-3\gamma}\la\tau^{\rm ap}\ra^{1-3\gamma}e^{-\fr12{\underline{c}}\la \tau^{\mathrm{ap}}\ra^s}e^{-\fr12{\underline{c}}\la k-l,kt^{\rm ap}-l \tau^{\mathrm{ap}}\ra^s}\\
\les & e^{-\fr12\dl_1\nu^\fr13\tau}e^{-\fr12{\underline{c}}\la k-l,kt^{\rm ap}-l \tau^{\mathrm{ap}}\ra^s}.
\end{align}
Then similar to \eqref{schur1}, we have
 \be
{\bf N}_{E;\ne}^{\mathrm{HL}}\les \eps^2\|\la t\ra^b {\bf B}  \rho\|_{L^2_{t,x}}^2.
 \ee
For the remaining case $l\ge k+1$, if $|kt^{\rm ap}-l\tau^{\rm ap}|\ge\fr{t^{\rm ap}}{2}$, then \eqref{absorb-tau} still holds. Therefore, it suffices to consider the case
\be
  |kt^{\rm ap}-l\tau^{\rm ap}|\le\fr{t^{\rm ap}}{2} \quad {\rm with}\quad l\ge k+1.
\ee
 This in turn shows that
\beno
t^{\rm ap}-\tau^{\rm ap}=t^{\rm ap}-\fr{kt^{\rm ap}}{l}+\fr{kt^{\rm ap}}{l}-\tau^{\rm ap}=\fr{(l-k)t^{\rm ap}}{l}+\fr{kt^{\rm ap}-l\tau^{\rm ap}}{l}\ge\fr{t^{\rm ap}}{2l}.
\eeno
We have 
$t^{\rm ap}-\tau^{\rm ap}=e^{-\nu\tau}(t-\tau)^{\rm ap}\le e^{-\nu\tau}(t-\tau)$,
which together with \eqref{gap1} gives
\be
\lm(\tau)-\lm(t)\gt\fr{e^{-\nu(t-\tau)} e^{\nu t} t^{\rm ap}}{\la \tau\ra^a(1+t)|l|}\gt \fr{e^{-\nu(t-\tau)} }{\la \tau\ra^a|l|}.
\ee
Hence, similar to \eqref{small-tau1},  for a fixed large constant $\Gamma_0$ satisfying
\be\label{kappa}
a\Gamma_0+1-3\gamma=s\Gamma_0,
\ee
 there holds
\begin{align}\label{gap2}
\nn&S^\fr14_k(t-\tau)e^{-\fr{1}{4}\dl_1\nu^\fr13\tau}\la\tau\ra^{1-3\gamma}e^{\left(\lm(t)-\lm(\tau)\right)\left\la k, kt^{\rm ap}\right\ra^s}{\bf 1}_{k\ne0}\\
\les_{\Gamma_0}&S^\fr14_k(t-\tau)e^{-\fr{1}{4}\dl_1\nu^\fr13\tau} \fr{e^{\Gamma_0\nu(t-\tau)} t^{a\Gamma_0+1-3\gamma}|l|^{\Gamma_0}}{\left\la k, kt^{\rm ap}\right\ra^{s\Gamma_0}} {\bf 1}_{k\ne0}\les|l|^{\Gamma_0-s\Gamma_0}\la k-l\ra^{s\Gamma_0}.
\end{align}
Substituting \eqref{gap2}  into \eqref{NHL2} and noting that $\la t\ra^b\approx \la \tau\ra^b$, using  Schur's test again, we have
\begin{align}\label{NHL4}
{\bf N}_{E;\ne}^{\mathrm{HL}}
\nn\les&\eps^2\int_0^{T^*}\sum_{k\in\mathbb{Z}^n_*}\Bigg[\sum_{l\in\mathbb{Z}^n_*}\int_0^t\left(\la\tau\ra^b| {\bf B}  \hat{\rho}_l(\tau)|\right) |l|^{\Gamma_0-1-s\Gamma_0}S^\fr12_k(t-\tau)\\
\nn&\times e^{-\fr{\dl_1}{2}\nu^\fr13\tau}e^{-\fr12\underline{c}\la k-l,kt^{\rm ap}-l \tau^{\mathrm{ap}}\ra^s}d\tau\Bigg]^2dt\\
\nn\les&\eps^2\|\la t\ra^b {\bf B}  \rho\|_{L^2_{t,x}}^2\left[\sup_t\sup_{k\ne0}\sum_{l\ne0}\int_0^t\mathcal{K}_2(t,\tau, k,l)d\tau\right]\\
&\times\left[\sup_{\tau}\sup_{l\ne0}\sum_{k\ne0}\int_\tau^{T^*}\mathcal{K}_2(t,\tau, k,l)dt\right],
\end{align}
where
$
\mathcal{K}_2(t,\tau, k,l)=|l|^{\Gamma_0-1-s\Gamma_0}S^\fr12_k(t-\tau)
e^{-\fr{\dl_1}{2}\nu^\fr13\tau}e^{-\fr12\underline{c}\la k-l,kt^{\rm ap}-l \tau^{\mathrm{ap}}\ra^s}.
$
By the change of variable \eqref{change1} and \eqref{app1},
\beq
\sum_{l\ne0}\int_0^t\mathcal{K}_2(t,\tau, k,l)d\tau\nn&\les&\sum_{l\ne0}|l|^{\Gamma_0-2-s\Gamma_0}\int_0^t
\left(e^{-\fr{\dl_1}{2}\nu^\fr13\tau}e^{\nu\tau}\right)e^{-\fr12\underline{c}\la k-l,kt^{\rm ap}-\fr{l}{|l|}\tau'\ra^{s}}d\tau'\\
\nn&\les&\sum_{l\ne0} e^{-\fr12c_s\underline{c}\la k-l\ra^s}\int_0^t
e^{-\fr12c_s\underline{c}\la kt^{\rm ap}-\fr{l}{|l|}\tau'\ra^s}d\tau'\les1,
\eeq
provided
\be\label{gamma1}
\Gamma_0-2-s\Gamma_0\le0.
\ee
Similarly, using the change of variable \eqref{change2}, \eqref{app1} and \eqref{S-prop2}, we obtain
\begin{align}
\sum_{k\ne0}\int_0^t\mathcal{K}_2(t,\tau, k,l)d\tau\nn\les&\sum_{k\ne0}|k|^{\Gamma_0-2-s\Gamma_0}\int_0^t
\left(S_k^\fr12(t-\tau)e^{-\fr{\dl_1}{2}\nu^\fr13\tau}e^{\nu t}\right)\\
\nn&\times\la k-l\ra^{\Gamma_0-1-s\Gamma_0}e^{-\fr12\underline{c}\la k-l,\fr{k}{|k|}t'-l \tau^{\mathrm{ap}}\ra^{s}}dt'\\
\nn\les&\sum_{k\ne0} \la k-l\ra e^{-\fr12c_s\underline{c}\la k-l\ra^s}\int_0^t
e^{-\fr12c_s\underline{c}\la \fr{k}{|k|}t'-l \tau^{\mathrm{ap}}\ra^{s}}dt'\les1.
\end{align}
Combining \eqref{kappa} with \eqref{gamma1} yields
\be\label{eq: s-gamma}
s\ge\fr{1-3\gamma}{3-3\gamma}+\fr{2a}{3-3\gamma}>\fr{1-3\gamma}{3-3\gamma}.
\ee
Now we turn to bound ${\bf N}_{E;\ne}^{\mathrm{LH}}$. If $\tau\le\fr{t}{2}$, similar to \eqref{small-tau1}, we have
\begin{align}\label{small-tau2}
 S^\fr14_k(t-\tau)\la t\ra^be^{\left(\lm(t)-\lm(\tau)\right)\left\la k,kt^{\rm ap}\right\ra^s}{\bf 1}_{k\ne0}\les1.
\end{align}
If $\fr{t}{2}\le\tau\le t$,
\be\label{large-tau}
S^\fr14_k(t-\tau)\la t\ra^be^{\left(\lm(t)-\lm(\tau)\right)\left\la k,kt^{\rm ap}\right\ra^s}{\bf 1}_{k\ne0}\les\la \tau\ra^b.
\ee
Combining these two inequalities with  \eqref{app3}, \eqref{S-prop2}  and \eqref{regu-overflow}, we have
\begin{align}\label{BS-up3}
\nn&\la t\ra^b {\bf B}_k  (t)S_k(t-\tau)|k|(t-\tau)^{\mathrm{ap}}{\bf 1}_{ \left|l,l \tau^{\mathrm{ap}}\right|<\fr12\left|k-l,kt^{\rm ap}-l \tau^{\mathrm{ap}}\right|}{\bf 1}_{k\ne l}\\
\nn\les&|k|^\fr12 S^\fr12_k(t-\tau)e^{-\fr12\dl_1\nu^\fr25\tau}\left(\la\tau\ra^{b}  \left\la \tau^{\mathrm{ap}}\right\ra e^{\dl \nu^\fr13  \tau}   e^{c\lm(\tau)\left\la l,l \tau^{\mathrm{ap}}\right\ra^s}\right)\\
&\times  \underline{A}_{k-l}^{\sig_0+1,\fr25}\left(\tau, kt^{\rm ap}-l \tau^{\mathrm{ap}}\right){\bf 1}_{k\ne l}.
\end{align}
Consequently,  using Cauchy-Schwarz yields
\begin{align}\label{NLH1}
\nn{\bf N}_{E;\ne}^{\mathrm{LH}}\les&\int_0^{T^*}\sum_{k\in\mathbb{Z}^n_*}|k|
\Bigg[\sum_{l\in\mathbb{Z}^n_*, l\ne k}\int_0^tS^\fr{1}{2}_k(t-\tau) e^{-\fr14\dl_1\nu^\fr25\tau}e^{-\underline{c}\left\la l,l \tau^{\mathrm{ap}}\right\ra^s}\la \tau\ra^{b} | {\bf B}  \hat{\rho}_l(\tau)| \\
\nn&\times\left|e^{-m\nu\tau} (\underline{A}^{\sig_0+1,\fr25}\hat{f})_{k-l}\left(\tau, kt^{\rm ap}-l \tau^{\mathrm{ap}}\right)\right|d\tau\Bigg]^2dt\\
\nn\les&\int_0^{T^*}\sum_{k\in\mathbb{Z}^n_*}|k|
\sum_{l\in\mathbb{Z}^n_*,l\ne k}\int_0^t  e^{-\fr14\dl_1\nu^\fr25\tau}   e^{-\underline{c}\left\la l,l \tau^{\mathrm{ap}}\right\ra^s}\la \tau\ra^{b} | {\bf B}  \hat{\rho}_l(\tau)|d\tau\\
\nn&\times \sum_{l\in\mathbb{Z}^n_*,l\ne k}\int_0^t S_k(t-\tau) e^{-\fr14\dl_1\nu^\fr25\tau}    e^{-\underline{c}\left\la l,l \tau^{\mathrm{ap}}\right\ra^s}\la \tau\ra^{b} | {\bf B}  \hat{\rho}_l(\tau)|\\
&\times\left|e^{-m\nu\tau}(\underline{A}^{\sig_0+1,\fr25}\hat{f})_{k-l}\left(\tau, kt^{\rm ap}-l \tau^{\mathrm{ap}}\right)\right|^2d\tau dt.
\end{align}
By \eqref{app1}, one deduces  that
\begin{align}\label{les-rho-low1}
\nn&\sum_{l\in\mathbb{Z}^n_*}\int_0^te^{-\fr14\dl_1\nu^\fr25\tau}   e^{-\underline{c}\left\la l,l \tau^{\mathrm{ap}}\right\ra^s}\la \tau\ra^{b} | {\bf B}  \hat{\rho}_l(\tau)|d\tau\\
\nn\les&\sum_{l\in\mathbb{Z}^n_*}\int_0^te^{-\fr{1}{4}\dl_1\nu^\fr25\tau}  e^{-c_s\underline{c}\la l\ra^s}  e^{-c_s\underline{c}\left\la l \tau^{\mathrm{ap}}\right\ra^s}\la \tau\ra^{b} | {\bf B}  \hat{\rho}_l(\tau)|d\tau\\
\les&\left(\sum_{l\in\mathbb{Z}^n_*}e^{-2c_s\underline{c}\la l\ra^s} \int_0^te^{-\fr{1}{2}\dl_1\nu^\fr25\tau}   e^{-2c_s\underline{c}\left\la l \tau^{\mathrm{ap}}\right\ra^s}d\tau\right)^\fr12\left\|\la t\ra^b  {\bf B}  \rho\right\|_{L^2_{t,x}}
\les \left\|\la t\ra^b  {\bf B}  \rho\right\|_{L^2_{t,x}},
\end{align}
where we have used the change of variable \eqref{change1}. Substituting this into \eqref{NLH1}, using Fubini's theorem, noting that \eqref{S-prop2} implies that $S_k(t-\tau)\les e^{-\nu(t-\tau)}$ for sufficiently small $\nu$, and using the change of variable \eqref{change2}, the standard Sobolev trace theorem, see Lemma 3.4 in \cite{BMM2016}, for instance, and \eqref{emb}, we find that
\begin{align}\label{NLH2}
\nn{\bf N}_{E;\ne}^{\mathrm{LH}}
\nn\les&\left\| \la t\ra^b {\bf B}  \rho\right\|_{L^2_{t,x}}\int_0^{T^*}
 \sum_{l\in\mathbb{Z}^n_*} e^{-\fr{\dl_1}{8}\nu^\fr25\tau}    e^{-\underline{c}\left\la l,l \tau^{\mathrm{ap}}\right\ra^s}\la \tau\ra^{b} | {\bf B}  \hat{\rho}_l(\tau)|\\
\nn&\times  \sum_{k\in\mathbb{Z}^n_*}\int_\tau^{T^*}|k| e^{-\nu t} \left|e^{-m\nu \tau}(\underline{A}^{\sig_0+1,\fr25}\hat{f})_{k-l}\left(\tau, kt^{\rm ap}-l \tau^{\mathrm{ap}}  \right)\right|^2 dtd\tau\\
\nn\les&\left\|\la t\ra^b  {\bf B}  \rho\right\|_{L^2_{t,x}}^2\sup_{0\le\tau\le T^*}\sum_{k\in\mathbb{Z}^n}\sup_{\zeta\in\mathbb{R}^n}\sup_{|\om|=1}\int_{-\infty}^{\infty}\left|e^{-m\nu\tau}(\underline{A}^{\sig_0+1,\fr25}\hat{f})_k\left(\tau, \om t'+\zeta\right)\right|^2 dt' \\
 \nn\les&\left\|\la t\ra^b  {\bf B}  \rho\right\|^2_{L^2_{t,x}}\sup_{0\le\tau\le T^*}\left(e^{-m\nu \tau}\left\|(\underline{A}^{\sig_0+1,\fr25}\hat{f})_k(\tau)\right\|_{L^2_kH^{\fr{n-1}{2}+}_\eta}\right)^2\\
  \les&\left\|\la t\ra^b  {\bf B}  \rho\right\|^2_{L^2_{t,x}}\sup_{0\le t\le T^*}\|f^w(t)\|_{\mathcal{E}^{\sig_0+1,\fr25}_m}^2.
\end{align}

\noindent\underline{\bf Treatment of ${\bf N}_{E;0}$.} 
We split ${\bf N}_{E;0}$ into high-low and low-high interactions:  
$
{\bf N}_{E;0}={\bf N}_{E;0}^{\mathrm{HL}}+{\bf N}_{E;0}^{\mathrm{LH}},
$
where
\begin{align}
\nn{\bf N}_{E;0}^{\mathrm{HL}}=&\int_0^{T^*}\sum_{k\in\mathbb{Z}^n_*}\Bigg[\int_0^t{\bf 1}_{\left|k(t^{\rm ap}-\tau^{\mathrm{ap}})\right|\le2\left|k,k\tau^{\mathrm{ap}}\right|}S_k(t-\tau)\la t\ra^b {\bf B}_k  (t)\hat{\rho}_k(\tau)\\
\nn&\times e^{\nu \tau}(t^{\rm ap}-\tau^{\mathrm{ap}})\hat{f}_0\left(\tau, k(t^{\rm ap}-\tau^{\mathrm{ap}})\right)d\tau\Bigg]^2dt,\\
\nn{\bf N}_{E;0}^{\mathrm{LH}}=&\int_0^{T^*}\sum_{k\in\mathbb{Z}^n_*}\Bigg[\int_0^t{\bf 1}_{\left|k,k\tau^{\mathrm{ap}}\right|<\fr12\left|k(t^{\rm ap}-\tau^{\mathrm{ap}})\right|}S_k(t-\tau)\la t\ra^b {\bf B}_k  (t)\hat{\rho}_k(\tau)\\
\nn&\times(t-\tau)^{\mathrm{ap}}\hat{f}_0\left(\tau, k(t^{\rm ap}-\tau^{\mathrm{ap}})\right)d\tau\Bigg]^2dt.
\end{align}

For ${\bf N}_{E;0}^{\mathrm{HL}}$, arguing as in \eqref{small-tau2} and \eqref{large-tau}, one deduces that
\begin{align}\label{BS-up2}
\nn&\la t\ra^b {\bf B}_k  (t)S_k(t-\tau){\bf 1}_{|k(t^{\rm ap}-\tau^{\rm ap})|\le 2|k,k\tau^{\mathrm{ap}}|}\\
\nn\les& S^{\fr34}_k(t-\tau)\la t\ra^b {\bf B}_k  (\tau)e^{\left(\lm(t)-\lm(\tau)\right)\left\la k,kt^{\rm ap}\right\ra^s}e^{c\lm(\tau)\left\la k(t^{\rm ap}-\tau^{\mathrm{ap}})\right\ra^s}\\
\les&\left(\la\tau\ra^b {\bf B}_k  (\tau) \right)e^{c\lm(\tau)\left\la k(t^{\rm ap}-\tau^{\mathrm{ap}})\right\ra^s}\fr{S_k^\fr12(t-\tau)}{\la k(t-\tau)^{\rm ap}\ra^2}\la k(t-\tau)^{\rm ap}\ra^2.
\end{align}
In view of \eqref{S-prop2}, we have ${\bf 1}_{k\ne0}S_k^{\fr12}(t)\les e^{-\nu t}$. Then
\be\label{borrow finite regu}
\int_{-\infty}^\infty \fr{S^{\fr12}_k(t-\tau)}{\la k(t-\tau)^{\rm ap} \ra^{2}}d\tau=\int_{-\infty}^\infty \fr{S^{\fr12}_k(t)}{\la kt^{\rm ap} \ra^{2}}dt\les1.
\ee
Thus, by  Schur's test, we obtain that
\begin{align}\label{NHL0}
{\bf N}_{E;0}^{\mathrm{HL}}
\nn\les&\left(\sup_\tau\sup_\eta e^{\lm(\tau)\left\la \eta\right\ra^s}\la e^{\nu\tau}\eta\ra^3\left|\hat{f}_0\left(\tau, \eta\right)\right|\right)^2\\
\nn&\times\int_0^{T^*}\sum_{k\in\mathbb{Z}^n_*}\Bigg[\int_0^t\fr{S^{\fr12}_k(t-\tau)}{\la k(t-\tau)^{\rm ap}\ra^2}\la\tau\ra^b\left| {\bf B}  \hat{\rho}_k(\tau)\right|d\tau\Bigg]^2dt\\
\nn\les&\sup_t \left\| e^{\lm(t)\left\la \eta\right\ra^s}\la e^{\nu t}\eta\ra^3 \hat{f}_0\left(t, \eta\right)\right\|_{L^\infty_\eta}^2\sup_{k\in\Z^n_*}\sup_t\int_0^t\fr{S^{\fr12}_k(t-\tau)}{\la k(t-\tau)^{\rm ap}\ra^2}d\tau\\
\nn&\times\sup_{k\in\Z^n_*}\sup_\tau\int_\tau^{T^*}\fr{S^{\fr12}_k(t-\tau)}{\la k(t-\tau)^{\rm ap}\ra^2}dt \left\|\la t \ra^b {\bf B}  \rho\right\|_{L^2_{t,x}}^2\\
\les&\eps^2\nu^{2\gamma}\left\|\la t \ra^b {\bf B}  \rho\right\|_{L^2_{t,x}}^2.
\end{align}

Next we consider ${\bf N}_{E;0}^{\mathrm{LH}}$. It is worth emphasizing that under the restriction
$
\left|k,k\tau^{\mathrm{ap}}\right|<\fr12\left|k(t^{\rm ap}-\tau^{\mathrm{ap}})\right|
$
it holds that
\be\label{nutau-up}
e^{\nu\tau}<\fr32.
\ee
Consequently, now the analogue of  \eqref{BS-up3} reads
\begin{align}\label{BS-up4}
\nn&\la t\ra^b  {\bf B}_k  (t)S_k(t-\tau)(t-\tau)^{\mathrm{ap}}{\bf 1}_{\left|k,k\tau^{\mathrm{ap}}\right|<\fr12\left|k(t^{\rm ap}-\tau^{\mathrm{ap}})\right|}\\
\les& |k|^\fr12 S_k^\fr12(t-\tau) \left(e^{-\nu\tau}\la \tau\ra^b e^{\dl \nu^\fr13 \tau}   e^{c\lm(\tau)\left\la k, k\tau^{\mathrm{ap}}\right\ra^s}\right) A_0^{\sig_0+1,\fr25}\left(\tau, k(t^{\rm ap}-\tau^{\mathrm{ap}}) \right).
\end{align}
Thus, similar to \eqref{les-rho-low1} and \eqref{NLH2}, we infer from the above two inequalities  that
\begin{align}\label{NLH0}
\nn{\bf N}_{E;0}^{\mathrm{LH}}\les&\int_0^{T^*}\sum_{k\in\mathbb{Z}^n_*}|k|\Bigg[\int_0^t    \la \tau\ra^b| {\bf B}  \hat{\rho}_k(\tau)|e^{-\underline{c}\la k,k\tau^{\mathrm{ap}}\ra^s}  \\
\nn&\times S^\fr12_k(t-\tau) e^{-m\nu \tau}(\underline{A}^{\sig_0+1,\fr25}\hat{f})_0\left(\tau, k(t^{\rm ap}-\tau^{\mathrm{ap}})\right)d\tau\Bigg]^2dt\\
\nn\les&\left\|\la t\ra^b  {\bf B}  \rho\right\|_{L^2_{t,x}}\int_0^{T^*}\sum_{k\in\mathbb{Z}^n_*}|k|\int_0^t  \la \tau\ra^b| {\bf B}  \hat{\rho}_k(\tau)|e^{-\nu\tau}e^{-\underline{c}\la k,k\tau^{\mathrm{ap}}\ra^s}  \\
\nn&\times S_k(t-\tau) \left|e^{-m\nu\tau}(\underline{A}^{\sig_0+1,\fr25}\hat{f})_0\left(\tau, kt^{\rm ap}-k\tau^{\mathrm{ap}}\right)\right|^2d\tau dt\\
\nn\les&\left\|\la t\ra^b  {\bf B}  \rho\right\|_{L^2_{t,x}}^2
  \sup_\tau\sup_{|\om|=1}\sup_{\zeta\in\mathbb{R}^n}\int^{\infty}_{-\infty}\left| e^{-m\nu\tau}(\underline{A}^{\sig_0+1,\fr25}\hat{f})_0\left(\tau, \om t'+\zeta\right)\right|^2 dt'\\
\les&\left\|\la t\ra^b  {\bf B}  \rho\right\|^2_{L^2_{t,x}}\sup_{0\le t\le T^*}\|f^w(t)\|_{\mathcal{E}^{\sig_0+1,\fr25}_m}^2.
\end{align}

\subsubsection{${\bf N}_\mu$ collision contributions} Recalling \eqref{Cmu-hat} and \eqref{eta-bar}, we have
\begin{align*}
{\bf N}_{\mu}\les&\nu^2\int_0^{T^*}\sum_{k\in\mathbb{Z}^n_*}\left[\langle t\rangle^{b} {\bf B}_k  (t)\int_0^tS_k(t-\tau)(\widehat{M_\theta})_k(\tau)(\widehat{\Dl_v\mu})(k(t-\tau)^{\rm ap})d\tau\right]^2dt\\
&+\nu^2\int_0^{T^*}\sum_{k\in\mathbb{Z}^n_*}\left[\langle t\rangle^{b} {\bf B}_k  (t)\int_0^tS_k(t-\tau)(\widehat{M_1})_k(\tau)\cdot(\widehat{\nb_v\mu})(k(t-\tau)^{\rm ap})d\tau\right]^2dt\\
=&{\bf N}_{\mu,1}+{\bf N}_{\mu,2}.
\end{align*}
Since both $\nabla_v\mu$ and $\Delta_v\mu$ are smooth and of fast decay, the treatments of ${\bf N}_{\mu,1}$ and ${\bf N}_{\mu,2}$ are  the same, here we only bound ${\bf N}_{\mu,1}$. To this end, using \eqref{BS-up2}, \eqref{BS-up4} and the fact $t^{\rm ap}-\tau^{\rm ap}=e^{-\nu\tau}(t-\tau)^{\rm ap}\le (t-\tau)^{\rm ap}$, we have
\beno
\la t\ra^b {\bf B}  _k(t)S_k(t-\tau)\les\left(\la\tau\ra^b {\bf B}  _k(\tau) \right) e^{\lm(0)\la k(t-\tau)^{\rm ap}\ra^s}\la k(t-\tau)^{\rm ap} \ra^{\sig_0+2}\fr{S^{\fr12}_k(t-\tau)}{\la k(t-\tau)^{\rm ap} \ra^{2}}.
\eeno
Then in view of \eqref{borrow finite regu}, similar to \eqref{NHL0}, one deduces that
\begin{align}
{\bf N}_{\mu,1}\nn\les&\sup_\eta \left(e^{\lm(0)\la \eta\ra^s}\la \eta\ra^{\sig_0+2}|(\widehat{\Dl_v\mu})(\eta)|\right)^2\\
\nn&\times\nu^2\sum_{k\in\mathbb{Z}^n_*}\int_0^{T^*}\left[\int_0^t\langle \tau\rangle^{b} {\bf B}_k  (\tau)(\widehat{M_\theta})_k(\tau)\fr{S^{\fr12}_k(t-\tau)}{\la k(t-\tau)^{\rm ap} \ra^{2}}d\tau\right]^2dt\\
\nn\les&\nu^2\sum_{k\in\mathbb{Z}^n_*}\int_0^{T^*}\int_0^t\left|\langle \tau\rangle^{b} {\bf B}_k  (\tau)(\widehat{M_\theta})_k(\tau)\right|^2\fr{S^{\fr12}_k(t-\tau)}{\la k(t-\tau)^{\rm ap} \ra^{2}}d\tau \int_0^t \fr{S^{\fr12}_k(t-\tau)}{\la k(t-\tau)^{\rm ap} \ra^{2}}d\tau dt\\
\nn\les&\nu^2\sum_{k\in\mathbb{Z}^n_*}\int_0^{T^*}\left|\langle \tau\rangle^{b} {\bf B}_k  (\tau)(\widehat{M_\theta})_k(\tau)\right|^2\int_\tau^{T^*}\fr{S^{\fr12}_k(t-\tau)}{\la k(t-\tau)^{\rm ap} \ra^{2}} dt d\tau 
\les \nu^2\left\|\la t\ra^b {\bf B}  M_{\theta}\right\|_{L^2_{t,x}}^2.
\end{align}
Thus, we get
\be
{\bf N}_{\mu}\les\nu^2\left(\left\|\la t\ra^b {\bf B}  M_{\theta}\right\|_{L^2_{t,x}}^2+\left\|\la t\ra^b {\bf B}  M_{1}\right\|_{L^2_{t,x}}^2\right),
\ee
which is consistent with Proposition \ref{prop: btsp} provided $\nu$ is chosen sufficiently small.

\subsubsection{${\bf N}_g$ collision contributions} Recalling \eqref{Cg1234}, it is natural to write
\be\label{Jg}
{\bf N}_g={\bf N}_{g,1}+{\bf N}_{g,2}+{\bf N}_{g,3}+{\bf N}_{g,4}.
\ee
For ${\bf N}_{g,1}$, we divide into two cases based on whether $f$ is at zero frequency or not as above:
$
{\bf N}_{g,1}\les {\bf N}_{g,1;\ne}+{\bf N}_{g,1;0}$,
with
\begin{align}
\nn {\bf N}_{g,1;\ne}=&\nu^2\int_0^{T^*}\sum_{k\in\mathbb{Z}^n_*}\Bigg[\la t\ra^b {\bf B}_k  (t)\int_0^tS_k(t-\tau)\sum_{l\in\mathbb{Z}^n_*,l\ne k}\hat\rho_l(\tau)|k(t-\tau)^{\mathrm{ap}}|^2\\
\nn&\times\hat{f}_{k-l}\left(\tau, kt^{\rm ap}-l \tau^{\mathrm{ap}}\right)d\tau\Bigg]^2dt
={\bf N}_{g,1;\ne}^{\mathrm{HL}}+{\bf N}_{g,1;\ne}^{\mathrm{LH}},\\
\nn {\bf N}_{g,1;0}=&\nu^2\int_0^{T^*}\sum_{k\in\mathbb{Z}^n_*}\Bigg[\la t\ra^b {\bf B}_k  (t)\int_0^tS_k(t-\tau)\hat\rho_k(\tau)|k(t-\tau)^{\mathrm{ap}}|^2\\
\nn&\times\hat{f}_{0}\left(\tau, e^{-\nu\tau}k(t-\tau)^{\mathrm{ap}}\right)d\tau\Bigg]^2dt
={\bf N}_{g,1;0}^{\mathrm{HL}}+{\bf N}_{g,1;0}^{\mathrm{LH}}.
\end{align}

 Consider ${\bf N}_{g,1;\ne}^{\mathrm{HL}}$ first. It is worth point out  that, different from ${\bf N}^{\rm HL}_{E;\ne}$, now we can use the inverse Knudsen number $\nu$ to absorb the quadratic time growth $\la \tau\ra^2$ with the aid of enhanced dissipation, namely, $\nu^\fr23 \la \tau\ra^2\les e^{\fr14\dl_1\nu^\fr13\tau}$. Thus, by \eqref{regu-overflow}, similar to \eqref{BS-up1}, \eqref{embd} and \eqref{small-tau1},   we have
\begin{align}\label{BS-up-5}
\nn&\nu \la t\ra^b  {\bf B}_k  (t) S_k(t-\tau) |k(t-\tau)^{\mathrm{ap}}|^2\left|\hat{f}_{k-l}\left(\tau, kt^{\rm ap}-l \tau^{\mathrm{ap}}\right)\right|
{\bf 1}_{\left|k-l,kt^{\rm ap}-l \tau^{\mathrm{ap}}\right|\le2\left|l,l \tau^{\mathrm{ap}}\right|}{\bf 1}_{k\ne l}\\
\nn\les&\nu e^{(m+2)\nu\tau}\la\tau\ra^2S^\fr34_k(t-\tau)\left(\la\tau\ra^b  {\bf B}_l    (\tau)\right)e^{-\underline{c}\la k-l,kt^{\rm ap}-l \tau^{\mathrm{ap}}\ra^s}\\
\nn&\times\left|e^{-m\nu\tau}\mathcal{F}\left[e^{\lm(\tau)\la\nb\ra^s}\la \nb \ra^\fr52 f\right]_{k-l}\left(\tau, kt^{\rm ap}-l \tau^{\mathrm{ap}}\right)\right|\\
\nn\les&\nu^\fr13S^\fr12_k(t-\tau)e^{-\fr12\dl_1\nu^\fr13\tau}e^{-\underline{c}\la k-l,kt^{\rm ap}-l \tau^{\mathrm{ap}}\ra^s}\left(\la\tau\ra^b  {\bf B}_l    (\tau)\right)\\
&\times \sup_\tau\left(e^{-m\nu\tau}\left\|e^{\dl_1\nu^\fr13\tau}\mathcal{F}\left[e^{\lm(\tau)\la\nb \ra^s}\la \nb\ra^\fr52 f_{\ne}\right]_{k}\left(\tau, \eta\right)\right\|_{L^2_kH^{\fr{n}{2}+}_\eta}\right).
\end{align}
Then by Schur's test, similar to \eqref{schur1}, one deduces that
\begin{align}\label{Ng1HLne}
\nn {\bf N}_{g,1;\ne}^{\mathrm{HL}}\nn\les& \nu^{\fr23+2\gamma}\eps^2\left\|\la t\ra^b  {\bf B}  \rho\right\|_{L^2_{t,x}}^2\left(\sup_{t}\sup_{k\ne 0}\sum_{l\in\mathbb{Z}^n_*}\int_0^t\mathcal{K}_1(t, \tau, k, l)d\tau\right)\\
&\times\left(\sup_{\tau}\sup_{l\ne0}\sum_{k\in\mathbb{Z}^n_*}\int_\tau^{T*}\mathcal{K}_1(t, \tau, k, l)dt\right)
 \les\nu^{\fr23+2\gamma}\eps^2\left\|\la t\ra^b  {\bf B}  \rho\right\|_{L^2_{t,x}}^2.
\end{align}

Next we turn to ${\bf N}_{g,1;\ne}^{\mathrm{LH}}$. From \eqref{pr-vt2} and \eqref{regu-overflow}, similar to \eqref{BS-up3}, for $l\ne0$, we find that
\begin{align}\label{BS-up6}
\nn&\la t\ra^b {\bf B}_k  (t)S_k(t-\tau)|k(t-\tau)^{\mathrm{ap}}|^2{\bf 1}_{ \left|l,l \tau^{\mathrm{ap}}\right|<\fr12\left|k-l,kt^{\rm ap}-l \tau^{\mathrm{ap}}\right|}{\bf 1}_{k\ne l}\\
\nn\les&|k|^\fr12 S^\fr12_k(t-\tau)e^{-\fr12\dl_1\nu^\fr25\tau}\left(\la\tau\ra^{b}  \left\la \tau^{\mathrm{ap}}\right\ra e^{\dl \nu^\fr13  \tau}   e^{c\lm(\tau)\left\la l,l \tau^{\mathrm{ap}}\right\ra^s}\right)\\
\nn&\times  (|\widehat{\pr_v^\tau}|\underline{A}^{\sig_0+1,\fr25})_{k-l}\left(\tau, kt^{\rm ap}-l \tau^{\mathrm{ap}}\right){\bf 1}_{k\ne l}\\
\nn\les&|k|^\fr12 S^\fr12_k(t-\tau)e^{-\fr14\dl_1\nu^\fr25\tau}e^{-\underline{c}\left\la l,l \tau^{\mathrm{ap}}\right\ra^s}
\left(e^{\dl \nu^\fr13  \tau}   e^{\lm(\tau)\left\la l,l \tau^{\mathrm{ap}}\right\ra^s}\left\la l,l \tau^{\mathrm{ap}}\right\ra^{b+\fr32+}\right)\\
&\times \fr{1}{\la \tau\ra^{\fr12+}} e^{-m\nu \tau}(|\widehat{\pr_v^\tau}|\underline{A}^{\sig_0+1,\fr25})_{k-l}\left(\tau, kt^{\rm ap}-l \tau^{\mathrm{ap}}\right){\bf 1}_{k\ne l}.
\end{align}
Then similar \eqref{NLH1}--\eqref{NLH2}, using the trace theorem and \eqref{emb'},  we are led to
\begin{align}\label{JLH1}
\nn {\bf N}_{g,1;\ne}^{\mathrm{LH}}\les&\nu^2\int_0^{T^*}\sum_{k\in\mathbb{Z}^n_*}|k|
\Bigg[\sum_{l\in\mathbb{Z}^n_*}\int_0^tS^\fr12_k(t-\tau)  e^{-\fr14\dl_1\nu^\fr25\tau} e^{-\underline{c}\left\la l,l \tau^{\mathrm{ap}}\right\ra^s}\\
\nn&\times e^{\dl \nu^\fr13 \tau} e^{\lm(\tau)\left\la l,l \tau^{\mathrm{ap}}\right\ra^s}\left\la l, l \tau^{\mathrm{ap}}\right\ra^{b+\fr32+} |\hat{\rho}_l(\tau)|\\
\nn&\times \fr{1}{\la\tau\ra^{\fr12+}}\left|e^{-m\nu \tau}\mathcal{F}\left[ \pr_v^\tau \underline{A}^{\sig_0+1,\fr25}  f_{\ne}\right]_{k-l}\left(\tau, kt^{\rm ap}-l \tau^{\mathrm{ap}}\right)\right| d\tau\Bigg]^2dt\\
\nn\les&\nu^2\sup_\tau\sup_{l\in\mathbb{Z}^n_*}\left(e^{\dl \nu^\fr13 \tau} e^{\lm(\tau)\left\la l,l \tau^{\mathrm{ap}}\right\ra^s}\left\la l, l \tau^{\mathrm{ap}}\right\ra^{b+\fr32+} |\hat{\rho}_l(\tau)|\right)^2\\
\nn&\times \sum_{l\in\Z^n_*}\int_0^{T^*}\fr{1}{\la\tau\ra^{1+}}\sum_{k\in\Z^n_*}|k|\int_\tau^{T^*} S_k(t-\tau)e^{-\fr14\dl_1\nu^\fr25\tau}e^{-\underline{c}\la l, l \tau^{\mathrm{ap}}\ra^s}\\
\nn&\times \left|e^{-m\nu \tau}\mathcal{F}\left[ \pr_v^\tau \underline{A}^{\sig_0+1,\fr25} f_{\ne}\right]_{k-l}\left(\tau, kt^{\rm ap}-l \tau^{\mathrm{ap}}\right)\right|^2dt d\tau\\
\nn\les&\sup_t \left\|B^{ \sig_1 }\rho(t) \right\|_{L^2_x}^2 \nu^2\sum_{k\in\mathbb{Z}^n_*}\int_0^{T^*} \fr{1}{\la \tau\ra^{1+}}   \left(e^{-m\nu\tau}\left\|\mathcal{F}\left[ \pr_v^\tau \underline{A}^{\sig_0+1,\fr25} f_{\ne}\right]_{k}(\tau)\right\|_{H^{\fr{n-1}{2}+}_\eta}\right)^2 d\tau\\
\les&\eps^2\nu^{2\gamma+1}\left(\sup_t\left\| f^w(t)\right\|_{\mathcal{E}_{m}^{\sigma_0+1,\fr25}}^2+\nu\int_0^{T^*}\left\| f^w(t)\right\|_{\mathcal{D}_{m}^{\sigma_0+1,\fr25}}^2dt\right).
\end{align}

The treatment of ${\bf N}_{g,1;0}^{\mathrm{HL}}$ is similar to \eqref{NHL0}, we just state the result:
\be\label{JLH0}
{\bf N}_{g,1;0}^{\mathrm{HL}}\les\nu^2\sup_t \left(\left\|e^{\lm(t)\left\la \eta\right\ra^s}\la e^{\nu t}\eta\ra^4\hat{f}_0\left(t, \eta\right)\right\|_{L^{\infty}_{\eta}}\right)^2\left\|\la t \ra^b {\bf B}  \rho\right\|_{L^2_{t,x}}^2.
\ee
As for the  treatment of ${\bf N}_{g,1;0}^{\mathrm{LH}}$, we will use  the restriction \eqref{nutau-up} again. In fact, using \eqref{nutau-up} and \eqref{S-prop2}, similar to \eqref{BS-up4}, one easily deduces that
\begin{align}\label{BS-up5}
\nn&\nu\la t\ra^b  {\bf B}_k  (t)S_k(t-\tau)|k(t-\tau)^{\mathrm{ap}}|^2{\bf 1}_{\left|k,k\tau^{\mathrm{ap}}\right|<\fr12\left|k(t^{\rm ap}-\tau^{\mathrm{ap}})\right|}\\
\les&\nu^\fr23 |k|^\fr12 S_k^\fr12(t-\tau) \left(\la \tau\ra^b e^{\dl \nu^\fr13 \tau}   e^{c\lm(\tau)\left\la k, k\tau^{\mathrm{ap}}\right\ra^s}|k|\right) A_0^{\sig_0+1,\fr25}\left(\tau, k(t^{\rm ap}-\tau^{\mathrm{ap}}) \right).
\end{align}
Then ${\bf N}_{g,1;0}^{\mathrm{LH}}$ can be bounded in the same manner as  \eqref{NLH0}:
\begin{align}\label{JLH0}
\nn {\bf N}_{g,1;0}^{\mathrm{LH}}\les&\nu^\fr43\int_0^{T^*}\sum_{k\in\mathbb{Z}^n_*}|k|\Bigg[\int_0^t\la \tau\ra^b| {\bf B}  \hat{\rho}_k(\tau)|e^{-\underline{c}\la k,k\tau^{\mathrm{ap}}\ra^s} \\
\nn&\times S^\fr12_k(t-\tau) e^{-m\nu \tau}(\underline{A}^{\sig_0+1,\fr25}\hat{f})_0\left(\tau, k(t^{\rm ap}-\tau^{\mathrm{ap}})\right)d\tau\Bigg]^2dt\\
\nn\les&\nu^\fr43\left\|\la t\ra^b  {\bf B}  \rho\right\|^2_{L^2_{t,x}}\sup_{0\le\tau\le T^*}\left(e^{-m\nu \tau}\left\|(\underline{A}^{\sig_0+1,\fr25}\hat{f})_0(\tau)\right\|_{H^{\fr{n-1}{2}+}_\eta}\right)^2\\
\les&\nu^\fr43\left\|\la t\ra^b  {\bf B}  \rho\right\|^2_{L^2_{t,x}}\sup_{0\le t\le T^*}\|f^w(t)\|_{\mathcal{E}^{\sig_0+1,\fr25}_m}^2.
\end{align}

Now we turn to ${\bf N}_{g,2}$. Indeed, ${\bf N}_{g,2}$ can be bounded by combining the treatments of ${\bf N}_{E}$ and ${\bf N}_{g,1}$. More precisely, now the analogue of  \eqref{BS-up-5} reads
\begin{align}\label{BS-up5'}
\nn&\nu \la t\ra^b  {\bf B}_k  (t) S_k(t-\tau) |k(t-\tau)^{\mathrm{ap}}|\left|e^{-\nu\tau}(\nb_\eta\hat{f})_{k-l}\left(\tau, kt^{\rm ap}-l \tau^{\mathrm{ap}}\right)\right|\\
\nn&\times{\bf 1}_{\left|k-l,kt^{\rm ap}-l \tau^{\mathrm{ap}}\right|\le2\left|l,l \tau^{\mathrm{ap}}\right|}{\bf 1}_{k\ne l}\\
\nn\les&\nu^\fr23S^\fr12_k(t-\tau)e^{-\fr12\dl_1\nu^\fr13\tau}e^{-\underline{c}\la k-l,kt^{\rm ap}-l \tau^{\mathrm{ap}}\ra^s}\left(\la\tau\ra^b  {\bf B}_l    (\tau)\right)\\
\nn&\times \sup_\tau\left(e^{-(m+1)\nu\tau}\left\|e^{\dl_1\nu^\fr13\tau}\mathcal{F}\left[e^{\lm(\tau)\la\nb \ra^s}\la \nb\ra^\fr32 v f_{\ne}\right]_{k}\left(\tau, \eta\right)\right\|_{L^2_kH^{\fr{n}{2}+}_\eta}\right)\\
\nn\les&\nu^\fr23S^\fr12_k(t-\tau)e^{-\fr12\dl_1\nu^\fr13\tau}e^{-\underline{c}\la k-l,kt^{\rm ap}-l \tau^{\mathrm{ap}}\ra^s}\left(\la\tau\ra^b  {\bf B}_l    (\tau)\right)\sup_t\left\|f^w(t)\right\|_{\mathcal{E}^{ \sig_1 ,\fr13}_m}\\
\les&\nu^\fr23S^\fr12_k(t-\tau)e^{-\fr12\dl_1\nu^\fr13\tau}e^{-\underline{c}\la k-l,kt^{\rm ap}-l \tau^{\mathrm{ap}}\ra^s}\left(\la\tau\ra^b  {\bf B}_l    (\tau)\right)\eps\nu^{\gamma}.
\end{align}
Then similar to \eqref{Ng1HLne}, we have
\begin{align}\label{Ng2HLne}
 {\bf N}_{g,2;\ne}^{\mathrm{HL}}
 \les\nu^{\fr43+2\gamma}\eps^2\left\|\la t\ra^b  {\bf B}  \rho\right\|_{L^2_{t,x}}^2.
\end{align}
For  ${\bf N}_{g,2;\ne}^{\mathrm{LH}}$, instead of using the strategy of treating ${\bf N}_{g,1;\ne}^{\mathrm{LH}}$, we just follow the way to treat ${\bf N}_{E;\ne}^{\mathrm{LH}}$, see \eqref{BS-up3}--\eqref{NLH2}. Thus,
\begin{align}
{\bf N}_{g,2;\ne}^{\mathrm{LH}}\nn\les&\nu^2\left\|\la t\ra^b  {\bf B}  \rho\right\|^2_{L^2_{t,x}}\sup_{0\le\tau\le T^*}\left(e^{-(m+1)\nu \tau}\left\|(\underline{A}^{\sig_0+1,\fr25}\nb_\eta\hat{f})_k(\tau)\right\|_{L^2_kH^{\fr{n-1}{2}+}_\eta}\right)^2\\
 \nn \les&\nu^2\left\|\la t\ra^b  {\bf B}  \rho\right\|^2_{L^2_{t,x}}\sup_{0\le t\le T^*}\|f^w(t)\|_{\mathcal{E}^{\sig_0+1,\fr25}_m}^2
  \les \eps^2\nu^{2\gamma+2}\left\|\la t\ra^b  {\bf B}  \rho\right\|^2_{L^2_{t,x}}.
\end{align}
The rest part ${\bf N}_{g,2;0}$ can be treated in  the same way as ${\bf N}_{E;0}$. In fact, similar to \eqref{NHL0} and \eqref{NLH0}, we have
\begin{align*}
{\bf N}_{g,2;0}^{\rm HL}\les&\nu^2\sup_t \left(e^{-\nu t} \|e^{\lm(t)\la\eta\ra^s}\la e^{\nu t}\eta\ra^3(\nb_\eta\hat{f})_0(t,\eta)\|_{L^\infty_\eta}\right)^2\left\|\la t\ra^b  {\bf B}  \rho\right\|^2_{L^2_{t,x}},\\
{\bf N}_{g,2;0}^{\rm LH}\nn\les&\nu^2\left\|\la t\ra^b  {\bf B}  \rho\right\|_{L^2_{t,x}}^2
  \sup_\tau\sup_{|\om|=1}\sup_{\zeta\in\mathbb{R}^n}\int^{\infty}_{-\infty}\left| e^{-(m+1)\nu\tau}(\underline{A}^{\sig_0+1,\fr25}\nb_\eta\hat{f})_0\left(\tau, \om t'+\zeta\right)\right|^2 dt'\\
\les& \eps^2\nu^{2\gamma+2}\left\|\la t\ra^b  {\bf B}  \rho\right\|^2_{L^2_{t,x}}.
\end{align*}

Next we consider ${\bf N}_{g,3}$.
\begin{align}
{\bf N}_{g,3}\nn\les&\int_0^{T^*}\sum_{k\in\mathbb{Z}^n_*}\Bigg[\la t\ra^b {\bf B}_k  (t)\int_0^tS_k(t-\tau)\nu\sum_{l\in\mathbb{Z}^n_*,l\ne k}(\widehat{M_\theta})_l(\tau)|k(t-\tau)^{\mathrm{ap}}|^2
\hat{f}_{k-l}\left(\tau, kt^{\rm ap}-l \tau^{\mathrm{ap}}\right)d\tau\Bigg]^2dt\\
\nn&+\int_0^{T^*}\sum_{k\in\mathbb{Z}^n_*}\Bigg[\la t\ra^b {\bf B}_k  (t)\int_0^tS_k(t-\tau)\nu(\widehat{M_\theta})_k(\tau)|k(t-\tau)^{\mathrm{ap}}|^2
\hat{f}_{0}\left(\tau, e^{-\nu\tau}k(t-\tau)^{\mathrm{ap}}\right)d\tau\Bigg]^2dt\\
\nn&+\int_0^{T^*}\sum_{k\in\mathbb{Z}^n_*}\Bigg[\la t\ra^b {\bf B}_k  (t)\int_0^tS_k(t-\tau)\nu(\widehat{M_\theta})_0(\tau)|k(t-\tau)^{\mathrm{ap}}|^2
\hat{f}_{k}\left(\tau, kt^{\rm ap}\right)d\tau\Bigg]^2dt\\
\nn=&{\bf N}_{g,3;\ne}+{\bf N}_{g,3;0}+{\bf N}_{g,3;0\ne}.
\end{align}
Clearly, ${\bf N}_{g,3;\ne}$ and  ${\bf N}_{g,3;0}$ can be treated in the same way as ${\bf N}_{g,1;\ne}$ and ${\bf N}_{g,1;0}$, respectively, with $\rho$ replaced by $(M_\theta)_{\ne}$. Here we only  estimate ${\bf N}_{g,3;0\ne}$.  If fact, now \eqref{BS-up6} reduces to
\begin{align}\label{BS-up7}
\nn&\la t\ra^b {\bf B}_k  (t)S_k(t-\tau)|k(t-\tau)^{\mathrm{ap}}|^2\\
\les&|k|^\fr12 S^\fr12_k(t-\tau)e^{-\fr14\dl_1\nu^\fr25\tau}\left(e^{\dl \nu^\fr13  \tau}   \left\la  \tau^{\mathrm{ap}}\right\ra^{b+\fr32+} \right)
\fr{e^{-m\nu \tau}}{\la \tau\ra^{\fr12+}} (|\widehat{\pr_v^\tau}|\underline{A}^{\sig_0+1,\fr25})_{k}\left(\tau, kt^{\rm ap}\right){\bf 1}_{k\ne 0}.
\end{align}
Then similar to \eqref{JLH1}, using \eqref{emb'} and \eqref{e-Mtheta0}, together with 
\begin{align*}
&\int_0^t e^{-\fr14\dl_1\nu^\fr25\tau} \left(e^{\dl \nu^\fr13  \tau}   \left\la  \tau^{\mathrm{ap}}\right\ra^{b+\fr32+} \left|(\widehat{M_\theta})_0(\tau)\right|\right)d\tau
\les \nu^{-\fr25} \sup_\tau\left(e^{\dl \nu^\fr13  \tau}   \left\la  \tau^{\mathrm{ap}}\right\ra^{b+\fr32+} \left|(\widehat{M_\theta})_0(\tau)\right|\right).
\end{align*}
we have
\begin{align}\label{JLH3}
\nn {\bf N}_{g,3;0\ne}\les&\nu^2\int_0^{T^*}\sum_{k\in\mathbb{Z}^n_*}|k|
\Bigg[\int_0^t   e^{-\fr14\dl_1\nu^\fr25\tau} \left(e^{\dl \nu^\fr13  \tau}   \left\la  \tau^{\mathrm{ap}}\right\ra^{b+\fr32+} \left|(\widehat{M_\theta})_0(\tau)\right|\right)\\
\nn&\times  S^\fr12_k(t-\tau) \fr{1}{\la \tau\ra^{\fr12+}} e^{-m\nu \tau} \left|\mathcal{F}\left[\pr_v^\tau \underline{A}^{\sig_0+1,\fr25} f_{\ne}\right]_{k}\left(\tau, kt^{\rm ap}\right)\right| d\tau\Bigg]^2dt\\
\nn\les&\nu^2\int_0^{T^*}\sum_{k\in\mathbb{Z}^n_*}|k| 
\int_0^t e^{-\fr14\dl_1\nu^\fr25\tau} \left(e^{\dl \nu^\fr13  \tau}   \left\la  \tau^{\mathrm{ap}}\right\ra^{b+\fr32+} \left|(\widehat{M_\theta})_0(\tau)\right|\right)d\tau\\
\nn&\times \int_0^t  e^{-\fr14\dl_1\nu^\fr25\tau} \left(e^{\dl \nu^\fr13  \tau}   \left\la  \tau^{\mathrm{ap}}\right\ra^{b+\fr32+} \left|(\widehat{M_\theta})_0(\tau)\right|\right)S_k(t-\tau)\\
\nn&\times \fr{1}{\la \tau\ra^{1+}}  \left|e^{-m\nu \tau}\mathcal{F}\left[\pr_v^\tau \underline{A}^{\sig_0+1,\fr25} f_{\ne}\right]_{k}\left(\tau, kt^{\rm ap}\right)\right|^2d\tau dt\\
\nn\les&\nu^{\fr85}\sup_\tau\left(e^{\dl \nu^\fr13  \tau}   \left\la  \tau^{\mathrm{ap}}\right\ra^{b+\fr32+} \left|(\widehat{M_\theta})_0(\tau)\right|\right)^2\sum_{k\in\mathbb{Z}^n_*}\int_0^{T^*} \fr{1}{\la \tau\ra^{1+}}\\
\nn&\times \int_\tau^{T^*}|k|e^{-\nu t}   \left|e^{-m\nu \tau}\mathcal{F}\left[\pr_v^\tau \underline{A}^{\sig_0+1,\fr25} f_{\ne}\right]_{k}\left(\tau, kt^{\rm ap}\right)\right|^2dt d\tau\\
\nn\les&\nu^{\fr35}\sup_t\left(e^{\dl \nu^\fr13  t}   \left\la  t^{\mathrm{ap}}\right\ra^{b+\fr32+} \left|(\widehat{M_\theta})_0(t)\right|\right)^2\\
\nn&\times\nu\sum_{k\in\mathbb{Z}^n_*}\int_0^{T^*} \fr{1}{\la \tau\ra^{1+}}   \left(e^{-m\nu\tau}\left\|\mathcal{F}\left[ \pr_v^\tau \underline{A}^{\sig_0+1,\fr25}f_{\ne}\right]_{k}(\tau)\right\|_{H^{\fr{n-1}{2}+}_\eta}\right)^2 d\tau,\\
\les& \eps^4\nu^{4\gamma+\fr35}\left(\sup_t\left\| f^w(t)\right\|_{\mathcal{E}_{m}^{\sigma_0+1,\fr25}}^2+\nu\int_0^{T^*}\left\| f^w(t)\right\|_{\mathcal{D}_{m}^{\sigma_0+1,\fr25}}^2dt\right).
\end{align}

Noting that $(M_1)_0=0$, the last term ${\bf N}_{g,4}$ can be treated in the same way as  ${\bf N}_{g,2}$. We thus omit the details to avoid repetition.

\subsection{ $L^\infty_t$ control on $\rho$}\label{sec-rho-low}
The lower norm estimate on $\rho$ is much more straightforward than the higher norm estimate in Section \ref{sec-L2rho}. From \eqref{ex-rho},  we have
\beno
\left\|B^{ \sig_1 }\rho(t)\right\|_{L^2_x}^2\les {\bf I}+{\bf L}+{\bf N}_E+{\bf N}_{\mu}+{\bf N}_{g},
\eeno
where
\begin{align*}
{\bf I}=&\sum_{k\in\Z^n_*}\left| B^{ \sig_1 }_k(t)S_k(t)\widehat{g_{\rm in}}(k,kt^{\rm ap})\right|^2,\\
{\bf L}=&\sum_{k\in\Z^n_*}\left[B^{ \sig_1 }_k(t)\int_0^tS_k(t-\tau)\hat{\rho}_k(\tau)(t-\tau)^{\rm ap} \hat{\mu}(k(t-\tau)^{\rm ap})d\tau\right]^2,\\
{\bf N}_{E}=&\sum_{k\in\mathbb{Z}^n_*}\Bigg[B_k^{ \sig_1 }(t)\int_0^tS_k(t-\tau)\sum_{l\in\mathbb{Z}^n_*}\hat{\rho}_l(\tau)\fr{l}{|l|^2}\cdot k(t-\tau)^{\mathrm{ap}}\hat{f}_{k-l}\left(\tau, kt^{\rm ap}-l \tau^{\mathrm{ap}}\right)d\tau\Bigg]^2,\\
{\bf N}_{\mu}=&\nu^2\sum_{k\in\mathbb{Z}^n_*}\left[B_k^{ \sig_1 }(t)\int_0^tS_k(t-\tau)(\widehat{\mathcal{C}_{\mu}})_k(\tau, k(t-\tau)^{\mathrm{ap}})d\tau\right]^2,\\
{\bf N}_{g}\nn=&\nu^2\sum_{k\in\mathbb{Z}^n_*}\left[B_k^{ \sig_1 }(t)\int_0^tS_k(t-\tau)(\widehat{\mathcal{C}[g]})_k(\tau, k(t-\tau)^{\mathrm{ap}})d\tau\right]^2.
\end{align*}

Clearly, by \eqref{S-prop2} and Sobolev embedding, there holds
\beno
{\bf I}\les \sum_{k\in\Z^n_*}\left|e^{\lm(0)\la k, kt^{\rm ap}\ra^s}\la k,kt^{\rm ap} \ra^{ \sig_1 } \widehat{g_{\rm in}}(k,kt^{\rm ap})\right|^2\les\left\| e^{\lm(0)\la\nb\ra^s}\la \nb\ra^{ \sig_1 }\left(\la v\ra^mg_{\rm in}\right)\right\|_{L^2}^2\les \eps^2\nu^{2\gamma}.
\eeno
To bound ${\bf L}$, using again \eqref{S-prop2}, for $k\ne0$, we have
\begin{align}
B^{ \sig_1 }_k(t)S_k(t-\tau)\nn\les& e^{\dl\nu^\fr13\tau} e^{\lm(\tau)\la k,k\tau^{\rm ap}\ra^s}\la k,k\tau^{\rm ap}\ra^{ \sig_1 } e^{c\lm(\tau)\la k(t^{\rm ap}-\tau^{\rm ap})\ra^s}S^{\fr12}_k(t-\tau)\\
\nn&+e^{\dl\nu^\fr13\tau} e^{c\lm(\tau)\la k,k\tau^{\rm ap}\ra^s}\la k(t^{\rm ap}-\tau^{\rm ap})\ra^{ \sig_1 } e^{\lm(\tau)\la k(t^{\rm ap}-\tau^{\rm ap})\ra^s}S^{\fr12}_k(t-\tau)\\
\nn\les& {\bf B}  _k(\tau) e^{\lm(0)\la k(t-\tau)^{\rm ap}\ra^s}\la k(t-\tau)^{\rm ap}\ra^{ \sig_1 }S^{\fr12}_k(t-\tau).
\end{align}
Consequently, in view of \eqref{borrow finite regu}, one deduces that
\begin{align}\label{e-L-low}
{\bf L}\nn\les&\sum_{k\in\Z^n_*}\left[ \int_0^t | {\bf B}  \hat{\rho}_k(\tau)|\fr{S_k^{\fr12}(t-\tau)}{\la k(t-\tau)^{\rm ap}\ra}e^{\lm(0)\la k(t-\tau)^{\rm ap}\ra^s}\la k(t-\tau)^{\rm ap}\ra^{\sig_1+2}|\hat{\mu}(k(t-\tau)^{\rm ap})|d\tau\right]^2\\
\nn\les&\sum_{k\in\Z^n_*}\left[ \int_0^t | {\bf B}  \hat{\rho}_k(\tau)|\fr{S_k^{\fr12}(t-\tau)}{\la k(t-\tau)^{\rm ap}\ra}d\tau\right]^2\sup_{\eta} \left(e^{\lm(0)\la \eta\ra^s} \la \eta\ra^{\sig_1+2}|\hat{\mu}(\eta)|\right)^2\\
\nn\les&\sum_{k\in\Z^n_*} \int_0^t | {\bf B}  \hat{\rho}_k(\tau)|^2d\tau \sup_{k\in\Z^n_*} \int_0^t\fr{S_k(t-\tau)}{\la k(t-\tau)^{\rm ap}\ra^2}d\tau\\
\les&\| {\bf B}  \rho(t)\|_{L^2_{t,x}}^2\les \left({\rm C}_0\eps\nu^{\gamma}\right)^2,
\end{align}
which is consistent with Proposition \ref{prop: btsp} as long as ${\rm \tl C}_0\gg {\rm C}_0$.

Next we turn to ${\bf N}_{E}$. As in Section \ref{sec-L2rho}, we split it into two parts based on $f$ is at zero frequency or not. Compared with \eqref{NHL4} and \eqref{NLH2}, the treatment of ${\bf N}_{E;\ne}$ now is much  easier since the plasma echoes will never happen even when $\rho$ is at high frequency. In fact, now one can see from  \eqref{S-prop2}, \eqref{pou-rho} and \eqref{regu-overflow} that
\begin{align}\label{BS-up8}
\nn&B^{\sig_1}_k(t)S_k(t-\tau)|k(t-\tau)^{\rm ap}|{\bf1}_{k\ne l}\\
\nn\les&B_l^{\sig_1+1}(\tau)\left(e^{-m\nu \tau}e^{\dl_1\nu^\fr25\tau}e^{\lm(\tau)\la k-l,kt^{\rm ap}-l\tau^{\rm ap}\ra^s}\la k-l,kt^{\rm ap}-l^{\rm ap}\ra^{\sig_1+1}\right)\\
\nn&\times e^{(m+1)\nu \tau}e^{-\dl_1\nu^\fr25\tau}{\bf1}_{k\ne l}\\
\les&B_l^{\sig_1+1}(\tau)\left(e^{-m\nu \tau}\underline{A}^{\sig_0+1,\fr25}_{k-l}(\tau, kt^{\rm ap}-l\tau^{\rm ap})\right) e^{-\fr12\dl_1\nu^\fr25\tau}{\bf1}_{k\ne l}.
\end{align}
Moreover, note that
\begin{align}\label{reg-gap}
\sum_{l\in\Z^n_*}\int_0^t B^{\sig_1+1}_l(\tau)|\rho_l(\tau)|d\tau\nn\les&\sum_{l\in\Z^n_*}\fr{1}{\la l\ra^{\fr{n}{2}+}}\int_0^t\la\tau\ra^{\fr12+}\la l\ra^{\fr{n}{2}+} B^{\sig_1+1}_l(\tau)|\rho_l(\tau)| \fr{1}{\la\tau\ra^{\fr12+}}d\tau\\
\les& \left\|\la t\ra^{\fr12+} {\bf B}  \rho \right\|_{L^2_{t,x}},
\end{align}
provided $ \sig_1+1+\fr{n+1}{2}\le \sig_0$. Then it follows from these two inequalities and \eqref{emb} that
\begin{align}\label{NEne-low}
{\bf N}_{E;\ne}
\nn\les&\sum_{k\in\mathbb{Z}^n_*}\Bigg[\int_0^t\sum_{l\in\mathbb{Z}^n_*,l\ne k} |B^{\sig_1+1}\hat{\rho}_l(\tau)| e^{-m\nu\tau} \left|(\underline{A}^{\sig_0+1,\fr25} \hat{f})_{k-l}\left(\tau, kt^{\rm ap}-l \tau^{\mathrm{ap}}\right)\right|d\tau\Bigg]^2\\
\nn\les&\sum_{l\in\Z^n_*}\int_0^t |B^{\sig_1+1}\hat{\rho}_l(\tau)|d\tau\int_0^t \sum_{k\in\Z^n_*}\sum_{l\in\Z^n_*, l\ne k} |B^{\sig_1+1}\hat{\rho}_l(\tau)|\\
\nn&\times \left(e^{-m\nu\tau}\left|(\underline{A}^{\sig_0+1,\fr25} \hat{f})_{k-l}\left(\tau, kt^{\rm ap}-l \tau^{\mathrm{ap}}\right)\right| \right)^2d\tau\\
\les&\left\|\la t\ra^{\fr12+} {\bf B}  \rho \right\|_{L^2_{t,x}}^2\sup_t\left(e^{-m\nu t}\left\|\underline{A}^{\sig_0+1,\fr25} \hat{f}_{\ne} \right\|_{L^2_k H^{\fr{n}{2}+}_\eta}\right)^2
\les\eps^2\nu^{2\gamma}\left\|\la t\ra^{\fr12+} {\bf B}  \rho \right\|_{L^2_{t,x}}^2.
\end{align}
The rest part  ${\bf N}_{E;0}$ can be bounded in a similar manner to the corresponding term in high norm estimate. Indeed, compared with \eqref{BS-up2} and \eqref{BS-up4}, now it is easier to get 
\begin{align}\label{BS-up9}
\nn&B^{\sig_1}_k(t)S_k(t-\tau)|k(t-\tau)^{\rm ap}|{\bf 1}_{|k(t^{\rm ap}-\tau^{\rm ap})|\le2|k,k\tau^{\rm ap}|}\\
\les& B^{\sig_1}_k(\tau)e^{c\lm(\tau)\la k(t^{\rm ap}-\tau^{\rm ap})\ra^s} |e^{\nu\tau}k(t^{\rm ap}-\tau^{\rm ap})|,
\end{align}
and
\begin{align}\label{BS-up10}
\nn&B^{\sig_1}_k(t)S_k(t-\tau)|k(t-\tau)^{\rm ap}|{\bf 1}_{|k,k\tau^{\rm ap}|<\fr12|k(t^{\rm ap}-\tau^{\rm ap})|}\\
\les&e^{-\nu\tau}e^{-\underline{c}\la k,k\tau^{\rm ap}\ra^s} e^{\dl\nu^\fr13\tau}e^{\lm(\tau)\la k,k\tau^{\rm ap}\ra^s}\left(e^{-m\nu t}\underline{A}_0^{\sig_0+1,\fr25}(\tau, k(t^{\rm ap}-\tau^{\rm ap})) \right).
\end{align}
Then by \eqref{reg-gap}  and \eqref{BS-up9},  one deduces that
\begin{align}\label{NE0-low}
{\bf N}^{\rm HL}_{E;0}\nn\les&\sup_\tau\left\|e^{\lm(\tau)\left\la \eta\right\ra^s}| e^{\nu\tau}\eta|\hat{f}_0\left(\tau, \eta\right)\right\|_{L^\infty_{\eta}}^2\sum_{k\in\Z^n_*}\left[\int_0^t B^{\sig_1}_k(\tau)|\hat{\rho}_k(\tau)|d\tau \right]^2\\
\les&\eps^2\nu^{2\gamma}\left[\sum_{k\in\Z^n_*}\int_0^t B^{\sig_1}_k(\tau)|\hat{\rho}_k(\tau)|d\tau \right]^2\les \eps^2\nu^{2\gamma}\left\|\la t\ra^{\fr12+} {\bf B}  \rho \right\|_{L^2_{t,x}}^2.
\end{align}
From \eqref{les-rho-low1} and \eqref{BS-up10}, we are led to
\begin{align}\label{e-NCL-L0}
\nn {\bf N}_{E;0}^{\rm LH}
\nn&\sum_{k\in\mathbb{Z}^n_*}\Bigg[\int_0^t e^{-\nu\tau}e^{-\underline{c}\la k,k\tau^{\rm ap}\ra^s} \left|e^{\dl\nu^\fr13\tau}e^{\lm(\tau)\la k,k\tau^{\rm ap}\ra^s}\hat{\rho}_k(\tau)\right|\\
\nn&\times  \left|e^{-m\nu t}\underline{A}_0^{\sig_0+1,\fr25}\hat{f}_{0}\left(\tau, k(t^{\rm ap}-\tau^{\mathrm{ap}})\right)\right| d\tau\Bigg]^2\\
\nn\les&\sup_t \left(e^{-m\nu t}\left\|\underline{A}_0^{\sig_0+1,\fr25}\hat{f}_0(t,\eta) \right\|_{L^\infty_\eta}\right)^2\sum_{k\in\Z^n_*}\left[\int_0^t e^{-\nu\tau}e^{-\underline{c}\la k,k\tau^{\rm ap}\ra^s}|{\bf B}\hat{\rho}_k(\tau)|d\tau \right]^2\\
\les&\eps^2\nu^{2\gamma}\left\|{\bf B}\rho \right\|_{L^2_{t,x}}^2.
\end{align}
The term ${\bf N}_{\mu}$ can be treated in the same way as ${\bf L}$. In fact, similar to \eqref{e-L-low}, we have
\begin{align}
{\bf N}_{\mu}\nn\les& \sup_\eta \left(e^{\lm(0)\la \eta\ra^s}\la \eta\ra^{\sig_1+1}\left(|(\widehat{\Dl_v\mu})(\eta)|+|(\widehat{\nb_v\mu})(\eta)|\right)\right)^2\\
\nn&\times\nu^2\sum_{k\in\Z^n_*}\left(\int_0^tB^{\sig_1}_k(\tau)(|(\widehat{M_\theta})_k(\tau)|+|(\widehat{M_1})_k(\tau)|)\fr{S^{\fr12}_k(t-\tau)}{\la k(t-\tau)^{\rm ap} \ra} d\tau\right)^2\\
\les&\nu^2\left( \left\| {\bf B}  M_\theta\right\|_{L^2_{t,x}}^2+\left\| {\bf B}  M_1\right\|_{L^2_{t,x}}^2\right).
\end{align}

Finally, we turn to ${\bf N}_{g}$, which consists of four parts as before. Here we only treat the most tricky one ${\bf N}_{g,3}$:
\begin{align}
{\bf N}_{g,3}\nn\les&\nu^2\sum_{k\in\mathbb{Z}^n_*}\left[B_k^{\sig_1}(t)\int_0^tS_k(t-\tau)\sum_{l\in\Z^n_*,l\ne k}(\widehat{M_\theta})_l(\tau)|k(t-\tau)^{\rm ap}|^2\hat{f}_{k-l}(\tau, kt^{\rm ap}-l\tau^{\mathrm{ap}})d\tau\right]^2\\
\nn&+\nu^2\sum_{k\in\mathbb{Z}^n_*}\left[B_k^{\sig_1}(t)\int_0^tS_k(t-\tau)(\widehat{M_\theta})_k(\tau)|k(t-\tau)^{\rm ap}|^2\hat{f}_{0}(\tau, k(t^{\rm ap}-\tau^{\mathrm{ap}}))d\tau\right]^2\\
\nn&+\nu^2\sum_{k\in\mathbb{Z}^n_*}\left[B_k^{\sig_1}(t)\int_0^tS_k(t-\tau)(\widehat{M_\theta})_0(\tau)|k(t-\tau)^{\rm ap}|^2\hat{f}_{k}(\tau, kt^{\mathrm{ap}})d\tau\right]^2\\
\nn=&{\bf N}_{g,3;\ne}+{\bf N}_{g,3;0}+{\bf N}_{g,3;0\ne}.
\end{align}
For ${\bf N}_{g,3;\ne}$, clearly, the analogue of \eqref{BS-up8} now reads
\begin{align}
B^{\sig_1}_k(t)S_k(t-\tau)|k(t-\tau)^{\rm ap}|^2{\bf1}_{k\ne l}
\nn\les B_l^{\sig_1+2}\left(e^{-m\nu \tau}\underline{A}^{\sig_0+1,\fr25}_{k-l}(\tau, kt^{\rm ap}-l\tau^{\rm ap})\right) e^{-\fr12\dl_1\nu^\fr25\tau}{\bf1}_{k\ne l}.
\end{align}
In addition, \eqref{reg-gap} still holds with $\rho$ replaced by $M_\theta$ and $\sig_1+1$ replaced by $\sig_1+2$, respectively, as long as 
$\sig_1+2+\fr{n+1}{2}\le\sig_0$. Then similar to \eqref{NEne-low}, we arrive at
\begin{align}
{\bf N}_{g,3;\ne}\nn\les&\eps^2\nu^{2+2\gamma}\left\|\la t\ra^{\fr12+}{\bf B}M_\theta \right\|_{L^2_{t,x}}^2.
\end{align}
As for ${\bf N}_{g,3;0}$, we just need to replace $|k(t-\tau)^{\rm ap}|$ by $|k(t-\tau)^{\rm ap}|^2$ in \eqref{BS-up9} and \eqref{BS-up10}. Then following \eqref{NE0-low} and \eqref{e-NCL-L0}, one easily deduces that
\begin{align}
{\bf N}_{g,3;0}\nn\les&\nu^2\sup_\tau\left\|e^{\lm(\tau)\left\la \eta\right\ra^s}| e^{\nu\tau}\eta|^2\hat{f}_0\left(\tau, \eta\right)\right\|_{L^\infty_{\eta}}^2\left\|\la t\ra^{\fr12+}{\bf B}\rho \right\|_{L^2_{t,x}}^2\\
\nn&+\nu^2\sup_t \left(e^{-m\nu t}\left\|\underline{A}_0^{\sig_0+1,\fr25}\hat{f}_0(t,\eta) \right\|_{L^\infty_\eta}\right)^2\left\|{\bf B}\rho \right\|_{L^2_{t,x}}^2\\
\les&\eps^2\nu^{2+2\gamma}\left\|\la t\ra^{\fr12+}{\bf B}\rho \right\|_{L^2_{t,x}}^2.
\end{align}
To bound ${\bf N}_{g,3;0\ne}$, instead of using \eqref{BS-up7}, noting that $(t-\tau)^{\rm ap}=e^{\nu\tau}(t^{\rm ap}-\tau^{\rm ap})\le e^{\nu\tau}t^{\rm ap}$, now we have
\begin{align}
\nn&B^{\sig_1}_k(t)S_k(t-\tau) |k(t-\tau)^{\rm ap}|^2{\bf 1}_{k\ne0}\\
\nn\les& e^{\dl\nu^\fr13\tau}e^{(m+2)\nu\tau}e^{-\dl_1\nu^\fr25\tau}\left(e^{-m\nu \tau}e^{\dl_1\nu^\fr25\tau}e^{\lm(\tau)\la k,kt^{\rm ap}\ra^s}\la k, kt^{\rm ap}\ra^{\sig_1+2} \right){\bf 1}_{k\ne0}\\
\nn\les&e^{\dl\nu^\fr13\tau} e^{-\fr12\dl_1\nu^\fr25\tau}\left(e^{-m\nu\tau}\underline{A}^{\sig_0+1,\fr25}_k(\tau, tk^{\rm ap})\right){\bf 1}_{k\ne0}.
\end{align}
Then by Cauchy-Schwarz inequality and \eqref{emb},
\begin{align}
{\bf N}_{g,3;0\ne}\nn\les&\nu^2\sum_{k\in\mathbb{Z}^n_*}\left[\int_0^t\left(e^{\dl\nu^\fr13\tau}(\widehat{M_\theta})_0(\tau)\right)\left\|e^{-m\nu\tau}\underline{A}^{\sig_0+1,\fr25}\hat{f}_{k}(\tau, \eta)\right\|_{L^\infty_\eta}d\tau\right]^2\\
\nn\les&\nu^2\int_0^t \left(e^{\dl\nu^\fr13\tau}(\widehat{M_\theta})_0(\tau)\right)^2       \sum_{k\in\Z^n_*}\left\|e^{-m\nu\tau}\underline{A}^{\sig_0+1,\fr25}\hat{f}_{k}(\tau, \eta)\right\|_{L^\infty_\eta}^2d\tau\\
\nn\les&\eps^2\nu^{2\gamma+2}\sup_t\left(e^{\dl\nu^\fr13t}\la t\ra^{\fr12+}(\widehat{M_\theta})_0(t)\right)^2.
\end{align}

\section{Higher order moment estimates}\label{high-m}
In this section, we improve the estimates \eqref{H-M1-H}, \eqref{H-M2-H}, \eqref{H-M1-L} and \eqref{H-M2-L}  for $M_1$ and $M_2$. As one can see from Section \ref{sec-rho-low}, the $L^\infty_t$ estimates are much more straightforward than the $L^2_t$ estimates. We will only sketch the details for the much harder $L^2_t$ control.

To begin with, we  give the expressions of $(\widehat{M_1})_k(t)$ and $(\widehat{M_2})_k(t)$. As a matter of fact, keeping in mind the relations \eqref{M1f} and \eqref{M2f}, the expressions of $(\widehat{M_1})_k(t)$ and $(\widehat{M_2})_k(t)$ can be obtained in a similar way to that of $\hat{\rho}_k(t)$, see \eqref{ex-f}--\eqref{ex-rho}. To avoid  unnecessary repetition, the expressions of $(\widehat{M_1})_k(t)$ and $(\widehat{M_2})_k(t)$ are given below directly, and we refer to \cite{bedrossian2017suppression} for more details for the derivations:
\begin{align}\label{ex-M1}
(\widehat{M_1})_k(t,\eta)\nn=&ie^{-\nu t}S_k(t)(\nb_\eta\widehat{f}_{\mathrm{in}})_k\left(kt^{\rm ap}\right)\\
\nn&-\int_0^tS_k(t-\tau)ie^{-\nu t}[\nb_\eta\mathcal{L}]_k(\tau, k(t-\tau)^{\rm ap}    )d\tau\\
\nn&+\int_0^tS_k(t-\tau)ie^{-\nu t}[\nb_\eta\mathcal{N}]_k(\tau,k(t-\tau)^{\rm ap}    )d\tau\\
\nn&-2\nu \left(\int_0^tie^{-\nu(t-\tau)}k(t-\tau)^{\rm ap}    d\tau \right)S_k(t)(\widehat{f_{\mathrm{in}}})_k\left(kt^{\rm ap}\right)\\
\nn&+2\nu \int_0^t\left(\int_{\tau'}^t ie^{-\nu(t-\tau)}k(t-\tau)^{\rm ap}    d\tau\right) S_k(t-\tau')\mathcal{L}_k(\tau', k(t-\tau')^{\rm ap}    )d\tau' \\
&-2\nu \int_0^t\left(\int_{\tau'}^t ie^{-\nu(t-\tau)}k(t-\tau)^{\rm ap}    d\tau\right) S_k(t-\tau')\mathcal{N}_k(\tau', k(t-\tau')^{\rm ap}    )d\tau',
\end{align}
and
\begin{align}\label{ex-M2}
(\widehat{M}_2)_k(t)
\nn=&-e^{-2\nu t}S_k(t)(\Dl_\eta\widehat{f_{\rm in}})_k\left(kt^{\rm ap}\right)+e^{-2\nu t}\int_0^tS_k(t-\tau)[\Dl_\eta\mathcal{L}]_k(\tau,k(t-\tau)^{\rm ap}    )d\tau\\
\nn&-e^{-2\nu t}\int_0^tS_k(t-\tau)[\Dl_\eta\mathcal{N}]_k(\tau,k(t-\tau)^{\rm ap}    )d\tau\\
\nn&+4\nu e^{-\nu t}\left(\int_0^t e^{-\nu (t-\tau)} k(t-\tau)^{\rm ap}    d\tau\right) \cdot  S_k(t) (\nb_\eta\widehat{f}_{\mathrm{in}})_k\left(kt^{\rm ap}\right)\\
\nn&+4\nu e^{-\nu t}\int_0^t\left(\int_{\tau'}^te^{-\nu (t-\tau)} k(t-\tau)^{\rm ap}d\tau\right)\cdot S_k(t-\tau') [\nb_\eta\mathcal{L}]_k(\tau', k(t-\tau')^{\rm ap}    )d\tau' \\
\nn&+4\nu e^{-\nu t}\int_0^t\left(\int_{\tau'}^te^{-\nu (t-\tau)} k(t-\tau)^{\rm ap}d\tau\right)\cdot S_k(t-\tau')[\nb_\eta\mathcal{N}]_k(\tau', k(t-\tau')^{\rm ap}    )d\tau'\\
\nn&-8\nu^2\left(e^{-2\nu t}\int_0^t\int_0^\tau e^{\nu \tau} k(t-\tau)^{\rm ap}    \cdot e^{\nu\tau'}k(t-\tau')^{\rm ap}    d\tau' d\tau\right) 
 S_k(t)(\widehat{f_{\mathrm{in}}})_k\left(kt^{\rm ap}\right)\\
\nn&+8\nu^2\int_0^t \left(e^{-2\nu t}\int^t_{\tau''} \int^\tau_{\tau''}e^{\nu \tau} k(t-\tau)^{\rm ap}    \cdot e^{\nu\tau'}k(t-\tau')^{\rm ap}    d\tau'd\tau\right)\\
\nn&\times S_k(t- \tau'')\mathcal{L}_k(\tau'', k(t-\tau'')^{\rm ap}    )d\tau''\\
\nn&-8\nu^2\int_0^t \left(e^{-2\nu t}\int^t_{\tau''} \int^\tau_{\tau''}e^{\nu \tau} k(t-\tau)^{\rm ap}     \cdot e^{\nu\tau'}k(t-\tau')^{\rm ap}    d\tau'd\tau\right)\\
\nn&\times S_k(t-\tau'')\mathcal{N}_k(\tau'', k(t-\tau'')^{\rm ap}    )d\tau'' \\
\nn&+2n\nu \left(\int_0^te^{-2\nu(t-\tau)}d\tau\right) S_k(t)(\widehat{f_{\rm in}})_k\left(kt^{\rm ap}\right)\\
\nn&-2n\nu \int_0^t \left(\int^t_{\tau'} e^{-2\nu(t-\tau)}d\tau\right) S_k(t-\tau')\mathcal{L}_k(\tau',k(t-\tau')^{\rm ap}    )d\tau'\\
&+2n\nu \int_0^t\left(\int_{\tau'}^t e^{-2\nu(t-\tau)}d\tau\right)  S_k(t-\tau')\mathcal{N}_k(\tau',k(t-\tau')^{\rm ap}    )d\tau',
\end{align}
where we have  used the notations
\begin{align}
[\nb_\eta\mathcal{L}]_k(\tau,  k(t-\tau)^{\rm ap}    )\nn=&\big(\nb_\eta[\mathcal{L}_k(\tau,\bar{\eta}(\tau; k,\eta)]\big)\big|_{\eta=kt^{\rm ap}},\\
[\nb_\eta\mathcal{N}]_k(\tau,  k(t-\tau)^{\rm ap}    )\nn=&\big(\nb_\eta[\mathcal{N}_k(\tau,\bar{\eta}(\tau; k,\eta)]\big)\big|_{\eta=kt^{\rm ap}},\\
[\Dl_\eta\mathcal{L}]_k(\tau,  k(t-\tau)^{\rm ap}    )\nn=&\big(\Dl_\eta[\mathcal{L}_k(\tau,\bar{\eta}(\tau; k,\eta)]\big)\big|_{\eta=kt^{\rm ap}},\\
[\Dl_\eta\mathcal{N}]_k(\tau,  k(t-\tau)^{\rm ap}    )\nn=&\big(\Dl_\eta[\mathcal{N}_k(\tau,\bar{\eta}(\tau; k,\eta)]\big)\big|_{\eta=kt^{\rm ap}}.
\end{align}

To improve \eqref{H-M1-H} and \eqref{H-M2-H}, now we do not have to use Lemma \ref{lem-linear-rho}. Instead, we will apply $\la t\ra^b{\bf B}(t)$ to \eqref{ex-M1} and \eqref{ex-M2} directly, and then take the $L^2_{t,x}$ norms.  Note that
\begin{align}\label{ker0}
\int^t_{\tau'} e^{-2\nu(t-\tau)}d\tau=\fr{1}{2\nu}\left(1-e^{-2\nu(t-\tau')}\right),
\end{align}

\begin{align}\label{ker1}
\int_{\tau'}^t e^{-\nu(t-\tau)}k(t-\tau)^{\rm ap}    d\tau=\fr{k}{2\nu^2}\left(1-e^{-\nu(t-\tau')} \right)^2,
\end{align}
and
\begin{align}\label{ker2}
e^{-2\nu t} \int^t_{\tau''} \int^\tau_{\tau''} e^{\nu \tau} k(t-\tau)^{\rm ap} \cdot e^{\nu\tau'}k(t-\tau')^{\rm ap}d\tau'd\tau=\fr{|k|^2}{8\nu^4}\left(1-e^{-\nu(t-\tau'')}\right)^4.
\end{align}
Combining \eqref{ker1}, \eqref{ker2} with \eqref{S-prop3} and \eqref{S-prop4}, we find that the last three term on the right hand side of \eqref{ex-M1} and the last six terms on the right hand side of \eqref{ex-M2} can be bounded by 
\begin{align}\label{0m}
\nn& S^{1-p}_k(t)\left|(\widehat{f_{\mathrm{in}}})_k\left(kt^{\rm ap}\right)\right|+ \int_0^t S_k^{1-p}(t-\tau')\left|\mathcal{L}_k(\tau', k(t-\tau')^{\rm ap}    )\right|d\tau' \\
&+ \int_0^t S_k^{1-p}(t-\tau')\left|\mathcal{N}_k(\tau', k(t-\tau')^{\rm ap}    )\right|d\tau',
\end{align}
for any fixed $p\in (0,1)$. Similarly, in view of \eqref{ker1} and \eqref{S-prop3}, the first three terms on the right hand side of \eqref{ex-M1} and the fourth to sixth terms on the right side of \eqref{ex-M2} can be bounded by
\begin{align}\label{1m}
\nn&S^{1-p}_k(t)e^{-\nu t}\left|(\nb_\eta\widehat{f}_{\mathrm{in}})_k\left(kt^{\rm ap}\right)\right|+\int_0^tS^{1-p}_k(t-\tau)e^{-\nu t}\big|[\nb_\eta\mathcal{L}]_k(\tau, k(t-\tau)^{\rm ap}    )\big|d\tau\\
&+\int_0^tS^{1-p}_k(t-\tau)e^{-\nu t}\big|[\nb_\eta\mathcal{N}]_k(\tau,k(t-\tau)^{\rm ap}    )\big|d\tau,
\end{align}
for any fixed $p\in(0,1)$. We would like to remark that, thanks to the property \eqref{S-prop2} possessed by $S_k(\cdot)$,   the index $p$ in  \eqref{0m} and \eqref{1m} does not matter and $S_k(\cdot)^{1-p}$ can serve as $S_k(\cdot)$.  In addition, we have to deal with the first three terms on the right hand side of \eqref{ex-M2}, among which and the six  terms in \eqref{0m} and \eqref{1m}, the second and third terms on the right hand side of \eqref{ex-M2} are  the most representative and complicated. Before  proceeding any further, let us write $[\Dl_\eta\mathcal{L}]_k(\tau,k(t-\tau)^{\rm ap}    )$ and $[\Dl_\eta\mathcal{N}]_k(\tau, k(t-\tau)^{\rm ap}    )$ explicitly. From \eqref{def-L}, one easily deduces that
\begin{align}\label{ex-2L}
[\Dl_\eta\mathcal{L}]_k(\tau,k(t-\tau)^{\rm ap}    )\nn=&2e^{2\nu\tau}\hat{\rho}_k(\tau)\fr{k}{|k|^2}\cdot(\nb_\eta \hat{\mu})\left(k(t-\tau)^{\mathrm{ap}} \right)\\
&+e^{2\nu\tau}\hat{\rho}_k(\tau)(t-\tau)^{\mathrm{ap}}(\Dl_\eta\hat{\mu})\left(k(t-\tau)^{\mathrm{ap}}\right).
\end{align}
Recalling the definition of $\mathcal{N}_k(\tau, \bar{\eta}(\tau;k,\eta))$ in \eqref{def-N}, after  lengthy but routine calculations, we obtain
\begin{align}\label{ex-2N}
[\Dl_\eta\mathcal{N}]_k(\tau, k(t-\tau)^{\rm ap}    )={\rm I}+{\rm II}+{\rm III},
\end{align}
where
\begin{align}\label{ex-2N-1}
{\rm I}\nn=&-2ie^{\nu\tau}\sum_{l\in\Z^n_*}\hat{E}_l(\tau)\cdot(\nb_\eta\hat{f})_{k-l}\left(\tau, kt^{\rm ap}-l \tau^{\mathrm{ap}}\right)\\
&-i\sum_{l\in\Z^n_*}\hat{E}_l(\tau)\cdot k(t-\tau)^{\mathrm{ap}}(\Dl_\eta\hat{f})_{k-l}\left(\tau, kt^{\rm ap}-l \tau^{\mathrm{ap}}\right)={\rm I}_1+{\rm I}_2,\\
\label{ex-2N-2}
{\rm II}\nn=&-2\nu e^{2\nu\tau}(\widehat{M_\theta})_k(\tau)\hat{\mu}\left(k(t-\tau)^{\mathrm{ap}}\right)\\
\nn&-4\nu e^{2\nu\tau}(\widehat{M_\theta})_k(\tau)k(t-\tau)^{\mathrm{ap}}\cdot(\nb_\eta\hat{\mu})\left(k(t-\tau)^{\mathrm{ap}}\right)\\
\nn&-\nu e^{2\nu\tau}(\widehat{M_\theta})_k(\tau)\left|k(t-\tau)^{\mathrm{ap}}\right|^2(\Dl_\eta\hat{\mu})\left(k(t-\tau)^{\mathrm{ap}}\right)\\
\nn&-2i\nu e^{2\nu\tau}(\widehat{M_1})_k(\tau)\cdot(\nb_\eta\hat{\mu})\left(k(t-\tau)^{\mathrm{ap}} \right)\\
&-i\nu e^{2\nu\tau}(\widehat{M_1})_k(\tau)\cdot k(t-\tau)^{\mathrm{ap}}(\Dl_\eta\hat{\mu})\left(k(t-\tau)^{\mathrm{ap}}\right),\\
\label{ex-2N-3}
{\rm III}\nn=&-2\nu e^{2\nu\tau}\sum_{l\in\Z^n_*}\hat{\rho}_l(\tau)\hat{f}_{k-l}\left(\tau, kt^{\rm ap}-l \tau^{\mathrm{ap}}\right)\\
\nn&-4\nu e^{\nu\tau}\sum_{l\in\Z^n_*}\hat{\rho}_l(\tau)k(t-\tau)^{\mathrm{ap}}\cdot(\nb_\eta\hat{f})_{k-l}\left(\tau, kt^{\rm ap}-l \tau^{\mathrm{ap}}\right)\\
\nn&-\nu \sum_{l\in\Z^n_*}\hat{\rho}_l(\tau)\left|k(t-\tau)^{\mathrm{ap}}\right|^2(\Dl_\eta\hat{f})_{k-l}\left(\tau, kt^{\rm ap}-l \tau^{\mathrm{ap}}\right)\\
\nn&-2 \nu \sum_{l\in\Z^n_*}\hat{\rho}_l(\tau)(\Dl_\eta\hat{f})_{k-l}\left(\tau, kt^{\rm ap}-l \tau^{\mathrm{ap}}\right)\\
\nn&-\nu \sum_{l\in\Z^n_*}\hat{\rho}_l(\tau)e^{-\nu\tau}k(t-\tau)^{\mathrm{ap}}\cdot(\nb_\eta\Dl_\eta\hat{f})_{k-l}\left(\tau, kt^{\rm ap}-l \tau^{\mathrm{ap}}\right)\\
\nn&-2\nu e^{2\nu\tau}\sum_{l\in\Z^n}(\widehat{M_\theta})_l(\tau)\hat{f}_{k-l}\left(\tau, kt^{\rm ap}-l \tau^{\mathrm{ap}}\right)\\
\nn&-4\nu e^{\nu\tau}\sum_{l\in\Z^n}(\widehat{M_\theta})_l(\tau)k(t-\tau)^{\mathrm{ap}}\cdot(\nb_\eta\hat{f})_{k-l}\left(\tau, kt^{\rm ap}-l \tau^{\mathrm{ap}}\right)\\
\nn&-\nu \sum_{l\in\Z^n}(\widehat{M_\theta})_l(\tau)\left|k(t-\tau)^{\mathrm{ap}}\right|^2(\Dl_\eta\hat{f})_{k-l}\left(\tau, kt^{\rm ap}-l \tau^{\mathrm{ap}}\right)\\
\nn&-2\nu i e^{\nu\tau}\sum_{l\in\Z^n_*}(\widehat{M_1})_l(\tau)\cdot(\nb_\eta\hat{f})_{k-l}\left(\tau, kt^{\rm ap}-l \tau^{\mathrm{ap}}\right)\\
&-\nu i \sum_{l\in\Z^n_*}(\widehat{M_1})_l(\tau)\cdot k(t-\tau)^{\mathrm{ap}}(\Dl_\eta\hat{f})_{k-l}\left(\tau, kt^{\rm ap}-l \tau^{\mathrm{ap}}\right).
\end{align}

Now we first consider 
\be\label{2L}
\sum_{k\in\Z^n_*}\int_0^{T^*}\left[\la t\ra^b {\bf B}_k(t)e^{-2\nu t}\int_0^tS_k(t-\tau)[\Dl_\eta\mathcal{L}]_k(\tau,k(t-\tau)^{\rm ap}    )d\tau\right]^2dt.
\ee
The treatment of \eqref{2L} is slightly more direct than that of ${\bf N}_{E;0}$ in Section \ref{sec-L2rho}. Indeed,   similar to \eqref{BS-up2} and \eqref{BS-up4}, it holds that
\begin{align}
\la t\ra^b {\bf B}_k(t)S_k(t-\tau)\nn\les&S^{\fr12}_k(t-\tau)\la\tau\ra^b {\bf B}_k(\tau) e^{c\lm(\tau)\left\la e^{-\nu\tau}k (t-\tau)^{\mathrm{ap}}\right\ra^s}\\
\nn&+S^{\fr12}_k(t-\tau)\left(\la \tau\ra^be^{\dl\nu^\fr13\tau}|k|^\fr12e^{c\lm(\tau)\left\la k, k\tau^{\mathrm{ap}}\right\ra^s}\right)\\
\nn&\times e^{\lm(\tau)\left\la e^{-\nu\tau}k (t-\tau)^{\mathrm{ap}}\right\ra^s}\left\la e^{-\nu \tau} k(t-\tau)^{\mathrm{ap}} \right\ra^{\sig_0}\\
\nn\les&\fr{S^{\fr12}_k(t-\tau)}{\la k(t-\tau)^{\rm ap}\ra^2}\left(\la\tau\ra^b {\bf B}_k(\tau)\right) e^{\lm(0)\left\la k (t-\tau)^{\mathrm{ap}}\right\ra^s}\left\la  k(t-\tau)^{\mathrm{ap}} \right\ra^{\sig_0+2}.
\end{align}
Then similar to \eqref{NHL0}, we arrive at
\begin{align}\label{e-2L}
\nn&\sum_{k\in\Z^n_*}\int_0^{T^*}\left[\la t\ra^b {\bf B}_k(t)e^{-2\nu t}\int_0^tS_k(t-\tau)[\Dl_\eta\mathcal{L}]_k(\tau,k(t-\tau)^{\rm ap}    )d\tau\right]^2dt\\
\nn\les&\sum_{|\al|\le2}\sum_{k\in\Z^n_*}\int_0^{T^*}\Bigg[\int_0^t\la \tau \ra^b \left|{\bf B}\hat{\rho}_k(\tau)\right|\fr{S_k^{\fr12}(t-\tau)}{\la k(t-\tau)^{\rm ap} \ra^2} e^{\lm(0)\left\la k (t-\tau)^{\mathrm{ap}}\right\ra^s}\\
\nn&\times\left\la  k(t-\tau)^{\mathrm{ap}} \right\ra^{\sig_0+3}\left|(\pr_\eta^\al \hat{\mu})\left(k(t-\tau)^{\mathrm{ap}} \right)\right|d\tau\Bigg]^2dt\\
\les&\left(\sum_{|\al|\le2} \left\|e^{\lm(0)\left\la \eta\right\ra^s}\left\la \eta\right\ra^{\sig_0+3}(\pr^\al_\eta\hat{\mu})\left(\eta\right) \right\|_{L^\infty_\eta}\right)^2\left\|\la t\ra^b{\bf B}\rho\right\|_{L^2_{t,x}}^2
\les ({\rm C}_0\eps\nu^{\gamma})^2,
\end{align}
which is consistent with Proposition \ref{prop: btsp} provided we choose ${\rm C}_2\gg {\rm C}_0$.

Next we turn to investigate
\be\label{2N}
\sum_{k\in\Z^n_*}\int_0^{T^*}\left[\la t\ra^b {\bf B}_k(t)e^{-2\nu t}\int_0^tS_k(t-\tau)[\Dl_\eta\mathcal{N}]_k(\tau,k(t-\tau)^{\rm ap}    )d\tau\right]^2dt.
\ee
On can see from \eqref{ex-2N}--\eqref{ex-2N-3} that there are many terms involved in \eqref{2N}, while these contributions can be treated similarly by using the techniques in Section \ref{sec-L2rho}. For the collisionless contributions,  two terms ${\rm I}_1$ and ${\rm I}_2$
are involved, see \eqref{ex-2N-1}. Clearly, ${\rm I}_2$ is the leading term, which leads to the contribution
\begin{align*}
&\sum_{k\in\Z^n_*}\int_0^{T^*}\Bigg[\la t\ra^b {\bf B}_k(t)e^{-2\nu t}\int_0^tS_k(t-\tau)\sum_{l\in\Z^n_*}\hat{\rho}_l(\tau)\fr{l}{|l|^2}\cdot k(t-\tau)^{\mathrm{ap}}\\
&\times (\Dl_\eta\hat{f})_{k-l}\left(\tau, kt^{\rm ap}-l \tau^{\mathrm{ap}}\right)d\tau\Bigg]^2dt.
\end{align*}
This can be treated exactly in the same as ${\bf N}_{E}$ in Section \ref{Sec: Nonlinear collisionless contributions} with $\hat{f}_{k-l}\left(\tau, kt^{\rm ap}-l \tau^{\mathrm{ap}}\right)$ replaced by $e^{-2\nu\tau}(\Dl_\eta\hat{f})_{k-l}\left(\tau, kt^{\rm ap}-l \tau^{\mathrm{ap}}\right)$. For ${\rm I}_1$, let us denote
\begin{align}
{\bf \tl N}_{E}\nn=&\sum_{k\in\Z^n_*}\int_0^{T^*}\Bigg[\la t\ra^b {\bf B}_k(t)e^{-2\nu t}\int_0^tS_k(t-\tau)\sum_{l\in\Z^n_*}\fr{\hat{\rho}_l(\tau)}{|l|^2}l\cdot  e^{\nu\tau} (\nb_\eta\hat{f})_{k-l}\left(\tau, kt^{\rm ap}-l \tau^{\mathrm{ap}}\right)d\tau\Bigg]^2dt\\
\nn=&{\bf \tl N}_{E;\ne}^{\rm HL}+{\bf \tl N}_{E;\ne}^{\rm LH}+{\bf \tl N}_{E;0}.
\end{align}
Due to the absence of $k(t-\tau)^{\rm ap}$, it is easy to see  that now ${\bf \tl N}_{E;\ne}^{\rm LH}$ and ${\bf \tl N}_{E;0}$  can be treated in the same way as ${\bf N}_{E;\ne}^{\rm LH}$ and ${\bf  N}_{E;0}$, respectively, in Section \ref{Sec: Nonlinear collisionless contributions} with  $\hat{f}_{k-l}\left(\tau, kt^{\rm ap}-l \tau^{\mathrm{ap}}\right)$ replaced by $e^{-\nu\tau}(\nb_\eta\hat{f})_{k-l}\left(\tau, kt^{\rm ap}-l \tau^{\mathrm{ap}}\right)$. Compared with ${\bf N}_{E;\ne}^{\rm HL}$ in Section \ref{Sec: Nonlinear collisionless contributions}, 
${\bf \tl N}_{E;\ne}^{\rm HL}$ is much easier to treat. Indeed, arguing as in Section \ref{Sec: Nonlinear collisionless contributions} based on $\tau\le\fr{t}{2}$ or $\fr{t}{2}\le\tau\le t$, ignoring the factor $\la \tau\ra^{1-3\gamma}$ in \eqref{small-tau1}, for $k\ne l$, we infer from \eqref{BS-up1} that
\begin{align*}
&\la t\ra^b {\bf B}_k  (t)S_k(t-\tau)\left|e^{-\nu\tau}(\nb_\eta\hat{f})_{k-l}\left(\tau, kt^{\rm ap}-l \tau^{\mathrm{ap}}\right)\right|{\bf 1}_{\left|k-l,kt^{\rm ap}-l \tau^{\mathrm{ap}}\right|\le2\left|l,l \tau^{\mathrm{ap}}\right|}\\
\les&\la \tau\ra^b{\bf B}_l(\tau)S_k^\fr12(t-\tau)e^{-\fr12\underline{c}\la k-l,kt^{\rm ap}-l\tau^{\rm ap}\ra^s}\left(e^{\lm(\tau)\la k-l,kt^{\rm ap}-l\tau^{\rm ap}\ra^s}\left|e^{-\nu\tau}(\nb_\eta\hat{f})_{k-l}\left(\tau, kt^{\rm ap}-l \tau^{\mathrm{ap}}\right)\right|\right)\\
\les&\la \tau\ra^b{\bf B}_l(\tau)S_k^\fr12(t-\tau)e^{-\fr12\underline{c}\la k-l,kt^{\rm ap}-l\tau^{\rm ap}\ra^s}e^{-\fr12\dl_1\nu^\fr13\tau}\sup_t\left\|f^w_{\ne}(t)\right\|_{\mathcal{E}^{\sig_1,\fr13}_m}.
\end{align*}
Then similar to \eqref{schur1}, we immediately have
\begin{align}
{\bf \tl N}_{E;\ne}^{\mathrm{HL}}\nn\les&\eps^2\nu^{2\gamma}\int_0^{T^*}\sum_{k\in\mathbb{Z}^n_*}\Bigg[\sum_{l\in\mathbb{Z}^n_*}\int_0^tS^\fr12_k(t-\tau) e^{-\fr12\dl_1\nu^\fr13\tau}e^{-\underline{c}\la k-l,kt^{\rm ap}-l \tau^{\mathrm{ap}}\ra^s}| {\bf B}  \hat{\rho}_l(\tau)|d\tau\Bigg]^2dt\\
\les&\eps^2\nu^{2\gamma}\| {\bf B}  \rho\|_{L^2_{t,x}}^2.
\end{align}

The collisional contributions stemming from ${\rm II}$ and ${\rm III}$ can be treated by following the way of treating the collisions in Section \ref{Sec: Nonlinear collisionless contributions} without additional complications. To explain the appearance of up to  the third derivative on $\hat{f}_0(t,\eta)$ with respect to  $\eta$ in \eqref{H-f0}, here we only sketch the estimate of the term that $(\nb_\eta\Dl_\eta\hat{f})_0\left(\tau, k(t^{\rm ap}-\tau^{\mathrm{ap}})\right)$ is involved. To this end, let us denote
\beno
{\rm III}_{5;0}=-\nu \hat{\rho}_k(\tau)e^{-\nu\tau}k(t-\tau)^{\mathrm{ap}}\cdot(\nb_\eta\Dl_\eta\hat{f})_{0}\left(\tau, k(t^{\rm ap}-\tau^{\mathrm{ap}})\right).
\eeno
Similar to \eqref{NHL0} and \eqref{NLH0}, we have
\begin{align}
\nn&\sum_{k\in\Z^n_*}\int_0^{T^*}\left[\la t\ra^b {\bf B}_k(t)e^{-2\nu t}\int_0^tS_k(t-\tau){\rm III}_{5;0}d\tau\right]^2dt\\
\nn\les&\nu^2\Bigg[\sup_t\left\| e^{-3\nu t}e^{\lm(t)\left\la \eta\right\ra^s}\la e^{\nu t}\eta\ra^3(\nb_\eta\Dl_\eta\hat{f})_0\left(t, \eta\right)\right\|_{L^\infty_\eta}^2\\
\nn&+\sup_{0\le\tau\le T^*}\left(e^{-(m+3)\nu \tau}\left\|(\underline{A}^{\sig_0+1,\fr25}\nb_\eta\Dl_\eta\hat{f})_0(\tau)\right\|_{H^{\fr{n-1}{2}+}_\eta}\right)^2\Bigg]\left\|\la t\ra^b {\bf B}\rho\right\|^2_{L^2_{t,x}}\\
\les& \eps^2\nu^{2\gamma+2}\left\|\la t\ra^b {\bf B}\rho\right\|^2_{L^2_{t,x}}.
\end{align}

\section{Estimates on the distribution function }\label{Estimates on the distribution function}
In this section, we improve \eqref{H-f1}--\eqref{H-f3}.
The following partition of unity 
\be\label{pou}
1={\bf 1}_{ \left|l, lt^{\rm ap}\right|<\fr12\left|k-l,\eta-lt^{\rm ap} \right| }+{\bf 1}_{\left|k-l,\eta-lt^{\rm ap} \right|\le 2\left|l, lt^{\rm ap}\right|}
\ee
will be used frequently.

\begin{lem}\label{lem-A}
Let $(\sig, \frak{e})\in\left\{ (\sig_0+1, \fr25),  ( \sig_1 , \fr13), ( \sig_1 , \fr25)\right\}$. Then there exists a positive constant $c\in (0, 1)$ such that
\begin{align}\label{A-LH}
 {\bf A}^{\sig,\frak{e}}_k(t,\eta){\bf 1}_{\left|l, lt^{\rm ap}\right|<\fr12\left|k-l,\eta-lt^{\rm ap} \right|}\les e^{-(\dl \nu^\fr13-\dl_1\nu^{\frak{e}})t}e^{\dl \nu^\fr13t}e^{c\lm(t)\la l,lt^{\rm ap}\ra^s}  {\bf A}^{\sig,\frak{e}}_{k-l}\left(t, \eta-lt^{\rm ap}\right),
\end{align}
and
\begin{align}\label{A-HL}
{\bf A}^{\sig,\frak{e}}_k(t,\eta){\bf 1}_{\left|k-l,\eta-lt^{\rm ap} \right|\le 2\left|l, lt^{\rm ap}\right|}
\les e^{c\lm(t)\la k-l,\eta-lt^{\rm ap}\ra^s}\left(e^{\dl_1\nu^\frak{e}t}e^{\lm(t)\left\la l, lt^{\rm ap}\right\ra^s}\left\la l, lt^{\rm ap}\right\ra^{\sig}\right).
\end{align}
In particular, if $k=l$, and $(\sig,\frak{e})=(\sig_0+1, \fr25)$, we have
\begin{align}\label{A-HL0}
{\bf A}^{\sig_0+1,\fr25}_k(t,\eta){\bf 1}_{\left|\eta-kt^{\rm ap} \right|\le 2\left|k, kt^{\rm ap}\right|}
\nn\les&e^{-s(\dl\nu^\fr13-\dl_1\nu^\fr25)t}\la t\ra^se^{c\lm(t)\la \eta-kt^{\rm ap}\ra^s} |k|^\fr{s}{2}\left({\bf B}_k(t)\right)^s\\
&\times \left(e^{\dl_1\nu^\fr25 t}e^{\lm(t)\la k, kt^{\rm ap}\ra^s}\left\la k,kt^{\rm ap}\right\ra^{\sig_0+1}\right)^{1-s}.
\end{align}
\end{lem}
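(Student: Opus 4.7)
The plan is to decompose $\mathbf{A}^{\sigma,\mathfrak{e}}_k(t,\eta)$ into its four multiplicative factors—the enhanced-dissipation exponential $e^{{\bf 1}_{k\neq 0}\delta_1\nu^{\mathfrak{e}}t}$, the Gevrey weight $e^{\lambda(t)\langle k,\eta\rangle^s}$, the Sobolev correction $\langle k,\eta\rangle^{\sigma}$, and the ghost multiplier $\mathfrak{m}_k(t,\eta)$—and to bound each separately using the splitting $(k,\eta)=(k-l,\eta-lt^{\mathrm{ap}})+(l,lt^{\mathrm{ap}})$ together with the triangle-type inequalities \eqref{app3}, \eqref{app5} and the two-sided bounds on $\mathfrak{m}$ from \eqref{m1}. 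In each regime exactly one of the two summands dominates $(k,\eta)$, which dictates the direction of the splitting.

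For \eqref{A-LH}, the hypothesis $|l,lt^{\mathrm{ap}}|<\tfrac{1}{2}|k-l,\eta-lt^{\mathrm{ap}}|$ forces $\langle k,\eta\rangle\lesssim\langle k-l,\eta-lt^{\mathrm{ap}}\rangle$, so $\langle k,\eta\rangle^{\sigma}\lesssim\langle k-l,\eta-lt^{\mathrm{ap}}\rangle^{\sigma}$; the Gevrey weight splits via \eqref{app3} as $e^{\lambda(t)\langle k,\eta\rangle^s}\leq e^{\lambda(t)\langle k-l,\eta-lt^{\mathrm{ap}}\rangle^s}e^{c\lambda(t)\langle l,lt^{\mathrm{ap}}\rangle^s}$; and $\mathfrak{m}_k(t,\eta)\leq 1\lesssim\mathfrak{m}_{k-l}(t,\eta-lt^{\mathrm{ap}})$ by \eqref{m1}. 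For the dissipation factor, use the crude bound $e^{{\bf 1}_{k\neq 0}\delta_1\nu^{\mathfrak{e}}t}\leq e^{\delta_1\nu^{\mathfrak{e}}t}\cdot e^{{\bf 1}_{k-l\neq 0}\delta_1\nu^{\mathfrak{e}}t}$ and rewrite $e^{\delta_1\nu^{\mathfrak{e}}t}=e^{-(\delta\nu^{1/3}-\delta_1\nu^{\mathfrak{e}})t}e^{\delta\nu^{1/3}t}$ to match the advertised prefactor. For \eqref{A-HL}, the argument is symmetric with $(l,lt^{\mathrm{ap}})$ now dominating: $\langle k,\eta\rangle^{\sigma}\lesssim\langle l,lt^{\mathrm{ap}}\rangle^{\sigma}$, \eqref{app5} gives $e^{\lambda(t)\langle k,\eta\rangle^s}\leq e^{\lambda(t)\langle l,lt^{\mathrm{ap}}\rangle^s}e^{c\lambda(t)\langle k-l,\eta-lt^{\mathrm{ap}}\rangle^s}$, $e^{{\bf 1}_{k\neq 0}\delta_1\nu^{\mathfrak{e}}t}\leq e^{\delta_1\nu^{\mathfrak{e}}t}$ is absorbed into the RHS directly, and $\mathfrak{m}_k\leq 1$.

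The interpolation estimate \eqref{A-HL0} then follows by specializing \eqref{A-HL} to $l=k$, which yields
\begin{equation*}
\mathbf{A}^{\sigma_0+1,2/5}_k(t,\eta){\bf 1}_{|\eta-kt^{\mathrm{ap}}|\leq 2|k,kt^{\mathrm{ap}}|}\lesssim e^{\delta_1\nu^{2/5}t}e^{\lambda(t)\langle k,kt^{\mathrm{ap}}\rangle^s}e^{c\lambda(t)\langle \eta-kt^{\mathrm{ap}}\rangle^s}\langle k,kt^{\mathrm{ap}}\rangle^{\sigma_0+1},
\end{equation*}
and then splitting each $(k,kt^{\mathrm{ap}})$-dependent factor into an $s$-power and a $(1-s)$-power, namely
\begin{equation*}
\langle k,kt^{\mathrm{ap}}\rangle^{\sigma_0+1}=\langle k,kt^{\mathrm{ap}}\rangle^{s\sigma_0}\cdot\langle k,kt^{\mathrm{ap}}\rangle^{(1-s)(\sigma_0+1)}\cdot\langle k,kt^{\mathrm{ap}}\rangle^{s},\qquad e^{\lambda(t)\langle k,kt^{\mathrm{ap}}\rangle^s}=\bigl(e^{\lambda(t)\langle k,kt^{\mathrm{ap}}\rangle^s}\bigr)^{s}\cdot\bigl(e^{\lambda(t)\langle k,kt^{\mathrm{ap}}\rangle^s}\bigr)^{1-s}.
\end{equation*}
The trailing factor $\langle k,kt^{\mathrm{ap}}\rangle^s$ is absorbed by the elementary bound $\langle k,kt^{\mathrm{ap}}\rangle\lesssim|k|\langle t^{\mathrm{ap}}\rangle\lesssim|k|\langle t\rangle$, valid for $k\neq 0$. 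The prefactor $e^{-s(\delta\nu^{1/3}-\delta_1\nu^{2/5})t}$ is arranged precisely so that $(e^{\delta\nu^{1/3}t})^s$ inside $({\bf B}_k(t))^s$ combines with $(e^{\delta_1\nu^{2/5}t})^{1-s}$ coming from the second factor to reproduce exactly $e^{\delta_1\nu^{2/5}t}$, matching the dissipation factor in $\mathbf{A}^{\sigma_0+1,2/5}_k$.

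The proof is essentially bookkeeping; the only point requiring care is verifying that the $\nu$-exponents align under the $s/(1-s)$ convex combination and that no indicator-function issue at $k=0$ or $k-l=0$ spoils the enhanced-dissipation accounting. These are straightforward but must be tracked explicitly to land on the precise constants appearing on the right-hand side of \eqref{A-HL0}.
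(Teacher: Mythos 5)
Your proof is correct and follows essentially the same route as the paper: both \eqref{A-LH} and \eqref{A-HL} come from splitting $(k,\eta)=(k-l,\eta-lt^{\mathrm{ap}})+(l,lt^{\mathrm{ap}})$ and applying \eqref{app3}, \eqref{app5} and \eqref{m1} factor by factor, and \eqref{A-HL0} is obtained exactly as in the paper by raising the $l=k$ case of \eqref{A-HL} to the powers $1-s$ and $s$, inserting $e^{\delta_1\nu^{2/5}t}=e^{-(\delta\nu^{1/3}-\delta_1\nu^{2/5})t}e^{\delta\nu^{1/3}t}$ in the $s$-power factor, and using ${\bf 1}_{k\ne 0}\langle k,kt^{\mathrm{ap}}\rangle\lesssim |k|\langle t\rangle$ to recognize $|k|^{s/2}({\bf B}_k(t))^s\langle t\rangle^s$. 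Your explicit exponent bookkeeping is simply a faithful expansion of the paper's ``follows immediately.''
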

\begin{proof}
Noting that $|k,\eta|\le |k-l,\eta-lt^{\rm ap}|+|l,lt^{\rm ap}|$, using \eqref{app3} and \eqref{app5}, we get \eqref{A-LH} and \eqref{A-HL} respectively. To prove \eqref{A-HL0}, using  \eqref{app5} again yields
\begin{align}
{\bf A}^{\sig_0+1,\fr25}_k(t,\eta){\bf 1}_{\left|\eta-kt^{\rm ap} \right|\le 2\left|k, kt^{\rm ap}\right|}
\nn\les&e^{c\lm(t)\la \eta-kt^{\rm ap}\ra^s}\left(e^{\dl_1\nu^\fr25t}e^{\lm(t)\la k, kt^{\rm ap}\ra^s}\left\la k,kt^{\rm ap}\right\ra^{\sig_0+1}\right)^{1-s}\\
\nn&\times \left(e^{-(\dl\nu^\fr13-\dl_1\nu^\fr25)t} e^{\dl\nu^\fr13t}e^{\lm(t)\la k, kt^{\rm ap}\ra^s}\left\la k,kt^{\rm ap}\right\ra^{\sig_0+1}\right)^{s}.
\end{align}
Combining this with the definition of ${\bf B}     _k(t)$ and the fact 
${\bf 1}_{k\ne0}\left\la k,  kt^{\rm ap}\right\ra\les |k|\left\la t^{\rm ap}\right\ra\les |k|\la t\ra$,
we find that \eqref{A-HL0} follows immediately. The proof of Lemma \ref{lem-A} is completed.
\end{proof}
Let $A(t,\nb)\in\left\{{\bf A}^{\sig_0+1, \fr25} (t,\nb), {\bf A}^{ \sig_1 , \fr13} (t,\nb), {\bf A}^{ \sig_1 , \fr25} (t,\nb)\right\}$, $\al\in\mathbb{N}^n$ be any fixed multi-index.
From  \eqref{mul-A1},  \eqref{mul-A2} and \eqref{eq-fw},   we have the following energy identity
\begin{align}\label{en-f-H}
\nn&\fr12\fr{d}{dt}\left(e^{-2|\al|\nu t}\left\| A(v^\al f^w)\right\|_{L^2}^2\right)-e^{-2|\al|\nu t}\dot{\lm}(t)\left\|\la \nb\ra^{\fr{s}{2}}A(v^\al f^w) \right\|_{L^2}^2\\
\nn&+e^{-2|\al|\nu t}\left\|\sqrt{-\fr{\pr_t\frak{m}}{\frak{m}}} A(v^\al f^w)\right\|_{L^2}^2+\nu e^{-2|\al|\nu t}\left\|\pr_v^tA(v^\al f^w) \right\|_{L^2}^2\\
\nn&+\nu|\al| e^{-2|\al|\nu t}\| A(v^\al f^w)\|_{L^2}^2+2n\nu\lm_1(t)e^{-2|\al|\nu t}\left\| A(v^\al f^w)\right\|_{L^2}^2\\
\nn&+\left(2\nu \lm_1(1-2\lm_1)-\dot{\lm}_1(t)\right)e^{-2(|\al|+1)\nu t}\left\|A(vv^\al f^w)\right\|_{L^2}^2 \\
=&\dl_1\nu^{\frak e}e^{-2|\al|\nu t}\left\|A(v^\al f^w_{\ne})\right\|_{L^2}^2+\mathsf{L}+\mathsf{CL}+\sum_{1\le j\le4}\mathsf{LE}_j+\mathsf{NE}_{E}+\mathsf{N}+\mathsf{CN}_0+\mathsf{CN}_1+\mathsf{CN}_2,
\end{align}
where $\frak e\in\{\frac{1}{3},\frac{2}{5}\}$ is determined by $A(t,\nabla)$, and
\begin{align*}
\mathsf{L}=&-e^{-2|\al|\nu t}\frak{Re}\sum_{k\in\Z^n}\int_\eta  A_k(t,\eta)\hat{E}_k(t)\cdot\pr^\al_\eta \left(\mathcal{F}\left[\nb_v\mu e^{\lm_1 |v|^2} \right](\bar{\eta}(t; k,\eta))\right)   A\pr_\eta^\al(\overline{\widehat{f^w}})_k(t,\eta)d\eta,\\
\mathsf{CL}=&\nu e^{-2|\al|\nu t}\frak{Re}\sum_{k\in\mathbb{Z}^n}\int_\eta  A\pr_\eta^\al\left(\mathcal{F}\left[(M_\theta\Dl_v\mu-M_1\cdot\nb_v\mu)e^{\lm_1 |v|^2}\right]_k(t,\bar{\eta}(t;k,\eta))\right)\\
\nn&\times A\pr_\eta^\al(\overline{\widehat{f^w}})_k(t,\eta)d\eta,\\
\mathsf{LE}_1=&2\dot{\lm}_1(t)e^{-2(|\al|+1)\nu t}\frak{Re}\sum_{k\in\Z^n}\int_\eta  A(\nb_\eta \pr_\eta^\al\widehat{f^w})_k\left(t, \eta \right)\cdot \nb_\eta  A_k(t,\eta)\pr^\al_\eta(\overline{\widehat{f^w}})_k(t,\eta) d\eta,\\
\mathsf{LE}_2=&-4\nu \lm_1(1-2\lm_1)e^{-2(|\al|+1)\nu t}\frak{Re}\sum_{k\in\Z^n}\int_\eta  A(\nb_\eta \pr_\eta^\al\widehat{f^w})_k\left(t, \eta \right)\cdot \nb_\eta  A_k(t,\eta)\pr^\al_\eta(\overline{\widehat{f^w}})_k(t,\eta) d\eta,\\
\mathsf{LE}_3=&4\nu\lm_1 e^{-\nu t}e^{-2|\al|\nu t}\frak{Re}\sum_{k\in\Z^n}\int_\eta A\nb_\eta\cdot \pr_\eta^\al\left(\bar{\eta}(t;k,\eta) \widehat{f^w}\right)_k\left(t, \eta \right) A\pr^\al_\eta(\overline{\widehat{f^w}})_k(t,\eta) d\eta, \\
\mathsf{LE}_4=&-\nu e^{-2|\al|\nu t}\frak{Re}\sum_{k\in\mathbb{Z}^n}\int_\eta  A[\pr_\eta^\al,|\bar{\eta}|^2](\widehat{f^w})_k(t,\eta) A\pr_\eta^\al(\overline{\widehat{f^w}})_k(t,\eta)d\eta,\\
\mathsf{NE}_E=&2\lm_1e^{-2|\al|\nu t}\frak{Re}\sum_{k,l\in\Z^n}\int_\eta   A_k(t,\eta)\hat{E}_l(t)\cdot i e^{-\nu t} (\nb_\eta\pr^\al_\eta\widehat{f^w})_{k-l}\left(t,\eta-lt^{\rm ap}\right)A\pr^\al_\eta (\overline{\widehat{f^w}})_k(t,\eta)d\eta,\\
\mathsf{N}=&-e^{-2|\al|\nu t}\frak{Re}\sum_{k, l\in\mathbb{Z}^n}\int_\eta  A\hat{E}_l(t)\cdot i\pr_\eta^\al\left[\bar{\eta}(t; k,\eta)(\widehat{f^w})_{k-l}\left(t, \eta-lt^{\rm ap}\right)\right] A\pr_\eta^\al(\overline{\widehat{f^w}})_k(t,\eta)d\eta,\\
\mathsf{CN}_{\ell}=& e^{-2|\al|\nu t}\frak{Re}\sum_{k\in\mathbb{Z}^n}\int_\eta  A\pr_\eta^\al\left(\mathcal{F}\big[\mathcal{M}_{\ell}[g^w]\big]_k(t,\bar{\eta}(t;k,\eta))\right) A\pr_\eta^\al(\overline{\widehat{f^w}})_k(t,\eta)d\eta,\quad \ell=0,1,2.
\end{align*}

\subsection{Linear contributions}
In this section, we study the linear contributions, namely, we treat $\mathsf{L}$, $\mathsf{CL}$, and $\mathsf{LE}_j, \, j=1,2, 3,4$. Let us consider $\mathsf{CL}$ first, which is from the linearization of the Fokker-Planck operator.
The two terms in $\mathsf{CL}$ can be treated in the same way, it suffices to consider 
\begin{align}
\mathsf{CL}_{M_\theta}
\nn=&\nu e^{-2|\al|\nu t}\frak{Re}\sum_{k\in\mathbb{Z}^n_*}\int_\eta  A_k(\eta)(\widehat{M_\theta})_k(t)\pr_\eta^\al\left(\mathcal{F}\left[\Dl_v\mu e^{\lm_1 |v|^2}\right](\bar{\eta}(t;k,\eta))\right) 
A\pr_\eta^\al(\overline{\widehat{f^w}})_k(\eta)d\eta\\
\nn&+\nu e^{-2|\al|\nu t}\frak{Re}\int_\eta  A_0(t,\eta)(\widehat{M_\theta})_0(t)\pr_\eta^\al\left(\mathcal{F}\left[\Dl_v\mu e^{\lm_1 |v|^2}\right](e^{\nu t}\eta)\right) A\pr_\eta^\al(\overline{\widehat{f^w}})_0(t,\eta)d\eta\\
=&\mathsf{CL}_{M_\theta;\ne}+\mathsf{CL}_{M_\theta;0}.
\end{align}
Note that
\begin{align}\label{Dl-mu}
\pr_\eta^\al\left(\mathcal{F}\left[\Dl_v\mu e^{\lm_1 |v|^2}\right](t,\bar{\eta}(t;k,\eta))\right)=e^{|\al|\nu t}\pr_\eta^\al\left(\mathcal{F}\left[\Dl_v\mu e^{\lm_1 |v|^2}\right]\right)(t,\bar{\eta}(t;k,\eta)).
\end{align}
Then
\begin{align}\label{e-CLMtheta0}
\mathsf{CL}_{M_\theta;0}\nn\le& \nu e^{-|\al|\nu t}\left| (\widehat{M_\theta})_0(t)\right|\int_\eta e^{\lm(0)\la e^{\nu t}\eta\ra^s}\la e^{\nu t}\eta\ra^{\sig_0+1}\left|\pr_\eta^\al\left(\mathcal{F}\left[\Dl_v\mu e^{\lm_1 |v|^2}\right]\right)(t,e^{\nu t}\eta) \right| \\
\nn&\times \left|A\pr_\eta^\al({\widehat{f^w}})_0(t,\eta)\right|d\eta\\
\les&\nu e^{-\fr{n}{2}\nu t}\left| (\widehat{M_\theta})_0(t)\right| \left(e^{-|\al|\nu t}\left\|A(v^\al f^w)\right\|_{L^2}\right),
\end{align}
where we have used the fact
\be\label{mu-L2}
\left(\int_\eta e^{2\lm(0)\la e^{\nu t}\eta\ra^s}\la e^{\nu t}\eta\ra^{2(\sig_0+1)}\left|\pr_\eta^\al\left(\mathcal{F}\left[\Dl_v\mu e^{\lm_1 |v|^2}\right]\right)(t,e^{\nu t}\eta) \right|^2d\eta\right)^\fr12\les e^{-\fr{n}{2}\nu t}.
\ee
Since $\left| (\widehat{M_\theta})_0(t)\right|$ decays very fast, see \eqref{e-Mtheta0}, the estimate \eqref{e-CLMtheta0}  is consistent with Proposition \ref{prop: btsp} for sufficiently small $\nu$.

Next turn to $\mathsf{CL}_{M_\theta;\ne}$. By using the partition of unity
\be\label{pou-0}
1={\bf 1}_{ \left|k, kt^{\rm ap}\right|<\fr12\left|\eta-kt^{\rm ap} \right| }+{\bf 1}_{\left|\eta-kt^{\rm ap} \right|\le 2\left|k, kt^{\rm ap}\right|},
\ee
we divide $\mathsf{CL}_{M_\theta;\ne}$ into two parts:
\begin{align}
\mathsf{CL}_{M_\theta;\ne}\nn\les&\nu e^{-|\al|\nu t}\sum_{k\in\mathbb{Z}^n_*}\int_\eta  A_k(t,\eta)\left|(\widehat{M_\theta})_k(t)\right| \left|\pr_\eta^\al\left(\mathcal{F}\left[\Dl_v\mu e^{\lm_1 |v|^2}\right]\right)(t,\bar{\eta}(t;k,\eta))\right|\\ 
\nn&\times\left|A\pr_\eta^\al({\widehat{f^w}})_k(t,\eta)\right|d\eta=\mathsf{CL}_{M_\theta;\ne}^{\rm LH}+\mathsf{CL}_{M_\theta;\ne}^{\rm HL}.
\end{align}
Now \eqref{A-LH} reduces to
\begin{align}\label{ALH0}
A_k(t,\eta){\bf 1}_{ \left|k, kt^{\rm ap}\right|<\fr12\left|\eta-kt^{\rm ap} \right| }
\les e^{\dl_1\nu^{\frak{e}}t}e^{c\lm(t)\la k,kt^{\rm ap}\ra^{s}}e^{\lm(t)\la \eta-kt^{\rm ap}\ra^s}\left\la \eta-kt^{\rm ap}\right\ra^{\sig},
\end{align}
for $\sig\in\{\sig_0+1,  \sig_1 \}$.
It follows this and \eqref{mu-L2} that
\begin{align}\label{CLLH}
\mathsf{CL}_{M_\theta;\ne}^{\rm LH}\nn\les&\nu e^{-|\al|\nu t} \sum_{k\in\mathbb{Z}^n_*}  e^{\dl_1\nu^{\frak{e}}t}e^{c\lm(t)\la k,kt^{\rm ap}\ra^{s}} \left|(\widehat{M_\theta})_k(t)\right|\left\|A\pr_\eta^\al({\widehat{f^w}})_k(t,\eta)\right\|_{L^2_\eta}\\
\les&\nu e^{-(\dl-\dl_1)\nu^\fr13 t}\left\| {\bf B}M_{\theta}\right\|_{L^2_x}\left( e^{-|\al|\nu t}\left\|A(v^\al f^w_{\ne})\right\|_{L^2}\right).
\end{align}
The estimate of $\mathsf{CL}_{M_\theta;\ne}^{\rm HL}$ is tricker  when $(\sig,\frak{e})=(\sig_0+1,\fr25)$. Indeed,   from \eqref{A-HL0}, \eqref{Dl-mu}, and \eqref{mu-L2},  we use a interpolation technic to obtain
\begin{align}\label{CLHL}
\mathsf{CL}_{M_\theta;\ne}^{\rm HL}\nn\les&\nu \la t\ra^s e^{-s(\dl-\dl_1)\nu^\fr13t}e^{-|\al|\nu t}\sum_{k\in\mathbb{Z}^n_*}\int_\eta |k|^\fr{s}{2}\left|A\pr_\eta^\al(\widehat{f^w})_k(t,\eta) \right| \\
\nn&\times \left|\underline{A}_k^{\sig_0+1,\fr25}\left(t, kt^{\rm ap}\right)(\widehat{M_\theta})_k(t)\right|^{1-s}\left|{\bf B}     (\widehat{M_\theta})_k(t) \right|^s \\
\nn&\times e^{c\lm(t)\la \eta-kt^{\rm ap}\ra^s}\left|\pr_\eta^\al\left(\mathcal{F}\left[\Dl_v\mu e^{\lm_1 |v|^2}\right]\right)(t,\bar{\eta}(t;k,\eta))\right|d\eta\\
\nn\les&\nu \la t\ra^s e^{-s(\dl-\dl_1)\nu^\fr13t}  e^{-|\al|\nu t}\sum_{k\in\Z^n_*}\left\||k|^\fr{s}{2}A\pr_\eta^\al(\widehat{f^w})_k(t,\eta) \right\|_{L^2_\eta}\\
\nn&\times \left|\underline{A}_k^{\sig_0+1,\fr25}\left(t, kt^{\rm ap}\right)(\widehat{M_\theta})_k(t)\right|^{1-s} \left|{\bf B}     (\widehat{M_\theta})_k(t) \right|^s\\
\nn\les&\nu \fr{\la t\ra^s}{\sqrt{-\dot{\lm}(t)}} e^{-s(\dl-\dl_1)\nu^\fr13t} e^{(1-s)m\nu t} \left(e^{-|\al|\nu t}\sqrt{-\dot{\lm}(t)}\left\|\la \nb\ra^{\fr{s}{2}}A(v^\al f^w)\right\|_{L^2}\right) \\
&\times \| {\bf B}M_\theta\|_{L^2}^s \left(e^{-m\nu t}\left\|{\bf 1}_{k\ne0}\underline{A}_k^{\sig_0+1,\fr25}\left(t, kt^{\rm ap}\right)(\widehat{M_\theta})_k(t)\right\|_{L^2_k}\right)^{1-s}.
\end{align}
Note that
\begin{align}\label{CLHL'}
\nu \fr{\la t\ra^s}{\sqrt{-\dot{\lm}(t)}} e^{-s(\dl-\dl_1)\nu^\fr13t} e^{(1-s)m\nu t}\les \nu\la t\ra^{s+\fr12+}e^{-\fr{s}{2}(\dl-\dl_1)\nu^\fr13t}\les \nu^\fr23.
\end{align}
Then it follows from the above two inequalities and \eqref{AM} that
\begin{align}\label{CLHL''}
\sum_{|\al|\le m}\int_0^t\fr{\mathsf{CL}_{M_\theta;\ne}^{\rm HL}(t')}{K_{|\al|}}dt'
\nn\les&\nu^{\fr12+\fr{s}{6}}\left(\int_0^t\sum_{|\al|\le m'} \fr{e^{-2|\al|\nu t'}}{K_{|\al|}}(-\dot{\lm}(t'))\left\|\la \nb\ra^{\fr{s}{2}}A(v^\al f^w)\right\|_{L^2}^2dt'\right)^\fr12 \\
&\times \| {\bf B}     \rho\|_{L^2_{t,x}}^{s} \left(\nu^\fr16\|f^w_{\ne}(t)\|_{L^2_t\mathcal{E}_m^{\sig_0+1,\fr25}}\right)^{1-s}.
\end{align}
Both \eqref{CLLH} and \eqref{CLHL''} are consistent with Proposition \ref{prop: btsp} for sufficiently small $\nu$.  When $(\sig,\frak{e})= (\sig_1, \fr13)  $ or $ (\sig_1, \fr25)$, $\mathsf{CL}_{M_\theta;\ne}^{\rm HL}$ can be bounded in the same way as $\mathsf{CL}_{M_\theta;\ne}^{\rm LH}$, see \eqref{CLLH}.

Compared with ${\sf CL}$, ${\sf L}$ is easier to treat, since $\fr{1}{|k|}$ hidden in $\hat{E}_k(t)$ will eliminate the extra half derivation on $\rho$ when $\rho$ is at high frequency and $\sig=\sig_0+1$. Indeed, for $\sig\in\{\sig_0+1,  \sig_1 \}$ and $k\ne0$, there holds
\be\label{elimi}
\fr{1}{|k|}\left\la k, kt^{\rm ap}\right\ra^{\sig}\les |k|^\fr12\left\la k, kt^{\rm ap}\right\ra^{\sig_0} \left\la t^{\rm ap}\right\ra.
\ee
Then \eqref{A-HL} reduces to
\be
 A_k(t,\eta){\bf 1}_{\left|\eta-kt^{\rm ap} \right|\le 2\left|k, kt^{\rm ap}\right|}
\les e^{c\lm(t)\la \eta-kt^{\rm ap}\ra^s} e^{-(\dl-\dl_1)\nu^\fr13t}\la t\ra{\bf B}_k(t).
\ee
Combining this with \eqref{ALH0}, similar to \eqref{CLLH}, we are led to 
\be
{\sf L}\les e^{-(\dl-\dl_1)\nu^\fr13 t}\fr{1}{\la t\ra^{\fr12+}} \left(\la t\ra^{\fr32+}\left\| {\bf B}\rho\right\|_{L^2_x}\right)\left( e^{-|\al|\nu t}\left\|A(v^\al f^w_{\ne})\right\|_{L^2}\right),
\ee
which is consistent with Proposition \ref{prop: btsp} provided we set ${\rm C}_f\gg {\rm C}_0$.

To bound $\mathsf{LE}_1$ and $\mathsf{LE}_2$, note that  for $\sig\in\{\sig_0+1,  \sig_1 \}$, there holds
\beno
\nb_\eta   A_k(t, \eta)=\left(s\lm(t)\fr{|k,\eta|\nb_\eta|\eta|}{\la k,\eta\ra^{2-s}}  +\sig\fr{|k,\eta|\nb_\eta|\eta|}{\la k,\eta\ra^2}+\fr{\nb_\eta\frak{m}_k(t,\eta)}{\frak{m}_k(t,\eta)}\right)A_k(t,\eta).
\eeno
If $k\ne0$, combining this with \eqref{m1'} yields
\be\label{A_eta1}
|\nb_\eta   A_k(t, \eta)|\les \left(\fr{1}{\la k,\eta\ra^{1-s}}+e^{\nu t}\nu^\fr13\right)A_k(t, \eta)\les e^{\nu t} A_k(t, \eta).
\ee
Whereas  for $k=0$, there holds
\begin{align}\label{A_eta2}
|\nb_\eta   A_0(t, \eta)|\les \fr{|\eta|}{\la \eta\ra^{2-s}}A_0(t, \eta)\les|\eta|A_0(t, \eta).
\end{align}
Now we treat $\mathsf{LE}_1$, which comes from the time-dependent Gaussian weight $e^{\lambda_1|v|^2}$. By choosing $|\dot\lambda_1(t)|$ small enough, we gain smallness even for this linear term. More precisely, by the definition of $\lm_1(t)$ in \eqref{def-lm1}, choosing $0<a_0\ll1$ but independent of $\nu$, we have
\beno
-\dot{\lm}_1(t)=a_0\nu \fr{t}{\la t\ra^{a_0+2}}\ll\nu.
\eeno
It follows from the above three inequalities that
\begin{align}\label{e-LE1}
\nn\mathsf{LE}_1
\nn\les&-\dot{\lm}_1(t)e^{-(2|\al|+1)\nu t}\left\|A(vv^\al f^w)\right\|_{L^2}\left(\|A(v^\al f^w_{\ne})\|_{L^2}+\left\|\pr_v^t A_0(v^\al f^w_0)\right\|_{L^2} \right)\\
\nn\le&\fr{1}{16}(-\dot{\lm}_1(t))e^{-2(|\al|+1)\nu t}\left\|A(vv^\al f^w)\right\|_{L^2}^2\\
\nn&+C(-\dot{\lm}_1(t))e^{-2|\al|\nu t}\left(\|A(v^\al f^w_{\ne})\|^2_{L^2}+\left\|\pr_v^t A_0(v^\al f^w_0)\right\|^2_{L^2} \right)\\
\nn\le&\fr{1}{16}(-\dot{\lm}_1(t))e^{-2(|\al|+1)\nu t}\left\|A(vv^\al f^w)\right\|_{L^2}^2\\
&+\fr{1}{16}e^{-2|\al|\nu t}\left(\nu^\fr13\|A(v^\al f^w_{\ne})\|^2_{L^2}+\nu\left\|\pr_v^t A_0(v^\al f^w_0)\right\|^2_{L^2} \right).
\end{align}
For $\mathsf{LE}_2$, which comes from the commutator $[v, A(t, \nabla)]$, the definition of $\lm_1(t)$ ensures that $\lm_1(t)\approx\nu\ll1$. Then similar to \eqref{e-LE1}, we obtain
\begin{align}\label{e-LE2}
\nn\mathsf{LE}_2\le&\fr{1}{16}\nu \lm_1(1-2\lm_1)e^{-2(|\al|+1)\nu t}\left\|A(vv^\al f^w)\right\|_{L^2}^2\\
&+\fr{1}{16}e^{-2|\al|\nu t}\left(\nu^\fr13\|A(v^\al f^w_{\ne})\|^2_{L^2}+\nu\left\|\pr_v^t A_0(v^\al f^w_0)\right\|^2_{L^2} \right).
\end{align}

Now we turn to $\mathsf{LE}_3$ and $\mathsf{LE}_4$. Here we gain the smallness from a well-chosen sequence of $K_{|\alpha|}$ in the spirit of \cite{bedrossian2017suppression},  which appears in the definition of our energy functional.

If $|\al|>1$, integrating by parts, $\mathsf{LE}_3$ can be rewritten as
\begin{align}
\nn \mathsf{LE}_3=&-4\nu\lm_1 e^{-\nu t}e^{-2|\al|\nu t}\frak{Re}\sum_{k\in\Z^n}\int_\eta \bar{\eta}(t;k,\eta)A \pr_\eta^\al( \widehat{f^w})_k\left(t, \eta \right)\cdot A\nb_\eta\pr^\al_\eta(\overline{\widehat{f^w}})_k(t,\eta) d\eta\\
\nn&-8\nu\lm_1 e^{-\nu t}e^{-2|\al|\nu t}\frak{Re}\sum_{k\in\Z^n}\int_\eta \bar{\eta}(t;k,\eta)A \pr_\eta^\al( \widehat{f^w})_k\left(t, \eta \right)\cdot\nb_\eta A\pr^\al_\eta(\overline{\widehat{f^w}})_k(t,\eta) d\eta\\
\nn&-4\nu\lm_1 e^{-2|\al|\nu t}\sum_{|\beta|=1}\begin{pmatrix}\al\\ \beta\end{pmatrix}\frak{Re}\sum_{k\in\Z^n}\int_\eta A \pr_\eta^{\al-\beta}( \widehat{f^w})_k\left(t, \eta \right)\pr_\eta^\beta\eta\cdot A\nb_\eta\pr^\al_\eta(\overline{\widehat{f^w}})_k(t,\eta) d\eta\\
\nn&-8\nu\lm_1 e^{-2|\al|\nu t}\sum_{|\beta|=1}\begin{pmatrix}\al\\ \beta\end{pmatrix}\frak{Re}\sum_{k\in\Z^n}\int_\eta A \pr_\eta^{\al-\beta}( \widehat{f^w})_k\left(t, \eta \right)\pr_\eta^\beta\eta\cdot\nb_\eta A\pr^\al_\eta(\overline{\widehat{f^w}})_k(t,\eta) d\eta\\
\nn=&\sum_{1\le j\le4}\mathsf{LE}_{3;(j)}.
\end{align}
It is easy to see that
\begin{align}\label{e-LE312'}
\mathsf{LE}_{3;(1)}
\nn\le&\fr{1}{16}\nu \lm_1(1-2\lm_1)e^{-2(|\al|+1)\nu t}\left\|A(vv^\al f^w)\right\|_{L^2}^2\\
&+\fr{1}{16}e^{-2|\al|\nu t}\left(\nu\left\|\pr_v^t A(v^\al f^w)\right\|^2_{L^2} \right),
\end{align}
and
\begin{align}\label{e-LE334'}
\mathsf{LE}_{3;(3)}
\nn\le&\fr{1}{16}\nu \lm_1(1-2\lm_1)e^{-2(|\al|+1)\nu t}\left\|A(vv^\al f^w)\right\|_{L^2}^2\\
&+CK_{|\al|-1}\nu \lm_1\sum_{|\beta|=1}\fr{e^{-2(|\al|-1)\nu t}}{K_{|\al|-1}}\left\|A(v^{\al-\beta}f^w)\right\|_{L^2}^2.
\end{align}
Using \eqref{A_eta1} and \eqref{A_eta2}, we have
\begin{align}\label{e-LE312}
\mathsf{LE}_{3;(2)}\nn\le&8\nu\lm_1 e^{-2|\al|\nu t}\|\pr_v^tA(v^\al f)\|_{L^2}\left(\|A(v^\al f^w_{\ne})\|_{L^2}+\left\|\pr_v^t A_0(v^\al f^w_0)\right\|_{L^2} \right) \\
\le&\fr{1}{16}e^{-2|\al|\nu t}\left(\nu^\fr13\|A(v^\al f^w_{\ne})\|^2_{L^2}
+\nu\left\|\pr_v^t A(v^\al f^w)\right\|^2_{L^2} \right),
\end{align}
and
\begin{align}\label{e-LE334}
\mathsf{LE}_{3;(4)}
\nn\les&\nu\lm_1 e^{-(2|\al|-1)\nu t}\sum_{|\beta|=1}\|A(v^{\al-\beta}f^w)\|_{L^2}\left(\|A(v^\al f^w_{\ne})\|_{L^2}+\left\|\pr_v^t A_0(v^\al f^w_0)\right\|_{L^2} \right)\\
\nn\le&\fr{1}{16}e^{-2|\al|\nu t}\left(\nu^\fr13\|A(v^\al f^w_{\ne})\|^2_{L^2}
+\nu\left\|\pr_v^t A(v^\al f^w)\right\|^2_{L^2} \right)\\
&+CK_{|\al|-1}\nu \lm_1\sum_{|\beta|=1}\fr{e^{-2(|\al|-1)\nu t}}{K_{|\al|-1}}\left\|A(v^{\al-\beta}f^w)\right\|_{L^2}^2.
\end{align}
Thanks to the fact $|\al|>|\beta|=1$, dividing \eqref{en-f-H}  by $K_{|\al|}$ and  summing with respect to $\al$, then the resulting summation arising from the last term on the right-hand side of \eqref{e-LE334'} and \eqref{e-LE334} can be absorbed by   
\be\label{damp-nu}
\nu\sum_{1\le|\al|\le m'}\fr{e^{-2|\al|\nu t}}{K_{|\al|}}\|A(v^\al f^w)\|_{L^2}^2,\quad {\rm with }\quad m'\in\{m, m+2\},
\ee
by choosing $K_{|\al|}\gg K_{|\al|-1}$ (even though ignoring the smallness in $\lm_1$).

If $|\al|=0$, then ${\sf LE}_{3;(3)}$ and ${\sf LE}_{3,(4)}$ will not appear, we just need to estimate $\sf{LE}_3$ by \eqref{e-LE312'} and \eqref{e-LE312}. If $|\al|=1$, we have
\begin{align}
\nn \mathsf{LE}_3=&{\sf LE}_{3;(1)}+{\sf LE}_{3;(2)}+4\nu\lm_1e^{-2|\al|\nu t}\left\|A(v^\al f^w)\right\|_{L^2}^2.
\end{align}
The last term  above can be absorbed directly by $\nu|\al| e^{-2|\al|\nu t}\| A(v^\al f^w)\|_{L^2}^2$ on the left hand side of \eqref{en-f-H}.

For $\mathsf{LE}_4$, if $|\al|>2$, we write
\begin{align}\label{LE4}
\mathsf{LE}_4
\nn=&-2\nu e^{\nu t} e^{-2|\al|\nu t} \sum_{ |\beta|=1}\begin{pmatrix}\al\\ \beta\end{pmatrix}\frak{Re}\sum_{k\in\mathbb{Z}^n}\int_\eta \bar{\eta}(t;k,\eta)^\beta A\pr_\eta^{\al-\beta}(\widehat{f^w})_k(t,\eta)\\
\nn&\times A\pr_\eta^\al(\overline{\widehat{f^w}})_k(t,\eta)d\eta\\
\nn&-2\nu e^{2\nu t}e^{-2|\al|\nu t} \sum_{ |\beta|=2}\begin{pmatrix}\al\\ \beta\end{pmatrix}c_\beta \frak{Re}\sum_{k\in\mathbb{Z}^n}\int_\eta A\pr_\eta^{\al-\beta}(\widehat{f^w})_k(t,\eta)A\pr_\eta^\al(\overline{\widehat{f^w}})_k(t,\eta)d\eta\\
=&\mathsf{LE}_{4;(1)}+\mathsf{LE}_{4;(2)},
\end{align}
where
\[
c_\beta=\begin{cases}
1, \quad \mathrm{if\ \ only\ \ one\ \ component\ \ of\ \ } \beta\ \ \mathrm{is\ \ nonzero},\\
0, \quad\mathrm{other wise}.
\end{cases}
\]
Then similar to \eqref{e-LE334}, one deduces that
\begin{align}\label{e-LE4}
\mathsf{LE}_4\nn\le&\fr{1}{16}\nu e^{-2|\al|\nu t}\left(\left\|\pr_v^tA(v^\al f^w)\right\|_{L^2}^2+{\bf 1}_{|\al|>2}\left\|A(v^\al f^w)\right\|_{L^2}^2\right)\\
\nn&+CK_{|\al|-1}\nu \sum_{ |\beta|=1}\fr{e^{-2(|\al|-1)\nu t}}{K_{|\al|-1}}\left\|A(v^{\al-\beta}f^w)\right\|_{L^2}^2\\
&+CK_{|\al|-2}\nu \sum_{ |\beta|=2}\fr{e^{-2(|\al|-2) \nu t} }{K_{|\al|-2}}c_\beta\left\|A(v^{\al-\beta}f^w)\right\|_{L^2}^2.
\end{align}
Since $|\al|>2$, $|\al-\beta|>0$ for $|\beta|=1, 2$. Then by choosing $K_{|\al|}\gg K_{|\al|-1}\gg K_{|\al|-2}$, the last two terms on the right hand side of \eqref{e-LE4} can be treated with the aid of \eqref{damp-nu}  analogously to \eqref{e-LE334}.

If $|\al|=1$, we have
\begin{align}\label{e-LE4'}
{\sf LE}_4\nn=&-2\nu e^{\nu t} e^{-2|\al|\nu t}\frak{Re}\sum_{k\in\Z^n}\int_\eta\bar{\eta}^\al(t;k,\eta)A(\widehat{f^w})_k(t,\eta)A\pr_\eta^\al(\overline{\widehat{f^w}})_k(t,\eta)d\eta\\
\nn\le&2\nu \left\|\pr_v^tA f^w\right\|_{L^2} \left(e^{-|\al|\nu t}\left\|A(v^\al f^w)\right\|_{L^2}\right)\\
\le& \fr{1}{16}\nu e^{-2|\al|\nu t}\left\|A(v^\al f^w)\right\|_{L^2}^2+16K_{0}\left(\nu \fr{1}{K_{0}}\left\|\pr_v^tA f^w\right\|_{L^2}^2\right).
\end{align}
The first term on the right hand side of \eqref{e-LE4'} can be absorbed  by $\nu|\al| e^{-2|\al|\nu t}\| A(v^\al f^w)\|_{L^2}^2$ on the left hand side of \eqref{en-f-H}. After dividing \eqref{en-f-H} by $K_{|\al|}$ and summing with respect to $\al$, the second term on the right hand side of \eqref{e-LE4'} can be absorbed by
\be\label{diffusion}
\nu\sum_{|\al|\le m'}\fr{e^{-2|\al|\nu t}}{K_{|\al|}}\left\|\pr_v^tA(v^\al f^w)\right\|_{L^2}^2,\quad {\rm with }\quad m'\in\{m, m+2\},
\ee
as long as $K_1\gg K_0$.

If $|\al|=2$, ${\sf LE}_{4;(1)}$ can be treated in the same way as  \eqref{e-LE4}. Note that ${\sf LE}_{4;(2)}$ will not appear unless $\al=(j,j), j= 1, 2, \cdots n$. Integrating by parts,  ${\sf LE}_{4;(2)}$ reduces to
\begin{align}
{\sf LE}_{4;(2)}=\nn&-2\nu e^{-2(|\al|-1)\nu t} c_\al \frak{Re}\sum_{k\in\mathbb{Z}^n}\int_\eta A(\widehat{f^w})_k(t,\eta)A\pr_\eta^\al(\overline{\widehat{f^w}})_k(t,\eta)d\eta\\
\nn=&2K_{|\al|-1}\nu \fr{e^{-2(|\al|-1)\nu t}}{K_{|\al|-1}} c_\al \left\|A (v_j f^w)\right\|_{L^2}^2\\
\nn&+4\nu e^{-2(|\al|-1)\nu t} c_\al \frak{Re}\sum_{k\in\mathbb{Z}^n}\int_\eta \pr_j^\eta A_k(t,\eta)(\widehat{f^w})_k(t,\eta)A\pr^\eta_j(\overline{\widehat{f^w}})_k(t,\eta)d\eta\\
=&{\sf LE}_{4;(2,1)}+{\sf LE}_{4;(2,2)},
\end{align}
for some $j\in\{ 1, 2, \cdots, n\}$. Using \eqref{A_eta1} and \eqref{A_eta2}  and the property of $\frak{m}_k(t,\eta)$ in \eqref{m2}, we find that
\begin{align}\label{LE422}
{\sf LE}_{4;(2,2)}\nn\les& \nu e^{-\nu t} c_\al\left\|A (v_j f^w)\right\|_{L^2}\left(\left\|Af^w_{\ne}\right\|_{L^2}+\left\|\pr_v^t Af^w_0\right\|_{L^2} \right)\\
\nn\le&CK_{1}c_\al\nu \fr{e^{-2\nu t}}{K_{1}} \left\|A (v_j f^w)\right\|_{L^2}^2
+CK_{0}c_\al \fr{1}{K_{0}} \left(\nu\left\|Af^w_{\ne}\right\|_{L^2}^2+\nu\left\|\pr_v^t Af^w_0\right\|_{L^2}^2 \right)\\
\nn\le&CK_{1}c_\al\nu \fr{e^{-2\nu t}}{K_{1}} \left\|A (v_j f^w)\right\|_{L^2}^2\\
&+CK_{0}c_\al \fr{1}{K_{0}} \left(\left\|\sqrt{-\fr{\pr_t\frak{m}}{\frak{m}}} A( f_{\neq}^w)\right\|_{L^2}^2+\nu\left\|\pr_v^tA(f^w)\right\|_{L^2}^2 \right).
\end{align}
${\sf LE}_{4;(2,1)}$ and the first term on the right hand side of \eqref{LE422} can be treated with the aid of \eqref{damp-nu}.  The last term on the right hand side of \eqref{LE422} can be treated with the aid of the good terms \eqref{diffusion} and 
\be
\sum_{|\al|\le m'}\fr{e^{-2|\al|\nu t}}{K_{|\al|}}\left\|\sqrt{-\fr{\pr_t\frak{m}}{\frak{m}}} A(v^\al f^w)\right\|_{L^2}^2\quad {\rm with }\quad m'\in\{m, m+2\}.
\ee

\subsection{Nonlinear contributions}\label{sec-non-1}
In this section, we consider the nonlinear contributions which contains the collisionless contributions and the collision contributions. For the collisionless contributions, we take advantage of the transport structure. For the collision contribution, the main difficulty is to treat the velocity localization loss. 
\subsubsection{Collisionless contributions}
To begin with, let us focus on the estimate of  $\mathsf{NE}_E$. In fact,
using \eqref{A-LH}, \eqref{A-HL}, \eqref{Young1} and \eqref{Young2}, and  \eqref{elimi} with $k$ replaced by $l$, we have
\begin{align}
\mathsf{NE}_E
=\nn&2\lm_1 e^{-\nu t}e^{-2|\al|\nu t}\sum_{k,l\in\Z^n}\int_\eta ( A\pr^\al_\eta \overline{\widehat{f^w}})_k(t,\eta)  A_k(t,\eta)\hat{\rho}_l(t)\fr{l}{|l|^2}\cdot  (\nb_\eta\pr^\al_\eta\widehat{f^w})_{k-l}\left(t,\eta-lt^{\rm ap}\right)d\eta\\
\nn\les&\lm_1 e^{-\nu t}e^{-2|\al|\nu t}\| A(v^\al f^w)\|_{L^2}\left\|  A(vv^\al f^w)\right\|_{L^2}
 e^{-(\dl-\dl_1)\nu^\fr13t}  \left\|  \la k\ra^{\fr{n}{2}+} e^{\dl \nu^\fr13t}e^{c\lm(t)\la k,kt^{\rm ap}\ra^s}\hat{\rho}_k(t)  \right\|_{L^2_k}\\
\nn&+\lm_1 e^{-\nu t}e^{-2|\al|\nu t}\| A(v^\al f^w)\|_{L^2}e^{-(\dl-\dl_1)\nu^\fr13t}\left\la t^{\rm ap} \right\ra\left\|{\bf B}     \rho\right\|_{L^2_x}\\
\nn&\times\left\|\la k\ra^{\fr{n}{2}+} e^{c\lm(t)\la k,\eta\ra^s}(\nb_\eta\pr^\al_\eta\widehat{f^w})_k(t,\eta)\right\|_{L^2}\\
\nn\le&\fr{1}{16}\nu\lm_1(1-2\lm_1) e^{-2(|\al|+1)\nu t}\left\|  A(vv^\al f^w)\right\|_{L^2}^2
+C\left(\la t \ra\left\|{\bf B}     \rho\right\|_{L^2_x}\right)^2\left(e^{-2|\al|\nu t}\| A(v^\al f^w)\|_{L^2}^2\right).
\end{align}

Next we turn to the estimates of $\mathsf{N}$, which can be split into two parts:
\begin{align}
\mathsf{N}=\nn&-e^{-2|\al|\nu t}\frak{Re}\sum_{k,l\in\mathbb{Z}^n}\int_\eta A_k(t,\eta)\hat{E}_l(t)\cdot i\left[\bar{\eta}(t; k,\eta)\pr_\eta^\al(\widehat{f^w})_{k-l}\left(t, \eta-lt^{\rm ap}\right)\right]\\
\nn&\times\left(A\pr_\eta^\al(\overline{\widehat{f^w}})\right)_k(t,\eta)d\eta\\
\nn&-ie^{\nu t}e^{-2|\al|\nu t}\sum_{\beta\le \al:|\beta|=1} \begin{pmatrix}\al\\ \beta\end{pmatrix}\frak{Re}\sum_{k, l\in\mathbb{Z}^n}\int_\eta A_k(t,\eta)\hat{E}_l^\beta(t)\pr_\eta^{\al-\beta}(\widehat{f^w})_{k-l}\left(t, \eta-lt^{\rm ap}\right)\\
\nn&\times A\pr_\eta^\al(\overline{\widehat{f^w}})_k(t,\eta)d\eta
=\mathsf{N}_M+\mathsf{N}_P.
\end{align}
We first treat ${\sf N}_M$. Two cases will be investigated according to ${\frak e}=\fr25$ or $\fr13$. It is worth pointing out that when $(\sig,\frak{e})=( \sig_1 , \fr25)$, we cannot use the ${\bf A}^{\sig_0+1,\fr25}$ weighted  high norm to control the ${\bf A}^{ \sig_1 ,\fr25}$ weighted low norm, since the latter is assigned more velocity localizations.\\
\noindent
{\bf Case 1: $A(t,\nb)\in \left\{{\bf A}^{\sig_0+1,\fr25}(t, \nb), {\bf A}^{ \sig_1 ,\fr25}(t,\nb)\right\}$.}
By using a classical commutator trick and frequency decomposition, we rewrite $\mathsf{N}_M$ as
\begin{align}
\mathsf{N}_M\nn=&-e^{-2|\al|\nu t}\frak{Re}\sum_{k\in\mathbb{Z}^n}\sum_{l\in\mathbb{Z}^n_*}\int_\eta \left(A_k(t,\eta)-A_{k-l}\left(t,\eta-lt^{\rm ap}\right)\right)\\
\nn&\times\hat{E}_l(t)\cdot \left[i\bar{\eta}(t; k,\eta)\pr_\eta^\al(\widehat{f^w})_{k-l}\left(t, \eta-lt^{\rm ap}\right)\right]A\pr_\eta^\al(\overline{\widehat{f^w}})_k(t,\eta)d\eta
=:\mathsf{N}_M^{\mathrm{LH}}+\mathsf{N}_M^{\mathrm{HL}},
\end{align}
where the splitting above is given according to the  partition of unity \eqref{pou}.

\noindent\uline{Treatment of $\mathsf{N}_M^{\mathrm{LH}}$.} To begin with, it is natural to divide the commutator into the following four parts:
\begin{align}
\nn &\fr{A_k(t,\eta)}{A_{k-l}\left(t,\eta-lt^{\rm ap}\right)}-1\\
\nn=&e^{\lm(t)\left(\la k,\eta\ra^s-\la k-l, \eta-lt^{\rm ap}\ra^s\right)}-1\\
\nn&+e^{\lm(t)\left(\la k,\eta\ra^s-\la k-l, \eta-lt^{\rm ap}\ra^s\right)}\left(\fr{\la k,\eta\ra^{\sigma}}{\la k-l,\eta-lt^{\rm ap}\ra^{\sigma}}-1\right)\\
\nn&+e^{\lm(t)\left(\la k,\eta\ra^s-\la k-l, \eta-lt^{\rm ap}\ra^s\right)}\fr{\la k,\eta\ra^{\sigma}}{\la k-l,\eta-lt^{\rm ap}\ra^{\sigma}}\left(\fr{\frak{m}_k(t,\eta)}{\frak{m}_{k-l}\left(t,\eta-lt^{\rm ap}\right)}-1\right)\\
\nn&+e^{\lm(t)\left(\la k,\eta\ra^s-\la k-l, \eta-lt^{\rm ap}\ra^s\right)}\fr{\la k,\eta\ra^{\sigma}}{\la k-l,\eta-lt^{\rm ap}\ra^{\sigma}}\fr{\frak{m}_k(t,\eta)}{\frak{m}_{k-l}\left(t,\eta-lt^{\rm ap}\right)}\left(\fr{e^{{\bf 1}_{k\ne0}\dl_1\nu^\fr25 t}}{e^{{\bf 1}_{k\ne l}\dl_1\nu^\fr25t}}-1\right)\\
\nn=&{\bf com}_1+{\bf com}_2+{\bf com}_3+{\bf com}_4,
\end{align}
 for $\sig\in\{\sig_0+1,  \sig_1 \}$.  Then we denote
\[
\mathsf{N}_M^{\mathrm{LH}}=\mathsf{N}_{M;1}^{\mathrm{LH}}+\mathsf{N}_{M;2}^{\mathrm{LH}}+\mathsf{N}_{M;3}^{\mathrm{LH}}+\mathsf{N}_{M;4}^{\mathrm{LH}}.
\]
By virtue of the elementary inequality $|e^x-1|\le |x|e^{|x|}$, the mean value theorem,  \eqref{app2}, \eqref{app3}, and the fact
\be\label{up-eta-bar}
|\bar{\eta}(t; k,\eta)|=e^{\nu t}\left|\left(\eta-lt^{\rm ap}\right)-(k-l)t^{\rm ap} \right|
\les e^{\nu t} \left\la t^{\rm ap}\right\ra \left| k-l, \eta-lt^{\rm ap}\right|,
\ee
we are led to
\begin{align}
\nn&|{\bf com}_1+{\bf com}_2|  |\bar{\eta}(t; k,\eta)|{\bf 1}_{\left|l, lt^{\rm ap}\right|<\fr12\left|k-l,\eta-lt^{\rm ap} \right|}\\
\nn\les&\left|l,lt^{\rm ap}\right|e^{c\lm(t)\la l,lt^{\rm ap}\ra^s}e^{\nu t} \left\la t^{\rm ap}\right\ra \left\la k-l, \eta-lt^{\rm ap}\right\ra^{\fr{s}{2}} \la k,\eta\ra^{\fr{s}{2}}.
\end{align}
Combining this with \eqref{tau-up},  \eqref{Young1} and Lemma \ref{lem-f-f} yields for $j=1, 2$
\begin{align}\label{e-com1}
\mathsf{N}_{M;j}^{\mathrm{LH}}
\nn\les&e^{-2|\al|\nu t}\sum_{k\in\mathbb{Z}^n,\, l\in\mathbb{Z}^n_*}\int_\eta \left(e^{\dl_1\nu^\fr13t}e^{c\lm(t)\la l,lt^{\rm ap}\ra^s}  \left\la t^{\rm ap}\right\ra^{3+a}| \hat{\rho}_l(t)|\right) \left(   \la \nu t\ra^{a+1}  e^{\nu t} e^{-\dl_1\nu^\fr13t}\right)\\
\nn&\times \fr{1}{\la  t\ra^{a+1}}\left|A\mathcal{F}\left[\la \nb\ra^{\fr{s}{2}}(v^\al f^w)\right]_{k-l}\left(t, \eta-lt^{\rm ap}\right)\right|\left|A\mathcal{F}\left[\la \nb\ra^{\fr{s}{2}}(v^\al f^w)\right]_k(t,\eta)\right|d\eta\\
\nn\les&-e^{-2|\al|\nu t}\dot{\lm}(t)\left\|\la \nb\ra^\fr{s}{2}A(v^\al f^w)\right\|_{L^2}^2\left(e^{-m\nu t}\left\| e^{\dl_1\nu^\fr13 t} \mathcal{F}\left[e^{\lm(t)\la\nb\ra^s}f_{\ne}\right](t)\right\|_{L^2_kL^\infty_\eta}\right)\\
\les&-e^{-2|\al|\nu t}\dot{\lm}(t)\left\|\la \nb\ra^\fr{s}{2}A(v^\al f^w)\right\|_{L^2}^2 \|f^w(t)\|_{\mathcal{E}^{ \sig_1 ,\fr13}_m},
\end{align}
where we have used the relation $\hat{\rho}_l(t)={\bf 1}_{l\ne0}\hat{f}_l\left(t,lt^{\rm ap}\right)$.

Next we turn to  $\mathsf{N}_{M;3}^{\mathrm{LH}}$. By \eqref{app3} and \eqref{up-eta-bar}, we have
\begin{align}
\mathsf{N}_{M;3}^{\mathrm{LH}}\nn\les&e^{-2|\al|\nu t}\sum_{k\in\mathbb{Z}^n,\, l\in\mathbb{Z}^n_*}\int_\eta e^{c\lm(t)\la l,lt^{\rm ap}\ra^s}\fr{|\hat{\rho}_l(t)|}{|l|}e^{\nu t} \left\la t^{\rm ap}\right\ra \left| k-l, \eta-lt^{\rm ap}\right|\\
\nn&\times \left|\fr{\frak{m}_k(t,\eta)}{\frak{m}_{k-l}\left(t,\eta-lt^{\rm ap}\right)}-1\right|\left|A\pr_\eta^\al(\widehat{f^w})_{k-l}\left(t, \eta-lt^{\rm ap}\right)\right|\left|A\pr_\eta^\al(\widehat{f^w})_k(t,\eta)\right|d\eta\\
=&\Big(\sum_{\substack{k\in\mathbb{Z}^n_*,\, l\in\mathbb{Z}^n_*, \\ k\ne l}}+\sum_{k=0,\, l\in\mathbb{Z}^n_*}+\sum_{\substack{k\in\mathbb{Z}^n_*,\, l\in\mathbb{Z}^n_*, \\ k= l}}\Big)\int_\eta\cdots d\eta=\mathsf{N}_{M;3,(1)}^{\mathrm{LH}}+\mathsf{N}_{M;3,(2)}^{\mathrm{LH}}+\mathsf{N}_{M;3,(3)}^{\mathrm{LH}}.
\end{align}
To bound $\mathsf{N}_{M;3,(1)}^{\mathrm{LH}}$, we further split it into two parts:
\begin{align}
\mathsf{N}_{M;3,(1)}^{\mathrm{LH}}\nn=&e^{-2|\al|\nu t}\sum_{\substack{k\in\mathbb{Z}^n_*,\, l\in\mathbb{Z}^n_*, \\ k\ne l}}\int_\eta{\bf 1}_{\left|\eta-lt^{\rm ap}\right|\le2|k-l|\left\la  t^{\rm ap}\right\ra}\cdots d\eta\\
\nn&+e^{-2|\al|\nu t}\sum_{\substack{k\in\mathbb{Z}^n_*,\, l\in\mathbb{Z}^n_*, \\ k\ne l}}\int_\eta{\bf 1}_{\left|\eta-lt^{\rm ap}\right|>2|k-l|\left\la t^{\rm ap}\right\ra}\cdots d\eta=\mathsf{N}_{M;3,(1)}^{\mathrm{LH};x}+\mathsf{N}_{M;3,(1)}^{\mathrm{LH};v}.
\end{align}
In view of  \eqref{nenene1} and \eqref{Young1}, similar to \eqref{e-com1},  we find that
\begin{align}
\mathsf{N}_{M;3,(1)}^{\mathrm{LH};x}\nn\les&e^{-2|\al|\nu t}\sum_{k\in\mathbb{Z}^n,\, l\in\mathbb{Z}^n_*}\int_\eta e^{c\lm(t)\la l,lt^{\rm ap}\ra^s}|\hat{\rho}_l(t)|e^{\nu t} \left\la t^{\rm ap}\right\ra^2\\
\nn&\times   \left|A\pr_\eta^\al(\widehat{f^w})_{k-l}\left(t, \eta-lt^{\rm ap}\right)\right|\left|A\pr_\eta^\al(\widehat{f^w})_k(t,\eta)\right|d\eta\\
\nn\les&\fr{1}{\la t\ra^{1+}}\left(e^{-m\nu t}\left\| e^{\dl_1\nu^\fr13 t}\mathcal{F}\left[e^{\lm(t)\la \nb\ra^s}f_{\ne}\right]\right\|_{L^2_kL^\infty_\eta}\right)\left(e^{-|\al|\nu t}\|A(v^\al f^w)\|_{L^2}\right)^2\\
\nn\les&\fr{1}{\la t\ra^{1+}} \|f^w(t)\|_{\mathcal{E}^{ \sig_1 ,\fr13}_m}\left(e^{-|\al|\nu t}\|A(v^\al f^w)\|_{L^2}\right)^2.
\end{align}
Similarly, for $\mathsf{N}_{M;3,(1)}^{\mathrm{LH};v}$, we use \eqref{nenene2}, \eqref{tau-up} and \eqref{Young1} to get
\begin{align}
\mathsf{N}_{M;3,(1)}^{\mathrm{LH};v}\nn\les&e^{-2|\al|\nu t}\sum_{k\in\mathbb{Z}^n,\, l\in\mathbb{Z}^n_*}\int_\eta e^{c\lm(t)\la l,lt^{\rm ap}\ra^s}\fr{|\hat{\rho}_l(t)|}{|l|}e^{2\nu t} \left\la t^{\rm ap}\right\ra t \left\la l,lt^{\rm ap}\right\ra\\
\nn&\times  \left|A\pr_\eta^\al(\widehat{f^w})_{k-l}\left(t, \eta-lt^{\rm ap}\right)\right|\left|A\pr_\eta^\al(\widehat{f^w})_k(t,\eta)\right|d\eta\\
\nn\les&\fr{1}{\la t\ra^{1+}} \|f^w(t)\|_{\mathcal{E}^{ \sig_1 ,\fr13}_m}\left(e^{-|\al|\nu t}\|A(v^\al f^w)\|_{L^2}\right)^2.
\end{align}
Next consider  $\mathsf{N}_{M;3,(2)}$. In fact,  from \eqref{0ne}, we obtain
\begin{align}
\mathsf{N}_{M;3,(2)}^{\mathrm{LH}}\nn\les&e^{-2|\al|\nu t}\sum_{ l\in\mathbb{Z}^n_*}\int_\eta e^{c\lm(t)\la l,lt^{\rm ap}\ra^s}\fr{|\hat{\rho}_l(t)|}{|l|}e^{\nu t} \left\la t^{\rm ap}\right\ra \left( |l|+\left |\eta-lt^{\rm ap}\right|\right)\\
\nn&\times  \min\left\{\fr{1}{|l|}, \fr{\la t\ra}{\left|\eta-lt^{\rm ap} \right|}\right\} \left|A\pr_\eta^\al(\widehat{f^w})_{-l}\left(t, \eta-lt^{\rm ap}\right)\right|\left|A\pr_\eta^\al(\widehat{f^w})_0(t,\eta)\right|d\eta\\
\nn\les&\fr{1}{\la t\ra^{1+}} \|f^w(t)\|_{\mathcal{E}^{ \sig_1 ,\fr13}_m}\left(e^{-|\al|\nu t}\|A(v^\al f^w_{\ne})\|_{L^2}\right)\left(e^{-|\al|\nu t}\|A(v^\al f^w_0)\|_{L^2}\right).
\end{align}
Similarly, we infer from \eqref{ne0} that
\begin{align}
\mathsf{N}_{M;3,(3)}^{\mathrm{LH}}\nn\les&e^{-2|\al|\nu t}\sum_{ k\in\mathbb{Z}^n_*}\int_\eta e^{c\lm(t)\la k,kt^{\rm ap}\ra^s}\fr{|\hat{\rho}_k(t)|}{|k|}e^{\nu t} \left\la t^{\rm ap}\right\ra \left(  |\eta|+\left|kt^{\rm ap}\right|\right)\\
\nn&\times\min\left\{\fr{1}{|k|}, \fr{\la t \ra}{|\eta|}\right\}\left|A\pr_\eta^\al(\widehat{f^w})_{0}\left(t, \eta-kt^{\rm ap}\right)\right|\left|A\pr_\eta^\al(\widehat{f^w})_k(t,\eta)\right|d\eta\\
\nn\les&\fr{1}{\la t\ra^{1+}} \|f^w(t)\|_{\mathcal{E}^{ \sig_1 ,\fr13}_m}\left(e^{-|\al|\nu t}\|A(v^\al f^w_{\ne})\|_{L^2}\right)\left(e^{-|\al|\nu t}\|A(v^\al f^w_0)\|_{L^2}\right).
\end{align}

Now we turn to $\mathsf{N}_{M;4}^{\mathrm{LH}}$, in which case the commutator ${\bf com}_4$ does not gain regularity and we use a interpolation argument. In fact, it is easy to see that
\begin{align*}
|{\bf com}_4|  {\bf 1}_{\left|l, lt^{\rm ap}\right|<\fr12\left|k-l,\eta-l\fr{1-e^{\nu t}}{\nu} \right|}
\les e^{c\lm(t)\la l,lt^{\rm ap}\ra^s}\left({\bf 1}_{k\ne0, k=l}\dl_1\nu^\fr25 te^{\dl_1\nu^\fr25t}+{\bf 1}_{ k=0, k\ne l}\dl_1\nu^\fr25t \right).
\end{align*}
Then
\begin{align}
\mathsf{N}_{M;4}^{\mathrm{LH}}\nn\les&e^{-2|\al|\nu t}\sum_{k\in\mathbb{Z}^n_*}\int_\eta e^{c\lm(t)\la k,kt^{\rm ap}\ra^s}\nu^\fr{2}{5} te^{\dl_1\nu^\fr25t}\fr{|\hat{\rho}_k(t)|}{|k|}\\
\nn&\times |\bar{\eta}(t;k,\eta)| \left|A\widehat{(v^\al f^w)}_{0}\left(t, \eta-kt^{\rm ap}\right)\right|\left|A\pr_\eta^\al(\widehat{f^w})_k(t,\eta)\right|d\eta\\
\nn&+e^{-2|\al|\nu t}\sum_{k\in\mathbb{Z}^n_*}\int_\eta e^{c\lm(t)\la k,kt^{\rm ap}\ra^s}\nu^\fr25 t\fr{|\hat{\rho}_k(t)|}{|k|}\\
\nn&\times |\bar{\eta}(t;0,\eta)| \left|A\widehat{(v^\al f^w)}_{-k}\left(t, \eta-kt^{\rm ap}\right)\right|\left|A\pr_\eta^\al(\widehat{f^w})_0(t,\eta)\right|d\eta
=:\mathsf{N}_{M;4,(1)}^{\mathrm{LH}}+\mathsf{N}_{M;4,(2)}^{\mathrm{LH}}.
\end{align}
If $|\eta|\le\left|kt^{\rm ap}\right|$, then
$
|\bar{\eta}(t; k,\eta)|=e^{\nu t}\left|\eta-kt^{\rm ap}\right|\les e^{\nu t}\left|kt^{\rm ap}\right|.
$
Combining this with \eqref{tau-up} yields
\begin{align}\label{NMLH41-1}
\mathsf{N}_{M;4,(1)}^{\mathrm{LH}}\nn\les&\fr{\nu^\fr{2}{5}}{\la t\ra}e^{-2|\al|\nu t}\sum_{k\in\mathbb{Z}^n_*}\int_\eta \left(e^{\dl \nu^\fr13 t}e^{c\lm(t)\la k,kt^{\rm ap}\ra^s} \left|kt^{\rm ap}\right|^3|\hat{\rho}_k(t)|\right) \\
\nn&\times \left(\la\nu t\ra^2e^{\nu t+\dl_1\nu^\fr25t-\dl\nu^\fr13 t}\right) \left|A\widehat{(v^\al f^w)}_{0}\left(t, \eta-kt^{\rm ap}\right)\right|\left|A\pr_\eta^\al(\widehat{f^w})_k(t,\eta)\right|d\eta\\
\les&\fr{\nu^\fr{2}{5}}{\la t\ra}\left\|{\bf B}\rho(t)\right\|_{L^2_x}\left(e^{-|\al|\nu t}\left\| A(v^\al f^w)\right\|_{L^2}\right)^2.
\end{align}
If $|\eta|>\left|kt^{\rm ap}\right|$, then
\begin{align}\label{interpolation1}
\nn&|\bar{\eta}(t;k,\eta)| \left|A\widehat{(v^\al f^w)}_{0}\left(t, \eta-kt^{\rm ap}\right)\right|\\
\nn\les&e^{\fr{s}{2}\nu t}|\eta|^{\fr{s}{2}}    |\bar{\eta}(t; k,\eta)|^{1-\fr{s}{2}} \left|A\widehat{(v^\al f^w)}_{0}\left(t, \eta-kt^{\rm ap}\right)\right|\\
\nn=&e^{\fr{s}{2-s}\nu t} \nu^{-\fr{1-s}{2-s}} \la t\ra^{\fr{a+1}{2-s}} \fr{|\eta|^{\fr{s}{2}}}{\la t\ra^{\fr{a+1}{2}}}   \left(\fr{1}{\la t\ra^{\fr{a+1}{2}}} \left|\mathcal{F}\left[|\pr_v|^\fr{s}{2}A(v^\al f^w)\right]_{0}\left(t, \eta-kt^{\rm ap}\right)\right|\right)^{\fr{s}{2-s}}\\
\nn&\times \left(\nu^\fr12\left|\bar{\eta}\left(t; 0,\eta-kt^{\rm ap}\right)\right| \left|A\widehat{(v^\al f^w)}_{0}\left(t, \eta-kt^{\rm ap}\right)\right|\right)^{\fr{2-2s}{2-s}}\\
=&e^{\fr{s}{2-s}\nu t} \nu^{-\fr{1-s}{2-s}} \la t\ra^{\fr{a+1}{2-s}} \fr{|\eta|^{\fr{s}{2}}}{\la t\ra^{\fr{a+1}{2}}}\left(*\right),
\end{align}
where we have used the fact
\be\label{eta-bar-1}
\bar{\eta}\left(t; 0,\eta-kt^{\rm ap}\right)=\bar{\eta}(t; k,\eta).
\ee
Thus,
\beq\label{NMLH41-2}
\mathsf{N}_{M;4,(1)}^{\mathrm{LH}}\nn&\les&\nu^{\gamma+\fr25-\fr{1-s}{2-s}}e^{-2|\al|\nu t}\sum_{k\in\mathbb{Z}^n_*}\int_\eta \nu^{-\gamma}\left(e^{\dl_1 \nu^\fr13 t}e^{c\lm(t)\la k,kt^{\rm ap}\ra^s} \left|t^{\rm ap}\right|^{\fr{a+1}{2-s}+1}|\hat{\rho}_k(t)|\right) \\
\nn&&\times  \left(\la\nu t\ra^{\fr{a+1}{2-s}+1}e^{\fr{s}{2-s}\nu t+\dl_1\nu^\fr25t-\dl_1\nu^\fr13 t}\right)\left(*\right)\left|\fr{|\eta|^{\fr{s}{2}}}{\la t\ra^{\fr{a+1}{2}}}A\pr_\eta^\al(\widehat{f^w})_k(t,\eta)\right|d\eta\\
\nn&\les&\left(\nu^{-\gamma}\|f^w(t)\|_{\mathcal{E}^{ \sig_1 ,\fr13}_m}\right)  \left(\sqrt{-\dot{\lm}(t)}e^{-|\al|\nu t}\left\|\la \nb\ra^{\fr{s}{2}}A(v^\al f^w)\right\|_{L^2}\right)\\
&&\times  e^{-|\al|\nu t}\left[\sqrt{-\dot{\lm}(t)}\left\|\la \nb\ra^{\fr{s}{2}}A(v^\al f^w)_0\right\|_{L^2_{v}}+\nu^\fr12\left\|\pr_v^tA(v^\al f^w)_0 \right\|_{L^2_v}\right],
\eeq
where we have used the fact that (note that $-\fr{1-s}{2-s}$ is increasing in $s$, and $s>\fr{1-3\gamma}{3-3\gamma}$)
\[
\gamma+\fr25-\fr{1-s}{2-s}>\gamma+\fr25-\fr{1-\fr{1-3\gamma}{3-3\gamma}}{2-\fr{1-3\gamma}{3-3\gamma}}=\gamma+\fr25+\fr{2}{3\gamma-5}\ge0 \quad\mathrm{for}\quad 0\le\gamma\le\fr13.
\]
The other term $\mathsf{N}_{M;4,(2)}^{\mathrm{LH}}$ can be treated similarly. In fact, on the one hand,  if
$\left|\eta-kt^{\rm ap}\right|\le \left| kt^{\rm ap}\right|$,
we have
$|\bar{\eta}(t; 0,\eta)|=e^{\nu t}|\eta|\les e^{\nu t}\left| kt^{\rm ap}\right|$.
Then $\mathsf{N}_{M;4,(2)}^{\mathrm{LH}}$ can be treated in the same manner as \eqref{NMLH41-1}.  On the other hand, if
$\left|\eta-kt^{\rm ap}\right|\ge \left| kt^{\rm ap}\right|$,
then
$|\bar{\eta}(t; 0,\eta)|=e^{\nu t}|\eta|\les e^{\nu t}\left|\eta- kt^{\rm ap}\right|$.
Combining this with the fact
$\bar{\eta}(t; 0,\eta)=\bar{\eta}\left(t; -k,\eta-kt^{\rm ap}\right)$,
it is easy to  see that \eqref{interpolation1} still holds with $\bar{\eta}(t; k,\eta)$ and $A\widehat{(v^\al f^w)}_{0}\left(t, \eta-kt^{\rm ap}\right)$ replaced by $\bar{\eta}(t; 0,\eta)$ and $A\widehat{(v^\al f^w)}_{-k}\left(t, \eta-kt^{\rm ap}\right)$, respectively. Then following \eqref{NMLH41-2}, we can bound $\mathsf{N}_{M;4,(2)}^{\mathrm{LH}}$ immediately.

\noindent\uline{Treatment of $\mathsf{N}_M^{\mathrm{HL}}$.} Different from \eqref{A-HL} and \eqref{A-HL0}, thanks to \eqref{app5}, one deduces that
\begin{align}\label{up-Aketa}
A_k(t,\eta){\bf 1}_{\left|k-l,\eta-lt^{\rm ap} \right|\le 2\left|l, lt^{\rm ap}\right|}
\les \left(e^{-(\dl-\dl_1) \nu^\fr13t}|l|^\fr12 \left\la t^{\rm ap}\right\ra \right){\bf B}_l(t) e^{c\lm(t)\la k-l,\eta-lt^{\rm ap}\ra^s}.
\end{align}
and
\begin{align}
\nn&A_{k-l}\left(t,\eta-lt^{\rm ap}\right){\bf 1}_{\left|k-l,\eta-lt^{\rm ap} \right|\le 2\left|l, lt^{\rm ap}\right|}\\
\nn\les&\left(e^{-\dl\nu^\fr13t} |l|^{\fr12} \left\la t^{\rm ap}\right\ra\right)\left(e^{\dl\nu^\fr13t}|l|^{\fr12}\left\la l, lt^{\rm ap}\right\ra^{\sigma_0}\right) e^{{\bf 1}_{k\ne l}\dl_1\nu^\fr25 t}e^{\lm(t)\left\la k-l, \eta-lt^{\rm ap} \right\ra^s}.
\end{align}
It follows from the above two inequalities, \eqref{up-eta-bar}  and  \eqref{Young2} that
\begin{align}\label{e-NMHL}
\mathsf{N}_M^{\mathrm{HL}}\nn\les& e^{-(\dl-\dl_1)\nu^\fr13t+\nu t}\left\la t^{\rm ap}\right\ra^{2}e^{-2|\al|\nu t}\sum_{k\in\mathbb{Z}^n,\, l\in\mathbb{Z}^n_*}\int_\eta\fr{1}{|l|^\fr12}\left|{B}^{\sig_0}\hat{\rho}_l(t)\right|\\
\nn&\times\left| k-l, \eta-lt^{\rm ap}\right|e^{{\bf 1}_{k\ne l}\dl_1\nu^\fr25 t}e^{\lm(t)\left\la k-l, \eta-lt^{\rm ap} \right\ra^s}\\
\nn&\times\left|\pr_\eta^\al(\widehat{f^w})_{k-l}\left(t, \eta-lt^{\rm ap}\right)\right|\left| A\pr_\eta^\al(\widehat{f^w})_k(t,\eta)d\eta\right|d\eta\\
\nn\les&e^{-2|\al|\nu t}\la t\ra^2\|{B}^{\sig_0}\rho(t)\|_{L^2_x}\|A(v^\al f^w)(t)\|_{L^2}\\
\nn&\times\left\|e^{{\bf}_{ k\ne 0}\dl_1\nu^\fr25 t}\mathcal{F}\left[\la\pr_x\ra^{\fr{n}{2}+}|\nb|e^{\lm(t)\la \nb\ra^s}(v^\al f^w)\right]_k(t)\right\|_{L^2_{k,\eta}}\\
\les&\fr{1}{\la t\ra^{\fr12+}}\left(\la t\ra^{\fr52+}\|{B}^{\sig_0}\rho(t)\|_{L^2_x}\right)\left(e^{-|\al|\nu t}\|A(v^\al f^w)(t)\|_{L^2}\right)^2.
\end{align}

\noindent{\bf Case 2: $A(t,\nb)={\bf A}^{ \sig_1 ,\fr13}(t,\nb)$.} Now we write ${\sf N}_M$ as
\begin{align}
\nn\mathsf{N}_M=&-e^{-2|\al|\nu t}\frak{Re}\sum_{k,l\in\mathbb{Z}^n}\int_\eta {\bf A}_k^{ \sig_1 ,\fr13}(t,\eta)\hat{\rho}_l(t)\fr{l}{|l|^2}\cdot \bar{\eta}(t; k,\eta)\pr_\eta^\al(\widehat{f^w})_{k-l}\left(t, \eta-lt^{\rm ap}\right)\\
\nn&\times \left({\bf A}^{ \sig_1 ,\fr13}\pr_\eta^\al(\overline{\widehat{f^w}})\right)_k(t,\eta)d\eta.
\end{align}
There is no need to use  the commutator trick as in Section \ref{sec-non-1}, since on the one hand, the derivatives landing on $f^w$ are tolerable.  On the other hand, the time weight $e^{\dl_1\nu^\fr13t}$ will land on $\rho$ instead of $f^w$. In fact, in view of the partition of unity \eqref{pou},  by \eqref{app3}, \eqref{app5} and \eqref{up-eta-bar}, we have
\begin{align}
\nn&{\bf A}_k^{ \sig_1 ,\fr13}(t,\eta)|\bar{\eta}(t; k,\eta)|\\
\nn\les&e^{-(\dl-\dl_1)\nu^\fr13t}e^{\nu t}\left(e^{\dl \nu^\fr13t}e^{c\lm(t)\left\la l, lt^{\rm ap}\right\ra^s}\left\la t^{\rm ap}\right\ra \right){\bf A}_{k-l}^{\sig_0+1,\fr25}\left(t, \eta-lt^{\rm ap} \right)\\
\nn&+e^{-(\dl-\dl_1)\nu^\fr13t}e^{\nu t}\left(e^{c\lm(t)\left\la k-l, \eta-lt^{\rm ap}\right\ra^s} |k-l, \eta-lt^{\rm ap}|\right)\left(\la t^{\rm ap}\ra e^{\dl \nu^\fr13t}e^{\lm(t)\left\la l, lt^{\rm ap}\right\ra^s}\left\la l, lt^{\rm ap}\right\ra^{\sig_1} \right).
\end{align}
Consequently, using \eqref{Young1} and \eqref{Young2}, we arrive at
\begin{align}
\nn\mathsf{N}_M\les&\fr{1}{\la t \ra^{\fr12+}}\left(\la t\ra^{\fr12} \| {\bf B} \rho(t)\|_{L^2_x}\right)\left(e^{-|\al|\nu t}\left\|{\bf A}^{\sig_0+1,\fr25}(v^\al f^w)\right\|_{L^2}\right)\left(e^{-|\al|\nu t}\left\|{\bf A}^{ \sig_1 ,\fr13}(v^\al f^w)\right\|_{L^2}\right).
\end{align}

Next we  consider $\mathsf{N}_P$. Note first that \eqref{up-Aketa}  holds for  all 
\[
A(t,\nb) \in\left\{{\bf A}^{\sig_0+1, \fr25} (t,\nb), {\bf A}^{ \sig_1 , \fr13} (t,\nb), {\bf A}^{ \sig_1 , \fr25} (t,\nb)\right\}.
\] 
Combining this with \eqref{A-LH} implies that for $l\ne0$, there holds
\begin{align}
\fr{1}{|l|}A_k(t,\eta)\nn\les& e^{-(\dl-\dl_1)\nu^\fr13t}\la t\ra  {\bf B}  _l(t)e^{c\lm(t)\la k-l,\eta-lt^{\rm ap}\ra^s}\\
\nn&+e^{-(\dl-\dl_1)\nu^\fr13t} e^{\dl \nu^\fr13t}e^{c\lm(t)\la l,lt^{\rm ap}\ra^s} A_{k-l}\left(t, \eta-lt^{\rm ap}\right).
\end{align}
From this, \eqref{Young1} and  \eqref{Young2}, we infer that
\begin{align}\label{e-NP}
\mathsf{N}_P\nn\les&e^{\nu t}e^{-2|\al|\nu t}e^{-(\dl-\dl_1)\nu^\fr13t} \|A(v^\al f^w)\|_{L^2}   \la t\ra  \| {\bf B} \rho(t)\|_{L^2_x}\sum_{\beta\le \al:|\beta|=1} \left\|A(v^{\al-\beta})f^w\right\|_{L^2}\\
\nn\les&\fr{1}{\la t \ra^{\fr12+}}\left(\la t\ra^{\fr32+}  \| {\bf B} \rho(t)\|_{L^2_x}\right)\left(e^{-|\al|\nu t}\|A(v^\al f^w)\|_{L^2}\right) \\
&\times\left(  \sum_{\beta\le \al:|\beta|=1} e^{-(|\al|-1)\nu t} \left\|A(v^{\al-\beta})f^w\right\|_{L^2} \right).
\end{align}
We would like to remark that, different from the linear contributions ${\sf LE}_3$ and ${\sf LE}_4$, the last factor on the right hand side of \eqref{e-NP} can be treated without resorting to \eqref{damp-nu}, because we can take $L^\infty_t$ norm in $t$ on it.

\subsubsection{Collision contributions (I): $A(t,\nb)={\bf A}^{\sig_0+1,\fr25}(t,\nb)$.}\label{sec-collision-I}
We first consider  $\mathsf{CN}_0$. Recalling \eqref{M0}, it is natural to divide $\mathsf{CN}_0$ into three parts according to the order of $\bar{\eta}(t;k,\eta)$:
\be\label{CN0}
\mathsf{CN}_0=\mathsf{CN}_{0;{\bf2}}+\mathsf{CN}_{0;{\bf 1}}+\mathsf{CN}_{0; {\bf0}},
\ee
here the subscripts `2, 1, 0' after the first subscripts `0' refers to the order of $\bar{\eta}(t;k,\eta)$ in \eqref{M0}.  More precisely,  we write 
$\mathsf{CN}_{0;{\bf2}}=\mathsf{CN}_{0;{\bf2}}[\rho]+\mathsf{CN}_{0;{\bf2}}[M_\theta]$,
where
\begin{align}
\nn&\mathsf{CN}_{0;{\bf2}}[\rho]\\
\nn=&\nu e^{-2|\al|\nu t}\frak{Re}\sum_{k\in\mathbb{Z}^n,\, l\in\mathbb{Z}^n}\int_\eta  A\pr_\eta^\al \left(\left|\bar{\eta}(t;k,\eta) \right|^2\hat{\rho}_l(t)(\widehat{f^w})_{k-l}(t,\eta-l t^{\rm ap})  \right)A\pr_\eta^\al(\overline{\widehat{f^w}})_k(t,\eta)d\eta\\
\nn=&-\nu e^{-2|\al|\nu t}\frak{Re}\sum_{{k\in \mathbb{Z}^n,\, l\in\mathbb{Z}^n_*}}\int_\eta A\pr_\eta^\al(\overline{\widehat{f^w}})_k(t,\eta)A_k(t,\eta)\hat{\rho}_l(t)\\\nn&\times\left|\bar{\eta}(t; k,\eta) \right|^2 \pr_\eta^\al(\widehat{f^w})_{k-l}\left(t,\eta-lt^{\rm ap}\right)d\eta\\
\nn&-\nu e^{-2|\al|\nu t} e^{\nu t}\sum_{\beta\le\al: |\beta|=1}\frak{Re}\sum_{{k\in \mathbb{Z}^n,\, l\in\mathbb{Z}^n_*}}\int_\eta A\pr_\eta^\al(\overline{\widehat{f^w}})_k(t,\eta)A_k(t,\eta)\hat{\rho}_l(t)\\
\nn&\times\bar{\eta}(t; k,\eta)^\beta\pr_\eta^{\al-\beta}(\widehat{f^w})_{k-l}\left(t,\eta-lt^{\rm ap}\right)d\eta\\
\nn&-\nu e^{-2|\al|\nu t} e^{2\nu t}\sum_{\beta\le\al: |\beta|=2}\frak{Re}\sum_{{k\in \mathbb{Z}^n,\, l\in\mathbb{Z}^n_*}}\int_\eta A\pr_\eta^\al(\overline{\widehat{f^w}})_k(t,\eta)A_k(t,\eta)\hat{\rho}_l(t)\\
\nn&\times\pr_\eta^{\al-\beta}(\widehat{f^w})_{k-l}\left(t,\eta-lt^{\rm ap}\right)d\eta
=\mathsf{CN}_{0;{\bf2}}[\rho]_{(1)}+\mathsf{CN}_{0;{\bf2}}[\rho]_{(2)}+\mathsf{CN}_{0;{\bf2}}[\rho]_{(3)},
\end{align}
and $\mathsf{CN}_{0;{\bf2}}[M_\theta]$ is given with $\rho$ replaced by $M_\theta$ in the definition of $\mathsf{CN}_{0;{\bf2}}[\rho]$ above.
Begin with $\mathsf{CN}_{0;{\bf2}}[\rho]_{(1)}$ and  divide  into three parts 
\begin{align}\label{split1}
\mathsf{CN}_{0;{\bf2}}[\rho]_{(1)}=\mathsf{CN}_{0;{\bf2}}[\rho]_{(1);0}^{\rm HL}+\mathsf{CN}_{0;{\bf2}}[\rho]_{(1);\ne}^{\rm HL}+\mathsf{CN}_{0;{\bf2}}[\rho]_{(1)}^{\rm LH},
\end{align}
where
\begin{align}
\mathsf{CN}_{0;{\bf2}}[\rho]_{(1);0}^{\rm HL}\nn=&-\nu e^{-2|\al|\nu t}\frak{Re}\sum_{k\in \mathbb{Z}^n_*}\int_\eta A\pr_\eta^\al(\overline{\widehat{f^w}})_k(t,\eta)A_k(t,\eta)\hat{\rho}_k(t)\\
\nn&\times {\bf 1}_{\left|\eta-kt^{\rm ap} \right|\le 2\left|k, kt^{\rm ap}\right|}\left|\bar{\eta}(t;k,\eta) \right|^2 \pr_\eta^\al(\widehat{f^w})_{0}\left(t,\eta-kt^{\rm ap}\right)d\eta,\\
\mathsf{CN}_{0;{\bf2}}[\rho]_{(1);\ne}^{\rm HL}\nn=&-\nu e^{-2|\al|\nu t}\frak{Re}\sum_{\substack{k\in \mathbb{Z}^n,\, l\in\mathbb{Z}^n_*\\ k\ne l}}\int_\eta A\pr_\eta^\al(\overline{\widehat{f^w}})_k(t,\eta)A_k(t,\eta)\hat{\rho}_l(t)\\
\nn&\times {\bf 1}_{\left|k-l,\eta-lt^{\rm ap} \right|\le 2\left|l, lt^{\rm ap}\right|}\left|\bar{\eta}(t; k,\eta) \right|^2 \pr_\eta^\al(\widehat{f^w})_{k-l}\left(t,\eta-lt^{\rm ap}\right)d\eta,
\end{align}
and
\begin{align*}
\mathsf{CN}_{0;{\bf2}}[\rho]_{(1);\ne}^{\rm LH}\nn=&-\nu e^{-2|\al|\nu t}\frak{Re}\sum_{\substack{k\in \mathbb{Z}^n,\, l\in\mathbb{Z}^n_*\\ k\ne l}}\int_\eta A\pr_\eta^\al(\overline{\widehat{f^w}})_k(t,\eta)A_k(t,\eta)\hat{\rho}_l(t)\\
\nn&\times {\bf 1}_{\left|l, lt^{\rm ap}\right|<\fr12\left|k-l,\eta-lt^{\rm ap} \right|}\left|\bar{\eta}(t; k,\eta) \right|^2 \pr_\eta^\al(\widehat{f^w})_{k-l}\left(t,\eta-lt^{\rm ap}\right)d\eta.
\end{align*}
We first consider $\mathsf{CN}_{0;2}[\rho]_{(1);0}^{\rm HL}$. 
Recalling that $\hat{\rho}_k(t)=\hat{f}_k\left(t, kt^{\rm ap}\right)$, and noting that a variants of \eqref{CLHL'} 
\begin{align*}
\nu \fr{\left\la t\right\ra^s}{\sqrt{-\dot{\lm}(t)}} e^{-s(\dl\nu^\fr13-\dl_1\nu^\fr25)t} e^{(1-s)m\nu t}e^{2\nu t}\les \nu\la t\ra^{s+\fr12+}e^{-\fr{s}{2}(\dl-\dl_1)\nu^\fr13t}\les \nu^\fr23
\end{align*}
holds, similar to \eqref{CLHL},  we infer from \eqref{A-HL0} that
\begin{align}\label{CNHL101}
\mathsf{CN}_{0;{\bf2}}[\rho]_{(1);0}^{\rm HL}\nn\les&\nu \left\la t\right\ra^s e^{-s(\dl-\dl_1)\nu^\fr13t} e^{2\nu t}e^{-2|\al|\nu t}\sum_{k\in\mathbb{Z}^n_*}\int_\eta |k|^\fr{s}{2}\left|A\pr_\eta^\al(\widehat{f^w})_k(t,\eta) \right| \\
\nn&\times \left|(\underline{A}^{\sig_0+1,\fr25} \hat{f})_k\left(t, kt^{\rm ap}\right)\right|^{1-s}\left| {\bf B}  \hat{\rho}_k(t) \right|^s \\
\nn&\times e^{c\lm(t)\la \eta-kt^{\rm ap}\ra^s}\left|\eta-kt^{\rm ap}\right|^2\left|\pr_\eta^\al(\widehat{f^w})_{0}\left(t,\eta-kt^{\rm ap}\right)\right|d\eta\\
\nn\les&\nu \fr{\left\la t\right\ra^s}{\sqrt{-\dot{\lm}(t)}} e^{-s(\dl-\dl_1)\nu^\fr13t} e^{(1-s)m\nu t}e^{2\nu t} \left(e^{-|\al|\nu t}\sqrt{-\dot{\lm}(t)}\left\|\la \nb\ra^{\fr{s}{2}}A(v^\al f^w)\right\|_{L^2}\right) \\
\nn&\times \|  {\bf B}  \rho\|_{L^2}^s \left(e^{-m\nu t}\|\underline{A}^{\sig_0+1,\fr25}\hat{f}_{\ne}\|_{L^2_kL^\infty_\eta}\right)^{1-s}\left(e^{-|\al|\nu t}\left\|e^{c\lm(t)\la \nb\ra^s}\left|\nb\right|^2\left(v^\al({f^w})_{0}\right)\right\|_{L^2}\right)\\
\nn\les&\nu^\fr23\left(e^{-|\al|\nu t}\sqrt{-\dot{\lm}(t)}\left\|\la \nb\ra^{\fr{s}{2}}A(v^\al f^w)\right\|_{L^2}\right) \\
&\times \|  {\bf B} \rho\|_{L^2}^s \|f^w_{\ne}(t)\|_{\mathcal{E}_m^{\sig_0+1,\fr25}}^{1-s}\left(e^{-|\al|\nu t}\left\|A\left(v^\al({f^w})_{0}\right)\right\|_{L^2}\right).
\end{align}
Furthermore, we can treat \eqref{CNHL101} like \eqref{CLHL''}.

For $\mathsf{CN}_{0;{\bf2}}[\rho]_{(1);\ne}^{\rm HL}$,  from \eqref{A-HL}, \eqref{up-eta-bar}, and \eqref{Young2}, and using the fact $\nu\la t^{\rm ap}\ra^2\les\nu^\fr13e^{\fr14\dl_1\nu^\fr13t}$, we infer that 
\begin{align}\label{e-JCg1HL}
\mathsf{CN}_{0;{\bf2}}[\rho]_{(1);\ne}^{\rm HL}\nn\les&\nu e^{-2|\al|\nu t}\sum_{\substack{k\in \mathbb{Z}^n,\, l\in\mathbb{Z}^n_*\\ k\ne l}}\int_\eta \left|A\pr_\eta^\al(\widehat{f^w})_k(t,\eta)\right|  \left|(\underline{A}^{\sig_0+1,\fr25} \hat{f})_l\left(t, lt^{\rm ap}\right)\right|e^{c\lm(t)\la k-l,\eta-lt^{\rm ap}\ra^s}\\
\nn&\times e^{2\nu t}\left\la t^{\rm ap}\right\ra^2\left|k-l, \eta-lt^{\rm ap} \right|^2 \left|\pr_\eta^\al(\widehat{f^w})_{k-l}\left(t,\eta-lt^{\rm ap}\right)\right|d\eta\\
\nn\les&\nu^\fr13e^{-\fr12\dl_1\nu^\fr13t}\left(e^{-|\al|\nu t}\|A(v^\al f^w)\|_{L^2}\right)\left(e^{-m\nu t}\|\underline{A}^{\sig_0+1,\fr25}\hat{f}_{\ne} \|_{L^2_{k} L^\infty_\eta}\right)\\
\nn&\times\left( e^{-|\al|\nu t}\left\|e^{\dl_1\nu^\fr13t}e^{c\lm(t)\la\nb\ra^s}|\nb|^2\la \pr_x\ra^{\fr{n}{2}+}\left(v^\al f^w\right)_{\ne} \right\|_{L^2}\right)\\
\nn\les&\nu^\fr13e^{-\fr12\dl_1\nu^\fr13t}\left(e^{-|\al|\nu t}\|A(v^\al f^w)\|_{L^2}\right) \| f_{\ne}^w(t)\|_{\mathcal{E}^{\sig_0+1,\fr25}_m}\\
&\times\left( e^{-|\al|\nu t}\left\| {\bf A}^{ \sig_1 ,\fr13}\left(v^\al f^w\right)_{\ne} \right\|_{L^2}\right).
\end{align}

As for $\mathsf{CN}_{0;{\bf2}}[\rho]_{(1)}^{\mathrm{LH}}$, using \eqref{A-LH}, the fact $\bar{\eta}(t; k,\eta)=\bar{\eta}(t;k-l,\eta-lt^{\rm ap})$ and \eqref{Young2}  , we have
\begin{align}\label{e-CNLH101}
\mathsf{CN}_{0;{\bf2}}[\rho]_{(1)}^{\mathrm{LH}}\nn\les&\nu e^{-2|\al|\nu t} \sum_{{k\in \mathbb{Z}^n,\, l\in\mathbb{Z}^n_*}}\int_\eta\left|\mathcal{F}[\pr_v^tA(v^\al f^w)]_k(t,\eta) \right| e^{\dl\nu^\fr13t} e^{c\lm(t)\la l, lt^{\rm ap}\ra^s} \left|\hat{\rho}_l(t) \right|\\
\nn&\times e^{-(\dl-\dl_1) \nu^\fr13 t} \left|\mathcal{F}[\pr_v^tA(v^\al f^w)]_{k-l}\left(t,\eta-lt^{\rm ap}\right)\right|d\eta\\
\les&\left(\nu e^{-|\al|\nu t} \left\|\pr_v^t A(v^\al f^w)\right\|_{L^2}\right)^2 \left\|B^{ \sig_1 }\rho(t) \right\|_{L^2_x}.
\end{align}
 We would like to emphasize that now we do not treat $\rho$ in terms of $f$ like \eqref{CNHL101} because we can not gain extra $e^{-m\nu t}$ decay.

When all derivatives are landing on $\rho$, $\mathsf{CN}_{0; {\bf2}}[\rho]^{\mathrm{HL}}_{(2)}$ and $\mathsf{CN}_{0; {\bf2}}[\rho]^{\mathrm{HL}}_{(3)}$ can be treated in the similar manner as $\mathsf{CN}_{0;{\bf2}}[\rho]_{(1)}^{\mathrm{HL}}$. In fact, similar to \eqref{CNHL101} and \eqref{e-JCg1HL}, we have
\begin{align}
\nn&\mathsf{CN}_{0; {\bf2}}[\rho]^{\mathrm{HL}}_{(2);0}+\mathsf{CN}_{0; {\bf2}}[\rho]^{\mathrm{HL}}_{(3);0}\\
\nn\les&\nu e^{\nu t}e^{(1-s)m\nu t}\fr{\la t \ra^s}{\sqrt{-\dot{\lm}(t)}}e^{-s(\dl\nu^\fr13t-\dl_1\nu^\fr25)t}\left(e^{-|\al|\nu t}\sqrt{-\dot{\lm}(t)}\left\|\la\nb\ra^{\fr{s}{2}}A(v^\al f^w)\right\|_{L^2}\right)\\
\nn&\times\left(e^{-m\nu t}\left\|\underline{A}_k^{\sig_0+1,\fr25}(t,kt^{\rm ap})\hat{\rho}_k(t)\right\|_{L^2_k}\right)^{1-s}
\left\| {\bf B}  \rho(t)\right\|_{L^2_x}^s\\
\nn&\times\sum_{\beta\le\al:1\le|\beta|\le2} e^{-(|\al|-|\beta|)\nu t}\left\|e^{c\lm(t)\la \eta\ra^s}\la\eta\ra\pr_\eta^{\al-\beta}(\widehat{f^w})_0(t,\eta)\right\|_{L^2_\eta}\\
\nn\les&\nu^{\fr23} e^{-\fr{s}{2}(\dl-\dl_1)\nu^\fr13t}\left(e^{-|\al|\nu t}\sqrt{-\dot{\lm}(t)}\left\|\la\nb\ra^{\fr{s}{2}}A(v^\al f^w)\right\|_{L^2}\right)\\
\nn&\times\|f_{\ne}(t)\|_{\mathcal{E}_m^{\sig_0+1,\fr25}}^{1-s}
\left\| {\bf B}  \rho(t)\right\|_{L^2_x}^s \sum_{\beta\le\al:1\le|\beta|\le2} e^{-(|\al|-|\beta|)\nu t}\left\|A(v^{\al-\beta} f^w)_0\right\|_{L^2}.
\end{align}
and
\begin{align}
\nn&\mathsf{CN}_{0; 2}[\rho]^{\mathrm{HL}}_{(2);\ne}+\mathsf{CN}_{0; 2}[\rho]^{\mathrm{HL}}_{(3);\ne}\\
\nn\les&\nu e^{\nu t}\la t\ra e^{-\dl_1\nu^\fr13t}\left(e^{-|\al|\nu t}\left\|A(v^\al f^w)\right\|_{L^2}\right)\left\|\underline{A}_k(t,kt^{\rm ap})\hat{\rho}_k(t)\right\|_{L^2_k}\\
\nn&\times \sum_{\beta\le\al:1\le|\beta|\le2} e^{-(|\al|-|\beta|)\nu t}\left\|e^{\dl_1\nu^\fr13 t}e^{c\lm(t)\la k,\eta\ra^s}\la k,\eta\ra\la k\ra^{\fr{n}{2}+}\pr_\eta^{\al-\beta}(\widehat{f^w})_k(t,\eta)\right\|_{L^2_{k,\eta}}\\
\nn\les&\nu^{\fr23}  e^{-\fr12\dl_1\nu^\fr13t}\left(e^{-|\al|\nu t}\left\|A(v^\al f^w)\right\|_{L^2}\right) \| f^w_{\ne}(t)\|_{\mathcal{E}^{\sig_0+1,\fr25}_m}\\
\nn&\times \sum_{\beta\le\al:1\le|\beta|\le2} e^{-(|\al|-|\beta|)\nu t}\left\| {\bf A}^{ \sig_1 ,\fr13}\left(v^{\al-\beta}{f^w}\right)_{\ne}\right\|_{L^2}.
\end{align}
When all derivatives are landing on $f^w$, similar to \eqref{e-CNLH101}, we arrive at
\begin{align}\label{CN0-remain}
\nn&\mathsf{CN}_{0;2}[\rho]_{(2)}^{\rm LH}+\mathsf{CN}_{0;2}[\rho]_{(3)}^{\rm LH}\\
\nn\les&\nu e^{-2|\al|\nu t}e^{\nu t}\sum_{\beta\le\al:|\beta|=1}\sum_{k,l\in\Z^n}\int_\eta |\eta(t; k,\eta)|\left|A\pr_\eta^\al(\widehat{f^w})_k(t,\eta) \right|\left(e^{\dl_1\nu^\fr25t}e^{c\lm(t)\la l,lt^{\rm ap}}\ra^s |\hat{\rho}_l(t)|\right)\\
\nn&\times \left|A\pr_\eta^{\al-\beta}(\widehat{f^w})_{k-l}(t,\eta-lt^{\rm ap})\right|d\eta\\
\nn&+\nu e^{-2|\al|\nu t}e^{2\nu t}\sum_{\beta\le\al:|\beta|=2}\sum_{k,l\in\Z^n}\int_\eta \left|A\pr_\eta^\al(\widehat{f^w})_k(t,\eta) \right|\left(e^{\dl_1\nu^\fr25t}e^{c\lm(t)\la l,lt^{\rm ap}}\ra^s |\hat{\rho}_l(t)|\right)\\
\nn&\times \left|A\pr_\eta^{\al-\beta}(\widehat{f^w})_{k-l}(t,\eta-lt^{\rm ap})\right|d\eta\\
\nn\les&\nu^\fr12 e^{-(\dl-\dl_1)\nu^\fr13t}\left(\nu^\fr12 e^{-|\al|\nu t}\left\|\pr_v^tA(v^\al f^w)\right\|_{L^2}\right)\left\| {\bf B}  \rho\right\|_{L^2_x}\\
\nn&\times\left(\sum_{\beta\le\al:|\beta|=1}e^{-(|\al|-1)\nu t}\left\|A(v^{\al-\beta}f^w)\right\|_{L^2}\right)\\
\nn&+\nu e^{-(\dl-\dl_1)\nu^\fr13t}\left(e^{-|\al|\nu t}\left\|A(v^\al f^w)\right\|_{L^2}\right)\left\| {\bf B}  \rho\right\|_{L^2_x}\\
&\times\left(\sum_{\beta\le\al:|\beta|=2}e^{-(|\al|-2)\nu t}\left\|A(v^{\al-\beta}f^w)\right\|_{L^2}\right).
\end{align}

Next consider $\mathsf{CN}_{0;{\bf 1}}$. Recalling \eqref{M0}, we write 
\begin{align}
\mathsf{CN}_{0;{\bf 1}}
\nn=&-\nu e^{-2|\al|\nu t}\frak{Re} \sum_{\substack{k\in \mathbb{Z}^n,\, l\in\mathbb{Z}^n_*}}\int_\eta A\pr_\eta^\al(\overline{\widehat{f^w}})_k(t,\eta)A_k(t,\eta)(\widehat{M_1})_l(t)\cdot i\bar{\eta}(t;k,\eta)\\
\nn&\times\pr_\eta^\al  (\widehat{f^w})_{k-l}\left(t,\eta-lt^{\rm ap}\right)d\eta\\
\nn&-\nu e^{\nu t}e^{-2|\al|\nu t} \sum_{\beta\le\al:|\beta|=1}\frak{Re}\sum_{\substack{k\in \mathbb{Z}^n,\, l\in\mathbb{Z}^n_*}}\int_\eta A\pr_\eta^\al(\overline{\widehat{f^w}})_k(t,\eta)A_k(t,\eta)(\widehat{M_1})_l^\beta(t) \\
\nn&\times i\pr_\eta^{\al -\beta} (\widehat{f^w})_{k-l}\left(t,\eta-lt^{\rm ap}\right)d\eta
=\mathsf{CN}_{0;{\bf 1},(1)}+\mathsf{CN}_{0;{\bf 1},(2)}.
\end{align}
Then similar to \eqref{CNHL101}--\eqref{e-CNLH101}, we have
\begin{align}
\nn&\mathsf{CN}_{0;{\bf 1},(1);0}^{\rm HL}+\mathsf{CN}_{0;{\bf 1},(2);0}^{\rm HL}\\
\nn\les&\nu^\fr23e^{-\fr{s}{2}(\dl-\dl_1)\nu^\fr13t}\left(e^{-|\al|\nu t}\sqrt{-\dot{\lm}(t)}\|\la \nb\ra^{\fr{s}{2}}A(v^\al f^w)\|_{L^2} \right)\\
\nn&\times\left( e^{-(m+1)\nu t}\|{\bf 1}_{k\ne0}\underline{A}^{\sig_0+1,\fr25}(\nb_\eta \hat{f})_k(t,\eta)\|_{L^2_kL^\infty_\eta}\right)^{1-s}\| {\bf B} M_1(t)\|_{L^2_x}^s\\
\nn&\times  \sum_{\beta\le \al:|\beta|\le1}\left(e^{-(|\al|-|\beta|)\nu t}\|A(v^{\al-\beta}f^w)\|_{L^2} \right)\\
\nn\les&\nu^\fr23e^{-\fr{s}{2}(\dl-\dl_1)\nu^\fr13t}\left(e^{-|\al|\nu t}\sqrt{-\dot{\lm}(t)}\|\la \nb\ra^{\fr{s}{2}}A(v^\al f^w)\|_{L^2} \right)\\
&\times\|f_{\ne}\|_{\mathcal{E}^{\sig_0+1,\fr25}_{m}}^{1-s}\| {\bf B}  M_1(t)\|_{L^2_x}^s\sum_{\beta\le \al:|\beta|\le1}\left(e^{-(|\al|-|\beta|)\nu t}\|A(v^{\al-\beta}f^w)\|_{L^2} \right),
\end{align}

\begin{align}
\mathsf{CN}_{0;{\bf 1},(1);\ne}^{\rm HL}+\mathsf{CN}_{0;{\bf 1},(2);\ne}^{\rm HL}
\nn\les&\nu^\fr35e^{-\fr12\dl_1\nu^\fr25 t}\left(e^{-|\al|\nu t}\|A(v^\al f^w)\|_{L^2}\right)^2
\|f_{\ne}\|_{\mathcal{E}^{\sig_0+1,\fr25}_{m}}\\
\nn&+\nu e^{-\fr12\dl_1\nu^\fr25 t}\left(e^{-|\al|\nu t}\|A(v^\al f^w)\|_{L^2}\right)
\|f_{\ne}\|_{\mathcal{E}^{\sig_0+1,\fr25}_{m}}\\
&\times\sum_{\beta\le \al:|\beta|=1}\left(e^{-(|\al|-1)\nu t}\|A(v^{\al-\beta} f^w)\|_{L^2}\right),
\end{align}
and
\begin{align}
\nn&\mathsf{CN}_{0;{\bf 1},(1)}^{\rm LH}+\mathsf{CN}_{0;{\bf 1},(2)}^{\rm LH}\\\nn\les&\nu^\fr12e^{-(\dl-\dl_1)\nu^\fr13t}\left(\nu^\fr12e^{-|\al|\nu t}\|\pr_v^tA(v^\al f^w)\|_{L^2}\right)\| {\bf B}  M_1\|_{L^2_x}\left(e^{-|\al|\nu t}\|A(v^\al f^w)\|_{L^2}\right)\\
\nn&+\nu e^{-(\dl-\dl_1)\nu^\fr13t}\left(e^{-|\al|\nu t}\|A(v^\al f^w)\|_{L^2}\right)\| {\bf B}  M_1\|_{L^2_x}\left(\sum_{\beta\le\al:|\beta|=1}e^{-(|\al|-1)\nu t}\|A(v^{\al-\beta} f^w)\|_{L^2}\right).
\end{align}
The treatment for $\mathsf{CN}_{0; {\bf0}}$ is similar and easier, and thus is omitted.

Now turn to $\mathsf{CN}_1$. Recalling \eqref{Merr} and \eqref{en-f-H}, according to the order of $\bar{\eta}(t;k,\eta)$, like \eqref{CN0}, we write
$\mathsf{CN}_1=\mathsf{CN}_{1;{\bf 1}}+\mathsf{CN}_{1;{\bf 0}}$.
Furthermore, let us denote
$\mathsf{CN}_{1;{\bf 1}}=\mathsf{CN}_{1;{\bf 1}}[\rho]-4\lm_1\mathsf{CN}_{1;{\bf 1}}[\rho+M_\theta]$,
where
\begin{align}\label{e-CN2}
\mathsf{CN}_{1;{\bf 1}}[\rho]
\nn=&-\nu e^{-\nu t} e^{-2|\al|\nu t}\frak{Re}\sum_{k\in\mathbb{Z}^n,l\in\mathbb{Z}^n_*}\int_\eta \left({A}\pr_{\eta}^\al(\overline{\widehat{f^w}})_k(t, \eta)\right){A}_k(t,\eta)\hat{\rho}_l(t)\\
\nn&\times\pr_\eta^\al\left(\bar{\eta}(t;k,\eta)\cdot(\nb_\eta\widehat{f^w})_{k-l}\left(t,\eta-lt^{\rm ap}\right)\right)d\eta\\
\nn=&-\nu e^{-\nu t} e^{-2|\al|\nu t}\frak{Re}\sum_{k\in\mathbb{Z}^n,l\in\mathbb{Z}^n_*}\int_\eta \left({A}\pr_{\eta}^\al(\overline{\widehat{f^w}})_k(t, \eta)\right){A}_k(t,\eta)\hat{\rho}_l(t)\\
\nn&\times \bar{\eta}(t;k,\eta)\cdot\pr_\eta^\al (\nb_\eta\widehat{f^w})_{k-l}\left(t,\eta-lt^{\rm ap}\right)d\eta\\
\nn&-\nu e^{-2|\al|\nu t}\sum_{\beta\le\al,|\beta|=1}\begin{pmatrix}\al\\\beta\end{pmatrix}\frak{Re}\sum_{k\in\mathbb{Z}^n,l\in\mathbb{Z}^n_*}\int_\eta \left({A}\pr_{\eta}^\al(\overline{\widehat{f^w}})_k(t, \eta)\right){A}_k(t,\eta)\hat{\rho}_l(t)\\
\nn&\times\underbrace{\pr_\eta^\beta\eta\cdot\nb_{\eta}\pr_\eta^{\al-\beta}}_{=\pr_\eta^\al}(\widehat{f^w})_{k-l}\left(t,\eta-lt^{\rm ap}\right)d\eta\\
=&\mathsf{CN}_{1;{\bf 1}}[\rho]_{(1)}+\mathsf{CN}_{1;{\bf 1}}[\rho]_{(2)},
\end{align}
and $\mathsf{CN}_{1;{\bf 1}}[\rho+M_\theta]=\mathsf{CN}_{1;{\bf 1}}[\rho]+\mathsf{CN}_{1;{\bf 1}}[M_\theta]$ can be given in the same way.
Like \eqref{split1}, $\mathsf{CN}_{1;{\bf 1}}[\rho]_{(1)}$ can be split as follows:
\[
\mathsf{CN}_{1;{\bf 1}}[\rho]_{(1)}=\mathsf{CN}_{1;{\bf 1}}[\rho]_{(1)}^{\mathrm{LH}}+\mathsf{CN}_{1;{\bf 1}}[\rho]_{(1);0}^{\mathrm{HL}}+\mathsf{CN}_{1;{\bf 1}}[\rho]_{(1);\ne}^{\mathrm{HL}}.
\]
Similar to \eqref{CNHL101}, using \eqref{A-HL0} and \eqref{up-eta-bar}, we are led to
\begin{align}\label{CNrhoHL0}
\mathsf{CN}_{1;{\bf 1}}[\rho]_{(1);0}^{\mathrm{HL}}\nn\les&\nu e^{-s(\dl-\dl_1)\nu^\fr13t} \left\la t\right\ra^{s}  e^{-2|\al|\nu t}\sum_{k\in\mathbb{Z}^n_*}\int_\eta |k|^\fr{s}{2}\left|{A}\pr_{\eta}^\al(\widehat{f^w})_k(t, \eta)\right|\\
\nn&\times\left|\underline{A}^{\sig_0+1,\fr25}\hat{f}_k\left(t,kt^{\rm ap}\right)\right|^{1-s}\left| {\bf B}  \hat{\rho}_k(t) \right|^s 
\mathcal{F}\left[e^{c\lm(t)\la \nb\ra^s}|\nb|(vv^\al f^w)\right]_{0}\left(t,\eta-kt^{\rm ap}\right)\\
\nn\les&\nu \fr{\left\la t\right\ra^s}{\sqrt{-\dot{\lm}(t)}} e^{-s(\dl-\dl_1)\nu^\fr13t} e^{(1-s)m\nu t}e^{\nu t} \left(e^{-|\al|\nu t}\sqrt{-\dot{\lm}(t)}\left\|\la \nb\ra^{\fr{s}{2}}A(v^\al f^w)\right\|_{L^2}\right) \\
\nn&\times \|  {\bf B}  \rho\|_{L^2}^s \left(e^{-m\nu t}\|\underline{A}^{\sig_0+1,\fr25}\hat{f}_{\ne}\|_{L^2_kL^\infty_\eta}\right)^{1-s}\left(e^{-(|\al|+1)\nu t}\left\|e^{c\lm(t)\la \nb\ra^s}\left|\nb\right|\left(vv^\al({f^w})_{0}\right)\right\|_{L^2}\right)\\
\nn\les&\nu^\fr23  \left(e^{-|\al|\nu t}\sqrt{-\dot{\lm}(t)}\left\|\la \nb\ra^{\fr{s}{2}}A(v^\al f^w)\right\|_{L^2}\right) \\
&\times \|  {\bf B}  \rho\|_{L^2}^s \|f_{\ne}\|_{\mathcal{E}^{\sig_0+1,\fr25}_m}^{1-s}\left(e^{-(|\al|+1)\nu t}\left\|{\bf A}^{ \sig_1 ,\fr25}\left(vv^\al{f^w}\right)_0\right\|_{L^2}\right).
\end{align}
Moreover, similar to \eqref{e-JCg1HL},  from \eqref{A-HL}, \eqref{up-eta-bar},   and \eqref{Young2}, we arrive at
\begin{align}\label{e-CN21ne}
\mathsf{CN}_{1;{\bf 1}}[\rho]_{(1);\ne}^{\mathrm{HL}}\nn\les&\nu    e^{-2|\al|\nu t}e^{\nu t}\left\la t\right\ra\sum_{\substack{k\in\mathbb{Z}^n,l\in\mathbb{Z}^n_*\\ k\ne l}}\int_\eta \left|{A}\pr_{\eta}^\al(\widehat{f^w})_k(t, \eta)\right|\left|\underline{A}^{\sig_0+1,\fr25}\hat{f}_l\left(t,lt^{\rm ap}\right)\right|\\
\nn&\times \mathcal{F}\left[e^{c\lm(t)\la \nb\ra^s}|\nb|(vv^\al f^w)\right]_{k-l}\left(t,\eta-lt^{\rm ap}\right)\\
\nn\les&\nu e^{-\dl_1\nu^\fr25t}e^{\nu t} \la t\ra  e^{-2|\al|\nu t}\|A(v^\al f^w)\|_{L^2_{x,v}}\|\underline{A}^{\sig_0+1,\fr25}\hat{f}_{\ne}\|_{L^2_kL^\infty_\eta}\\
\nn&\times\left\|e^{\dl_1\nu^\fr25t}e^{c\lm(t)\la \nb\ra^s}\la \pr_x\ra^{\fr{n}{2}+}|\nb|(vv^\al f^w_{\ne})\right\|_{L^2}\\
\nn\les&\nu\la t\ra e^{(m+2)\nu t}e^{-\dl_1\nu^\fr25t}\left(e^{-m \nu t}\|\underline{A}^{\sig_0+1,\fr25}\hat{f}_{\ne}\|_{L^2_kL^\infty_\eta}\right)\left(e^{-|\al|\nu t}\|A(v^\al f^w)\|_{L^2}\right)\\
\nn&\times\left(e^{-|\al|\nu t}\left\|{\bf A}^{ \sig_1 ,\fr25}(vv^\al{f^w_{\neq}})\right\|_{L^2}\right)\\
\nn\les&\nu^\fr35 e^{-\fr{\dl_1}{2}\nu^\fr25t}\|f_{\ne}^w\|_{\mathcal{E}^{\sig_0+1,\fr25}_m}\left(e^{-|\al|\nu t}\|A(v^\al f^w)\|_{L^2}\right)\\
&\times\left(e^{-(|\al|+1)\nu t}\left\|{\bf A}^{ \sig_1 ,\fr25}(vv^\al{f^w_{\neq}})\right\|_{L^2}\right).
\end{align}

As for $\mathsf{CN}_{1;{\bf 1}}[\rho]_{(1)}^{\mathrm{LH}}$, using \eqref{A-LH}  and \eqref{Young1},
\begin{align}\label{e-CN2LH}
\mathsf{CN}_{1;{\bf 1}}[\rho]_{(1)}^{\mathrm{LH}}\nn\les&\nu e^{-\nu t} e^{-2|\al|\nu t}\sum_{k\in\mathbb{Z}^n,l\in\mathbb{Z}^n_*}\int_\eta \left|\hat{\pr_v^t}{A}\pr_{\eta}^\al(\widehat{f^w})_k(t, \eta)\right|\left(e^{c\lm(t)\la l,lt^{\rm ap}\ra^s}e^{\dl \nu^\fr13t}\left|\hat{\rho}_l(t)\right|\right) \\
\nn&\times e^{-(\dl-\dl_1)\nu^\fr13t} \left|A\pr_\eta^\al\nb_\eta(\widehat{f^w})_{k-l}\left(t,\eta-lt^{\rm ap}\right)\right|d\eta\\
\nn\les&\left(\nu^\fr12e^{-|\al|\nu t}\left\|\pr_v^tA(v^\al f^w)\right\|_{L^2_{x,v}}\right)\left(\sqrt{-\dot{\lm}_1(t)}e^{-(|\al|+1)\nu t}\left\|A(vv^\al{f^w})\right\|_{L^2}\right)\\
\nn&\times  \left\|e^{-(\dl-\dl_1)\nu^\fr13t}\fr{\nu^\fr12}{\sqrt{-\dot{\lm}_1(t)}}\la k\ra^{\fr{n}{2}+}e^{c\lm(t)\la k,kt^{\rm ap}\ra^s}e^{\dl \nu^\fr13t}\hat{\rho}_k(t)\right\|_{L^2_k}\\
\les&\left\|B^{ \sig_1 }\rho\right\|_{L^2_x}\left(\nu^\fr12 e^{-|\al|\nu t} \left\|\pr_v^tA(v^\al f^w)\right\|_{L^2}\right)\left(\sqrt{-\dot{\lm}_1(t)}e^{-(|\al|+1)\nu t}\left\|A(vv^\al{f^w})\right\|_{L^2}\right),
\end{align}
where we have used
\[
e^{-(\dl-\dl_1)\nu^\fr13t}\fr{\nu^\fr12}{\sqrt{-\dot{\lm}_1(t)}}\les\left\la t^{\rm ap}\right\ra^{\fr{1+a_0}{2}}.
\]
Similarly, we can bound $\mathsf{CN}_{1;{\bf 1}}[\rho]_{(2)}$ as follows
\begin{align}
\mathsf{CN}_{1;{\bf 1}}[\rho]_{(2);0}^{\rm HL}\nn\les&\nu^\fr23  \left(e^{-|\al|\nu t}\sqrt{-\dot{\lm}(t)}\left\|\la \nb\ra^{\fr{s}{2}}A(v^\al f^w)\right\|_{L^2}\right) \\
&\times \|  {\bf B}  \rho\|_{L^2}^s \|f_{\ne}^w\|_{\mathcal{E}^{\sig_0+1,\fr25}_m}^{1-s}\left(e^{-|\al|\nu t}\left\|A\left(v^\al{f^w}\right)_0\right\|_{L^2}\right),\\
\mathsf{CN}_{1;{\bf 1}}[\rho]_{(2);\ne}^{\rm HL}\les&\nu e^{-\fr{\dl_1}{2}\nu^\fr25t}\|f^w_{\ne}\|_{\mathcal{E}^{\sig_0+1,\fr25}_m}\left(e^{-|\al|\nu t}\|A(v^\al f^w)\|_{L^2}\right)^2,
\end{align}
and
\begin{align}
\mathsf{CN}_{1;{\bf 1}}[\rho]_{(2)}^{\rm LH}
\les \| {\bf B}  \rho\|_{L^2_x}\left(e^{-|\al|\nu t}\|A(v^\al f^w)\|_{L^2}\right)^2.
\end{align}

To bound $\mathsf{CN}_{1;{\bf 0}}$, we write
\begin{align}
\mathsf{CN}_{1;{\bf 0}}\nn=&2\nu\lm_1e^{-\nu t}e^{-2|\al|\nu t}\frak{Re}\sum_{k\in\Z^n,l\in\Z^n_*}\int_\eta A_k(t,\eta)(\widehat{M_1})_l(t)\cdot i(\nb_\eta \pr^\al_\eta \widehat{f^w})_{k-l}(t,\eta-lt^{\rm ap})\\
\nn&\times A\pr^\al_\eta(\overline{\widehat{f^w}})_k(t,\eta)d\eta
=\mathsf{CN}_{1;{\bf 0}}^{\rm LH}+\mathsf{CN}_{1;{\bf 0};0}^{\rm HL}+\mathsf{CN}_{1;{\bf 0};\ne}^{\rm HL}.
\end{align}
Then recalling \eqref{M1f}, similar to \eqref{CNrhoHL0}--\eqref{e-CN2LH},  we have
\begin{align}\label{CN100HL}
\mathsf{CN}_{1;{\bf 0};0}^{\rm HL}\nn\les&\nu\lm_1e^{-\nu t}e^{-2|\al|\nu t}\la t\ra^s e^{s(\dl_1\nu^\fr25-\dl\nu^\fr13) t}\sum_{k\in\Z^n_*}\left(\underline{A}_k^{\sig_0+1,\fr25}(t,kt^{\rm ap})\left| (\widehat{M_1})_k(t)\right|\right)^{1-s}\\
\nn\times&\left| {\bf B}  (\widehat{M_1})_k(t) \right|^s\int_\eta\left(e^{c\lm(t)\la \eta-kt^{\rm ap}\ra^s}\left|(\nb_\eta\pr^\al_\eta\widehat{f^w})_0(t,\eta-kt^{\rm ap}) \right| \right)\\
\nn&\times \left(|k|^\fr{s}{2}\left|A\pr_\eta^\al(\widehat{f^w})_k(t,\eta) \right|\right)d\eta\\
\nn\les&\nu\lm_1e^{-\nu t}e^{-2|\al|\nu t}\la t\ra^s e^{s(\dl_1\nu^\fr25-\dl\nu^\fr13) t}\left\|\underline{A}_k^{\sig_0+1,\fr25}(t,kt^{\rm ap})(\widehat{M_1})_k(t)\right\|_{L^2_k}^{1-s}\left\| {\bf B}  M_1(t)\right\|_{L^2_x}^s\\
\nn&\times \left\|  {\bf A}^{ \sig_1 ,\fr25}(vv^\al f^w_0)\right\|_{L^2}\left\|\la\nb\ra^{\fr{s}{2}}A(v^\al f^w)\right\|_{L^2}\\
\nn\les&\nu \lm_1\fr{\la t\ra^s}{\sqrt{-\dot{\lm}(t)}} e^{s(\dl_1\nu^\fr25-\dl\nu^\fr13)t}e^{(1-s)m\nu t}\left(e^{-(m+1)\nu t}\left\|\underline{A}^{\sig_0+1,\fr25}(\nb_\eta\hat{f})_{\ne}\right\|_{L^2_kL^\infty_\eta}\right)^{1-s}\\
\nn&\times \left\| {\bf B}  M_1(t)\right\|_{L^2_x}^s\left(e^{-(|\al|+1)\nu t} \left\|  {\bf A}^{ \sig_1 ,\fr25}(vv^\al f^w_0)\right\|_{L^2}\right)\\
\nn&\times\left( e^{-|\al|\nu t} \sqrt{-\dot{\lm}(t)}\left\|\la\nb\ra^{\fr{s}{2}}A(v^\al f^w)\right\|_{L^2}\right)\\
\nn\les&\nu^\fr23 \lm_1\ e^{-\fr{s}{2}(\dl-\dl_1)\nu^\fr13t}\left\|f_{\ne}^w\right\|_{\mathcal{E}_m^{\sig_0+1,\fr25}}^{1-s}\left\| {\bf B}  M_1(t)\right\|_{L^2_x}^s\\
&\times \left(e^{-(|\al|+1)\nu t} \left\|  {\bf A}^{ \sig_1 ,\fr25}(vv^\al f^w)_0\right\|_{L^2}\right)\left( e^{-|\al|\nu t} \sqrt{-\dot{\lm}(t)}\left\|\la\nb\ra^{\fr{s}{2}}A(v^\al f^w)\right\|_{L^2}\right),
\end{align}

\begin{align}
\mathsf{CN}_{1;{\bf 0};\ne}^{\rm HL}\nn\les&\nu\lm_1e^{-\nu t}e^{-2|\al|\nu t}\sum_{\substack{k\in\Z^n,l\in\Z^n_*\\ l\ne k}}\int_\eta e^{\dl_1\nu^\fr25t}e^{\lm(t)\la l,lt^{\rm ap}\ra^s}\left\la l,lt^{\rm ap}\right\ra^{\sig+1}\left|(\widehat{M_1})_l(t) \right|\\
\nn&\times e^{c\lm(t)\la k-l,\eta-lt^{\rm ap}\ra^s}\left| \nb_\eta\pr_\eta^\al(\widehat{f^w})_{k-l}(t,\eta-lt^{\rm ap})\right| \left|A\pr_\eta^\al(\widehat{f^w})_k(t,\eta)\right|d\eta\\
\nn\les&\nu e^{-\dl_1\nu^\fr25t}\left\|\underline{A}_k^{\sig_0+1,\fr25}(t,kt^{\rm ap})(\widehat{M}_1)_k(t)\right\|_{L^2_k}\left(\lm_1e^{-(|\al|+1)\nu t}\|A(vv^\al f^w_{\ne})\|_{L^2}\right)\\\nn&\times\left(e^{-|\al|\nu t}\left\|A(v^\al  f^w)\right\|_{L^2}\right)\\
\nn\les&\nu e^{-\fr{\dl_1}{2}\nu^{\fr25}t}\left(e^{-(m+1)\nu t}\left\|\underline{A}^{\sig_0+1,\fr25}(\nb_\eta\hat{f})_k(t,\eta)\right\|_{L^2_kL^\infty_\eta}\right)\left(\lm_1e^{-(|\al|+1)\nu t}\|A(vv^\al f^w_{\ne})\|_{L^2}\right)\\
\nn&\times\left(e^{-|\al|\nu t}\left\|A(v^\al  f^w)\right\|_{L^2}\right)\\
\les&\nu e^{-\fr{\dl_1}{2}\nu^{\fr25}t}\|f_{\ne}\|_{\mathcal{E}^{\sig_0+1,\fr25}_m}\big(\lm_1e^{-(|\al|+1)\nu t}\|A(vv^\al f^w_{\ne})\|_{L^2}\big)
\big(e^{-|\al|\nu t}\left\|A(v^\al  f^w)\right\|_{L^2}\big),
\end{align}
and
\begin{align}
\mathsf{CN}_{1;{\bf 0}}^{\rm LH}\nn\les&\nu\left\| e^{\dl_1\nu^\fr25t}\la k\ra^{\fr{n}{2}+}e^{c\lm(t)\la k, kt^{\rm ap}\ra^s}(\widehat{M_1})_k(t)\right\|_{L^2_k}\\
\nn&\times \left(\lm_1e^{-(|\al|+1)\nu t}\left\|A(vv^\al f^w)\right\|_{L^2}\right)\left(e^{-|\al|\nu t}\left\|A(v^\al f^w)\right\|_{L^2}\right)\\
\les&\nu \left\| {\bf B}  M_1\right\|_{L^2_x}\left(\lm_1e^{-(|\al|+1)\nu t}\left\|A(vv^\al f^w)\right\|_{L^2}\right)\left(e^{-|\al|\nu t}\left\|A(v^\al f^w)\right\|_{L^2}\right).
\end{align}

The remaining term is ${\sf CN}_2$. Recalling \eqref{M2}, it is natural to write ${\sf CN}_2$ as
\beno
{\sf CN}_2=2\lm_1(1-2\lm_1){\sf CN}_2[\rho]-4\lm_1^2{\sf CN}_2[M_\theta]={\sf CN}_{2;\rho}+{\sf CN}_{2;M_\theta},
\eeno
where
\begin{align}
{\sf CN}_2[\rho]\nn=\nu e^{-2(|\al|+1)\nu t}\frak{Re}\sum_{k\in\mathbb{Z}^n,\, l\in\mathbb{Z}^n}\int_\eta  A_k(t,\eta)\hat{\rho}_l(t)\pr_\eta^\al(\Dl_\eta\widehat{f^w})_{k-l}(t, \eta-lt^{\rm ap})A\pr_\eta^\al(\overline{\widehat{f^w}})_k(t,\eta)d\eta,
\end{align}
and ${\sf CN}_2[M_\theta]$ can be given with $\rho$ replaced by $M_\theta$ as above. ${\sf CN}_{2;\rho}$ and ${\sf CN}_{2;M_\theta}$ can be treated in the similar manner.
Let us focus on the estimates of ${\sf CN}_{2;M_\theta}$, which can be split into three parts
${\sf CN}_{2;M_\theta}={\sf CN}_{2;M_\theta;0}+{\sf CN}_{2;M_\theta;\ne\ne} +{\sf CN}_{2;M_\theta;\ne0}$,
where 
\begin{align}
{\sf CN}_{2;M_\theta;0}\nn=&4\nu\lm_1^2 e^{-2(|\al|+1)\nu t}(\widehat{M_\theta})_0(t)\left\|A(vv^\al f^w)\right\|_{L^2}^2\\
\nn&+8\nu\lm_1^2 e^{-2(|\al|+1)\nu t}(\widehat{M_\theta})_0(t)\sum_{k\in\mathbb{Z}^n}\int_\eta  A_k(t,\eta)  (\nb_\eta \pr_\eta^\al\widehat{f^w})_{k}\left(t,\eta\right)\\
\nn&\cdot \nb_\eta A_k(t,\eta)\pr_\eta^\al(\overline{\widehat{f^w}})_k(t,\eta)d\eta={\sf CN}_{2;M_\theta;0,(1)}+{\sf CN}_{2;M_\theta;0,(2)},\\
{\sf CN}_{2;M_\theta;\ne\ne}\nn=&4\nu\lm_1^2e^{-2(|\al|+1)\nu t}\sum_{\substack{k\in\mathbb{Z}^n, l\in\mathbb{Z}^n_*\\ l\ne k}}\int_\eta  A_k(t,\eta) (\widehat{M_\theta})_l(t) (\nb_\eta \pr_\eta^\al\widehat{f^w})_{k-l}\left(t,\eta-lt^{\rm ap}\right)\\
\nn&\cdot A\nb_\eta\pr_\eta^\al(\overline{\widehat{f^w}})_k(t,\eta)d\eta\\
\nn&+8\nu\lm_1^2e^{-2(|\al|+1)\nu t}\sum_{\substack{k\in\mathbb{Z}^n, l\in\mathbb{Z}^n_*\\ l\ne k}}\int_\eta  A_k(t,\eta) (\widehat{M_\theta})_l(t) (\nb_\eta \pr_\eta^\al\widehat{f^w})_{k-l}\left(t,\eta-lt^{\rm ap}\right)\\
\nn&\cdot \nb_\eta A_k(t,\eta)\pr_\eta^\al(\overline{\widehat{f^w}})_k(t,\eta)d\eta={\sf CN}_{2;M_\theta;\ne\ne,(1)}+{\sf CN}_{2;M_\theta;\ne\ne,(2)},
\end{align}
and
\begin{align}
{\sf CN}_{2;M_\theta;\ne0}\nn=&-4\nu\lm_1^2e^{-2(|\al|+1)\nu t}\sum_{k\in\mathbb{Z}^n_*}\int_\eta  A_k(t,\eta) (\widehat{M_\theta})_k(t) (\Dl_\eta \pr_\eta^\al\widehat{f^w})_{0}\left(t,\eta-kt^{\rm ap}\right)\\
\nn&\times A\pr_\eta^\al(\overline{\widehat{f^w}})_k(t,\eta)d\eta.
\end{align}
Clearly, ${\sf CN}_{2;M_\theta;0,(1)}$ can be absorbed by the left hand side of \eqref{en-f-H}. For ${\sf CN}_{2;M_\theta;0,(2)}$, we infer from \eqref{A_eta1} and \eqref{A_eta2} that
\be\label{A_eta3}
|\nb_\eta A_k(t,\eta)|\les e^{\nu t}A_k(t,\eta).
\ee
Then
\begin{align}
{\sf CN}_{2;M_\theta;0,(2)}\nn\les&\lm_1\nu \left| (\widehat{M_\theta})_0(t)\right|\left(\lm_1 e^{-(|\al|+1)\nu t}\left\| A(vv^\al f^w)\right\|_{L^2}\right)\left(e^{-|\al|\nu t}\|A(v^\al f^w)\|_{L^2}\right).
\end{align}
We will estimate ${\sf CN}_{2;M_\theta;\ne\ne,(1)}$ by using the partition of unity \eqref{pou}. In fact, using \eqref{A-LH} and \eqref{Young1}, one deduces that
\begin{align}\label{CN2MLH1}
{\sf CN}_{2;M_\theta;\ne\ne,(1)}^{\rm LH}\nn\les&\nu e^{-(\dl-\dl_1)\nu^\fr13t}\left\|{\bf 1}_{k\ne0}e^{\dl\nu^\fr13 t}\la k\ra^{\fr{n}{2}+}e^{c\lm(t)\la k, kt^{\rm ap}\ra^s}(\widehat{M_\theta})_k(t)\right\|_{L^2_k}\\
\nn&\times \left(\lm_1 e^{-(|\al|+1)\nu t}\left\|A(vv^\al f^w)\right\|_{L^2}\right)^2\\
\les&\nu e^{-(\dl-\dl_1)\nu^\fr13t}\left\|B^{ \sig_1 }({M_\theta})_{\ne}(t)\right\|_{L^2_x} \left(\lm_1 e^{-(|\al|+1)\nu t}\left\|A(vv^\al f^w)\right\|_{L^2}\right)^2.
\end{align}
For  ${\sf CN}_{2;M_\theta;\ne\ne,(1)}^{\rm HL}$, thanks to \eqref{A-HL} and \eqref{Young2}, similar to \eqref{e-CN21ne}, using \eqref{AM}, we have
\begin{align}
{\sf CN}_{2;M_\theta;\ne\ne,(1)}^{\rm HL}\nn\les&\nu\lm_1^2 e^{-2(|\al|+1)\nu t}\left\| e^{\dl_1\nu^\fr25 t} \la k\ra^{\fr{n}{2}+} e^{c\lm(t)\la k,\eta\ra^s}(\nb_\eta\pr_\eta^\al\widehat{f^w_{\ne}})_k(t,\eta)\right\|_{L^2_{k,\eta}}\\
\nn&\times e^{-\dl_1\nu^\fr25 t}\left\|{\bf 1}_{k\ne0}\underline{A}_k^{\sig_0+1,\fr25}\left(t,kt^{\rm ap}\right)(\widehat{M_\theta})_k(t) \right\|_{L^2_k}\left\|A(vv^\al f^w)\right\|_{L^2}\\
\les&\lm_1\left(\nu\lm_1 e^{-2(|\al|+1)\nu t}\left\|A(vv^\al f^w)\right\|_{L^2}^2\right)\|f_{\ne}^w(t)\|_{\mathcal{E}_m^{\sig_0+1,\fr25}}.
\end{align}
In view of  \eqref{A_eta3}, the  other term  ${\sf CN}_{2;M_\theta;\ne\ne,(2)}$ can be bounded in a similar manner as  ${\sf CN}_{2;M_\theta;\ne\ne,(1)}$:
\begin{align}
{\sf CN}_{2;M_\theta;\ne\ne,(2)}\nn\les&\nu \lm_1  e^{-(\dl-\dl_1)\nu^\fr13t}\left\|B^{ \sig_1 }({M_\theta})_{\ne}(t)\right\|_{L^2_x} \left(\lm_1 e^{-(|\al|+1)\nu t}\left\|A(vv^\al f^w)\right\|_{L^2}\right)\\
\nn&\times\left( e^{-|\al|\nu t}\left\|A(v^\al f^w) \right\|_{L^2}\right)\\
\nn&+\nu\lm_1\left(\lm_1 e^{-(|\al|+1)\nu t}\left\|A(vv^\al f^w)\right\|_{L^2}\right)\left(e^{-|\al|\nu t}\left\|A(v^\al f^w)\right\|_{L^2}\right)\|f_{\ne}^w(t)\|_{\mathcal{E}_m^{\sig_0+1,\fr25}}.
\end{align}

Instead of using the partition of \eqref{pou}, we use  Bony's decomposition (see Section \ref{subsec-LP}) to bound ${\sf CN}_{2;M_\theta;\ne0}$. 
 More precisely, we write
\begin{align}
{\sf CN}_{2;M_\theta;\ne0}
\nn=&-4\nu\lm_1^2 e^{-2(|\al|+1)\nu t}\frak{Re}\sum_{N\ge8}\sum_{k\in\mathbb{Z}^n_*}\int_\eta  A_k(t,\eta) (\widehat{M_\theta})_k(t)_{<\fr{N}{8}} (\Dl_\eta \pr_\eta^\al\widehat{f^w})_{0}\left(t,\eta-kt^{\rm ap}\right)_N\\
\nn&\times A\pr_\eta^\al(\overline{\widehat{f^w}})_k(t,\eta) d\eta\\
\nn&-4\nu\lm_1^2 e^{-2(|\al|+1)\nu t}\frak{Re}\sum_{N\in \mathbb{D}}\sum_{k\in\mathbb{Z}^n_*}\int_\eta  A_k(t,\eta) (\widehat{M_\theta})_k(t)_{N} (\Dl_\eta \pr_\eta^\al\widehat{f^w})_{0}\left(t,\eta-kt^{\rm ap}\right)_{<16N}\\
\nn&\times A\pr_\eta^\al(\overline{\widehat{f^w}})_k(t,\eta)d\eta={\sf CN}_{2;M_\theta;\ne0}^{\rm LH}+{\sf CN}_{2;M_\theta;\ne0}^{\rm HL},
\end{align}
where 
\beno
(\widehat{M_\theta})_k(t)_{N}=\varphi\left(\fr{k, kt^{\rm ap}}{N}\right)(\widehat{M_\theta})_k(t),\quad (\widehat{M_\theta})_k(t)_{<\fr{N}{8}}=\chi\left(\fr{k, kt^{\rm ap}}{N/8}\right)(\widehat{M_\theta})_k(t),
\eeno
\beno
(\Dl_\eta \pr_\eta^\al\widehat{f^w})_{0}\left(t,\eta-kt^{\rm ap}\right)_N=\varphi\left(\fr{\eta-kt^{\rm ap}}{N}\right)(\Dl_\eta \pr_\eta^\al\widehat{f^w})_{0}\left(t,\eta-kt^{\rm ap}\right),
\eeno
and
\beno
(\Dl_\eta \pr_\eta^\al\widehat{f^w})_{0}\left(t,\eta-kt^{\rm ap}\right)_{<16N}=\chi\left(\fr{\eta-kt^{\rm ap}}{16N}\right)(\Dl_\eta \pr_\eta^\al\widehat{f^w})_{0}\left(t,\eta-kt^{\rm ap}\right).
\eeno
For any fixed $N\in\mathbb{D}$, the restriction on the integrand of ${\sf CN}_{2;M_\theta;\ne0}^{\rm HL}$ implies that
\[
\fr{N}{2}\le\left|k, kt^{\rm ap}\right|\le\fr{3}{2}N,\quad \left|\eta-kt^{\rm ap}\right|\le\fr34\cdot 16N=12N\le24\left|k, kt^{\rm ap}\right|.
\]
Then by virtue of \eqref{app5}, an analogue of \eqref{A-HL0} holds for each $N\in\mathbb{D}$. Consequently, similar to \eqref{CNHL101}, and using \eqref{AM}, we arrive at
\begin{align}\label{loss2m}
{\sf CN}_{2;M_\theta;\ne0}^{\rm HL}\nn\les&\nu \lm_1^2 e^{-2\nu t}e^{-2|\al|\nu t}\la t\ra^s e^{-s(\dl\nu^\fr13-\dl_1\nu^\fr25)t}\sum_{k\in\mathbb{Z}^n_*}\int_\eta  |k|^\fr{s}{2} \left|A\pr_\eta^\al(\widehat{f^w})_k(t,\eta) \right| \\
\nn&\times \left|\underline{A} _k^{\sig_0+1,\fr25}\left(t, kt^{\rm ap}\right)(\widehat{M_\theta})_k(t)\right|^{1-s}\left| {\bf B}  (\widehat{M_\theta})_k(t) \right|^s \\
\nn&\times  e^{c\lm(t)\la \eta-kt^{\rm ap}\ra^s}\left|\Dl_\eta\pr_\eta^\al(\widehat{f^w})_{0}\left(t,\eta-kt^{\rm ap}\right)\right|d\eta\\
\nn\les&\nu \lm_1^2 \fr{\la t\ra^s}{\sqrt{-\dot{\lm}(t)}}e^{-s(\dl\nu^\fr13-\dl_1\nu^\fr25)t}e^{(1-s)m\nu t} \left(e^{-|\al|\nu t}\sqrt{-\dot{\lm}(t)}\left\|\la\nb\ra^{\fr{s}{2}} A(v^\al f^w)\right\|_{L^2}\right)\\
\nn&\times \left(e^{-m\nu t}\left\|{\bf1}_{k\ne0}\underline{A}_k^{\sig_0+1,\fr25}\left(t, kt^{\rm ap}\right)(\widehat{M_\theta})_k(t)\right\|_{L^2_k}\right)^{1-s}\left\| {\bf B}  M_\theta\right\|_{L^2_x}^s\\
\nn&\times  e^{-(|\al|+2)\nu t}\left\|e^{c\lm(t)\la\nb\ra^s}(|v|^2v^\al f^w)_0 \right\|_{L^2}\\
\nn\les&\nu^\fr23\lm_1^2e^{-\fr{s}{2}(\dl-\dl_1)\nu^\fr13t}\left(e^{-|\al|\nu t}\sqrt{-\dot{\lm}(t)}\left\|\la\nb\ra^{\fr{s}{2}} A(v^\al f^w)\right\|_{L^2}\right)\\
&\times \left\|f_{\ne}^w\right\|_{\mathcal{E}_m^{\sig_0+1,\fr25}}^{1-s}\left\| {\bf B}  M_\theta\right\|_{L^2_x}^s  \left(e^{-(|\al|+2)\nu t}\left\|{\bf A}^{ \sig_1 ,\fr25}(|v|^2v^\al f^w)_0 \right\|_{L^2}\right).
\end{align}
Next, we will use integrating by parts to bound ${\sf CN}_{2;M_\theta;\ne0}^{\rm LH}$:
\begin{align}
{\sf CN}_{2;M_\theta;\ne0}^{\rm LH}\nn=&4\nu\lm_1^2 e^{-2(|\al|+1)\nu t}\frak{Re}\sum_{N\ge8}\sum_{k\in\mathbb{Z}^n_*}\int_\eta  A_k(t,\eta) (\widehat{M_\theta})_k(t)_{<\fr{N}{8}} (\nb_\eta \pr_\eta^\al\widehat{f^w})_{0}\left(t,\eta-kt^{\rm ap}\right)_N\\
\nn&\times A\nb_\eta\pr_\eta^\al(\overline{\widehat{f^w}})_k(t,\eta) d\eta\\
\nn&+8\nu\lm_1^2e^{-2(|\al|+1)\nu t}\frak{Re}\sum_{N\ge8}\sum_{k\in\mathbb{Z}^n_*}\int_\eta  A_k(t,\eta) (\widehat{M_\theta})_k(t)_{<\fr{N}{8}} (\nb_\eta \pr_\eta^\al\widehat{f^w})_{0}\left(t,\eta-kt^{\rm ap}\right)_N\\
\nn&\cdot \nb_\eta A_k(t,\eta)\pr_\eta^\al(\overline{\widehat{f^w}})_k(t,\eta) d\eta\\
\nn&+4\nu\lm_1^2e^{-2(|\al|+1)\nu t}\frak{Re}\sum_{N\ge8}\sum_{k\in\mathbb{Z}^n_*}\int_\eta  A_k(t,\eta) (\widehat{M_\theta})_k(t)_{<\fr{N}{8}} \fr{1}{N}(\nb_\eta\varphi)\left(\fr{\eta-kt^{\rm ap}}{N} \right)\\
\nn&\cdot (\nb_\eta \pr_\eta^\al\widehat{f^w})_{0}\left(t,\eta-kt^{\rm ap}\right) A\pr_\eta^\al(\overline{\widehat{f^w}})_k(t,\eta) d\eta\\
\nn=&{\sf CN}_{2;M_\theta;\ne0,(1)}^{\rm LH}+{\sf CN}_{2;M_\theta;\ne0,(2)}^{\rm LH}+{\sf CN}_{2;M_\theta;\ne0,(3)}^{\rm LH}.
\end{align}
For any fixed $N\in\mathbb{D}$, the restriction on the integrand of ${\sf CN}_{2;M_\theta;\ne0}^{\rm LH}$ implies that
\beno
\fr{N}{2}\le\left| \eta-kt^{\rm ap}\right|\le\fr{3N}{2}, \quad \left| k, kt^{\rm ap}\right| \le\fr34\cdot\fr{N}{8}\le \fr{3}{16}\left| \eta-kt^{\rm ap}\right|.
\eeno
Combining this with \eqref{app3}, we find that the analogue of \eqref{A-LH} with $l=k$ holds. Therefore, we have
\begin{align*}
{\sf CN}_{2;M_\theta;\ne0,(1)}^{\rm LH}\les&\nu e^{-(\dl-\dl_1)\nu^\fr13t}\left\|B^{\sig_1}({M_\theta})_{\ne}(t)\right\|_{L^2_x} \left(\lm_1 e^{-(|\al|+1)\nu t}\left\|A(vv^\al f^w)\right\|_{L^2}\right)^2,\\
{\sf CN}_{2;M_\theta;\ne0,(2)}^{\rm LH}\nn\les&\nu \lm_1 e^{-(\dl-\dl_1)\nu^\fr13t}\left\| {\bf B}  {M_\theta}(t)\right\|_{L^2_x} \left(\lm_1 e^{-(|\al|+1)\nu t}\left\|A(vv^\al f^w)\right\|_{L^2}\right)\\
&\times\left( e^{-|\al|\nu t}\left\|A(v^\al f^w)_0\right\|_{L^2}\right),
\end{align*}
and
\begin{align}\label{d-cutoff}
{\sf CN}_{2;M_\theta;\ne0,(3)}^{\rm LH}\nn\les&\left(\sum_{N\ge8, N\in\mathbb{D}}\fr{1}{N}\right)\nu\lm_1^2 e^{-2(|\al|+1)\nu t}\left\|{\bf 1}_{k\ne0}e^{\dl\nu^\fr13 t}\la k\ra^{\fr{n}{2}+}e^{c\lm(t)\la k, kt^{\rm ap}\ra^s}(\widehat{M_\theta})_k(t)\right\|_{L^2_k}\\
\nn&\times \left\|A(v^\al f^w)\right\|_{L^2}\left\|A(vv^\al f^w)_0\right\|_{L^2}\\
\les&\nu\lm_1e^{-\nu t} \left\| {\bf B}  {M_\theta}(t)\right\|_{L^2_x} \left(e^{-|\al|\nu t}\left\|A(v^\al f^w)\right\|_{L^2}\right)\left(\lm_1 e^{-(|\al|+1)\nu t}\left\|A(vv^\al f^w)_0\right\|_{L^2}\right).
\end{align}
We would like to remark that there is a redundant  $\lm_1$ or $\nu$ in all the estimates for ${\sf CN}_{2;M_\theta}$. Thus, the estimates for ${\sf CN}_{2;M_\theta;\ne\ne}$ and ${\sf CN}_{2;M_\theta;\ne0}$ from \eqref{CN2MLH1} to \eqref{d-cutoff} apply to ${\sf CN}_{2;\rho}$. We omit the details to avoid repetition.

\subsubsection{Collision contributions (II): $A(t,\nb)\in\left\{{\bf A}^{ \sig_1 ,\fr13}(t,\nb), {\bf A}^{ \sig_1 ,\fr25}(t,\nb)\right\}$.}
Before proceeding any further, it is worth pointing out that in the case under consideration, when all the derivatives land on $f^w$, i.e., $f^w$ is at high frequency, the collision nonlinearities can be treated in the same way as in Section \ref{sec-collision-I}. On the other hand, when $f^w$ is at low frequency, there is no need to use \eqref{rhof}--\eqref{M2f} to bound the macroscopic quantities $\rho, M_1, M_\theta$ in terms of $f$ due to the low regularity. In particular, the key point is to avoid the loss of velocity localizations in \eqref{CNrhoHL0}, \eqref{e-CN21ne}, \eqref{CN100HL} and \eqref{loss2m}. 

Let us first estimate \eqref{CNrhoHL0}, \eqref{e-CN21ne}, \eqref{CN100HL} and \eqref{loss2m} in different way from that in Section \ref{sec-collision-I}.
 Indeed, there is no need to distinguish $f^w$ is at zero frequency or not when $f^w$ is at low frequency.
From \eqref{A-HL},  \eqref{up-eta-bar},  and \eqref{Young2}, one deduces that
\begin{align}\label{e-CNL21ne}
\mathsf{CN}_{1;{\bf 1}}[\rho]_{(1)}^{\mathrm{HL}}\nn\les&\nu e^{-(\dl-\dl_1)\nu^\fr13t} e^{-2|\al|\nu t}\sum_{\substack{k\in\mathbb{Z}^n,l\in\mathbb{Z}^n_*}}\int_\eta \left|{A}\pr_{\eta}^\al(\widehat{f^w})_k(t, \eta)\right|\\
\nn&\times e^{\dl\nu^\fr13t}e^{\lm(t)\la l, lt^{\rm ap}\ra^s}\left\la l,lt^{\rm ap}\right\ra^{ \sig_1 } \left\la t^{\rm ap}\right\ra |\hat{\rho}_l(t)|\\
\nn&\times e^{{\bf 1}_{k\ne l}\dl_1\nu^\frak{e}t}\mathcal{F}\left[e^{c\lm(t)\la \nb\ra^s}|\nb|(vv^\al f^w)\right]_{k-l}\left(t,\eta-lt^{\rm ap}\right)\\
\nn\les&\nu e^{-(\dl-\dl_1)\nu^\fr13t}  e^{-2|\al|\nu t}\left\|A(v^\al f^w)\right\|_{L^2}\| {\bf B}  \rho\|_{L^2_x}\\
\nn&\times\left\|e^{{\bf 1}_{k\ne l}\dl_1\nu^\frak{e}t}e^{c\lm(t)\la \nb\ra^s}\la \pr_x\ra^{\fr{n}{2}+}|\nb|(vv^\al f^w)\right\|_{L^2}\\
\les& e^{-\fr12(\dl-\dl_1)\nu^\fr13t}\left(e^{-|\al|\nu t}\|A(v^\al f^w)\|_{L^2}\right)\| {\bf B}  \rho\|_{L^2_x}
\nu e^{-(|\al|+1)\nu t}\left\|A(vv^\al{f^w})\right\|_{L^2}.
\end{align}
Similarly, we have
\begin{align}
\mathsf{CN}_{1;{\bf 0}}^{\rm HL}\nn\les&\nu\lm_1e^{-(\dl-\dl_1)\nu^\fr13t}e^{-\nu t}e^{-2|\al|\nu t}\sum_{\substack{k\in\mathbb{Z}^n,l\in\mathbb{Z}^n_*}}\int_\eta \left|{A}\pr_{\eta}^\al(\widehat{f^w})_k(t, \eta)\right|\\
\nn&\times e^{\dl\nu^\fr13t}e^{\lm(t)\la l, lt^{\rm ap}\ra^s}\left\la l,lt^{\rm ap}\right\ra^{ \sig_1 } |(\widehat{M_1})_l(t)|\\
\nn&\times e^{{\bf 1}_{k\ne l}\dl_1\nu^\frak{e}t}\mathcal{F}\left[e^{c\lm(t)\la \nb\ra^s}|\nb|(vv^\al f^w)\right]_{k-l}\left(t,\eta-lt^{\rm ap}\right)\\
\les& \nu e^{-(\dl-\dl_1)\nu^\fr13t}\left(e^{-|\al|\nu t}\|A(v^\al f^w)\|_{L^2}\right)\| {\bf B}  \rho\|_{L^2_x}
\lm_1 e^{-(|\al|+1)\nu t}\left\|A(vv^\al{f^w})\right\|_{L^2}.
\end{align}
Compared with  \eqref{CN2MLH1} to \eqref{d-cutoff}, the treatment of ${\sf CN}_{2;M_\theta}$ when $M_\theta$ is at non-zero mode, denoted by ${\sf CN}_{2;M_\theta;\ne}$ below,  is more straightforward:
\begin{align}
{\sf CN}_{2;M_\theta;\ne}\nn=&-4\nu\lm_1^2 e^{-2(|\al|+1)\nu t}\frak{Re}\sum_{k\in\mathbb{Z}^n, l\in\mathbb{Z}^n_*}\int_\eta   A_k(t,\eta) (\widehat{M_\theta})_l(t) (\Dl_\eta \pr_\eta^\al\widehat{f^w})_{k-l}\left(t,\eta-lt^{\rm ap}\right)\\
\nn&\times  A\pr_\eta^\al(\overline{\widehat{f^w}})_k(t,\eta)d\eta\\
\nn=&4\nu\lm_1^2 e^{-2(|\al|+1)\nu t}\frak{Re}\sum_{k\in\mathbb{Z}^n, l\in\mathbb{Z}^n_*}\int_\eta   A_k(t,\eta) (\widehat{M_\theta})_l(t) (\nb_\eta \pr_\eta^\al\widehat{f^w})_{k-l}\left(t,\eta-lt^{\rm ap}\right)\\
\nn&\cdot  A\nb_\eta\pr_\eta^\al(\overline{\widehat{f^w}})_k(t,\eta)d\eta\\
\nn&+8\nu\lm_1^2 e^{-2(|\al|+1)\nu t}\frak{Re}\sum_{k\in\mathbb{Z}^n, l\in\mathbb{Z}^n_*}\int_\eta   A_k(t,\eta) (\widehat{M_\theta})_l(t) (\nb_\eta \pr_\eta^\al\widehat{f^w})_{k-l}\left(t,\eta-lt^{\rm ap}\right)\\
\nn&\cdot \nb_\eta  A_k(t,\eta)\pr_\eta^\al(\overline{\widehat{f^w}})_k(t,\eta)d\eta={\sf CN}_{2;M_\theta;\ne,(1)}+{\sf CN}_{2;M_\theta;\ne,(2)}.
\end{align}
Using again \eqref{A-HL}  and \eqref{Young2}, we have
\begin{align}
{\sf CN}_{2;M_\theta;\ne,(1)}^{\rm  HL}\nn\les&\nu\lm_1^2 e^{-2(|\al|+1)\nu t}\sum_{k\in\mathbb{Z}^n, l\in\mathbb{Z}^n_*}\int_\eta   e^{\dl_1\nu^\frak{e}t} e^{\lm(t)\la l,lt^{\rm ap}\ra^s}\la l,lt^{\rm ap}\ra^{ \sig_1 } |(\widehat{M_\theta})_l(t)| \\
\nn&\times e^{c\lm(t)\la k-l,\eta-lt^{\rm ap}\ra^s}\left|(\nb_\eta \pr_\eta^\al\widehat{f^w})_{k-l}\left(t,\eta-lt^{\rm ap}\right)\right| \left| A\nb_\eta\pr_\eta^\al(\overline{\widehat{f^w}})_k(t,\eta)\right|d\eta\\
\nn\les&\lm_1e^{-(\dl-\dl_1)\nu^\fr13t}\left\|B^{ \sig_1 }(M_\theta)_{\ne}(t)\right\|_{L^2_x}\left(\nu\lm_1e^{-2(|\al|+1)\nu t}\left\| A(vv^\al f^w)\right\|_{L^2}^2\right).
\end{align}
In view of \eqref{A-LH} and \eqref{Young1}, one deduces that
\begin{align}
{\sf CN}_{2;M_\theta;\ne,(1)}^{\rm LH}\nn\les&\nu\lm_1^2 e^{-2(|\al|+1)\nu t}\sum_{k\in\mathbb{Z}^n,\, l\in\mathbb{Z}^n_*}\int_\eta   e^{-(\dl-\dl_1)\nu^\fr13 t} e^{\dl\nu^\fr13t} e^{c\lm(t)\la l,lt^{\rm ap}\ra^s} |(\widehat{M_\theta})_l(t)| \\
\nn&\times \left|A(\nb_\eta \pr_\eta^\al\widehat{f^w})_{k-l}\left(t,\eta-lt^{\rm ap}\right)\right| \left| A\nb_\eta\pr_\eta^\al(\overline{\widehat{f^w}})_k(t,\eta)\right|d\eta\\
\nn\les&\lm_1e^{-(\dl-\dl_1)\nu^\fr13t}\left\|B^{ \sig_1 }(M_\theta)_{\ne}(t)\right\|_{L^2_x}\left(\nu\lm_1e^{-2(|\al|+1)\nu t}\left\| A(vv^\al f^w)\right\|_{L^2}^2\right).
\end{align}
Similarly, thanks to \eqref{A_eta3}, we obtain
\begin{align}
{\sf CN}_{2;M_\theta;\ne,(2)}^{\rm  HL}\nn\les&\nu\lm_1^2 e^{-(2|\al|+1)\nu t}\sum_{k\in\mathbb{Z}^n,\, l\in\mathbb{Z}^n_*}\int_\eta   e^{\dl_1\nu^\frak{e}t} e^{\lm(t)\la l,lt^{\rm ap}\ra^s}\la l,lt^{\rm ap}\ra^{ \sig_1 } |(\widehat{M_\theta})_l(t)| \\
\nn&\times e^{c\lm(t)\la k-l,\eta-lt^{\rm ap}\ra^s}\left|(\nb_\eta \pr_\eta^\al\widehat{f^w})_{k-l}\left(t,\eta-lt^{\rm ap}\right)\right| \left| A\pr_\eta^\al(\overline{\widehat{f^w}})_k(t,\eta)\right|d\eta\\
\nn\les&\lm_1\sqrt{\nu\lm_1}e^{-(\dl-\dl_1)\nu^\fr13t}\left\| {\bf B}  M_\theta(t)\right\|_{L^2_x}\left(\sqrt{\nu\lm_1}e^{-(|\al|+1)\nu t}\left\| A(vv^\al f^w)\right\|_{L^2}\right)\\
&\times\left(e^{-|\al|\nu t}\left\|A(v^\al f^w)\right\|_{L^2}\right),
\end{align}
and
\begin{align}
{\sf CN}_{2;M_\theta;\ne,(2)}^{\rm LH}
\nn\les&\lm_1\sqrt{\nu\lm_1}e^{-(\dl-\dl_1)\nu^\fr13t}\left\| {\bf B}  M_\theta(t)\right\|_{L^2_x}\left(\sqrt{\nu\lm_1}e^{-(|\al|+1)\nu t}\left\|A(vv^\al f^w)\right\|_{L^2}\right)\\
&\times\left(e^{-|\al|\nu t}\left\| A(v^\al f^w)\right\|_{L^2}\right).
\end{align}

The other `{\rm HL}' terms that does not involve the loss of velocity localizations can be treaded in a  similar manner as the corresponding  `{\rm LH}' terms. 
To avoid unnecessary duplication, we only show that the representative nonlinear term $\mathsf{CN}_{0;{\bf2}}[\rho]_{(1)}^{\rm HL}$ now can be treated analogously to \eqref{e-CNLH101}. In fact, using \eqref{A-HL} and \eqref{Young2} yields
\begin{align}
\mathsf{CN}_{0;{\bf2}}[\rho]_{(1)}^{\rm HL}\nn&\les\nu e^{-2|\al|\nu t} \sum_{{k\in \mathbb{Z}^n,\, l\in\mathbb{Z}^n_*}}\int_\eta\left|\mathcal{F}[\pr_v^tA(v^\al f^w)]_k(t,\eta) \right| e^{\dl\nu^\fr13t} e^{\lm(t)\la l, lt^{\rm ap}\ra^s}\la l,lt^{\rm ap}\ra^{ \sig_1 } \left|\hat{\rho}_l(t) \right|\\
\nn&\times e^{-(\dl-\dl_1) \nu^\fr13 t} e^{c\lm(t)\la k-l,\eta-lt^{\rm ap}\ra^s}\left|\mathcal{F}[\pr_v^t(v^\al f^w)]_{k-l}\left(t,\eta-lt^{\rm ap}\right)\right|d\eta\\
\les&\left(\nu e^{-|\al|\nu t} \left\|\pr_v^t A(v^\al f^w)\right\|_{L^2}\right)^2 \left\|B^{ \sig_1 }\rho(t) \right\|_{L^2_x}.
\end{align}

\section{Zero mode estimates}
In this section, we improve \eqref{H-f0}. To this end,  we first give the equations of $\hat{f}_0(t,\eta)$,   $(\nb_\eta\hat{f})_0(t,\eta)$, $(\nb_\eta^2\hat{f})_0(t,\eta)$ and $(\nb_\eta^3\hat{f})_0(t,\eta)$. In fact, direct calculations  from  \eqref{eq-f} show that
\be\label{eq-f0}
\pr_t\hat{f}_0(t,\eta)+\nu|e^{\nu t}\eta|^2\hat{f}_0(t,\eta)=\mathcal{N}_0(t,e^{\nu t}\eta),
\ee
\begin{align}\label{eq-1f0}
\pr_t(\nb_\eta\hat{f})_0(t,\eta)+\nu |e^{\nu t}\eta|^2(\nb_\eta\hat{f})_0(t,\eta)=-2\nu e^{2\nu t}\eta\hat{f}_0(t,\eta)+ \nb_\eta\left(\mathcal{N}_0(t, e^{\nu t}\eta)\right),
\end{align}
\begin{align}\label{2f0}
\nn&\pr_t(\pr^\eta_{jj'} \hat{f})_0(t,\eta)+\nu |e^{\nu t}\eta|^2(\pr_{jj'}^\eta\hat{f})_0(t,\eta)\\
\nn=&-2\nu e^{2\nu t}{\eta}_{j'}(\pr^\eta_j\hat{f})_0(t,\eta)-2\nu e^{2\nu t}{\eta}_j(\pr^\eta_{j'}\hat{f})_0(t,\eta)-2\nu e^{2\nu t}\dl_{jj'}\hat{f}_0(t,\eta)\\
&+\pr^\eta_{jj'}\left(\mathcal{N}_0(t, e^{\nu t}\eta)\right),
\end{align}
and
\begin{align}\label{eq-3f0}
\nn&\pr_t (\pr^\eta_{jj'j''}\hat{f})_0(t,\eta)+\nu |e^{\nu t}\eta|^2(\pr^\eta_{jj'j''}\hat{f})_0(t,\eta)\\
\nn=&-2\nu e^{2\nu t}\eta_{j''}(\pr^\eta_{jj'}\hat{f})_0(t,\eta)-2\nu e^{2\nu t}\eta_{j'}( \pr^\eta_{j''j}\hat{f})_0(t,\eta)-2\nu e^{2\nu t}\eta_{j}( \pr^\eta_{j'j''}\hat{f})_0(t,\eta)\\
\nn&-2\nu \dl_{j'j''} e^{2\nu t}(\pr^\eta_j \hat{f})_0(t,\eta)-2\nu \dl_{j''j} e^{2\nu t}(\pr^\eta_{j'} \hat{f})_0(t,\eta)-2\nu \dl_{jj'} e^{2\nu t}(\pr^\eta_{j''} \hat{f})_0(t,\eta)\\
&+\pr^\eta_{jj'j''}\left(\mathcal{N}_0(t,e^{\nu t}\eta)\right),
\end{align}
where
\begin{align}\label{e-N0}
\mathcal{N}_0(t,e^{\nu t}\eta)=-\sum_{k\in\mathbb{Z}^n_*}\hat{E}_k(t)\cdot i e^{\nu t}\eta \hat{f}_{-k}\left(t, \eta-kt^{\rm ap}\right)+\nu (\widehat{\mathcal{C}_\mu})_0(t,e^{\nu t}\eta)+\nu(\widehat{\mathcal{C}[g]})_0(t,e^{\nu t}\eta).
\end{align}
More precisely,
$(\widehat{\mathcal{C}_\mu})_0(t,e^{\nu t}\eta)=-(\widehat{M_\theta})_0(t)|e^{\nu t}\eta|^2\hat{\mu }(e^{\nu t}\eta)$ 
and
\begin{align}\label{e-g0}
\nn(\widehat{\mathcal{C}[g]})_0(t,e^{\nu t}\eta)=&-\sum_{k\in\mathbb{Z}^n_*}\hat{\rho}_k(t)|e^{\nu t}\eta|^2\hat{f}_{-k}\left(t, \eta-kt^{\rm ap}\right)-\sum_{k\in\mathbb{Z}^n_*}\hat{\rho}_k(t)\eta\cdot(\nb_\eta\hat{f})_{-k}\left(t, \eta-kt^{\rm ap}\right)\\
\nn&-\sum_{k\in\mathbb{Z}^n}(\widehat{M_\theta})_k(t)|e^{\nu t}\eta|^2\hat{f}_{-k}\left(t, \eta-kt^{\rm ap}\right)\\
&-\sum_{k\in\mathbb{Z}^n_*}(\widehat{M_1})_k(t)\cdot ie^{\nu t}\eta\hat{f}_{-k}\left(t, \eta-kt^{\rm ap}\right).
\end{align}

We now introduce the solution operator 
\be\label{S0eta}
S_0(t,0;\eta)=e^{-\fr12(e^{2\nu t}-1)|\eta|^2},\quad{\rm and}\quad S_0(t,\tau;\eta)=e^{-\fr12(e^{2\nu t}-e^{2\nu\tau})|\eta|^2}.
\ee
Applying  Duhamel's principle to \eqref{eq-f0}--\eqref{eq-3f0},  we get the   following expressions one by one:
\begin{align}\label{f0'}
\hat{f}_0(t,\eta)=&S_0(t,0;\eta)(\widehat{f_{\mathrm{in}}})_0(\eta)+\int_0^tS_0(t,\tau;\eta)\mathcal{N}_0(\tau, e^{\nu\tau}\eta)d\tau,\\
\label{1f0}
(\nb_\eta\hat{f})_0(t,\eta)\nn=&S_0(t,0;\eta)(\nb_\eta\widehat{f_{\mathrm{in}}})_0(\eta)+\int_0^tS_0(t,\tau;\eta)\nb_\eta\left(\mathcal{N}_0(\tau, e^{\nu\tau}\eta)\right)d\tau\\
&-2\nu\int_0^tS_0(t,\tau;\eta)e^{2\nu\tau}\eta \hat{f}_0(\tau, \eta) d\tau,\\
\label{2f0'}
(\pr^\eta_{jj'} \hat{f})_0(t,\eta)\nn=&S_0(t,0;\eta)(\pr^\eta_{jj'}\widehat{f_{\mathrm{in}}})_0(\eta)+\int_0^tS_0(t,\tau;\eta)\pr^\eta_{jj'}\left(\mathcal{N}_0(\tau, e^{\nu\tau}\eta)\right)d\tau\\
\nn&-2\nu\int_0^tS_0(t,\tau;\eta)e^{2\nu\tau}\eta_{j'}(\pr^\eta_j\hat{f})_0(\tau,\eta)d\tau\\
\nn&-2\nu\int_0^tS_0(t,\tau;\eta)e^{2\nu\tau}\eta_{j}(\pr^\eta_{j'}\hat{f})_0(\tau,\eta)d\tau\\
&-2\nu \dl_{jj'}\int_0^tS_0(t,\tau;\eta)e^{2\nu\tau}\hat{f}_0(\tau,\eta)d\tau,
\end{align}
and
\begin{align}\label{3f0}
&(\pr^\eta_{jj'j''} \hat{f})_0(t,\eta)\nn=S_0(t,0;\eta)(\pr^\eta_{jj'j''}\widehat{f_{\mathrm{in}}})_0(\eta)+\int_0^tS_0(t,\tau;\eta)\pr^\eta_{jj'j''}\left(\mathcal{N}_0(\tau, e^{\nu\tau}\eta)\right)d\tau\\
\nn& -2\nu\int_0^tS_0(t,\tau;\eta)e^{2\nu\tau}\eta_{j''}  (\pr^\eta_{jj'}\hat{f})_0(\tau,\eta)d\tau
-2\nu\int_0^tS_0(t,\tau;\eta)e^{2\nu\tau}\eta_{j'}  (\pr^\eta_{j''j}\hat{f})_0(\tau,\eta)d\tau\\
\nn&-2\nu\int_0^tS_0(t,\tau;\eta)e^{2\nu\tau}\eta_{j}  (\pr^\eta_{j'j''}\hat{f})_0(\tau,\eta)d\tau
-2\nu\dl_{j'j''}\int_0^tS_0(t,\tau;\eta)e^{2\nu\tau}  (\pr^\eta_{j}\hat{f})_0(\tau,\eta)d\tau\\
&-2\nu\dl_{j''j}\int_0^tS_0(t,\tau;\eta)e^{2\nu\tau}  (\pr^\eta_{j'}\hat{f})_0(\tau,\eta)d\tau
-2\nu\dl_{jj'}\int_0^tS_0(t,\tau;\eta)e^{2\nu\tau}  (\pr^\eta_{j''}\hat{f})_0(\tau,\eta)d\tau.
\end{align}
By \eqref{f0'}--\eqref{2f0'} and the semigroup property of $S_0(\cdot,\cdot;\eta)$, we have
\begin{align}\label{int-f0}
\nn& \nu\int_0^tS_0(t,\tau;\eta)e^{2\nu\tau} \left|\hat{f}_0(\tau, \eta)\right| d\tau \\
\nn\le&\nu\int_0^te^{2\nu\tau}d\tau  S_0(t,0;\eta)\left|(\widehat{f_{\mathrm{in}}})_0(\eta)\right|\\
\nn&+\nu\int_0^t\int_{\tau'}^t e^{2\nu\tau}d\tau  S_0(t,\tau';\eta)\left|\mathcal{N}_0(\tau', e^{\nu\tau'}\eta)\right|d\tau'\\
\les& e^{2\nu t}|\eta| \left(S_0(t,0;\eta) \fr{\left|(\widehat{f_{\mathrm{in}}})_0(\eta)\right|}{|\eta|}+\int_0^t S_0(t,\tau';\eta)\fr{\left|\mathcal{N}_0(\tau', e^{\nu\tau'}\eta)\right|}{|\eta|}d\tau'\right),
\end{align}

\begin{align}\label{int-1f0}
\nn&\nu \int_0^tS_0(t,\tau;\eta)e^{2\nu\tau} \left|(\nb_\eta\hat{f})_0(\tau,\eta)\right|d\tau\\
\nn\les&e^{2\nu t} \left(S_0(t,0;\eta) \left|(\nb_\eta\widehat{f_{\mathrm{in}}})_0(\eta)\right|+\int_0^t  S_0(t,\tau';\eta) \left|\nb_\eta\left(\mathcal{N}_0(\tau', e^{\nu\tau'}\eta)\right)\right|d\tau' \right)\\
&+ e^{2\nu t}|\eta|  \left(\nu\int_0^t    S_0(t,\tau';\eta)e^{2\nu\tau'}\left|\hat{f}_0(\tau', \eta)\right| d\tau'\right),
\end{align}
and
\begin{align}\label{int-2f0}
\nn&\nu\int_0^tS_0(t,\tau;\eta)e^{2\nu\tau}|\eta|\left|(\nb^2_\eta\hat{f})_0(\tau,\eta)\right|d\tau\\
\nn\les&e^{2\nu t}|\eta| \left(S_0(t,0;\eta) \left|(\nb_\eta^2\widehat{f_{\mathrm{in}}})_0(\eta)\right|+\int_0^t  S_0(t,\tau';\eta) \left|\nb_\eta^2\left(\mathcal{N}_0(\tau', e^{\nu \tau'\eta})\right)\right|d\tau'\right)\\
\nn&+|e^{\nu t}\eta|^2\left(\nu\int_0^t S_0(t,\tau';\eta)e^{2\nu\tau'}\left|(\nb_\eta\hat{f})_0(\tau',\eta)\right|d\tau'\right)\\
&+e^{2\nu t}|\eta| \left( \nu \int_0^t S_0(t,\tau';\eta) e^{2\nu\tau'}\left|\hat{f}_0(\tau',\eta)\right| d\tau'\right).
\end{align}
It follows from \eqref{f0'}--\eqref{int-2f0} that, for any $j\in\N$, we have
\begin{align*}
e^{\nu t}|e^{\nu t}\eta|^j\left|\hat{f}_0(t,\eta)\right|\les |e^{\nu t}\eta|^{j+1}\left(S_0(t,0;\eta)\fr{\left|(\widehat{f}_{\rm in})_0(\eta) \right|}{|\eta|} +\int_0^tS_0(t,\tau;\eta)\fr{\left|\mathcal{N}_0(\tau,e^{\nu\tau}\eta) \right|}{|\eta|}d\tau\right),
\end{align*}
\begin{align*}
\nn&|e^{\nu t}\eta|^j\left|(\nb_\eta\hat{f})_0(t,\eta) \right|\\
\nn\les&|e^{\nu t}\eta|^j\left(S_0(t,0;\eta)\left|(\nb_\eta\widehat{f}_{\rm in})_0(\eta)\right|+\int_0^tS_0(t,\tau;\eta)\left|\nb_\eta\left(\mathcal{N}_0(\tau, e^{\nu\tau}\eta) \right)\right|d\tau\right)\\
&+|e^{\nu t}\eta|^{j+2}\left(S_0(t,0;\eta)\fr{|(\widehat{f_{\rm in}})_0(\eta)|}{|\eta|}+\int_0^tS_0(t,\tau;\eta)\fr{\left| \mathcal{N}_0(\tau, e^{\nu\tau}\eta)\right|}{|\eta|}d\tau\right),
\end{align*}

\begin{align*}
\nn&e^{-\nu t} |e^{\nu t}\eta|^j \left|(\nb_\eta^2\hat{f})_0(t,\eta) \right|\\
\nn\les&e^{-\nu t} |e^{\nu t}\eta|^j \left(S_0(t,0;\eta)\left|(\nb_\eta^2\widehat{f_{\mathrm{in}}})_0(\eta)\right|+\int_0^tS_0(t,\tau;\eta)\left|\nb_\eta^2\left(\mathcal{N}_0(\tau,e^{\nu\tau}\eta)\right)\right|d\tau\right)\\
\nn&+|e^{\nu t}\eta|^{j+1} \left(S_0(t,0;\eta)\left|(\nb_\eta\widehat{f_{\mathrm{in}}})_0(\eta)\right|+\int_0^t  S_0(t,\tau';\eta)\left|\nb_\eta\left(\mathcal{N}_0(\tau',e^{\nu\tau'}\eta)\right)\right|d\tau' \right)\\
&+(|e^{\nu t}\eta|^{j+3}+|e^{\nu t}\eta|^{j+1}) \left(S_0(t,0;\eta)\fr{|(\widehat{f_{\rm in}})_0( \eta)|}{|\eta|}+\int_0^tS_0(t,\tau;\eta)\fr{|\mathcal{N}_0(\tau, e^{\nu\tau}\eta)|}{|\eta|} d\tau\right),
\end{align*}
and
\begin{align*}
\nn&e^{-2\nu t}|e^{\nu t} \eta|^j\left| \nb_\eta^3\hat{f}_0(t,\eta)\right|\\
\nn\les&e^{-2\nu t}|e^{\nu t}\eta|^j\left(S_0(t,0;\eta)\left|(\nb_\eta^3\widehat{f_{\mathrm{in}}})_0(\eta)\right|+\int_0^tS_0(t,\tau;\eta)\left|\nb_\eta^3\left(\mathcal{N}_0(\tau, e^{\nu\tau}\eta)\right)\right|d\tau\right)\\
\nn&+e^{-\nu t}|e^{\nu t}\eta|^{j+1} \left(S_0(t,0;\eta) \left|(\nb_\eta^2\widehat{f_{\mathrm{in}}})_0(\eta)\right|+\int_0^t  S_0(t,\tau';\eta) \left|\nb_\eta^2\left(\mathcal{N}_0(\tau',e^{\nu\tau'}\eta)\right)\right|d\tau'\right)\\
\nn&+ \left(|e^{\nu t}\eta|^{j+2}+|e^{\nu t}\eta|^{j}\right)\Bigg(S_0(t, 0;\eta)\left|(\nb_\eta\widehat{f_{\rm in}})_0(\eta) \right|\\
\nn&+\int_0^t S_0(t,\tau';\eta)\left|\nb_\eta\left(\mathcal{N}_0(\tau', e^{\nu\tau'}\eta)\right)\right|d\tau'\Bigg)\\
&+\left( |e^{\nu t}\eta|^{j+4}+|e^{\nu t}\eta|^{j+2}\right)\left(S_0(t,0;\eta)\fr{|(\widehat{f_{\rm in}})_0(\eta)|}{|\eta|}+
\int_0^tS_0(t,\tau;\eta)\fr{|\mathcal{N}_0(\tau,e^{\nu\tau}\eta)|}{|\eta|}d\tau \right).
\end{align*}
Collecting the above four estimates,  we arrive at
\begin{align}\label{e-al-f0-1}
\nn&\sum_{|\al|\le3}e^{-(|\al|-1)\nu t} |e^{\nu t}\eta|^j\left|(\pr^\al_\eta\hat{f})_0(t,\eta)\right|\\
\nn\les&\sum_{1\le\ell\le4}|e^{\nu t}\eta|^{j+\ell}\left(S_0(t,0;\eta)\fr{\left|(\widehat{f}_{\rm in})_0(\eta) \right|}{|\eta|} +\int_0^tS_0(t,\tau;\eta)\fr{\left|\mathcal{N}_0(\tau,e^{\nu\tau}\eta) \right|}{|\eta|}d\tau\right)\\
\nn&+\sum_{0\le\ell\le2}|e^{\nu t}\eta|^{j+\ell}\left(S_0(t,0;\eta)\left|(\nb_\eta\widehat{f}_{\rm in})_0(\eta)\right|+\int_0^tS_0(t,\tau;\eta)\left|\nb_\eta\left(\mathcal{N}_0(\tau, e^{\nu\tau}\eta)\right) \right|d\tau\right)\\
\nn&+\sum_{0\le\ell\le1}e^{-\nu t} |e^{\nu t}\eta|^{j+\ell} \Bigg(S_0(t,0;\eta)\left|(\nb_\eta^2\widehat{f_{\mathrm{in}}})_0(\eta)\right|\\
\nn&+\int_0^tS_0(t,\tau;\eta)\left|\nb_\eta^2\left(\mathcal{N}_0(\tau,e^{\nu\tau}\eta)\right)\right|d\tau\Bigg)\\
&+e^{-2\nu t}|e^{\nu t}\eta|^j\left(S_0(t,0;\eta)\left|(\nb_\eta^3\widehat{f_{\mathrm{in}}})_0(\eta)\right|+\int_0^tS_0(t,\tau;\eta)\left|\nb_\eta^3\left(\mathcal{N}_0(\tau, e^{\nu\tau}\eta)\right)\right|d\tau\right),
\end{align}
for any $j\in\N$. By \eqref{S0eta}, for any fixed $q\in\N$, there hold
\begin{align}\label{e-S0eta1}
|e^{\nu t}\eta|^qS_0(t,0;\eta)
&=|e^{\nu t}\eta|^qS_0(t,0;\eta)\left({\bf 1}_{\nu t\le1}+{\bf 1}_{\nu t>1}\right)\les|\eta|^q{\bf 1}_{\nu t\le1}+e^{-\fr14|\eta|^2}{\bf 1}_{\nu t>1},\\
\label{e-S0eta2}
|e^{\nu t}\eta|^qS_0(t,\tau;\eta)
&\les_q\left(\left(|e^{\nu t}\eta|^2-|e^{\nu\tau}\eta|^2 \right)^{\fr{q}{2}}+|e^{\nu\tau}\eta|^q\right)e^{-\fr12(e^{2\nu t}-e^{2\nu\tau})|\eta|^2}
\les \la e^{\nu\tau}\eta\ra^{q}.
\end{align}
Substituting \eqref{S0eta}, \eqref{e-S0eta1} and \eqref{e-S0eta2} into \eqref{e-al-f0-1}, and take sum with respect to $j$ over $\{0, 1, 2,\cdots, \sig_0-6\}$, we are led to
\begin{align}\label{e-al-f0-2}
\nn&\sum_{|\al|\le3}e^{-(|\al|-1)\nu t} e^{\lm(t)\la\eta\ra^s} \la e^{\nu t}\eta\ra^{\sig_0-6}\left|(\pr^\al_\eta\hat{f})_0(t,\eta)\right|\\
\nn\les&e^{\lm(0)\la\eta\ra^s}\sum_{|\al|\le3}\la\eta\ra^{\sig_0-3-|\al|}\left|(\nb_\eta^{|\al|}\widehat{f_{\rm in}})_0(\eta)\right|+\Big|\fr{(\widehat{f}_{\rm in})_0(\eta) }{|\eta|}\Big|\\
\nn& +\int_0^te^{\lm(\tau)\la\eta\ra^s}\la e^{\nu\tau}\eta \ra^{\sig_0-2}\fr{\left|\mathcal{N}_0(\tau,e^{\nu\tau}\eta) \right|}{|\eta|}d\tau\\
&+\sum_{1\le|\al|\le3}\int_0^te^{\lm(\tau)\la\eta\ra^s} \la e^{\nu\tau}\eta\ra^{\sig_0-3-|\al|}\left|\nb_\eta^{|\al|}\left(\mathcal{N}_0(\tau, e^{\nu\tau}\eta)\right)\right|d\tau.
\end{align}
Since $(\hat{f})_0(t,0)=\hat{\rho}_0(t)=0$, one easily deduces that 
\[
\Big|\fr{(\widehat{f}_{\rm in})_0(\eta) }{|\eta|}\Big|\les \left\|\nb_\eta(\widehat{f}_{\rm in})_0(\eta)\right\|_{L^\infty_\eta}\les \|\la v\ra^m f_{\rm in}\|_{L^2_v}.
\]
The other initial term can be bounded by $\left\| e^{\lm(0)\la \pr_v \ra^s}\la\pr_v\ra^{\sig_0-3}(\la v\ra^m f_{\rm in}) \right\|_{L^2_v}$ provided we choose $m>\fr{n}{2}+3.$

One can see from \eqref{e-N0} that the two nonlinear terms stemming from $\fr{\left|\mathcal{N}_0(\tau,e^{\nu\tau}\eta) \right|}{|\eta|}$ and $\left|(\nb_\eta^{|\al|}\left(\mathcal{N}_0(\tau, e^{\nu\tau}\eta)\right)\right|$ on the right hand side of \eqref{e-al-f0-2} are of the same type. Furthermore, there is no need to distinguish the collisionless contributions and collision contributions since we are working in a space with low norm. Accordingly, we only show the details of the  treatment of the first nonlinear term.  The second nonlinear term in \eqref{e-al-f0-2} can be treated similarly without additional complications. In fact, from \eqref{e-N0}, it is natural to write:
\begin{align*}
\int_0^te^{\lm(\tau)\la \eta\ra^s}\la e^{\nu \tau}\eta\ra^{\sig_0-2}\fr{\left|\mathcal{N}_0(\tau, e^{\nu\tau}\eta)\right|}{|\eta|}d\tau=I_{E}+I_{\mu}+I_{g}.
\end{align*}
Noting that $|\eta|\le |k\tau^{\rm ap}|+|\eta-k\tau^{\rm ap}|$, by using the partition of unity $1={\bf 1}_{|\eta-k\tau^{\rm ap}|\le 2|k\tau^{\rm ap}|}+{\bf 1}_{|k\tau^{\rm ap}|\le\fr12|\eta-k\tau^{\rm ap}|}$, \eqref{emb} and \eqref{tau-up},  we have
\begin{align}\label{I1}
I_E\nn\les&\sum_{k\in\Z^n_*}\int_0^t e^{\lm(\tau)\la \eta\ra^s}\la\eta\ra^{\sig_0-2} |\hat{\rho}_k(\tau)| e^{(\sig_0-1)\nu \tau}\left| \hat{f}_{-k}\left(\tau, \eta-k\tau^{\mathrm{ap}}\right)\right| d\tau\\
\nn\les&\sum_{k\in\Z^n_*}\int_0^t e^{\lm(\tau)\left\la k\tau^{\mathrm{ap}}\right\ra^s}\left\la k\tau^{\mathrm{ap}}\right\ra^{\sig_0-2}\la \tau\ra |\hat{\rho}_k(\tau)| \\
\nn&\times e^{(\sig_0-1)\nu \tau} e^{c\lm(\tau)\left\la \eta- k\tau^{\mathrm{ap}}\right\ra^s}\left| \hat{f}_{-k}\left(\tau, \eta-k\tau^{\mathrm{ap}}\right)\right| \fr{1}{\la\tau\ra}d\tau\\
\nn&+\sum_{k\in\Z^n_*}\int_0^t  e^{c\lm(\tau)\left\la k\tau^{\mathrm{ap}}\right\ra^s}|\hat{\rho}_k(\tau)|\la \tau\ra e^{(\sig_0-1)\nu \tau} e^{\lm(\tau)\left\la \eta- k\tau^{\mathrm{ap}}\right\ra^s}\\
\nn&\times\left\la\eta-k\tau^{\mathrm{ap}}\right\ra^{\sig_0-2}\left| \hat{f}_{-k}\left(\tau, \eta-k\tau^{\mathrm{ap}}\right)\right|\fr{1}{\la \tau\ra} d\tau\\
\nn\les&\sup_t\left(e^{-m\nu t}\left\|e^{\lm(t)\la \eta\ra^s}\la\eta\ra^{\sig_0-2}\hat{f}_k(t,\eta){\bf 1}_{k\ne0}\right\|_{L^2_k L^\infty_\eta}\right)\int_0^t\left\|{\bf B}\hat{\rho}_k(\tau)\right\|_{L^2_k}\fr{1}{\la\tau\ra}d\tau\\
\les&\eps\nu^{\gamma}\|{\bf B}\rho\|_{L^2_{t,x}}.
\end{align}
By \eqref{e-Mtheta0}, direct calculations give
\begin{align}\label{I2}
I_\mu\nn\les& \nu\int_0^t e^{\lm(\tau)\la \eta\ra^s}\la e^{\nu\tau}\eta\ra^{\sig_0-2}\left|(\widehat{M_\theta})_0(\tau)\right| e^{\nu\tau}|e^{\nu \tau}\eta| \left|\hat{\mu }(e^{\nu \tau}\eta)\right|d\tau\\
\nn\les&\sup_{\tau}\left(e^{2\nu \tau}\left|(\widehat{M_\theta})_0(\tau)\right| \right)\int_0^t \nu e^{-\nu \tau}\left(e^{\lm(0)\la \eta\ra^s}\la e^{\nu\tau}\eta\ra^{\sig_0-1}e^{-\fr{|e^{\nu\tau}\eta|^2}{2}} \right)d\tau\\
\les&\sup_{t}\left(e^{2\nu t}\left|(\widehat{M_\theta})_0(t)\right| \right)\les \eps^2\nu^{2\gamma}.
\end{align}
For $I_g$, according to \eqref{e-g0}, we write $I_g=I_{g;(1)}+I_{g;(2)}+I_{g;(3)}+I_{g;(4)}$.

We only sketch the treatment of $I_{g;(2)}$ and $I_{g;(3)}$ since the other two terms can be treated similarly. Indeed, similar to \eqref{I1} and \eqref{I2}, we have
\begin{align}\label{Ig2}
I_{g;(2)}\nn\les&\nu\sum_{k\in\Z^n_*}\int_0^t e^{\lm(\tau)\la \eta\ra^s}\la\eta\ra^{\sig_0-2} e^{(\sig_0-1)\nu \tau} |\hat{\rho}_k(t)| \left| e^{-\nu\tau}(\nb_\eta\hat{f})_{-k}\left(t, \eta-k\tau^{\mathrm{ap}}\right)\right| d\tau\\
\nn\les&\nu\sup_t\left(e^{-(m+1)\nu t}\left\|e^{\lm(t)\la \eta\ra^s}\la\eta\ra^{\sig_0-2}(\nb_\eta\hat{f})_k(t,\eta){\bf 1}_{k\ne0}\right\|_{L^2_k L^\infty_\eta}\right)\|{\bf B}\rho\|_{L^2_{t,x}}\\
\les& \eps\nu^{\gamma+1}\|{\bf B}\rho\|_{L^2_{t,x}},
\end{align}
and
\begin{align}\label{I4}
I_{g;(3)}\nn\les&\nu\sum_{k\in\Z^n_*}\int_0^t e^{\lm(\tau)\la \eta\ra^s}\la\eta\ra^{\sig_0-1} e^{\sig_0\nu \tau}|(\widehat{M_\theta})_k(\tau)| \left| \hat{f}_{-k}\left(\tau, \eta-k\tau^{\mathrm{ap}}\right)\right| d\tau\\
\nn&+\nu\int_0^t e^{\lm(\tau)\la \eta\ra^s}\la\eta\ra^{\sig_0-1} e^{\sig_0\nu \tau} |(\widehat{M_\theta})_0(t)| \left| \hat{f}_{0}\left(t, \eta\right)\right| d\tau\\
\les&\sup_t \left( e^{-m\nu t}\left\|e^{\lm(t)\la \eta\ra^s}\la\eta\ra^{\sig_0-1}\hat{f}_k(t,\eta)\right\|_{L^2_k L^\infty_\eta}\right)\\
\nn&\times\left(\|{\bf B}M_\theta\|_{L^2_{t,x}}+\sup_t\left(e^{(\sig_0+1+m)\nu t} |(\widehat{M_\theta})_0(t)| \right)\right)
\les\eps^2\nu^{2\gamma}.
\end{align}

We would like to remark that, for the nonlinear contributions involving $(\nb_\eta^{|\al|}\left(\mathcal{N}_0(\tau, e^{\nu\tau}\eta)\right)$,  when three derivatives lands on $(\nb_\eta \hat{f})_{-k}(t,\eta-kt^{\rm ap})$ that comes from the second term on the right hand side of \eqref{e-g0}, we have to deal with 
\begin{align}
\nn \tl{I}_{g;(2)}=\nu\sum_{k\in\Z^n_*}\int_0^t e^{\lm(\tau)\la \eta\ra^s}\la\eta\ra^{\sig_0-6} e^{(\sig_0-6)\nu \tau} |\hat{\rho}_k(t)| |\eta| \left| e^{-\nu\tau}(\nb_\eta^4\hat{f})_{-k}\left(t, \eta-k\tau^{\mathrm{ap}}\right)\right| d\tau.
\end{align}
Like \eqref{Ig2}, using \eqref{emb}, we have
\begin{align}
\tl{I}_{g;(2)}\nn\les&\nu\sup_t\left(e^{-( m+4)\nu t}\left\|e^{\lm(t)\la \eta\ra^s}\la\eta\ra^{\sig_0-6}(\nb_\eta^4\hat{f})_k(t,\eta){\bf 1}_{k\ne0}\right\|_{L^2_k H^{\fr{n}{2}+}_\eta}\right)\|{\bf B}\rho\|_{L^2_{t,x}}\\
\les&\nu \|f^w_{\ne}\|_{\mathcal{E}^{\sig_0+1,\fr25}_{m}}\|{\bf B}\rho\|_{L^2_{t,x}}\les \eps\nu^{\gamma+1}\|{\bf B}\rho\|_{L^2_{t,x}},
\end{align}
provided we choose $m>\fr{n}{2}+4$.

\begin{appendix}

\section{Elementary inequalities}\label{sec-app-ineq}
\begin{lem}[\cite{BM2015}]\label{lem-tri-in}
Let $0<s<1$ and $x, y\ge0$.
\begin{enumerate}
\item We have the triangle inequality in the opposite direction
\be\label{app1}
c_s\left(\la x\ra^s+\la y\ra^s\right)\le \la x+y\ra^s,
\ee
for some $c_s\in(0,1)$ depending only on $s$.
\item In general,
\be\label{app2}
\left|\la x\ra^s-\la y\ra^s \right|\les_s\fr{|x-y|}{\la x\ra^{1-s}+\la y\ra^{1-s}}.
\ee
\item If $x\le\fr{y}{K}$ for some $K>1$, then we have the improved triangle inequality
\be\label{app3}
 \la x+y\ra^s\le \la y\ra^s +\fr{s}{(K-1)^{1-s}}\la x\ra^s.
\ee
\item For $x\ge y$, we have the following improved triangle inequality 
\be\label{app4}
\la x+y\ra^s\le\left(\fr{\la x\ra}{\la x+y\ra}\right)^{1-s}\left(\la x\ra^s+\la y\ra^s\right).
\ee
\end{enumerate}
\end{lem}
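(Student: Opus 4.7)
The plan is to prove all four inequalities by reducing them to standard properties of the scalar function $f(z)=\la z\ra^s=(1+z^2)^{s/2}$ for $0<s<1$. The derivative $f'(z)=sz/\la z\ra^{2-s}$ satisfies the pointwise bound $|f'(z)|\le s/\la z\ra^{1-s}$, which is the engine behind parts (2) and (3), while parts (1) and (4) follow from the subadditivity $\la x+y\ra\le \la x\ra+\la y\ra$ combined with the concavity of $t\mapsto t^s$.

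For (1), I would assume without loss of generality $x\ge y\ge 0$, observe that $\la x+y\ra\ge \la x\ra\ge \la y\ra$, and conclude $\la x+y\ra^s\ge \la x\ra^s\ge \tfrac12(\la x\ra^s+\la y\ra^s)$; thus $c_s=\tfrac12$ suffices. Part (4) is equally direct: start from $\la x+y\ra\le \la x\ra+\la y\ra$, factor out $\la x+y\ra^{1-s}$ on the left, and use $x\ge y$ together with $\la x\ra\ge \la y\ra$ to bound
\begin{equation*}
\la x\ra+\la y\ra=\la x\ra^{1-s}\la x\ra^s+\la y\ra^{1-s}\la y\ra^s\le \la x\ra^{1-s}(\la x\ra^s+\la y\ra^s),
\end{equation*}
which gives the claimed inequality after dividing through by $\la x+y\ra^{1-s}$.

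For (2), apply the mean value theorem to $f$ on the interval with endpoints $x$ and $y$ to produce $\xi$ between them with $|\la x\ra^s-\la y\ra^s|=|f'(\xi)|\,|x-y|\le s|x-y|/\la \xi\ra^{1-s}$, and note that $\la \xi\ra\ge \min(\la x\ra,\la y\ra)$ yields $\la \xi\ra^{1-s}\gtrsim \la x\ra^{1-s}+\la y\ra^{1-s}$. For (3), I would write
\begin{equation*}
\la x+y\ra^s-\la y\ra^s=\int_0^x f'(y+t)\,dt\le \int_0^x \frac{s}{\la y+t\ra^{1-s}}\,dt\le \frac{sx}{\la y\ra^{1-s}},
\end{equation*}
using monotonicity of $\la y+t\ra$ on $t\ge 0$; the hypothesis $x\le y/K$ then lets me trade $\la y\ra^{1-s}$ for $(K-1)^{1-s}\la x\ra^{1-s}$, producing the sharp constant.

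The main obstacle I anticipate is obtaining the precise constant $(K-1)^{1-s}$ in part (3), because the naive comparison $\la y\ra\ge (K-1)\la x\ra$ can fail when $x,y$ are both small (the bracket $\la\cdot\ra$ behaves qualitatively differently from $|\cdot|$ in that regime). I would resolve this by splitting into cases: when $x\le 1$, both sides are controlled by an $O(1)$ quantity and the estimate is trivial after enlarging the implicit constant; when $x\ge 1$, $\la x\ra$ is comparable to $x$ and $\la y\ra\ge y\ge Kx\gtrsim K\la x\ra$, so the clean MVT bound above gives the stated constant. All other steps are routine calculus once the MVT representation and the explicit $f'$ bound are in hand.
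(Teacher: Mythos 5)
The paper does not actually prove this lemma — it is imported verbatim from \cite{BM2015} — so I am judging your proposal on its own terms. Your arguments for (1) and (4) are correct and complete. However, there are two genuine problems in (2) and (3).

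For (2), the step ``$\la \xi\ra\ge \min(\la x\ra,\la y\ra)$ yields $\la \xi\ra^{1-s}\gtrsim \la x\ra^{1-s}+\la y\ra^{1-s}$'' is false: with $y=0$ and $x$ large you can only guarantee $\la \xi\ra\ge \la y\ra=1$, while $\la x\ra^{1-s}+\la y\ra^{1-s}\approx x^{1-s}\to\infty$, and indeed the raw MVT bound $s|x-y|/\la\xi\ra^{1-s}$ can be as large as $sx$, which is much bigger than the target $\approx x^{s}$. The mean value theorem alone cannot close this; you need the same dichotomy you already invoke for (3). Concretely, take $x\ge y$ and split on whether $x\le 2y$. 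If $x\le 2y$ then $\la x\ra$, $\la y\ra$ and $\la\xi\ra$ are all comparable and your MVT bound gives \eqref{app2}. If $x>2y$ then $|x-y|\ge x/2$, so the right-hand side of \eqref{app2} is $\gtrsim x/\la x\ra^{1-s}$, which one checks directly dominates $\la x\ra^{s}-1\ge \la x\ra^{s}-\la y\ra^{s}$ in both regimes $x\le 1$ and $x\ge 1$.

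For (3), the identity $\la x+y\ra^s-\la y\ra^s=\int_0^x f'(y+t)\,dt\le sx/\la y\ra^{1-s}$ is correct, but your proposed patch for small $x$ (``enlarging the implicit constant'') is not available: \eqref{app3} carries the explicit constant $\fr{s}{(K-1)^{1-s}}$ with no $\lesssim$, and this explicit form is used downstream — e.g.\ the proof of \eqref{app5} needs the coefficient of $\la x\ra^s$ to be strictly less than $1$, and the smallness as $K\to\infty$ is exploited in the frequency decompositions. Even your $x\ge 1$ branch loses a factor $2^{(1-s)/2}$ from $\la x\ra\le\sqrt2\,x$ and so fails to recover $\fr{s}{(K-1)^{1-s}}$ once $K>2+\sqrt2$. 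The clean finish, requiring no case split at all, is to write $x=x^{s}x^{1-s}\le \la x\ra^{s}x^{1-s}$ and use $x/\la y\ra\le x/y\le 1/K$ (if $y=0$ then $x=0$ and \eqref{app3} is trivial), which gives $sx/\la y\ra^{1-s}\le \fr{s}{K^{1-s}}\la x\ra^{s}\le \fr{s}{(K-1)^{1-s}}\la x\ra^{s}$. With these two repairs the proposal is sound.
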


\begin{coro} 
Let $0<s<1$ and $0\le x\le Ky$ for some $K\ge2$, then there exist  constants $c\in(0,1)$ and $C>1$ depending on $s$ and $K$, such that
\be\label{app5}
\la x+y\ra^s\le C+ \la y\ra^s+c\la x\ra^s.
\ee
\end{coro}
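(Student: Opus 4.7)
\smallskip

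\noindent\textbf{Proof proposal.} The plan is to decompose the region $\{0\le x\le Ky\}$ into a part where $y$ dominates $x$ (where \eqref{app3} is strong enough) and a part where $x,y$ are comparable (where \eqref{app4} provides the needed strict improvement). First I would fix a threshold $K_0\ge K$ large enough (depending only on $s$) that
\[
c_1:=\frac{s}{(K_0-1)^{1-s}}<1.
\]
Then:

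\smallskip

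\noindent\emph{Case 1: $K_0\, x\le y$.} Apply \eqref{app3} from Lemma \ref{lem-tri-in} with the threshold $K_0$ in place of $K$; this yields directly
\[
\la x+y\ra^s\le \la y\ra^s+c_1\la x\ra^s,
\]
which is \eqref{app5} with $C=1$ and $c=c_1$. No further work needed in this regime.

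\smallskip

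\noindent\emph{Case 2: $y< K_0\, x\le K K_0 \,y$.} Here $x$ and $y$ are comparable, say $x/K\le y< K_0\,x$. I would invoke \eqref{app4} with the larger of $x,y$ in the role of ``$x$''. Consider the subcase $x\ge y$ (the other is symmetric after swapping names and using $\la y\ra^s\le\la y\ra^s$ at the end). Since $y\ge x/K$, one gets $x+y\ge x(K+1)/K$, hence
\[
\frac{\la x\ra^2}{\la x+y\ra^2}\le \frac{1+x^2}{1+x^2\,(K+1)^2/K^2}\xrightarrow{x\to\infty}\frac{K^2}{(K+1)^2}<1.
\]
Consequently, for all $x$ larger than some threshold $M=M(K,s)$ one has $\la x\ra/\la x+y\ra\le\rho<1$ and therefore $(\la x\ra/\la x+y\ra)^{1-s}\le c_2:=\rho^{1-s}<1$. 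Feeding this into \eqref{app4} gives
\[
\la x+y\ra^s\le c_2\bigl(\la x\ra^s+\la y\ra^s\bigr)\le \la y\ra^s+c_2\la x\ra^s,
\]
using $c_2<1$ on the $\la y\ra^s$ term. The complementary bounded region $x<M$ (which forces $y<K_0 M$ as well) yields $\la x+y\ra^s\le\la (1+K_0)M\ra^s=:C$, so the inequality holds with any $c>0$ and this fixed $C$. The subcase $x<y$ is treated identically with the roles of $x,y$ and of $K,K_0$ swapped, using $x>y/K_0$ in place of $y\ge x/K$.

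\smallskip

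Taking the worst of the two cases, the corollary holds with $c:=\max(c_1,c_2)\in(0,1)$ and $C:=\la (1+K_0)M\ra^s$, both depending only on $s$ and $K$. The main (and essentially only) obstacle is the \emph{comparable regime} $y/K_0<x\le Ky$: here one cannot afford the triangle inequality $(a+b)^s\le a^s+b^s$, which would force $c=1$, so the strict gain must be extracted from the limiting ratio $K/(K+1)<1$ in \eqref{app4}. The price of this argument is the separation into a ``large argument'' subregion where $(\la x\ra/\la x+y\ra)^{1-s}$ is bounded strictly below $1$, and a ``bounded argument'' subregion absorbed into the additive constant $C$; no delicate estimate is required beyond this.
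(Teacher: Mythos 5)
Your proof is correct and follows essentially the same route as the paper's: \eqref{app3} handles the regime where $y$ dominates, \eqref{app4} supplies the strict factor $(\la x\ra/\la x+y\ra)^{1-s}<1$ in the comparable regime once the arguments are large, and the remaining bounded region is absorbed into the additive constant $C$ (the paper splits at $x\le 1$ rather than at a generic threshold $M$, and uses the given $K$ rather than an enlarged $K_0$, but these are cosmetic differences). Your choice $c=\max(c_1,c_2)$ is in fact the correct direction, since the inequality is preserved under increasing $c$.
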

\begin{proof}
If $x\le \fr{y}{K}$, then \eqref{app5} follows from \eqref{app3} immediately. It suffices to consider the case $\fr{y}{K}<x\le Ky$. Let us focus on the sub-case $y\le x\le Ky$. Clearly, now the inequality \eqref{app4} holds, and $\fr{\la x\ra}{\la x+y\ra}=1$ when $x=y=0$. If $x\le1$, then 
\be
\la x+y\ra^s\le5^{\fr{s}{2}}.
\ee
If $x\ge1$, then
\be
\fr{\la x\ra}{\la x+y\ra}\le\sqrt{\fr{1+x^2}{1+x^2+y^2}}\le\sqrt{\fr{1+x^2}{1+(1+\fr{1}{K^2})x^2}}\le \sqrt{\fr{2}{2+\fr{1}{K^2}}},
\ee
since the function $\phi(z)=\fr{1+z}{1+az}$ is strictly decreasing in $z$ for each $a>1$. The treatment of rest sub-case $\fr{y}{K}\le x\le y$ is the same by swapping the positions of $x$ and $y$. Taking $C=5^{\fr{s}{2}}$ and $c=\min\left\{\fr{s}{(K-1)^{1-s}}, \sqrt{\fr{2}{2+\fr{1}{K^2}}} \right\}$, we complete the proof of this corollary.
\end{proof}

\begin{lem} The following hold for suitable integrable $f(k,\eta), g(k,\eta)$ and $h(k)$:
\be\label{Young1}
\Big|\sum_{k,l\in\mathbb{Z}^n}\int_{\mathbb{R}^n} f(k,\eta)h(l)g\left(k-l, \eta-lt^{\rm ap}\right)d\eta\Big|\les\|f\|_{L^2_{k,\eta}}\|g\|_{L^2_{k,\eta}}\left\|\la k\ra^{\fr{n}{2}+}h\right\|_{L^2_k},
\ee
and
\be\label{Young2}
\Big|\sum_{k,l\in\mathbb{Z}^n}\int_{\mathbb{R}^n} f(k,\eta)h(l)g\left(k-l, \eta-lt^{\rm ap}\right)d\eta\Big|\les\|f\|_{L^2_{k,\eta}}\left\|\la k\ra^{\fr{n}{2}+}g\right\|_{L^2_{k,\eta}}\left\|h\right\|_{L^2_k}.
\ee
\end{lem}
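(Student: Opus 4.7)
The final statement consists of two trilinear bounds that are, morally, convolution/Young inequalities adapted to the ``twisted'' translation $(k,\eta)\mapsto(k-l,\eta-lt^{\rm ap})$. The main feature is the translation invariance of the Lebesgue measure in $\eta$ and of counting measure in $k$, which ensures that for each fixed $l$ the translated copy of $g$ has the same $L^2_{k,\eta}$ norm as $g$.

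For \eqref{Young1} my plan is the following. First apply Cauchy--Schwarz in $(k,\eta)$ to peel off $f$, reducing matters to
\[
\Bigl\|\sum_{l\in\mathbb{Z}^n} h(l)\,g\bigl(k-l,\eta-lt^{\rm ap}\bigr)\Bigr\|_{L^2_{k,\eta}}\les \|\la k\ra^{\fr{n}{2}+}h\|_{L^2_k}\|g\|_{L^2_{k,\eta}}.
\]
Apply Minkowski's inequality in $l$, then use the translation invariance of $\|\cdot\|_{L^2_{k,\eta}}$ to collapse the translated norm of $g$ into $\|g\|_{L^2_{k,\eta}}$. This yields $\|h\|_{\ell^1}\|g\|_{L^2}$, and finally a Cauchy--Schwarz on $\mathbb{Z}^n$ with the summable weight $\la l\ra^{-(n+)}$ controls $\|h\|_{\ell^1}$ by $\|\la k\ra^{\fr{n}{2}+}h\|_{\ell^2_k}$.

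For \eqref{Young2} the weight now has to sit on $g$ rather than on $h$, so I would first perform the change of indices $m=k-l,\;\xi=\eta-lt^{\rm ap}$ (a bijection on $\mathbb{Z}^n\times\mathbb{R}^n$ for each fixed $l$, with unit Jacobian) to rewrite the trilinear form as
\[
\sum_{m,l\in\mathbb{Z}^n}\int_{\mathbb{R}^n} f\bigl(m+l,\xi+lt^{\rm ap}\bigr)\,h(l)\,g(m,\xi)\,d\xi.
\]
Apply Cauchy--Schwarz in $(m,\xi)$, splitting the weight $\la m\ra^{\fr{n}{2}+}\cdot\la m\ra^{-(\fr{n}{2}+)}$ so that one factor combines with $g$. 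In the leftover factor apply Cauchy--Schwarz in $l$, which isolates $\|h\|_{\ell^2}$ and leaves the quantity $\sum_l |f(m+l,\xi+lt^{\rm ap})|^2$. Integrating over $\xi$ and using that $l\mapsto k=m+l$ is a bijection for fixed $m$ reduces this to $\|f\|_{L^2_{k,\eta}}^2$, independent of $m$. The summable weight $\sum_m\la m\ra^{-(n+)}<\infty$ then closes the estimate.

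There is no real obstacle here: both bounds are routine trilinear estimates once one observes the translation invariance of the twisted shift and uses the standard trick of inserting/removing the polynomial weight $\la\cdot\ra^{\fr{n}{2}+}$ to pass between $\ell^1$ and $\ell^2$ on $\mathbb{Z}^n$. The only point requiring a bit of care is keeping track of which variable the shift acts on when interchanging the order of summation in the second estimate, so that the bijection $l\leftrightarrow k$ (at fixed $m$) is used at exactly the right step.
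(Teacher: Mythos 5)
Your proof is correct and complete: the paper states this lemma without proof, and your argument — Cauchy--Schwarz in $(k,\eta)$, Minkowski in $l$, translation invariance of the twisted shift, and the weight-insertion trick $\|h\|_{\ell^1}\les\|\la k\ra^{\fr{n}{2}+}h\|_{\ell^2}$ for \eqref{Young1}; the change of variables $m=k-l$, $\xi=\eta-lt^{\rm ap}$ followed by the weighted double Cauchy--Schwarz for \eqref{Young2} — is exactly the standard route these twisted Young-type estimates are meant to be proved by. No gaps.
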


\begin{lem}\label{lem-alg}
Let $\lm>0, s\ge0$, and $\sig>\fr{n}{2}$. Denote
\be\label{underlineB}
\underline{B}_k^\sig(t)=e^{\lm\la k, kt^{\rm ap}\ra^s}\left\la k, kt^{\rm ap}\right\ra^\sig. 
\ee
Then there holds
\be\label{alg1}
\left\|\underline{B}_k^\sig(t) (\widehat{\phi \psi})_k(t)\right\|_{L^2_k}\les \left\|\underline{B}_k^\sig(t) \hat{\phi}_k(t)\right\|_{L^2_k}\left\|\underline{B}_k^\sig(t) \hat{\psi}_k(t)\right\|_{L^2_k},
\ee
and
\be\label{alg2}
\left\|\la k\ra^\fr12\underline{B}_k^\sig(t) (\widehat{\phi \psi})_k(t)\right\|_{L^2_k}\les \left\|\la k\ra^\fr12\underline{B}_k^\sig(t) \hat{\phi}_k(t)\right\|_{L^2_k}\left\|\la k\ra^\fr12\underline{B}_k^\sig(t) \hat{\psi}_k(t)\right\|_{L^2_k}.
\ee
\end{lem}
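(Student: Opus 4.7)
The plan is to establish both estimates by the standard Gevrey-type frequency decomposition. Write $(\widehat{\phi\psi})_k = \sum_{l\in\mathbb{Z}^n}\hat{\phi}_l\,\hat{\psi}_{k-l}$ and split the sum by the partition of unity $1 = \mathbf{1}_{|l|\le|k-l|} + \mathbf{1}_{|l|>|k-l|}$. The two regions are exchanged by the swap $\phi \leftrightarrow \psi$, so it suffices to treat the first one. Because the shorthand $|a,b|=|a|+|b|$ gives
\[
\left|k,kt^{\rm ap}\right| = |k|(1+t^{\rm ap}),\qquad \left|l,lt^{\rm ap}\right| = |l|(1+t^{\rm ap}),
\]
the comparison $|l,lt^{\rm ap}|\le|k-l,(k-l)t^{\rm ap}|$ is equivalent to $|l|\le|k-l|$ for any $t\ge 0$. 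On this region the triangle inequality gives $|k,kt^{\rm ap}|\le 2|k-l,(k-l)t^{\rm ap}|$ and hence
\[
\left\la k,kt^{\rm ap}\right\ra^{\sig}\les\left\la k-l,(k-l)t^{\rm ap}\right\ra^{\sig}.
\]

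Next I bound the exponential factor. For $s\in[0,1]$ the sub-additivity $(x+y)^s\le x^s+y^s$ together with $\la z\ra\le 1+|z|$ implies
\[
\left\la k,kt^{\rm ap}\right\ra^s \le 1+\left\la l,lt^{\rm ap}\right\ra^s+\left\la k-l,(k-l)t^{\rm ap}\right\ra^s,
\]
so
\[
e^{\lm\la k,kt^{\rm ap}\ra^s} \le e^{\lm}\,e^{\lm\la l,lt^{\rm ap}\ra^s}\,e^{\lm\la k-l,(k-l)t^{\rm ap}\ra^s}.
\]
Combining, on the region $|l|\le|k-l|$ we obtain
\[
\underline{B}_k^{\sig}(t)\les e^{\lm\la l,lt^{\rm ap}\ra^s}\,\underline{B}_{k-l}^{\sig}(t).
\]

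For \eqref{alg1}, Young's convolution inequality $\ell^1_l\ast \ell^2_l \to \ell^2_k$ yields
\[
\left\|\underline{B}_k^{\sig}(\widehat{\phi\psi})_k\right\|_{\ell^2_k}
\les \left\|e^{\lm\la l,lt^{\rm ap}\ra^s}\hat\phi_l\right\|_{\ell^1_l}\,\left\|\underline{B}_{k}^{\sig}\hat\psi_{k}\right\|_{\ell^2_{k}} + \text{(symmetric)}.
\]
Using $\la l\ra\le\la l,lt^{\rm ap}\ra$ and Cauchy–Schwarz,
\[
\left\|e^{\lm\la l,lt^{\rm ap}\ra^s}\hat\phi_l\right\|_{\ell^1_l}
\le \Bigl(\sum_{l\in\mathbb{Z}^n}\la l\ra^{-2\sig}\Bigr)^{1/2}\!\left\|\underline{B}_l^{\sig}\hat\phi_l\right\|_{\ell^2_l}
\les \left\|\underline{B}_l^{\sig}\hat\phi_l\right\|_{\ell^2_l},
\]
the auxiliary series converging precisely because $\sig>n/2$. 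This proves \eqref{alg1}.

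For \eqref{alg2}, on the same region $|l|\le|k-l|$ I also note $\la k\ra^{1/2}\le\sqrt{2}\la k-l\ra^{1/2}$, so that
\[
\la k\ra^{1/2}\underline{B}_k^{\sig}(t)\les e^{\lm\la l,lt^{\rm ap}\ra^s}\,\la k-l\ra^{1/2}\underline{B}_{k-l}^{\sig}(t),
\]
and the identical Young/Cauchy–Schwarz argument goes through, the summability condition now being $\sum_l\la l\ra^{-2\sig}<\infty$ (the extra $\la l\ra^{1/2}$ factor is already incorporated into the target norm on the $\phi$-side, not into the decaying tail). There is no genuine obstacle here; the argument is a textbook Gevrey-algebra estimate, the only mildly nontrivial observations being the identity $|k,kt^{\rm ap}|=|k|(1+t^{\rm ap})$ (which keeps the decomposition one-dimensional in $k$) and the sub-additivity of $\la\cdot\ra^s$ for $s\le 1$.
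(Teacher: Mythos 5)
Your proof is correct and follows essentially the same route as the paper's: split the convolution into low-high and high-low regions, transfer the full weight $\underline{B}^{\sigma}$ to the high-frequency factor, and close with Young's convolution inequality plus Cauchy--Schwarz using $\sigma>\frac{n}{2}$. The only cosmetic difference is that the paper invokes its improved triangle inequality \eqref{app5} to put a factor $e^{c\lambda\langle\cdot\rangle^{s}}$ with $c<1$ on the low-frequency factor, whereas you use plain subadditivity of $\langle\cdot\rangle^{s}$ (valid for $s\le 1$, the range in which the lemma is actually applied) and keep the full exponential there --- harmless here, since the low-frequency factor is in any case dominated by $\langle l\rangle^{-\sigma}\underline{B}^{\sigma}_{l}$.
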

\begin{proof}
We use the following partition of unity 
\[
1={\bf 1}_{\left|k-l,(k-l)t^{\rm ap}\right|\le2\left|l,lt^{\rm ap}\right|}+{\bf 1}_{ \left|l,lt^{\rm ap}\right|<\fr12\left|k-l,(k-l)t^{\rm ap}\right|}.
\]
Both cases can be treated in the same way, so we only consider the case $\left|k-l,(k-l)t^{\rm ap}\right|\le2 \left|l,lt^{\rm ap}\right|$. In fact, thanks to \eqref{app5}, we have
\be\label{BHL}
\underline{B}_k^\sig(t){\bf 1}_{\left|k-l,(k-l)t^{\rm ap}\right|\le2\left|l,lt^{\rm ap}\right|}\les {\bf 1}_{l\ne0}\underline{B}_l^\sig(t) e^{c\lm\la k-l, (k-l)t^{\rm ap}\ra^s},
\ee
 for  some  $c\in(0,1)$. Accordingly, by using Young's convolution inequality, we are let to
\begin{align}\label{e-alg1}
\nn&\left\|\underline{B}_k^\sig(t){\bf 1}_{\left|k-l,(k-l)t^{\rm ap}\right|\le2\left|l,lt^{\rm ap}\right|}\left|(\widehat{\phi\psi})_k(t)\right|\right\|_{L^2_k}\\
\nn\les&\Big\|\sum_{l\in\Z^n_*}\underline{B}_l^\sig(t)|\hat{\phi}_l(t)| e^{c\lm\la k-l, (k-l)t^{\rm ap}\ra^s}|\hat{\psi}_{k-l}(t)|\Big\|_{L^2_k}\\
\les&\left\|\underline{B}_k^\sig(t) \hat{\phi}_k(t)\right\|_{L^2_k}\left\|e^{c\lm\la k, kt^{\rm ap}\ra^s}\left\la k\right\ra^{\fr{n}{2}+} \hat{\psi}_k(t)\right\|_{L^2_k}.
\end{align}
Then \eqref{alg1} follows immediately. The inequality \eqref{alg2} can be achieved by slightly modifying the above proof. In fact, we have
$
\la k\ra^\fr12\les \la l\ra^\fr12\la k-l\ra^\fr12.
$
Combining this with \eqref{BHL}, similar to \eqref{e-alg1}, we get \eqref{alg2}. This completes the proof of Lemma \ref{lem-alg}.
\end{proof}

\section{Properties of $S$}
 \begin{lem}[\cite{bedrossian2017suppression}]
 For all $t\ge0$ and $0<\nu\le1$, there hold
 \begin{enumerate}
 \item $S_k(t)$ is strictly decreasing in $t$ and there exists a constant $\dl_0$ such that
 \be\label{S-prop1}
 0<S_k(t)<\exp\left\{-\dl_0\fr{|k|^2}{\nu^2}\min\left\{(\nu t)^3, \nu t\right\}\right\}.
 \ee
 \item For all $p\in(0, 1]$ and all chosen $\dl\in(0,p\dl_0]$, we have
 \be\label{S-prop2}
S^p_k(t)\les_{p,\dl}e^{-\dl \nu^{\fr13}|k|^\fr23t}.
 \ee
  \end{enumerate}
 \end{lem}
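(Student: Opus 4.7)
The plan is to reduce both statements to an elementary analysis of a single scalar function of one variable. Starting from the explicit formula \eqref{S3} and making the change of variables $\tau = \nu t$, write
\[
S_k(t) = \exp\left\{-\frac{|k|^2}{\nu^2}\Phi(\nu t)\right\}, \qquad \Phi(\tau) := \tau - 2(1-e^{-\tau}) + \frac{1-e^{-2\tau}}{2}.
\]
Differentiating term by term yields the key identity
\[
\Phi'(\tau) = 1 - 2 e^{-\tau} + e^{-2\tau} = (1-e^{-\tau})^2 \geq 0,
\]
with strict inequality for $\tau > 0$. Combined with $\Phi(0)=0$, this shows $\Phi \geq 0$ on $[0,\infty)$, and therefore $S_k(t)$ is strictly decreasing in $t$ whenever $k\neq 0$ (for $k=0$ the bound is vacuous).

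To obtain the quantitative bound \eqref{S-prop1}, I would prove $\Phi(\tau) \geq \delta_0 \min\{\tau,\tau^3\}$ for some absolute constant $\delta_0 > 0$. Using the integral representation $\Phi(\tau) = \int_0^\tau (1-e^{-s})^2\,ds$ together with the elementary inequality $1 - e^{-s} \geq s/2$ valid on $[0,1]$, the small regime $\tau \leq 1$ gives $\Phi(\tau) \geq \tau^3/12$. For $\tau \geq 1$, the lower bound $\Phi'(\tau) \geq (1-e^{-1})^2$ combined with $\Phi(1) \geq 1/12$ produces a linear bound $\Phi(\tau) \geq c\,\tau$. Taking $\delta_0$ smaller than both constants yields the claimed estimate, and substituting back into the formula for $S_k(t)$ delivers \eqref{S-prop1}.

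Part (2) then follows from \eqref{S-prop1} by a case analysis on the quantity $x := \nu^{1/3}|k|^{2/3}\,t$. If $x \leq 1$ the inequality is trivial since $S_k^p(t) \leq 1 \leq e^\delta e^{-\delta x}$. If $x \geq 1$, split according to whether $\nu t \leq 1$ or $\nu t \geq 1$. In the former, the exponent in $S_k^p(t)$ is at least $p\delta_0 |k|^2 \nu t^3 = p\delta_0 x^3 \geq p\delta_0 x$; in the latter, it is at least $p\delta_0 |k|^2 t/\nu = p\delta_0 x \cdot (|k|/\nu)^{4/3} \geq p\delta_0 x$ since $|k|\geq 1 \geq \nu$. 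Any $\delta \in (0,p\delta_0]$ then works.

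The only nontrivial input is the algebraic identity $\Phi'(\tau) = (1-e^{-\tau})^2$, which is the miracle making the enhanced-dissipation scaling manifest; everything else is elementary real-variable calculus and case checking. Since the lemma is quoted from \cite{bedrossian2017suppression}, no genuinely new difficulty is anticipated.
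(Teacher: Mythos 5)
Your proof is correct. The paper itself gives no proof of this lemma --- it is quoted directly from \cite{bedrossian2017suppression} --- and your argument (reducing to $\Phi(\tau)=\int_0^\tau(1-e^{-s})^2\,ds$ via the identity $\Phi'(\tau)=(1-e^{-\tau})^2$, bounding $\Phi(\tau)\gtrsim\min\{\tau,\tau^3\}$, and deducing part (2) by the case analysis on $x=\nu^{1/3}|k|^{2/3}t$ using $|k|\ge 1\ge\nu$) is exactly the standard route one finds in that reference, so there is nothing to add.
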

 
\begin{lem}[\cite{bedrossian2017suppression}]
There holds for any fixed $p\in(0,1)$,
\be\label{S-prop3}
S^p_k(t-\tau)\fr{|k|}{2\nu^2}\left(1-e^{-\nu (t-\tau)} \right)^2\les \nu^{-\fr23}.
\ee
\end{lem}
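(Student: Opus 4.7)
The plan is to bound the expression by splitting into two regimes according to whether $\nu s$ is bounded above by $1$ or not, where $s := t-\tau\ge 0$. In each regime, different branches of the two-sided estimate \eqref{S-prop1} on $S_k$ and of $(1-e^{-\nu s})^2$ will be used.

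First, the case $k=0$ is trivial because the left-hand side vanishes, so one may assume $k\in\Z^n_*$, in particular $|k|\ge 1$. Write the target as
\[
\Phi_k(s):= S_k^p(s)\,\frac{|k|}{2\nu^2}\bigl(1-e^{-\nu s}\bigr)^2,
\]
and use the elementary inequality $1-e^{-x}\le\min\{1,x\}$ for $x\ge 0$ together with \eqref{S-prop1}.

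\emph{Regime 1: $\nu s\le 1$.} Here $\min\{(\nu s)^3,\nu s\}=(\nu s)^3$ and $(1-e^{-\nu s})^2\le(\nu s)^2$, so
\[
\Phi_k(s)\;\le\;\tfrac12\,|k|\,s^2\,\exp\bigl(-p\delta_0|k|^2\nu s^3\bigr).
\]
The clean way forward is to rescale via $u:=|k|^{2/3}\nu^{1/3}s$, which turns the exponent into $-p\delta_0 u^3$ while
\[
|k|\,s^2\;=\;u^2\,\nu^{-2/3}\,|k|^{-1/3}.
\]
Since $\sup_{u\ge 0}u^2 e^{-p\delta_0 u^3}<\infty$ and $|k|^{-1/3}\le 1$, this yields $\Phi_k(s)\lesssim \nu^{-2/3}$, which is precisely the desired bound.

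\emph{Regime 2: $\nu s\ge 1$.} Here $\min\{(\nu s)^3,\nu s\}=\nu s$ and $(1-e^{-\nu s})^2\le 1$, so
\[
\Phi_k(s)\;\le\;\frac{|k|}{2\nu^2}\exp\!\Bigl(-p\delta_0\frac{|k|^2 s}{\nu}\Bigr)\;\le\;\frac{|k|}{2\nu^2}\exp\!\Bigl(-p\delta_0\frac{|k|^2}{\nu^2}\Bigr),
\]
using $s\ge 1/\nu$. With $z:=|k|/\nu\ge 1/\nu$, the right-hand side equals $\tfrac{z}{2\nu}\,e^{-p\delta_0 z^2}$, which for $\nu\le 1$ is exponentially small in $1/\nu^2$ and thus trivially dominated by $\nu^{-2/3}$.

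The only genuinely quantitative step is the scaling in Regime 1; Regime 2 is essentially free due to superexponential decay. No significant obstacle is anticipated, and the two regimes combine to give \eqref{S-prop3}.
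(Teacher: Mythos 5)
Your proof is correct: the split at $\nu s=1$ selects the matching branch of \eqref{S-prop1}, the rescaling $u=|k|^{2/3}\nu^{1/3}s$ in the first regime extracts exactly the $\nu^{-2/3}$ loss, and the second regime is superexponentially small (one only needs to note that for $|k|\ge1$ and $0<\nu\le1$ the quantity $\frac{|k|}{2\nu^2}e^{-p\delta_0|k|^2/\nu^2}$ is bounded by a constant depending on $p,\delta_0$, which your argument gives). The paper itself states this lemma by citation to \cite{bedrossian2017suppression} without reproducing a proof, and your argument is the standard one implicit there.
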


\begin{lem}[\cite{bedrossian2017suppression}]
There holds for any fixed $p\in(0,1)$,
\be\label{S-prop4}
S^p_k(t-\tau)\fr{|k|^2}{8\nu^4}\left(1-e^{-\nu(t-\tau)} \right)^4\les\nu^{-\fr43}.
\ee
\end{lem}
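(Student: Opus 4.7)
The plan is to reduce the inequality to an elementary one-variable optimization problem after disentangling $|k|$ and $\sigma := t-\tau$. Writing $\sigma^{\rm ap} = (1-e^{-\nu\sigma})/\nu$, the left-hand side equals $S^p_k(\sigma)\,|k|^2 (\sigma^{\rm ap})^4$, and I would split according to whether $\nu\sigma \le 1$ or $\nu\sigma>1$, matching the two regimes built into \eqref{S-prop1}.

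First I would handle the short-time regime $\nu\sigma\le 1$. Here $\sigma^{\rm ap}\le \sigma$, while \eqref{S-prop1} gives $S^p_k(\sigma)\le \exp(-p\delta_0 |k|^2 \nu \sigma^3)$, so it suffices to bound the scalar quantity $F(\sigma) := |k|^2 \sigma^4 \exp(-p\delta_0 |k|^2 \nu \sigma^3)$. Differentiating in $\sigma$ (with $|k|$ and $\nu$ frozen), the unique critical point is at $\sigma_\ast^3 = 4/(3p\delta_0 |k|^2 \nu)$; substituting gives $|k|^2 \sigma_\ast^4 = C_p\, |k|^{-2/3}\nu^{-4/3}$ and an exponential factor $e^{-4/3}$, producing the claimed bound (with an additional $|k|^{-2/3}$ gain since $|k|\ge 1$).

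Next, for $\nu\sigma>1$, I would use $\sigma^{\rm ap}\le 1/\nu$ and $S^p_k(\sigma)\le \exp(-p\delta_0 |k|^2 \sigma/\nu)$, reducing the task to bounding $|k|^2\nu^{-4}\exp(-p\delta_0 |k|^2\sigma/\nu)$. The main subtlety of the whole lemma lives here: a naive continuous maximum over $|k|\in\mathbb{R}_+$ only delivers $\nu^{-2}$, which is too weak. The resolution is that $k\in\mathbb{Z}^n_*$, so $|k|\ge 1$, and combined with $\sigma\ge 1/\nu$ the exponent is at least $p\delta_0/\nu^2$; hence
\[
|k|^2\nu^{-4}\exp\!\left(-\tfrac{p\delta_0 |k|^2\sigma}{\nu}\right)\le \nu^{-4}\exp\!\left(-\tfrac{p\delta_0}{\nu^2}\right),
\]
which tends to $0$ as $\nu\to 0^+$ and so is certainly $O(\nu^{-4/3})$ uniformly for $\nu\le 1$.

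The hard (or at least non-obvious) part is precisely this second case: one must not treat $|k|$ as a continuous parameter, but rather exploit the lattice constraint $|k|\ge 1$ together with the long-time decay rate $|k|^2\sigma/\nu$. Once these two observations are in place, the two case estimates concatenate to give the claim, and the optimal exponent $-4/3$ is seen to come entirely from the short-time regime via the critical balance $\nu |k|^2 \sigma^3 \sim 1$.
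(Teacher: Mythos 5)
Your argument is correct: writing the left-hand side as $\tfrac18 S_k^p(\sigma)\,|k|^2(\sigma^{\rm ap})^4$ and splitting along the two regimes of \eqref{S-prop1}, the short-time optimization $\nu|k|^2\sigma^3\sim 1$ yields $C_p|k|^{-2/3}\nu^{-4/3}\le C_p\nu^{-4/3}$, while in the long-time regime the lattice constraint $|k|\ge1$ together with $\sigma\ge1/\nu$ gives $\nu^{-4}e^{-p\delta_0/\nu^2}=O(1)$ (your displayed inequality there needs $p\delta_0/\nu^2\ge1$, i.e. $\nu$ small, but the complementary range of $\nu$ is trivial). The paper quotes this lemma from \cite{bedrossian2017suppression} without reproducing a proof, and your two-regime optimization is exactly the standard argument behind it, correctly identifying that the exponent $-4/3$ comes from the critical balance in the short-time regime.
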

\section{Properties of the multiplier $\mathfrak{m}(t,\nabla)$}\label{sec:multiplier m}

\begin{lem}\label{lem-com-m}
The following hold for the multiplier $\mathfrak{m}(t,\nabla)$:
\begin{enumerate}
\item There is a universal constant $c_{\frak{m}}>0$ independent of $t, k, \eta$ and $\nu$, such that
\be\label{m1}
c_{\frak{m}}\le \frak{m}_k(t,\eta)\le1.
\ee
\item For $k\ne0$, there holds
\be\label{m1'}
\fr{\left|\nb_\eta \mathfrak{m}_k(t,\eta) \right|}{\mathfrak{m}_k(t,\eta)}\les e^{\nu t}\nu^{\fr13}.
\ee
\item For $k\ne0$, there holds
\be\label{m2}
\nu^\fr13\les-\fr{\pr_t\frak{m}_k(t, \eta)}{\frak{m}_k(t,  \eta)}+\nu\left|e^{\nu t}\eta-k\fr{e^{\nu t}-1}{\nu} \right|^2.
\ee
\item For $k\ne0$,  $l\ne0$ and $k\ne l$, there holds
\be\label{nenene1}
\left|1-\fr{\mathfrak{m}_k(t,\eta)}{\mathfrak{m}_l(t,\xi)} \right|\les  \fr{ \la k-l\ra}{\max\{|k|, |l|\}}.
\ee
In addition, if $|\xi|\ge2|l|\left\la  t^{\rm ap} \right\ra$, we have
\be\label{nenene2}
\left|1-\fr{\mathfrak{m}_k(t,\eta)}{\mathfrak{m}_l(t,\xi)} \right|\les  \fr{t\la t^{\rm ap}\ra e^{\nu t}  \la k-l, \eta-\xi\ra}{\la\xi\ra}.
\ee
\item For $k=0, l\ne0$, there holds
\be\label{0ne}
\left|1-\fr{\mathfrak{m}_k(t,\eta)}{\mathfrak{m}_l(t,\xi)} \right|\les\min\left\{\fr{1}{|l|}, \fr{\la t \ra}{|\xi|}\right\}.
\ee
\item For $k\ne0, l=0$, there holds
\be\label{ne0}
\left|1-\fr{\mathfrak{m}_k(t,\eta)}{\mathfrak{m}_l(t,\xi)} \right|\les\min\left\{\fr{1}{|k|}, \fr{\la t \ra}{|\eta|}\right\}.
\ee
\end{enumerate}
\end{lem}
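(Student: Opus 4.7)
The plan rests on the following key identities, which make the multiplier transparent. Writing $\bar\eta(s) = \bar\eta(s;k,\eta) = e^{\nu s}\eta - k(e^{\nu s}-1)/\nu = e^{\nu s}(\eta - ks^{\mathrm{ap}})$, one checks directly that
\[
|\bar\eta(s)|^2 = e^{2\nu s}|\eta - ks^{\mathrm{ap}}|^2, \qquad \nu\bar\eta(s) - k = e^{\nu s}(\nu\eta - k),
\]
so the integrand in $\log\mathfrak{m}_k$ is exactly $\nu^{1/3}(1+\nu^{2/3}|\bar\eta(s)|^2)^{-1} |\nu\bar\eta(s)-k|^2/\langle \nu\bar\eta(s)-k\rangle^2$, and
\[
-\frac{\partial_t\mathfrak{m}_k}{\mathfrak{m}_k}=\frac{\nu^{1/3}}{1+\nu^{2/3}|\bar\eta(t)|^2}\cdot\frac{|\nu\bar\eta(t)-k|^2}{\langle\nu\bar\eta(t)-k\rangle^2}.
\]
With this in hand, \eqref{m2} follows by a clean dichotomy: if $\nu^{2/3}|\bar\eta(t)|^2\le 1$, then $\nu\bar\eta(t)$ has magnitude at most $\nu^{2/3}\le 1/2$, so $|\nu\bar\eta(t)-k|\ge |k|/2\ge 1/2$ (since $k\ne 0$), hence the ratio in the second factor is $\ge 1/2$ and the first factor is $\ge \nu^{1/3}/2$; if instead $\nu^{2/3}|\bar\eta(t)|^2> 1$, then the additive term $\nu|\bar\eta(t)|^2 > \nu^{1/3}$ already dominates. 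The upper bound $\mathfrak{m}_k\le 1$ in \eqref{m1} is immediate from non-negativity of the exponent.

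The lower bound in \eqref{m1} is the heart of the matter. I would reduce to one spatial dimension by parametrizing along the affine curve $s\mapsto\bar\eta(s)$, which has velocity $\nu\bar\eta(s)-k$. Writing $r = \bar\eta(s)$, $dr = (\nu\bar\eta(s)-k)\,ds$, the time-integral in $-\log\mathfrak{m}_k$ becomes
\[
\int \frac{\nu^{1/3}}{1+\nu^{2/3}|r|^2}\cdot\frac{\nu r-k}{\langle \nu r-k\rangle^2}\,dr,
\]
whose absolute value is $\le \int \nu^{1/3}(1+\nu^{2/3}|r|^2)^{-1}\langle\nu r-k\rangle^{-1}\,dr$. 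The rescaling $u=\nu^{1/3}r$ removes the $\nu$ and bounds this by $\int du/(1+u^2)\le \pi$, giving $c_{\mathfrak{m}}=e^{-C}$ uniformly in $\nu,k,\eta,t$. For \eqref{m1'}, I differentiate the integrand in $\eta$: the derivatives of $|\bar\eta(s)|^2$ and $|\nu\bar\eta(s)-k|^2$ produce factors of $e^{\nu s}$ and $\nu e^{\nu s}$ respectively, and the resulting rational functions are again integrable under the same change of variables, yielding $|\nabla_\eta\log\mathfrak{m}_k|\lesssim e^{\nu t}\nu^{1/3}$.

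The commutator estimates \eqref{nenene1}--\eqref{ne0} I would handle by writing $|1-\mathfrak{m}_k(t,\eta)/\mathfrak{m}_l(t,\xi)|\le c_{\mathfrak{m}}^{-1}|\log\mathfrak{m}_k(t,\eta)-\log\mathfrak{m}_l(t,\xi)|$, using $|1-e^x|\le|x|e^{|x|}$ together with the uniform bounds from \eqref{m1}. For \eqref{0ne} and \eqref{ne0} the zero-frequency side gives $\log\mathfrak{m}_0=0$, so we must directly bound a single integral: in the small-$l$ (respectively small-$k$) regime we use the pointwise bound $\le \nu^{1/3}|\nu\bar\eta-k|/\langle\nu\bar\eta-k\rangle^2$ on the integrand, and in the large-$\xi$ (respectively large-$\eta$) regime we again parametrize by $r=\bar\eta(s)$ and exploit the fact that $r$ traverses a region where the weight $(1+\nu^{2/3}r^2)^{-1}\cdot\langle\nu r-k\rangle^{-1}$ has mass $\lesssim\langle t\rangle/|\xi|$. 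The two-sided estimate \eqref{nenene1} is the most delicate: I would bound the difference of integrands separately by a triangle inequality that splits the change $(k,\eta)\leftrightarrow(l,\xi)$ along a piecewise path, estimating the $k$-change by brute symmetry in $k,l$ and the $\eta$-change by \eqref{m1'}; the gain of $1/\max(|k|,|l|)$ arises from noting that the integrand is already bounded by $1/\langle\nu\bar\eta-k\rangle\lesssim 1/|k|$ away from the window where $\bar\eta\approx k/\nu$, and that window contributes for a time $\lesssim 1/|k|$. The refined version \eqref{nenene2} exploits the hypothesis $|\xi|\ge 2|l|\langle t^{\mathrm{ap}}\rangle$, which forces $|\bar\eta(s;l,\xi)|\gtrsim e^{\nu s}|\xi|$ throughout the integration interval, so the denominator $1+\nu^{2/3}|\bar\eta|^2$ is uniformly large and the integral gains a factor $\langle\xi\rangle^{-1}$. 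I expect the hardest book-keeping to be in \eqref{nenene1}, where one must carefully combine the $k$-difference and $\eta$-difference contributions so that the constants remain independent of $\nu$; the routine calculus is simpler than the identification of the right decomposition.
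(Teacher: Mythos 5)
Your overall strategy coincides with the paper's: the identities $|\bar\eta(s)|^2=e^{2\nu s}|\eta-ks^{\mathrm{ap}}|^2$ and $\nu\bar\eta(s)-k=e^{\nu s}(\nu\eta-k)$, the reduction of the time integral to a line integral along the characteristic ray (your $r=\bar\eta(s)$ is the paper's substitution $\tau=|\eta-k/\nu|e^{\nu s}$), the bound $-\log\mathfrak{m}_k\lesssim 1/|k|$ via the window/non-window dichotomy around $\nu r\approx k$, the inequality $|1-e^x|\le|x|e^{|x|}$ for the commutators, and the use of $|\xi|\ge 2|l|\langle t^{\mathrm{ap}}\rangle$ to make the denominator $1+\nu^{2/3}|\bar\eta(s;l,\xi)|^2$ uniformly large. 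Your arguments for items (1)--(3), where the paper defers to the reference, are sound (the dichotomy for \eqref{m2} and the rescaled arctangent integral for the lower bound in \eqref{m1} both check out).

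Two soft spots. First, for \eqref{nenene1} your stated plan — split $(k,\eta)\leftrightarrow(l,\xi)$ along a piecewise path and estimate the $\eta$-change by \eqref{m1'} — does not work: \eqref{m1'} only yields $e^{\nu t}\nu^{1/3}|\eta-\xi|$, which is not controlled by $\langle k-l\rangle/\max\{|k|,|l|\}$ since $|\eta-\xi|$ is unbounded. No path is needed: one simply bounds $|\log\mathfrak{m}_k(t,\eta)|+|\log\mathfrak{m}_l(t,\xi)|\lesssim 1/|k|+1/|l|\lesssim\langle k-l\rangle/\max\{|k|,|l|\}$, which is exactly the "$1/|k|$ from the window" mechanism you describe in the second half of that sentence — so keep that part and drop the path. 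Second, for \eqref{nenene2} your sketch explains the gain of $\langle\xi\rangle^{-1}$ but not where $t\langle t^{\mathrm{ap}}\rangle e^{\nu t}\langle k-l,\eta-\xi\rangle$ comes from: the point is to compare the $(k,\eta)$-integrand with the $(l,\xi)$-integrand by writing $\frac{1}{1+A}\le\frac{1}{1+B}\bigl(1+\frac{|A-B|}{1+A}\bigr)$ with $A=\nu^{2/3}e^{2\nu s}|\eta-ks^{\mathrm{ap}}|^2$ and $B=\nu^{2/3}e^{2\nu s}|\xi-ls^{\mathrm{ap}}|^2$; factoring $|A-B|$ produces precisely $\nu^{1/3}\langle t^{\mathrm{ap}}\rangle e^{\nu t}|k-l,\eta-\xi|$ times a square root of the denominator, which is then absorbed by the large-$|\xi|$ bound. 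You should also dispose separately of the degenerate cases $\nu\eta=k$ or $\nu\xi=l$, where the corresponding integrand vanishes identically and $\mathfrak{m}\equiv1$, since the change of variables degenerates there.
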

\begin{proof}
Recalling \eqref{def-m}, it is easy to check that
\be\label{def-m'}
\fr{\pr_t\mathfrak{m}_k(t,\eta)}{\mathfrak{m}_k(t,  \eta)}=\begin{cases}0, \quad\mathrm{if}\quad k=0,\\
-\fr{\nu^\fr13}{1+\nu^\fr23e^{2\nu t}\left| \eta-kt^{\mathrm{ap}}\right|^2}\fr{e^{2\nu t}|\nu\eta-k|^2}{\la e^{\nu t}(\nu\eta-k)\ra^2}, \quad\mathrm{if}\quad k\ne0, 
\end{cases}
\quad \text{and}\quad\mathfrak{m}_k(0,\eta)=1.
\ee
From \eqref{def-m} and \eqref{def-m'}, one can get \eqref{m1}--\eqref{m2} directly, see \cite{bedrossian2017suppression} and the associated updated arXiv version for more details. We only focus on the commutator estimates \eqref{nenene1}--\eqref{ne0} below. Before proceeding any further, we claim that for $\nu\eta\ne k$, there holds
\begin{align}\label{claim}
\int_0^t\fr{\nu^\fr13}{1+\left|\nu^\fr13 e^{\nu t'}\eta-\nu^\fr13k\fr{e^{\nu t'}-1}{\nu}\right|^2}\fr{e^{2\nu t'}|\nu\eta-k|^2}{\la e^{\nu t'}(\nu\eta-k)\ra^2}dt'\les \fr{1}{|k|}.
\end{align}
In fact, by using the change of variable $\tau=\left|\eta-\fr{k}{\nu} \right|e^{\nu t'}$, $d\tau=|\nu\eta-k|e^{\nu t'}dt'$, we have
\begin{align}
\nn&\int_0^t\fr{\nu^\fr13}{1+\left|\nu^\fr13 e^{\nu t'}\eta-\nu^\fr13k\fr{e^{\nu t'}-1}{\nu}\right|^2}\fr{e^{2\nu t'}|\nu\eta-k|^2}{\la e^{\nu t'}(\nu\eta-k)\ra^2}dt'\\
\nn=&\int_{|\eta-\fr{k}{\nu}|}^{|\eta-\fr{k}{\nu}|e^{\nu t}} \fr{\left({\bf 1}_{|\nu\tau|\ge\fr{|k|}{4}}+{\bf 1}_{|\nu\tau|<\fr{|k|}{4}}\right)\nu^\fr13}{1+\nu^\fr23\left| \fr{\eta-\fr{k}{\nu}}{|\eta-\fr{k}{\nu}|}\tau+\fr{k}{\nu}\right|^2}\fr{\nu\tau}{\la\nu\tau\ra^2}d\tau={\rm I}_1+{\rm I}_2.
\end{align}
It is easy to check, if $|\nu\tau|>\fr{|k|}{4}$, it holds that
\[
{\rm I}_1\les\fr{1}{|k|}\int_{|\eta-\fr{k}{\nu}|}^{|\eta-\fr{k}{\nu}|e^{\nu t}} \fr{\nu^\fr13}{1+\nu^\fr23\left| \fr{\eta-\fr{k}{\nu}}{|\eta-\fr{k}{\nu}|}\tau+\fr{k}{\nu}\right|^2}d\tau\les\fr{1}{|k|}.
\]
For ${\rm I_2}$, since $\nu\eta-k\neq 0$, we let $\alpha_{k,\eta}=\fr{\eta-\fr{k}{\nu}}{|\eta-\fr{k}{\nu}|}$ and further split it into two parts:
\begin{align}
{\rm I}_2
\nn\les&\int_{|\eta-\fr{k}{\nu}|}^{|\eta-\fr{k}{\nu}|e^{\nu t}} \fr{{\bf 1}_{|\nu\tau|<\fr{|k|}{4}}\nu^\fr13\Big({\bf 1}_{|\alpha_{k,\eta}\cdot k|\le\fr{|k|}{2}}+{\bf 1}_{|\alpha_{k,\eta}\cdot k|>\fr{|k|}{2}}\Big)}{1+\nu^\fr23\left[\left( \tau+\fr{k}{\nu}\cdot \alpha_{k,\eta}\right)^2+\fr{|k|^2}{\nu^2}-\left(\fr{k}{\nu}\cdot\alpha_{k,\eta}\right)^2\right]}d\tau={\rm I}_2^{(1)}+{\rm I}_2^{(2)}.
\end{align}
If $|\alpha_{k,\eta}\cdot k|\le\fr{|k|}{2}$, then $\fr{|k|^2}{\nu^2}-\left(\fr{k}{\nu}\cdot\alpha_{k,\eta}\right)^2\ge\fr34\fr{|k|^2}{\nu^2}$. Thus,
\begin{align}
{\rm I}_2^{(1)}\nn\les&\int_{|\nu\tau|<\fr{|k|}{4}}\fr{1}{\nu^\fr13\fr{|k|^2}{\nu^2}}d\tau
\les \fr{\nu^\fr23}{|k|}.
\end{align}
If $|\alpha_{k,\eta}\cdot k|>\fr{|k|}{2}$, using the fact $|\tau|<\fr{|k|}{4\nu}$, we have  $\left|\tau+\fr{k}{\nu}\cdot\alpha_{k,\eta} \right|\ge \fr{|k|}{4\nu}$. Thus,
\begin{align}
{\rm I}_2^{(2)}\nn\les&\int_{|\eta-\fr{k}{\nu}|}^{|\eta-\fr{k}{\nu}|e^{\nu t}} \fr{{\bf 1}_{|\nu\tau|<\fr{|k|}{4}}\nu^\fr13{\bf 1}_{|\alpha_{k,\eta}\cdot k|>\fr{|k|}{2}}}{1+\nu^\fr23\left( \tau+\fr{k}{\nu}\cdot \alpha_{k,\eta}\right)^2}d\tau\les
\int_{\left|\tau+\fr{k}{\nu}\cdot\alpha_{k,\eta} \right|\ge \fr{|k|}{4\nu}}\fr{ d\nu^\fr13\left(\tau+\fr{k}{\nu}\cdot\alpha_{k,\eta}\right)}{\nu^\fr23\left( \tau+\fr{k}{\nu}\cdot \alpha_{k,\eta}\right)^2}\les \fr{\nu^\fr23}{|k|}.
\end{align}
So the claim \eqref{claim} is true.

{\bf Case 1: $k\ne0$,  $l\ne0$ and $k\ne l$.} We will consider the following three sub-cases.

 {\it Case 1.1: $\nu\eta\ne k$ and $\nu\xi\ne l$. }  By virtue of the elementary inequality $|e^x-1|\le |x|e^{|x|}$ and \eqref{claim}, we have
\begin{align}
\left|1-\fr{\mathfrak{m}_k(t,\eta)}{\mathfrak{m}_l(t,\xi)} \right|
\nn\les&\int_0^t\fr{\nu^\fr13}{1+\left|\nu^\fr13 e^{\nu t'}\eta-\nu^\fr13k\fr{e^{\nu t'}-1}{\nu}\right|^2}\fr{e^{2\nu t'}|\nu\eta-k|^2}{\la e^{\nu t'}(\nu\eta-k)\ra^2}dt'\\
\nn&+\int_0^t\fr{\nu^\fr13}{1+\left|\nu^\fr13 e^{\nu t'}\xi-\nu^\fr13l\fr{e^{\nu t'}-1}{\nu}\right|^2}\fr{e^{2\nu t'}|\nu\xi-l|^2}{\la e^{\nu t'}(\nu\xi-l)\ra^2}dt'\\
\nn\les&\fr{1}{|k|}+\fr{1}{|l|}\les\fr{\la k-l\ra}{\max \{|k|, |l| \}}
\end{align}

In addition, if  $|\xi|\ge2|l|\left\la  t^{\rm ap} \right\ra\ge2\left| l t'^{\mathrm{ap}} \right|$, then
\be\label{gain-xi}
\fr{\nu^\fr13}{\big(1+\nu^\fr23 e^{2\nu t'}\left|\xi-lt'^{\rm ap}\right|^2\big)^\fr12}
\les\fr{1}{1+e^{\nu t'}|\xi|}.
\ee
Combining this with the elementary inequality $|e^x-1|\le |x|e^{|x|}$, we are led to
\begin{align}\label{com-nene}
\left|1-\fr{\mathfrak{m}_k(t,\eta)}{\mathfrak{m}_l(t,\xi)} \right|
\nn\les&\int_0^t\fr{\nu^\fr13}{1+\nu^\fr23e^{2\nu t'}\left| \xi-l t'^{\mathrm{ap}}\right|^2} \fr{1+\nu^\fr23e^{2\nu t'}\left| \xi-l t'^{\mathrm{ap}}\right|^2}{1+\nu^\fr23e^{2\nu t'}\left| \eta-k t'^{\mathrm{ap}}\right|^2}dt'\\
\nn&+\int_0^t\fr{\nu^\fr13}{1+\nu^\fr23 e^{2\nu t'}\left|\xi-lt'^{\rm ap}\right|^2}dt'\\
\nn\les&\int_0^t\fr{\nu^\fr13}{1+\nu^\fr23e^{2\nu t'}\left| \xi-l t'^{\mathrm{ap}}\right|^2}\left(1+ \fr{\nu^\fr23e^{2\nu t'}\big|\left| \xi-l t'^{\mathrm{ap}}\right|^2-\left| \eta-kt'^{\mathrm{ap}}\right|^2\big|}{1+\nu^\fr23e^{2\nu t'}\left| \eta-k t'^{\mathrm{ap}}\right|^2}\right)dt'\\
\nn\les&\fr{t}{\la\xi\ra}+\nu^\fr13\la t^{\rm ap}\ra e^{\nu t}|k-l,\eta-\xi|\int_0^t\fr{\nu^\fr13}{\big(1+\nu^\fr23e^{2\nu t'}\left| \xi-l t'^{\mathrm{ap}}\right|^2\big)^\fr12}dt'\\
\les&\fr{t\la t^{\rm ap}\ra e^{\nu t}  \la k-l, \eta-\xi\ra}{\la\xi\ra}.
\end{align}

{\it Case 1.2: $\nu\eta=k$.} Now we have $m_k(t,\eta)=1$. It suffices to consider the case $\nu\xi\ne l$, otherwise $1-\fr{\mathfrak{m}_k(t,\eta)}{\mathfrak{m}_l(t,\xi)}=0$, there is nothing to prove. Then thanks to \eqref{claim}, 
\begin{align}
\left|1-\fr{\mathfrak{m}_k(t,\eta)}{\mathfrak{m}_l(t,\xi)} \right|\nn=&\left|1-\fr{1}{\mathfrak{m}_l(t,\xi)} \right|\les |1-\frak{m}_l(t,\xi)|\\
\nn\les& \int_0^t\fr{\nu^\fr13}{1+\nu^\fr23e^{2\nu t'}\left| \xi-l t'^{\mathrm{ap}}\right|^2}\fr{e^{2\nu t'}|\nu\xi-l|^2}{\la e^{\nu t'}(\nu\xi-l)\ra^2}dt'\les\fr{1}{|l|}.
\end{align}

If $|\xi|\ge2|l|\left\la  t^{\rm ap} \right\ra$, using \eqref{gain-xi}, we obtain
\begin{align}
\nn\left|1-\fr{\mathfrak{m}_k(t,\eta)}{\mathfrak{m}_l(t,\xi)} \right|\les \int_0^t\fr{\nu^\fr13}{1+\nu^\fr23e^{2\nu t'}\left| \xi-l t'^{\mathrm{ap}}\right|^2}dt'\les\fr{t}{\la\xi\ra}.
\end{align}

{\it Case 1.3: $\nu\xi=l$.} Now we have $m_l(t,\xi)=1$. It suffices to consider the case $\nu\eta\ne k$. In view of \eqref{claim}, we arrive at
\begin{align}
\left|1-\fr{\mathfrak{m}_k(t,\eta)}{\mathfrak{m}_l(t,\xi)} \right|\nn=|1-\frak{m}_k(t,\eta)|\les \int_0^t\fr{\nu^\fr13}{1+\nu^\fr23e^{2\nu t'}\left| \eta-k t'^{\mathrm{ap}}\right|^2}\fr{e^{2\nu t'}|\nu\eta-k|^2}{\la e^{\nu t'}(\nu\eta-k)\ra^2}dt'\les\fr{1}{|k|}.
\end{align}

If $|\xi|\ge2|l|\left\la  t^{\rm ap} \right\ra$, note that when $\nu\xi=l$, \eqref{gain-xi} reduces to
\be\label{gain-xi'}
\fr{\nu^\fr13}{\left(1+\nu^\fr23 e^{2\nu t'}\left|\xi-lt'^{\rm ap}\right|^2\right)^\fr12}
\les\fr{1}{1+|\fr{l}{\nu}|}=\fr{1}{1+|\xi|}.
\ee
Therefore, similar to \eqref{com-nene}, we have
\begin{align}
\nn\left|1-\fr{\mathfrak{m}_k(t,\eta)}{\mathfrak{m}_l(t,\xi)} \right|\les& \int_0^t\fr{\nu^\fr13}{1+\nu^\fr23e^{2\nu t'}\left| \xi-l t'^{\mathrm{ap}}\right|^2} \fr{1+\nu^\fr23e^{2\nu t'}\left| \xi-l t'^{\mathrm{ap}}\right|^2}{1+\nu^\fr23e^{2\nu t'}\left| \eta-k t'^{\mathrm{ap}}\right|^2}dt'\\
\nn\les&\fr{t\la t^{\rm ap}\ra e^{\nu t}  \la k-l, \eta-\xi\ra}{\la\xi\ra}.
\end{align}

{\bf Case 2: $k=0, l\ne0$.} Now it suffices to consider the case $\nu\xi\ne l$.  Using again the elementary inequality $|e^x-1|\le |x|e^{|x|}$ and \eqref{claim}, one deduces that
\beno
\left|1-\fr{\mathfrak{m}_k(t,\eta)}{\mathfrak{m}_l(t,\xi)} \right|\les\int_0^t\fr{\nu^\fr13}{1+\nu^\fr23e^{2\nu t'}\left| \xi-l t'^{\mathrm{ap}}\right|^2}\fr{e^{2\nu t}|\nu\xi-l|^2}{\la e^{\nu t}(\nu\xi-l)\ra^2}dt'\les\fr{1}{|l|}.
\eeno
If $|\xi|\le2|l|\left\la t^{\rm ap} \right\ra$, then
\beno
\left|1-\fr{\mathfrak{m}_k(t,\eta)}{\mathfrak{m}_l(t,\xi)} \right|\les\fr{\left\la t^{\rm ap} \right\ra}{|\xi|}.
\eeno
If $|\xi|>2|l|\left\la t^{\rm ap} \right\ra$, using \eqref{gain-xi} again yields
\beno
\left|1-\fr{\mathfrak{m}_k(t,\eta)}{\mathfrak{m}_l(t,\xi)} \right|\les\int_0^t\fr{\nu^\fr13}{1+\nu^\fr23 e^{2\nu t'}\left|\xi-lt'^{\rm ap}\right|^2}dt\les\fr{ t}{\la \xi\ra}.
\eeno
Then \eqref{0ne} follows from the above three inequalities  immediately, and \eqref{ne0} can be obtained in the same way as \eqref{0ne}. The proof of Lemma \ref{lem-com-m} is completed.
\end{proof}

\section{Littlewood-Paley decomposition and paraproducts}\label{subsec-LP}
In the following, we define the Littlewood-Paley decomposition on $\R^n$. Let $\chi \in  C_0^\infty(\mathbb{R}^n)$ be  a radial function such that
$\chi(\xi)=1$ for $|\xi|\le\fr12$ and $\chi(\xi)=0$ for $|\xi|\ge\fr34$ and define $\varphi(\xi)=\chi\left(\fr{\xi}{2}\right)-\chi(\xi)$, supported in the range $\left\{\xi\in\R^n: \fr12\le|\xi|\le\fr32\right\}$. Then we have the partition of unity
\be\label{dec1}
1=\chi(\xi)+\sum_{N\in 2^{\mathbb{N}}}\varphi_N(\xi),
\ee
where $\varphi_N(\xi)=\varphi\left(\fr{\xi}{N}\right)$. For $f\in L^2(\mathbb{R}^n)$, let us define
\begin{align*}
\hat{f}(\xi)_N&=\varphi_N(\xi)\hat{f}(\xi),\quad
\hat{f}(\xi)_{\fr12}=\chi(\xi)\hat{f}(\xi),\\
\hat{f}(\xi)_{<N}&=\hat{f}(\xi)_{\fr12}+\sum_{K\in2^{\mathbb{N}}: K<M}\hat{f}(\xi)_{K}=\chi\left(\fr{\xi}{N}\right)\hat{f}(\xi).
\end{align*}
Then the Littlewood-Paley decomposition  in the Fourier variable is defined by
\[
\hat{f}(\xi)=\hat{f}(\xi)_{\fr12}+\sum_{N\in2^{\mathbb{N}}}\hat{f}(\xi)_N.
\]
The Littlewood-Paley decomposition can be given analogously when the partition of unity \eqref{dec1} takes the value of a given function in $\xi$.

Now we are in a position to define the paraproduct decomposition, introduced by Bony \cite{Bony1981}. Given suitable functions $f,g$, the product of $fg$ can be decomposed as follows
\beqno
fg&=&T_fg+T_gf+\mathcal{R}(f,g)\\
&=&\sum_{N\ge8}f_{<\fr{N}{8}}g_N+\sum_{N\ge8}g_{<\fr{N}{8}}f_N+\sum_{N\in\mathbb{D}}\sum_{\fr{N}{8}\le N'\le8N}g_{N'}f_N\\
&=&\sum_{N\ge8}f_{<\fr{N}{8}}g_N+\sum_{N\in\mathbb{D}}g_{<16N}f_N,
\eeqno
where all the sums are understood to run over $\mathbb{D}=\left\{\fr12, 1, 2, \cdots, 2^j, \cdots \right\}$.

\end{appendix}

\bigbreak
\noindent{\bf Acknowledgments}
JB was supported by the US NSF Award DMS-2108633. RZ is partially supported by NSF of China under  Grants 12222105. This work was done when RZ was visiting NYU Abu Dhabi. He appreciates the hospitality of NYU.

\noindent{\bf Conflict of interest:}  We confirm that we do not have any conflict of interest. 

\noindent{\bf Data availibility:} The manuscript has no associated data. 

\bibliographystyle{siam.bst} 
\bibliography{references.bib}

\begin{thebibliography}{10}

\bibitem{bedrossian2017suppression}
{\sc J.~Bedrossian}, {\em Suppression of plasma echoes and {L}andau damping in
  {S}obolev spaces by weak collisions in a {V}lasov-{F}okker-{P}lanck
  equation}, Annals of PDE, 3 (2017), pp.~1--66.

\bibitem{bedrossian2020nonlinear}
\leavevmode\vrule height 2pt depth -1.6pt width 23pt, {\em Nonlinear echoes and
  {L}andau damping with insufficient regularity}, Tunisian Journal of
  Mathematics, 3 (2020), pp.~121--205.

\bibitem{bedrossian2022brief}
\leavevmode\vrule height 2pt depth -1.6pt width 23pt, {\em A brief introduction
  to the mathematics of {L}andau damping}, arXiv preprint arXiv:2211.13707,
  (2022).

\bibitem{bedrossian2023nonlinear}
{\sc J.~Bedrossian, R.~Bianchini, M.~C. Zelati, and M.~Dolce}, {\em Nonlinear
  inviscid damping and shear-buoyancy instability in the two-dimensional
  {B}oussinesq equations}, Communications on Pure and Applied Mathematics, 76
  (2023), pp.~3685--3768.

\bibitem{BM2015}
{\sc J.~Bedrossian and N.~Masmoudi}, {\em Inviscid damping and the asymptotic
  stability of planar shear flows in the 2{D} {E}uler equations}, Publ. Math.
  Inst. Hautes \'{E}tudes Sci., 122 (2015), pp.~195--300.

\bibitem{BMM2016}
{\sc J.~Bedrossian, N.~Masmoudi, and C.~Mouhot}, {\em Landau damping:
  paraproducts and {G}evrey regularity}, Ann. PDE, 2 (2016), pp.~Art. 4, 71.

\bibitem{bedrossian2022linearized}
{\sc J.~Bedrossian, N.~Masmoudi, and C.~Mouhot}, {\em Linearized wave-damping
  structure of {V}lasov--{P}oisson in $\mathbb{R}^{3}$}, SIAM Journal on
  Mathematical Analysis, 54 (2022), pp.~4379--4406.

\bibitem{BMV2016}
{\sc J.~Bedrossian, N.~Masmoudi, and V.~Vicol}, {\em Enhanced dissipation and
  inviscid damping in the inviscid limit of the {N}avier-{S}tokes equations
  near the two dimensional {C}ouette flow}, Arch. Ration. Mech. Anal., 219
  (2016), pp.~1087--1159.

\bibitem{Bony1981}
{\sc J.-M. Bony}, {\em Calcul symbolique et propagation des singularit\'{e}s
  pour les \'{e}quations aux d\'{e}riv\'{e}es partielles non lin\'{e}aires},
  Ann. Sci. \'{E}cole Norm. Sup. (4), 14 (1981), pp.~209--246.

\bibitem{boyd2003physics}
{\sc T.~J.~M. Boyd and J.~J. Sanderson}, {\em The physics of plasmas},
  Cambridge university press, 2003.

\bibitem{castro2023traveling}
{\sc {\'A}.~Castro and D.~Lear}, {\em Traveling waves near {C}ouette flow for
  the 2{D} {E}uler equation}, Communications in Mathematical Physics,  (2023),
  pp.~1--75.

\bibitem{chaturvedi2023vlasov}
{\sc S.~Chaturvedi, J.~Luk, and T.~Nguyen}, {\em The
  {V}lasov--{P}oisson--{L}andau system in the weakly collisional regime},
  Journal of the American Mathematical Society,  (2023).

\bibitem{chen2023nonlinear}
{\sc Q.~Chen, D.~Wei, P.~Zhang, and Z.~Zhang}, {\em Nonlinear inviscid damping
  for 2-{D} inhomogeneous incompressible {E}uler equations}, arXiv preprint
  arXiv:2303.14858,  (2023).

\bibitem{deng2023long}
{\sc Y.~Deng and N.~Masmoudi}, {\em Long-time instability of the {C}ouette flow
  in low {G}evrey spaces}, Communications on Pure and Applied Mathematics, 76
  (2023), pp.~2804--2887.

\bibitem{grenier2021landau}
{\sc E.~Grenier, T.~T. Nguyen, and I.~Rodnianski}, {\em {L}andau damping for
  analytic and {G}evrey data}, Mathematical Research Letters, 28 (2021),
  pp.~1679--1702.

\bibitem{guo2002landau}
{\sc Y.~Guo}, {\em The {L}andau equation in a periodic box}, Communications in
  mathematical physics, 231 (2002), pp.~391--434.

\bibitem{guo2012vlasov}
\leavevmode\vrule height 2pt depth -1.6pt width 23pt, {\em The
  {V}lasov-{P}oisson-{L}andau system in a periodic box}, Journal of the
  American Mathematical Society, 25 (2012), pp.~759--812.

\bibitem{hakim_francisquez_juno_hammett_2020}
{\sc A.~Hakim, M.~Francisquez, J.~Juno, and G.~W. Hammett}, {\em Conservative
  discontinuous {G}alerkin schemes for nonlinear
  {D}ougherty–{F}okker–{P}lanck collision operators}, Journal of Plasma
  Physics, 86 (2020), p.~905860403.

\bibitem{han2021asymptotic}
{\sc D.~Han-Kwan, T.~T. Nguyen, and F.~Rousset}, {\em Asymptotic stability of
  equilibria for screened {V}lasov--{P}oisson systems via pointwise dispersive
  estimates}, Annals of PDE, 7 (2021), p.~18.

\bibitem{huang-NHNguyen2022sharp}
{\sc L.~Huang, Q.-H. Nguyen, and Y.~Xu}, {\em Sharp estimates for screened
  {V}lasov-{P}oisson system around {P}enrose-stable equilibria in
  $\mathbb{R}^d,d\geq 3$}, arXiv preprint arXiv:2205.10261,  (2022).

\bibitem{ignatova2016almost}
{\sc M.~Ignatova and V.~Vicol}, {\em Almost global existence for the {P}randtl
  boundary layer equations}, Archive for Rational Mechanics and Analysis, 220
  (2016), pp.~809--848.

\bibitem{IJ2020}
{\sc A.~Ionescu and H.~Jia}, {\em Nonlinear inviscid damping near monotonic
  shear flows}, arXiv preprint arXiv:2001.03087,  (2020).

\bibitem{IonescuJia2021}
\leavevmode\vrule height 2pt depth -1.6pt width 23pt, {\em Axi-symmetrization
  near point vortex solutions for the 2{D} {E}uler equation}, Communications on
  Pure and Applied Mathematics, 75 (2022), pp.~818--891.

\bibitem{IonescuJia2020cmp}
{\sc A.~D. Ionescu and H.~Jia}, {\em Inviscid damping near the {C}ouette flow
  in a channel}, Comm. Math. Phys., 374 (2020), pp.~2015--2096.

\bibitem{ionescu2024nonlinear}
{\sc A.~D. Ionescu, B.~Pausader, X.~Wang, and K.~Widmayer}, {\em Nonlinear
  {L}andau {D}amping for the {V}lasov--{P}oisson system in $\mathbb{R}^3$: The
  {P}oisson equilibrium}, Annals of PDE, 10 (2024), p.~2.

\bibitem{Jia2020arma}
{\sc H.~Jia}, {\em Linear inviscid damping in {G}evrey spaces}, Arch. Ration.
  Mech. Anal., 235 (2020), pp.~1327--1355.

\bibitem{Jia2020siam}
\leavevmode\vrule height 2pt depth -1.6pt width 23pt, {\em Linear inviscid
  damping near monotone shear flows}, SIAM J. Math. Anal., 52 (2020),
  pp.~623--652.

\bibitem{johnston1971dominant}
{\sc G.~L. Johnston}, {\em Dominant effects of {C}oulomb collisions on
  maintenance of {L}andau damping}, The Physics of Fluids, 14 (1971),
  pp.~2719--2726.

\bibitem{landau196561}
{\sc L.~Landau}, {\em On the vibrations of the electronic plasma}, The
  Collected Papers of LD Landau,  (1965), pp.~445--460.

\bibitem{lenard1958plasma}
{\sc A.~Lenard and I.~B. Bernstein}, {\em Plasma oscillations with diffusion in
  velocity space}, Physical Review, 112 (1958), p.~1456.

\bibitem{liMasmoudiZhao2022asymptotic}
{\sc H.~Li, N.~Masmoudi, and W.~Zhao}, {\em Asymptotic stability of
  two-dimensional {C}ouette flow in a viscous fluid}, arXiv preprint
  arXiv:2208.14898,  (2022).

\bibitem{liMasmoudiZhao2023dynamical}
\leavevmode\vrule height 2pt depth -1.6pt width 23pt, {\em A dynamical approach
  to the study of instability near {C}ouette flow}, Communications on Pure and
  Applied Mathematics,  (2022).

\bibitem{lin2011inviscid}
{\sc Z.~Lin and C.~Zeng}, {\em Inviscid dynamical structures near {C}ouette
  flow}, Archive for rational mechanics and analysis, 200 (2011),
  pp.~1075--1097.

\bibitem{lin2011small}
\leavevmode\vrule height 2pt depth -1.6pt width 23pt, {\em Small {BGK} waves
  and nonlinear {L}andau damping}, Communications in mathematical physics, 306
  (2011), pp.~291--331.

\bibitem{liu2022linear}
{\sc H.~Liu, N.~Masmoudi, C.~Zhai, and W.~Zhao}, {\em Linear damping and
  depletion in flowing plasma with strong sheared magnetic fields}, Journal de
  Math{\'e}matiques Pures et Appliqu{\'e}es, 158 (2022), pp.~1--41.

\bibitem{luo2024weak}
{\sc Y.~Luo}, {\em Weak collision effect on nonlinear {L}andau damping for the
  {V}lasov-{P}oisson-{F}okker-{P}lanck system}, arXiv preprint
  arXiv:2401.05601,  (2024).

\bibitem{malmberg1964collisionless}
{\sc J.~Malmberg and C.~Wharton}, {\em Collisionless damping of electrostatic
  plasma waves}, Physical Review Letters, 13 (1964), p.~184.

\bibitem{malmberg1968plasma}
{\sc J.~Malmberg, C.~Wharton, R.~Gould, and T.~O'neil}, {\em Plasma wave echo
  experiment}, Physical Review Letters, 20 (1968), p.~95.

\bibitem{MSZ2020}
{\sc N.~Masmoudi, B.~Said-Houari, and W.~Zhao}, {\em Stability of the {C}ouette
  flow for a 2{D} {B}oussinesq system without thermal diffusivity}, Archive for
  Rational Mechanics and Analysis, 245 (2022), pp.~645--752.

\bibitem{MasmoudiZhao2020cpde}
{\sc N.~Masmoudi and W.~Zhao}, {\em Enhanced dissipation for the 2{D} {C}ouette
  flow in critical space}, Comm. Partial Differential Equations, 45 (2020),
  pp.~1682--1701.

\bibitem{MasmoudiZhao2020}
\leavevmode\vrule height 2pt depth -1.6pt width 23pt, {\em Nonlinear inviscid
  damping for a class of monotone shear flows in a finite channel}, arXiv
  preprint arXiv:2001.08564, to appear in Ann. of Math.,  (2020).

\bibitem{MasmoudiZhao2019}
\leavevmode\vrule height 2pt depth -1.6pt width 23pt, {\em Stability threshold
  of two-dimensional {C}ouette flow in {S}obolev spaces}, Annales de l'Institut
  Henri Poincar{\'e} C, 39 (2022), pp.~245--325.

\bibitem{MouhotVillani2011}
{\sc C.~Mouhot and C.~Villani}, {\em On {L}andau damping}, Acta Math., 207
  (2011), pp.~29--201.

\bibitem{o1968effect}
{\sc T.~M. O'Neil}, {\em Effect of coulomb collisions and microturbulence on
  the plasma wave echo}, The physics of fluids, 11 (1968), pp.~2420--2425.

\bibitem{Orr1907}
{\sc W.~M. Orr}, {\em The stability or instability of the steady motions of a
  perfect liquid and of a viscous liquid. part {I}: A perfect liquid}, 27
  (1907), pp.~9--68.

\bibitem{RenZhao2017}
{\sc S.~Ren and W.~Zhao}, {\em Linear damping of {A}lfv\'{e}n waves by phase
  mixing}, SIAM J. Math. Anal., 49 (2017), pp.~2101--2137.

\bibitem{Ros-Mar-Mac-Will-Judd-Dav-PhysRev}
{\sc M.~N. Rosenbluth, W.~M. MacDonald, and D.~L. Judd}, {\em {F}okker-{P}lanck
  equation for an inverse-square force}, Phys. Rev., 107 (1957), pp.~1--6.

\bibitem{short2002damping}
{\sc R.~Short and A.~Simon}, {\em Damping of perturbations in weakly
  collisional plasmas}, Physics of Plasmas, 9 (2002), pp.~3245--3253.

\bibitem{sinambela2023transition}
{\sc D.~Sinambela and W.~Zhao}, {\em The transition to instability for stable
  shear flows in inviscid fluids}, arXiv preprint arXiv:2303.15925,  (2023).

\bibitem{strain2008exponential}
{\sc R.~M. Strain and Y.~Guo}, {\em Exponential decay for soft potentials near
  {M}axwellian}, Archive for Rational Mechanics and Analysis, 187 (2008),
  pp.~287--339.

\bibitem{wang2021global}
{\sc C.~Wang, W.~Yuxi, and P.~Zhang}, {\em On the global small solution of
  2-{D} {P}randtl system with initial data in the optimal {G}evrey class},
  arXiv preprint arXiv:2103.00681,  (2021).

\bibitem{wei2023nonlinear}
{\sc D.~Wei and Z.~Zhang}, {\em Nonlinear enhanced dissipation and inviscid
  damping for the 2{D} {C}ouette flow}, Tunisian Journal of Mathematics, 5
  (2023), pp.~573--592.

\bibitem{WeiZhangZhao2020}
{\sc D.~Wei, Z.~Zhang, and W.~Zhao}, {\em Linear inviscid damping and enhanced
  dissipation for the {K}olmogorov flow}, Adv. Math., 362 (2020), pp.~106963,
  103.

\bibitem{zhang2016long}
{\sc P.~Zhang and Z.~Zhang}, {\em Long time well-posedness of {P}randtl system
  with small and analytic initial data}, Journal of Functional Analysis, 270
  (2016), pp.~2591--2615.

\bibitem{zhao2023inviscid}
{\sc W.~Zhao}, {\em Inviscid damping of monotone shear flows for 2{D}
  inhomogeneous {E}uler equation with non-constant density in a finite
  channel}, arXiv preprint arXiv:2304.09841,  (2023).

\bibitem{zhaozi2023asymptotic}
{\sc W.~Zhao and R.~Zi}, {\em Asymptotic stability of couette flow in a strong
  uniform magnetic field for the {E}uler-{MHD} system}, arXiv preprint
  arXiv:2305.04052,  (2023).

\bibitem{Zillinger2017}
{\sc C.~Zillinger}, {\em Linear inviscid damping for monotone shear flows},
  Trans. Amer. Math. Soc., 369 (2017), pp.~8799--8855.

\end{thebibliography}

\end{document}